\numberwithin{equation}{section}
\newtheorem{theorem}{Theorem}[section]
\newtheorem{corollary}[theorem]{Corollary}
\newtheorem{lemma}[theorem]{Lemma}
\newtheorem{proposition}[theorem]{Proposition}
\theoremstyle{definition}
\newtheorem{definition}[theorem]{Definition}
\newtheorem{remark}[theorem]{Remark}
\newtheorem{example}[theorem]{Example}
\newtheorem{problem}[theorem]{Problem}
\newtheorem{question}[theorem]{Question}
\newcommand{\add}{\operatorname{add}}
\newcommand{\alg}{\operatorname{alg}}
\newcommand{\ch}{\operatorname{char}}
\newcommand{\End}{\operatorname{End}}
\newcommand{\ev}{\operatorname{ev}}
\newcommand{\genus}{\operatorname{genus}}
\newcommand{\GL}{\operatorname{GL}}
\newcommand{\Hom}{\operatorname{Hom}}
\newcommand{\hh}{\operatorname{H}}
\newcommand{\HC}{\operatorname{HC}}
\newcommand{\HH}{\operatorname{HH}}
\newcommand{\lcm}{\operatorname{lcm}}
\newcommand{\modf}{\operatorname{mod}}
\newcommand{\per}{\operatorname{per}}
\newcommand{\PSL}{\operatorname{PSL}}
\newcommand{\rank}{\operatorname{rank}}
\newcommand{\soc}{\operatorname{soc}}
\DeclareMathOperator{\stHom}{\underline{\Hom}}
\DeclareMathOperator{\stmod}{\underline{\modf}}
\DeclareMathOperator{\RHom}{\mathbf{R}Hom}
\newcommand{\balpha}{\bar{\alpha}}
\newcommand{\oa}{\omega_\alpha}
\newcommand{\oba}{\omega_{\balpha}}
\newcommand{\eps}{\varepsilon}
\newcommand{\vphi}{\varphi}
\newcommand{\wh}{\widehat}
\newcommand{\cA}{\mathcal{A}}
\newcommand{\cB}{\mathcal{B}}
\newcommand{\cC}{\mathcal{C}}
\newcommand{\cD}{\mathcal{D}}
\newcommand{\cJ}{\mathcal{J}}
\newcommand{\cK}{\mathcal{K}}
\newcommand{\gL}{\Lambda}
\newcommand{\dgL}{\cD^b(\gL)}
\newcommand{\cP}{\mathcal{P}}
\newcommand{\cQ}{\mathcal{Q}}
\newcommand{\cT}{\mathcal{T}}
\newcommand{\bZ}{\mathbb{Z}}
\begin{document}

\title[]{From groups to clusters}

\author[]{Sefi Ladkani}
\address{Department of Mathematics \\
University of Haifa \\
Mount Carmel, Haifa 31905, Israel}
\email{ladkani.math@gmail.com}

\thanks{The author acknowledges support
from DFG grant LA 2732/1.1 within the framework of the priority
program SPP 1388 Representation Theory.}

\begin{abstract}
We construct a new class of symmetric algebras of
tame representation type that are also the endomorphism algebras of
cluster-tilting objects in 2-Calabi-Yau triangulated categories, hence
all their non-projective indecomposable modules are $\Omega$-periodic of
period dividing 4. Our construction is based on the combinatorial notion of
triangulation quivers, which arise naturally from triangulations of oriented
surfaces with marked points.

This class of algebras contains the algebras of
quaternion type introduced and studied by Erdmann with relation to certain
blocks of group algebras. On the other hand, it contains also the Jacobian
algebras of the quivers with potentials associated by Fomin-Shapiro-Thurston
and Labardini-Fragoso to triangulations of closed surfaces with punctures,
hence our
construction may serve as a bridge between the modular representation theory
of finite groups and the theory of cluster algebras.
\end{abstract}

\subjclass[2010]{%
Primary 16G10;
Secondary 13F60, 16D50, 16E05, 16E35, 16G60, 16G70, 18E30, 20C20.}

\keywords{%
Algebra of quaternion type,
2-CY-tilted algebra,
Brauer graph algebra, derived equivalence,
Jacobian algebra,
marked surface,
periodic modules,
quiver with potential,
ribbon graph, ribbon quiver,
triangulation quiver, triangulation algebra,
symmetric algebra.
}

\maketitle

\setcounter{tocdepth}{1}
\tableofcontents

\section*{Introduction}

The aim of this survey is to report on new connections between the
representation theory of finite groups and the theory of cluster algebras.

Blocks of group algebras form an important class of indecomposable, symmetric
finite-dimensional algebras. Blocks of finite representation type are Morita
equivalent to Brauer tree algebras and are well understood.
In order to understand blocks of tame representation type,
Erdmann~\cite{Erdmann90} introduced
the classes of algebras of dihedral, semi-dihedral and quaternion type,
which are defined by properties of their Auslander-Reiten quiver,
proved that blocks with dihedral, semi-dihedral or generalized quaternion
defect group belong to the respective class of algebras and classified
the possible quivers with relations these algebras may have.

One of these classes consists of the algebras of quaternion type, which are
indecomposable, symmetric algebras of tame representation type having
non-singular Cartan matrix, with the property that
any indecomposable non-projective module is $\Omega$-periodic of period
dividing $4$, where $\Omega$ is Heller's syzygy functor. This class
of algebras is closed under derived equivalences~\cite{Holm99}.

2-Calabi-Yau triangulated categories with cluster-tilting objects arise in the
additive categorification of cluster algebras with skew-symmetric exchange
matrices~\cite{Amiot09,BMRRT06,Keller10,Keller11}.
The role of the clusters in the cluster algebra is played by 
the cluster-tilting objects, whose endomorphism algebras, called
\emph{2-CY-tilted algebras}, have remarkable representation theoretic and
homological properties~\cite{BMR07,KellerReiten07}.

We show that symmetric algebras $\gL$ that are in addition 2-CY-tilted have
interesting structural properties analogous to those of the algebras of
quaternion type; firstly, the functor $\Omega^4$ is
isomorphic to the identity functor on the stable module category
$\stmod \gL$ (Proposition~\ref{p:period});
secondly, such algebras tend to come in derived equivalence classes
(Proposition~\ref{p:dereq}). More precisely, if $\gL=\End_{\cC}(T)$ 
for a cluster-tilting object $T$ in a 2-Calabi-Yau triangulated category $\cC$,
then the 2-CY-tilted algebra $\gL'=\End_{\cC}(T')$ is derived equivalent
to $\gL$ for any other cluster-tilting object $T'$ obtained from $T$
by a finite sequence of Iyama-Yoshino~\cite{IyamaYoshino08} mutations.

Motivated by this analogy one is naturally led to ask whether the algebras
of quaternion type can be realized as 2-CY-tilted algebras, and even more
generally, what are the symmetric algebras that are also 2-CY-tilted?

In this survey we provide an affirmative answer to the first question
and attempt to answer the second question, first by 
classifying the symmetric, 2-CY-tilted algebras of finite representation type
and then by constructing a new class of
symmetric, 2-CY-tilted algebras of tame representation type.
Note that there are also many wild symmetric, 2-CY-tilted algebras, but we
will not discuss them here.
Let us describe the main results along with the structure of this survey.

In Section~\ref{sec:motivation} we review some basic notions including
blocks, stable categories, symmetric algebras, periodic modules and the
definition of algebras of quaternion type.
We also introduce the algebras of quasi-quaternion type, which are defined
similarly to the algebras of quaternion type,
the only difference being the omission of the condition
that the Cartan matrix is non-singular.

In Section~\ref{sec:sym2CY} we investigate symmetric 2-CY-tilted algebras.
We start by recalling the definition and basic properties of 2-CY-tilted
algebras. Since many of them arise as Jacobian algebras of quivers
with potentials~\cite{Amiot09,DWZ08,Keller11},
we review this notion as well, and introduce the
notion of hyperpotential~\cite{Ladkani14a} which is useful over ground fields
of positive characteristic.
Then we present results concerning the periodicity of
modules and derived equivalences for these algebras.

A classification of symmetric, 2-CY-tilted, indecomposable algebras of finite
representation type which are not simple is presented in
Section~\ref{sec:sym2CYfin}.
We show that these algebras are precisely the Brauer tree algebras
with at most two simple modules (Theorem~\ref{t:sym2CYfin}).

Then, we construct a large class of symmetric, 2-CY-tilted algebras of tame
representation type (Theorem~\ref{t:quasi}).
Our construction is based on the combinatorial notion of \emph{triangulation
quivers}, which are quivers with the property that for each vertex
the set of incoming arrows and that of outgoing arrows have cardinality
$2$, together with bijections between these sets that combine to yield
a permutation on the set of all arrows which is of order dividing $3$.
Triangulation quivers can be built from ideal triangulations of surfaces
with marked points in a way which is analogous to, but different than the
construction of the adjacency quivers of Fomin, Shapiro and
Thurston~\cite{FST08} arising in their work on cluster algebras from
surfaces.

The ingredients behind our construction are presented in
Sections~\ref{sec:quivers}, \ref{sec:surface}, \ref{sec:algebras}
and~\ref{sec:triangquasi}.
Section~\ref{sec:quivers} forms the combinatorial heart of this survey.
We introduce ribbon quivers and the dual notion of ribbon graphs,
define the subclass of triangulation quivers, and present a block decomposition
of the latter into three basic building blocks.
Section~\ref{sec:surface} explains how triangulations of marked surfaces
give rise to triangulation quivers. We discuss the
differences and similarities to adjacency quivers and provide a dimer model
perspective on these constructions.

In Section~\ref{sec:algebras} we introduce two classes of algebras which turn
out to be important for our study, one consists of the well known
Brauer graph algebras~\cite{Alperin86,Benson98,Kauer98}, while the other
is the newly defined \emph{triangulation algebras}. Roughly speaking,
a Brauer graph algebra arises from any ribbon quiver and
auxiliary data given in the form of scalars and positive integer
multiplicities, whereas
a triangulation algebra arises from any triangulation quiver with similar
auxiliary data.

In Section~\ref{sec:triangquasi} we investigate triangulation algebras in
more detail and prove that they are finite-dimensional, tame, symmetric,
2-CY-tilted algebras and hence of quasi-quaternion type.
By using Iyama-Yoshino mutations of
cluster-tilting objects~\cite{IyamaYoshino08} we are able to construct even
more, derived equivalent, algebras with the same properties. The
finite-dimensionality of the triangulation algebras relies on computations
inside complete path algebras of quivers, whereas the proof of their
representation type uses the observation that apart from a few exceptions,
the triangulation algebras are deformations of the corresponding
Brauer graph algebras~(Proposition~\ref{p:degen}).

Our construction yields new symmetric 2-CY-tilted algebras in addition to the
ones constructed by Burban, Iyama, Keller and Reiten~\cite{BIKR08} arising from
the stable categories of maximal Cohen-Macaulay modules over odd-dimensional
isolated hypersurface singularities.
Moreover, it provides new insights on the important problem of classifying
the self-injective algebras with periodic module categories, as
the algebras we construct are instances of
new tame symmetric algebras with periodic modules which seem not to appear in
the classification announced by Erdmann and
Skowro\'{n}ski~\cite[Theorem~6.2]{ES08}.

In Section~\ref{sec:known} we prove that 
our newly constructed class of algebras
contains two known classes of algebras as subclasses.
Firstly, it contains all the members in Erdmann's lists of
algebras of quaternion type (Theorem~\ref{t:quat2CY}).
Since these algebras turn out to be 2-CY-tilted, this gives a new proof of
the fact that they are indeed of quaternion type, which was first shown
in~\cite{ES06} by constructing bimodule resolutions.
As a consequence, we are able to characterize all the blocks of group algebras
that are 2-CY-tilted algebras (Proposition~\ref{p:block2CY}).

In order to illustrate the advantage of
this new point of view on the algebras of quaternion
type, we discover new algebras of quaternion type which seem not to appear
in the existing lists (Proposition~\ref{p:newquat}).

Secondly,
our newly constructed class of algebras contains also all the Jacobian algebras
of the quivers with potentials associated by
Labardini-Fragoso~\cite{Labardini09} to triangulations of closed surfaces with
punctures. As a consequence, we deduce that the latter algebras are
finite-dimensional of quasi-quaternion type and their derived equivalence
class depends only on the surface and not on the particular triangulation
(Corollary~\ref{c:Jaclosed}, see also~\cite{Ladkani12}).

Our newly constructed class contains also all the symmetric algebras of
tubular type $(2,2,2,2)$ and their socle deformations classified in~\cite{BS03,BS04}
(Proposition~\ref{p:sympoly}). 

In Section~\ref{sec:mut} we introduce a notion of mutation on triangulation
quivers and compare it to various other notions of mutations existing in the
literature, including flips of triangulations, Kauer's elementary
moves~\cite{Kauer98} for Brauer graph algebras and mutations of quivers with
potentials~\cite{DWZ08}. 
We observe that the Brauer graph algebras arising from different
triangulations of the
same marked surface are derived equivalent (Corollary~\ref{c:BGAsurface}),
a result which has also been obtained by Marsh and
Schroll~\cite{MarshSchroll14}, however the algebras they consider in the case
of surfaces with non-empty boundary are different.
Analogously, under mild conditions the triangulation algebras of
triangulation quivers related by a mutation are derived equivalent
(Proposition~\ref{p:mutriang}).

Finally we outline an application to the theory of quivers with potentials.
Non-degenerate potentials are important in various approaches to the
categorification of cluster algebras~\cite{DWZ10,Plamondon11}.
It was proved by Derksen, Weyman and Zelevinsky~\cite{DWZ08} that over an
uncountable field, any quiver without loops or 2-cycles has at least one
non-degenerate potential. For certain classes of quivers, a non-degenerate
potential is unique~\cite{GLS16,Ladkani13}.
On the other hand, we construct infinitely many families of quivers, each
having infinitely many non-degenerate potentials with pairwise non-isomorphic
Jacobian algebras (Corollary~\ref{c:infpot}).

\subsection*{Acknowledgements}
I discussed various aspects of this work with Thorsten Holm, Maxim Kontsevich,
Robert Marsh and Andrzej Skowro\'{n}ski. I thank them for their interest.
I would like to thank Peter Littelmann for his encouragement while writing
this survey.
I thank also the anonymous referee for many valuable comments and suggestions.

Parts of the material were presented in various talks that I gave in Germany
during Summer 2013 and in Israel
during Winter 2014/15, as well as at the workshop on Cluster algebras and
finite-dimensional algebras that was held in June 2015 at Leicester, UK.
They were also scheduled to be presented at the ARTA conference that was held
in September 2013 at Torun, Poland, and at the final meeting of the
priority program ``Representation Theory''
that took place at Bad Honnef, Germany in March 2015.
I thank the organizers of these meetings for their invitations.

Many of the results presented in this survey were obtained during my stay at
the University of Bonn which was supported by my DFG grant LA~2732/1-1 within
the framework of the priority program SPP 1388 ``Representation Theory''.

A report containing these results~\cite{Ladkani14b} was
written during my visit to the IHES at Bures-sur-Yvette in the spring of 2014.
In a subsequent visit during Spring 2015
some aspects of the theory were refined.
I would like to thank the IHES for the hospitality and the
inspiring atmosphere.

While writing this survey I have been supported by the
Center for Absorption in Science, Ministry of Aliyah and Immigrant Absorption,
State of Israel.

\section{Motivation: blocks of tame representation type}
\label{sec:motivation}

\subsection{Group algebras}
\label{ssec:group}

Let $G$ be a finite group and $K$ be a field. The group algebra
$KG$ can be written as a direct product of indecomposable rings,
which are called \emph{blocks}.
By Maschke's theorem, if the characteristic of $K$ does not divide 
the order of $G$,
then $KG$, and hence each block, is semi-simple. In particular, when $K$
is also algebraically closed, each block is isomorphic to a matrix ring
over~$K$.

However, when the characteristic of $K$, denoted here and throughout the
paper by $\ch K$, divides the order of $G$, a block may not be
semi-simple anymore. The \emph{defect group} of a block $B$
measures how far it is from being semi-simple. It may be defined
as a minimal subgroup $D$ of $G$ such that any $B$-module
is $D$-projective (i.e.\ it is isomorphic to a direct summand of
$W \otimes_{KD} KG$ for some $KD$-module $W$).
A defect group is a $p$-subgroup of $G$ (where $p=\ch K$),
determined up to $G$-conjugacy. A defect group of the principal block
(the block which the trivial $KG$-module $K$ belongs to) is
a $p$-Sylow subgroup of $G$, and
a block is semi-simple if and only if its defect group is trivial.
We refer to the survey article~\cite{Linckelmann11} for further details.

Many aspects of the representation theory of a block are controlled by its
defect group. One such important aspect is the representation type.
Indeed, if $B$ is a block with defect group $D$ over an algebraically closed
field of
characteristic $p$, then $B$ is of finite representation type if and only if
$D$ is cyclic~\cite{Higman54},
while $B$ is of tame (but not finite) representation type if and only if
$p=2$ and $D$ is either dihedral, semi-dihedral of generalized quaternion
group~\cite{BD75}.
In all other cases, $B$ is of wild representation type.

Blocks of finite representation type, that it, blocks with cyclic defect group,
are Morita equivalent to Brauer tree algebras~\cite{Dade66,Janusz69} and hence
are well understood.
In order to understand blocks of tame representation type (over algebraically
closed fields), Erdmann introduced families of symmetric algebras defined by
properties of their Auslander-Reiten quivers. These are the algebras
of dihedral, semi-dihedral and quaternion type.
She showed that a block with dihedral (respectively, semi-dihedral, generalized
quaternion) defect group is an algebra of the corresponding type and moreover
she classified the quivers with relations these algebras may possibly
have~\cite{Erdmann90}.

In this section we focus on the algebras of quaternion type and start by
reviewing the relevant notions.

\subsection{Stable categories and periodicity}
Let $A$ be a finite-dimensional algebra over a field $K$. Denote by 
$\modf A$ the category of finitely generated right $A$-modules,
and by $\cD^b(A) = \cD^b(\modf A)$ its bounded derived category.
The latter contains as triangulated subcategory the category
$\per A$ of perfect complexes whose objects are bounded complexes
of finitely generated projective $A$-modules.
The Verdier quotient $\cD^b(A)/\per A$ is known as the
\emph{singularity category} of $A$, see~\cite{Orlov09}. 
Its name comes from the fact that it vanishes precisely when $A$ has finite
global dimension (i.e.\ $A$ is ``smooth'')~\cite[Remark~1.9]{Orlov09},
as in this case any $A$-module has a finite projective resolution,
thus any object in $\cD^b(A)$ is isomorphic to a perfect complex.

Assume that the algebra $A$ is \emph{self-injective}, i.e.\ $A$ is injective
as left and right module over itself, and
consider the \emph{stable module category} $\stmod A$
whose objects are the same as those of $\modf A$ and the space of morphisms
between any two objects $M, N \in \stmod A$ is given by
\[
\underline{\Hom}_A(M,N) = \Hom_A(M,N)/\cP(M,N)
\]
where $\cP(M,N)$ consists of all the morphisms $M \to N$ in $\modf A$
which factor through some projective module over $A$.

By a result of Happel~\cite[Theorem~I.2.6]{Happel88}, the additive category
$\stmod A$ is triangulated. Moreover, by a theorem of
Rickard~\cite[Theorem~2.1]{Rickard89}, $\stmod A$ can be identified with the
singularity category of $A$.

Let $M \in \stmod A$ and consider a projective cover $P_M$ of $M$. Define
a module $\Omega M$ by the exact sequence (in $\modf A$)
\[
0 \to \Omega M \to P_M \to M \to 0 .
\]
The \emph{syzygy} $\Omega M$ is well defined in the category $\stmod A$ and
gives rise to \emph{Heller's syzygy functor}
$\Omega \colon \stmod A \to \stmod A$, see~\cite{Heller61}.
Sometimes, when we want to stress the role of the
algebra $A$, we will denote the syzygy functor by $\Omega_A$ instead of $\Omega$.

Similarly, by taking an injective envelope $I_M$ of $M$ and the exact sequence
\[
0 \to M \to I_M \to \Omega^{-1} M  \to 0
\]
one can define the cosyzygy functor $\Omega^{-1} \colon \stmod A \to \stmod A$,
which is an inverse of $\Omega$.
The suspension functor of the triangulated category $\stmod A$ is
given by $\Omega^{-1}$.

An important class of self-injective algebras is formed by the
symmetric algebras, which we now define. First, observe that
for a finite-dimensional algebra $A$ over $K$, the vector space
$DA = \Hom_K(A,K)$ is an $A$-$A$-bimodule.

\begin{definition}
A finite-dimensional $K$-algebra $A$ is \emph{symmetric} if
$A \simeq DA$ as $A$-$A$-bimodules. Here and throughout the paper,
the symbol $\simeq$ denotes isomorphism.
\end{definition}

We recall a few alternative characterizations of symmetric algebras.
In order to formulate them, we need the notion of a Calabi-Yau
triangulated category which is given below.

\begin{definition}
Let $d \in \bZ$.
A $K$-linear triangulated category $\cT$ with suspension $\Sigma$
and finite-dimensional morphism spaces is \emph{$d$-Calabi-Yau} if
there exist functorial isomorphisms
\[
\Hom_{\cT}(X, Y) \simeq D \Hom_{\cT}(Y, \Sigma^d X)
\]
for all $X, Y \in \cT$.
\end{definition}

\begin{proposition} \label{p:symmetric}
The following conditions are equivalent for a finite-dimensional $K$-algebra
$A$.
\begin{enumerate}
\renewcommand{\theenumi}{\alph{enumi}}
\item \label{it:p:sym}
$A$ is symmetric;

\item \label{it:p:symform}
There exists a symmetrizing form on $A$, that is,
a $K$-linear map $\lambda \colon A \to K$
whose kernel does not contain any non-trivial left ideal of $A$
and moreover $\lambda(xy)=\lambda(yx)$ for any $x,y \in A$;

\item \label{it:p:symper}
The triangulated category $\per A$ is $0$-Calabi-Yau.

\item \label{it:p:sym0CY}
$A$ is isomorphic to the endomorphism algebra of an object in a triangulated
$0$-Calabi-Yau category.
\end{enumerate}
\end{proposition}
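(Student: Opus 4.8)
\[
\text{(a)} \Rightarrow \text{(b)} \Rightarrow \text{(c)} \Rightarrow \text{(d)} \Rightarrow \text{(a)}.
\]
The first step, (a) $\Rightarrow$ (b), is the standard dictionary between bimodule isomorphisms and symmetrizing forms. Given an isomorphism $\varphi \colon A \xrightarrow{\sim} DA$ of $A$-$A$-bimodules, I would set $\lambda = \varphi(1) \in DA = \Hom_K(A,K)$, so that $\lambda$ is a linear form on $A$. The bimodule condition $\varphi(ax) = a\cdot\varphi(x)$ and $\varphi(xa) = \varphi(x)\cdot a$ translates, after unwinding the bimodule structure on $DA$, into $\lambda(xy) = \lambda(yx)$ for all $x,y$. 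The non-degeneracy of $\varphi$ (it is an isomorphism) forces the kernel of $\lambda$ to contain no nonzero one-sided ideal. Conversely, given such a $\lambda$, the assignment $a \mapsto (x \mapsto \lambda(ax))$ defines a bimodule map $A \to DA$ whose injectivity follows from the ideal condition and which is then an isomorphism by dimension count, giving (b) $\Rightarrow$ (a) as well; I would record this so that (b) is genuinely an equivalent reformulation.

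Next I would treat (a) $\Rightarrow$ (c). The point is that $\per A$ carries a Serre functor, and the Calabi-Yau property is the statement that this Serre functor is (isomorphic to) the suspension raised to the power $d$, here $d=0$, i.e.\ the identity. Concretely, for perfect complexes the natural duality
\[
\Hom_{\cD^b(A)}(X,Y) \simeq D\Hom_{\cD^b(A)}(Y, X \otimes^{\mathbf L}_A DA)
\]
expresses the Serre functor as $- \otimes^{\mathbf L}_A DA$. When $A$ is symmetric, $DA \simeq A$ as bimodules, so this functor is the identity and the displayed isomorphism becomes exactly the $0$-Calabi-Yau condition on $\per A$. The implication (c) $\Rightarrow$ (d) is immediate, since $A \simeq \End_{\per A}(A)$ exhibits $A$ as the endomorphism algebra of the stalk complex $A$ inside the $0$-Calabi-Yau category $\per A$.

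The remaining implication (d) $\Rightarrow$ (a) is where the real content lies, and I expect it to be the main obstacle. Suppose $A \simeq \End_{\cT}(X)$ for some object $X$ in a $0$-Calabi-Yau category $\cT$ with suspension $\Sigma$. Applying the defining functorial isomorphism with $Y = X$ and $d = 0$ gives
\[
\End_{\cT}(X) = \Hom_{\cT}(X,X) \simeq D\Hom_{\cT}(X, X) = D\End_{\cT}(X),
\]
so $A \simeq DA$ as $K$-vector spaces; the task is to upgrade this to an isomorphism of $A$-$A$-bimodules. The delicate part is checking that the Calabi-Yau isomorphism is compatible with the left and right multiplication actions of $\End_{\cT}(X)$, which amounts to unwinding the functoriality (naturality in both variables) of the isomorphism $\Hom_{\cT}(X,Y) \simeq D\Hom_{\cT}(Y,X)$ with respect to morphisms $X \to X$ precomposed and postcomposed. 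Equivalently, the bilinear pairing $(f,g) \mapsto \lambda(fg)$ induced on $\End_{\cT}(X)$ by the Calabi-Yau duality is non-degenerate and symmetric, and I would verify the symmetry $\lambda(fg)=\lambda(gf)$ directly from the functoriality of the duality, thereby producing a symmetrizing form and invoking the already-established equivalence (a) $\Leftrightarrow$ (b) to conclude. Tracking these naturality constraints carefully is the crux, since it is exactly there that the $0$-Calabi-Yau hypothesis does its work.
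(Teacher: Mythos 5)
Your proposal is correct and follows essentially the same route as the paper: the standard dictionary for (a)$\Leftrightarrow$(b), the Serre-duality formula $\Hom_{\cD^b(A)}(X,Y) \simeq D\Hom_{\cD^b(A)}(Y, X \stackrel{\mathbf{L}}{\otimes}_A DA)$ for (a)$\Rightarrow$(c), the observation $A \simeq \End_{\per A}(A)$ for (c)$\Rightarrow$(d), and the functorial isomorphism $\Hom_{\cT}(X,X) \simeq D\Hom_{\cT}(X,X)$ for (d)$\Rightarrow$(a). Your elaboration of the naturality check in the last step (and the detour through the symmetrizing form) is a slightly more detailed account of what the paper states in one line, but it is the same argument.
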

\begin{proof}
The equivalence of~\eqref{it:p:sym} and~\eqref{it:p:symform} is standard, see
e.g.~\cite[Theorem~IV.2.2]{SY11}.
The implication~\eqref{it:p:sym}$\Rightarrow$\eqref{it:p:symper} follows from
the fact that for any finite-dimensional algebra $A$ one has
\[
\Hom_{\cD^b(A)}(X, Y) \simeq D\Hom_{\cD^b(A)}(Y,
X \stackrel{\mathbf{L}}{\otimes}_A DA)
\]
for any $X \in \per A$ and $Y \in \cD^b(A)$, see the proof
of~\cite[Theorem~I.4.6]{Happel88}.
For the implication~\eqref{it:p:symper}$\Rightarrow$\eqref{it:p:sym0CY},
observe that $A \simeq \End_{\per A}(A)$.
For~\eqref{it:p:sym0CY}$\Rightarrow$\eqref{it:p:sym}, note that if $X$ is an
object in a $0$-Calabi-Yau triangulated category $\cT$ and $A = \End_{\cT}(X)$,
then the functorial isomorphism $\Hom_{\cT}(X,X) \simeq D\Hom_{\cT}(X,X)$
implies that $A \simeq DA$ as $A$-$A$-bimodules.
\end{proof}

\begin{corollary}[\protect{\cite[Theorem~IV.4.1]{SY11}}]
\label{c:sym:idempotent}
Let $A$ be a symmetric algebra and $e \in A$ an idempotent. Then the algebra
$eAe$ is also symmetric.
\end{corollary}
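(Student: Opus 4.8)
The plan is to use the characterization of symmetric algebras via symmetrizing forms, namely the equivalence \eqref{it:p:sym}$\Leftrightarrow$\eqref{it:p:symform} of Proposition~\ref{p:symmetric}. Since $A$ is symmetric, I would fix a symmetrizing form $\lambda \colon A \to K$, so that $\lambda(xy)=\lambda(yx)$ for all $x,y \in A$ and $\ker\lambda$ contains no nonzero left ideal of $A$. The algebra $eAe$ is a finite-dimensional $K$-algebra with unit $e$, and the natural candidate for its symmetrizing form is the restriction $\mu = \lambda|_{eAe} \colon eAe \to K$.

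Two properties must then be checked. The trace condition $\mu(xy)=\mu(yx)$ for $x,y \in eAe$ is immediate from the corresponding property of $\lambda$. The substance of the argument is to verify that $\ker\mu$ contains no nonzero left ideal of $eAe$, equivalently (since $eAe$ is unital with unit $e$) that for every $0 \neq x \in eAe$ there exists $y \in eAe$ with $\mu(yx)\neq 0$. Here I would first record the identities $ex = xe = x$ for $x \in eAe$, which follow from $x=exe$ and $e^2=e$. Given such an $x \neq 0$, the nondegeneracy of $\lambda$ on $A$ furnishes some $a \in A$ with $\lambda(ax)\neq 0$. Setting $y = eae \in eAe$, a short computation using $e^2=e$ and $exe=x$ gives $yx = eax$, and then the trace property yields
\[
\mu(yx) = \lambda(eax) = \lambda(axe) = \lambda(ax) \neq 0,
\]
where the middle equality is the instance $\lambda(e\cdot ax)=\lambda(ax\cdot e)$ of the trace property and the last equality uses $xe=x$. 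This exhibits the required $y$ and completes the verification, so $\mu$ is a symmetrizing form and $eAe$ is symmetric by Proposition~\ref{p:symmetric}.

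The single nontrivial point is thus the nondegeneracy of the restricted form, and I expect the main obstacle to be the small bookkeeping that converts the witness $a \in A$ for $\lambda$ into a witness $y = eae$ lying in $eAe$; the trace property is precisely what makes this reduction succeed. Alternatively, one can bypass even this computation by invoking the Calabi-Yau characterization \eqref{it:p:symper}--\eqref{it:p:sym0CY}: as $A$ is symmetric, $\per A$ is $0$-Calabi-Yau, the projective module $eA$ is an object of $\per A$, and $\End_{\per A}(eA) = \End_A(eA) \simeq eAe$; hence $eAe$ is the endomorphism algebra of an object in a $0$-Calabi-Yau category and is therefore symmetric by \eqref{it:p:sym0CY}$\Rightarrow$\eqref{it:p:sym}. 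In this second route the entire difficulty is absorbed into the already-established Proposition~\ref{p:symmetric}, at the cost of the routine identification $\End_A(eA) \simeq eAe$.
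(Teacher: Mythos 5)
Your proof is correct. Your second route (via the $0$-Calabi-Yau property of $\per A$ and the identification $eAe \simeq \End_{\per A}(eA)$) is precisely the alternative the paper itself gives. Your primary route, however, differs from the paper's primary one-liner: the paper simply observes the bimodule isomorphism $D(eAe) \simeq e(DA)e$, which together with $A \simeq DA$ immediately yields $eAe \simeq D(eAe)$ as $eAe$-bimodules, whereas you work through the symmetrizing-form characterization \eqref{it:p:symform} of Proposition~\ref{p:symmetric}, restricting $\lambda$ to $eAe$ and verifying nondegeneracy by hand. The two are really the same fact in different clothing — the restricted form $\lambda|_{eAe}$ is exactly what the isomorphism $e(DA)e \simeq D(eAe)$ produces from a bimodule generator of $DA$ — but your version has the merit of being completely explicit, and your key computation ($y = eae$ witnesses nondegeneracy because $\mu(yx)=\lambda(eax)=\lambda(axe)=\lambda(ax)\neq 0$) is exactly the right reduction and is carried out correctly. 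The paper's formulation buys brevity at the cost of leaving the verification of $D(eAe)\simeq e(DA)e$ to the reader; yours spells out the only nontrivial point.
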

\begin{proof}
One has $D(eAe) \simeq e(DA)e$; alternatively,
use Proposition~\ref{p:symmetric} for the algebra
$eAe \simeq \End_{\per A}(eA)$.
\end{proof}

\begin{example}
Any group algebra $KG$ is symmetric. Indeed, a symmetrizing form on $KG$ is
given by
\[
\lambda(\sum_{g \in G} a_g g) = a_1 .
\]
It follows from Corollary~\ref{c:sym:idempotent} that any block of a group
algebra is also symmetric.
\end{example}

\begin{example}
If $A$ is any finite-dimensional $K$-algebra, the bimodule structure
on $DA$ allows to define a symmetric algebra whose underlying vector space is
$A \oplus DA$ called the \emph{trivial extension algebra} of $A$ and
denoted by $T(A)$. The elements of $T(A)$ are
pairs $(a,\mu)$ where $a \in A$ and $\mu \in DA$. Addition and
multiplication are given by the formulae
\begin{align*}
(a, \mu) + (a', \mu') = (a+a', \mu + \mu') \\
(a,\mu) \cdot (a',\mu') = (aa', a \mu'+\mu a')
\end{align*}
for $a,a' \in A$ and $\mu, \mu' \in DA$.
The symmetrizing form on $T(A)$ is given by $\lambda(a,\mu) = \mu(1)$.
\end{example}

\begin{remark} \label{rem:CYminus1}
The stable category of a symmetric algebra $A$ is
$(-1)$-Calabi-Yau, i.e.\
\[
\stHom_A (M, N) \simeq D \stHom_A (N, \Omega M)
\]
for $M, N \in \stmod A$, see for example~\cite[Proposition~1.2]{ES06}
and the end of~\cite[\S1]{Amiot09}.
\end{remark}

\begin{definition}
A module $M \in \stmod A$ is \emph{$\Omega$-periodic} if
$\Omega^r M \simeq M$ for some integer $r>0$.
\end{definition}

The category $\stmod A$ has Auslander-Reiten sequences, and 
when $A$ is symmetric there is a
close connection between the Auslander-Reiten translation $\tau$ on
$\stmod A$ and the syzygy $\Omega$, namely $\tau = \Omega^2$.
In particular, a module is $\Omega$-periodic if and only if it
is $\tau$-periodic.

\begin{example} \label{ex:Kxn}
Let $n \geq 1$ and consider the algebra $A = K[x]/(x^n)$.
It is a commutative, local, symmetric algebra over $K$ of finite
representation type whose indecomposable modules are given by $M_i = x^i A$
for $0 \leq i < n$. The module $M_0 = A$ is projective, and the exact
sequence
\[
0 \to x^{n-i}A \to A \xrightarrow{x^i \cdot -} x^i A \to 0
\]
shows that $\Omega(M_i) = M_{n-i}$ for any $0 < i < n$.
Hence $\Omega^2 M \simeq M$ for any $M \in \stmod A$.
Note that if $\ch K = p$ and $n=p^e$ for some $e \geq 1$,
then $A \simeq KG$ for $G=\bZ/p^e\bZ$ and its defect group
equals $G$.
\end{example}

\subsection{Algebras of quaternion type}
In this section we assume that the ground field $K$ is algebraically
closed. The algebras of quaternion type were introduced by Erdmann, and
we refer to the articles~\cite{Erdmann88} and the monograph~\cite{Erdmann90}
for a detailed presentation.

\begin{definition}[\cite{Erdmann88}]
A finite-dimensional algebra $A$ is of \emph{quaternion type} if
\begin{enumerate}
\renewcommand{\theenumi}{\roman{enumi}}
\item
$A$ is symmetric, indecomposable as a ring;
\item
$A$ has tame (but not finite) representation type;
\item
$\Omega^4 M \simeq M$ for any $M \in \stmod A$;
\item \label{it:q:cartan}
$\det C_A \neq 0$, where $C_A$ denotes the Cartan matrix of $A$. 
\end{enumerate}
\end{definition}

Recall that the \emph{Cartan matrix} of a basic algebra $A$ is the
$n$-by-$n$ matrix with integer entries given by
$(C_A)_{i,j} = \dim_K e_i A e_j$, where $e_1, e_2, \dots, e_n$ form a complete
set of primitive orthogonal idempotents in $A$.
The motivation behind condition~\eqref{it:q:cartan} lies in the fact that
if $B$ is a block over a field of characteristic $p$, then the determinant
of its Cartan matrix is a power of $p$.

Let $n \geq 3$. The \emph{generalized quaternion group} $Q_{2^n}$ is
given by generators and relations as follows:
\[
Q_{2^n} = 
\langle
x,y \mid x^{2^{n-2}}=y^2 \,,\, y^4=1 \,,\, y^{-1}xy=x^{-1}
\rangle .
\]
In particular, for $n=3$ we recover the usual quaternion group with
$8$ elements.

Erdmann proved the following facts:
\begin{enumerate}
\renewcommand{\theenumi}{\alph{enumi}}
\item
Blocks of group algebras with generalized quaternion defect groups are of
quaternion type.

\item \label{it:q12lists}
An algebra of quaternion type is Morita equivalent to an algebra in 12
families of symmetric algebras given by quivers with relations.
In particular, an algebra of quaternion type has at most three isomorphism
classes of simple modules.
\end{enumerate}

The lists of the quivers with relations of point~\eqref{it:q12lists}
can be found in~\cite[pp.\ 303--306]{Erdmann90}
or in the survey articles~\cite[Theorem~5.5]{ES08}
and~\cite[Theorem~8.4]{Skowronski06}.
Later, Holm~\cite{Holm99}
presented a derived equivalence classification of the algebras appearing in
these lists and proved that these algebras are indeed tame. Finally,
in~\cite{ES06} Erdmann and Skowro\'{n}ski showed that the algebras in these lists
have the required periodicity property and hence they are indeed of quaternion
type.

\begin{example}
One of the families in Erdmann's list consists of local algebras whose
quiver is
\[
\xymatrix@=1pc{
{\bullet} \ar@(dl,ul)[]^{\alpha} \ar@(dr,ur)[]_{\beta}
}
\]
with the relations
\begin{align*}
\alpha^2 = (\beta \alpha)^{m-1} \beta &,&
\beta^2 = (\alpha \beta)^{m-1} \alpha &,&
\alpha \beta^2 = \alpha^2 \beta = \beta \alpha^2 = \beta^2 \alpha = 0
\end{align*}
depending on an integer parameter $m \geq 2$. 
When the ground field $K$ is algebraically closed, $\ch K = 2$ and $m=2^{n-2}$
for some $n \geq 3$, this algebra is the group algebra $K Q_{2^n}$ of the
generalized quaternion group $Q_{2^n}$.
\end{example}

\subsection{Algebras of quasi-quaternion type}

It seems natural to lift the restriction on the Cartan determinant in the
definition of algebras of quaternion type and consider a wider
class of algebras, which we call algebras of quasi-quaternion type.

\begin{definition} \label{def:quasi}
A finite-dimensional algebra $A$ is of \emph{quasi-quaternion type} if:
\begin{enumerate}
\renewcommand{\theenumi}{\roman{enumi}}
\item
$A$ is symmetric, indecomposable as a ring;
\item
$A$ has tame (but not finite) representation type;
\item \label{it:qq:period4}
$\Omega^4 M \simeq M$ for any $M \in \stmod A$;
\end{enumerate}
\end{definition}

\begin{remark}
Since $\tau=\Omega^2$, the stable Auslander-Reiten quiver of an algebra of
quasi-quaternion type consists of tubes of ranks $1$ and $2$.
\end{remark}

In analogy with Erdmann's description of the algebras of quaternion type,
the following problem arises naturally.
\begin{problem}
Describe the algebras of quasi-quaternion type.
\end{problem}

Algebras of quasi-quaternion type are in particular tame symmetric algebras
with periodic modules. A classification of the latter algebras has been
announced in~\cite[Theorem~6.2]{ES08},
see also~\cite[Theorem~8.7]{Skowronski06}. However, many of the algebras
of quasi-quaternion type to be constructed in Section~\ref{sec:triangquasi}
seem to be missing from the aforementioned classification.

Since the derived equivalence of self-injective algebras implies their stable 
equivalence~\cite[Corollary~2.2]{Rickard89}
and stable equivalence preserves representation type~\cite{Krause97},
an argument as in Prop.~2.1 and Prop.~2.2 of~\cite{Holm99} yields the
following observation.
\begin{proposition} \label{p:derquasi}
Any algebra which is derived equivalent to an algebra of quasi-quaternion
type is also of quasi-quaternion type.
\end{proposition}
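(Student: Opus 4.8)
The statement asserts that quasi-quaternion type is a derived invariant. The three defining conditions are: being symmetric and indecomposable, having tame (but not finite) representation type, and satisfying $\Omega^4 M \simeq M$ for all $M \in \stmod A$. The plan is to verify that each of these three properties is preserved under derived equivalence, relying on the hint given in the excerpt that the argument parallels Prop.~2.1 and Prop.~2.2 of~\cite{Holm99}. Suppose $A$ is of quasi-quaternion type and $B$ is derived equivalent to $A$; I must show $B$ satisfies all three conditions.

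\textbf{Key steps.} First I would recall the chain of equivalences the excerpt already flags: derived equivalence of self-injective algebras implies stable equivalence~\cite[Corollary~2.2]{Rickard89}, and stable equivalence preserves representation type~\cite{Krause97}. So the very first point to establish is that $B$ is self-injective (in fact symmetric); this is needed even to speak of the stable category $\stmod B$ and of $\Omega_B$. The property of being symmetric is known to be a derived invariant (one can invoke Proposition~\ref{p:symmetric}, since the Calabi-Yau characterization of symmetry in terms of $\per A$ being $0$-Calabi-Yau is manifestly preserved by a triangle equivalence of derived categories). Indecomposability as a ring is likewise a derived invariant, as the center of the algebra (and hence its decomposition into blocks) is recovered from the derived category. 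Having secured that $B$ is symmetric and indecomposable, the stable equivalence induced by the derived equivalence transports the tame (but not finite) representation type from $A$ to $B$ by the two cited results.

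\textbf{The periodicity condition.} The main work is condition~\eqref{it:qq:period4}. Here I would argue that a stable equivalence commutes with the syzygy functor up to isomorphism of triangulated functors: for self-injective algebras the stable category $\stmod A$ is triangulated with suspension $\Omega^{-1}$, and a stable equivalence induced by a derived equivalence is an equivalence of triangulated categories, hence intertwines the suspensions. Consequently it carries $\Omega_A^{4}$ to $\Omega_B^{4}$. Since $\Omega_A^4 \simeq \mathrm{id}$ on $\stmod A$ by hypothesis, the transported functor $\Omega_B^4$ is isomorphic to the identity on $\stmod B$, giving $\Omega_B^4 N \simeq N$ for every $N \in \stmod B$.

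\textbf{Main obstacle.} The delicate point is the compatibility of the stable equivalence with the triangulated structure, specifically that it commutes with $\Omega$ (equivalently with the suspension). A general stable equivalence need not respect syzygies, but a stable equivalence \emph{arising from a derived equivalence} is induced by a triangle functor on $\stmod A \simeq \cD^b(A)/\per A$ and therefore does commute with the suspension, which for symmetric algebras is $\Omega^{-1}$. Making this identification precise---and confirming that the self-injectivity of $B$, needed to realize $\stmod B$ as the singularity category via Rickard's theorem, is in hand before invoking it---is the crux of the argument; once that is settled, the transport of the period-$4$ condition is immediate.
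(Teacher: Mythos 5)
Your proposal is correct and follows essentially the same route the paper takes: the paper's argument is exactly the reduction to derived invariance of symmetry and indecomposability, transport of tameness via the induced stable equivalence (Rickard plus Krause), and transport of the period-$4$ condition via the fact that the induced stable equivalence is a triangle equivalence commuting with the suspension $\Omega^{-1}$, as in Propositions~2.1 and~2.2 of Holm's work. You have merely written out in full the details the paper leaves to that reference.
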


One approach to guarantee the condition~\eqref{it:qq:period4} in the
definition of algebras of quasi-quaternion type
is to show that the algebra $A$ is periodic as $A$-$A$-bimodule with
period dividing $4$, that is, 
$\Omega_{A^e}^4(A) \simeq A$, where $A^e = A^{op} \otimes_K A$.
This is usually done using a projective resolution of $A$ as
a bimodule over itself. In fact, such strategy is used in~\cite{ES06}
to prove that the algebras in Erdmann's list are of quaternion type.

We suggest an alternative approach using 2-Calabi-Yau categories.
It turns out that symmetric algebras that are also the endomorphism
algebras of cluster-tilting objects in such categories always satisfy the 
periodicity condition~\eqref{it:qq:period4}. We explain this in the
next section.

\section{Symmetric 2-CY-tilted algebras}
\label{sec:sym2CY}

In this section we study properties of symmetric algebras that are also
2-CY-tilted, i.e.\ being isomorphic to the endomorphism algebras of
cluster-tilting objects in 2-Calabi-Yau triangulated categories. 

We start by recalling the definition and basic properties of 2-CY-tilted
algebras. Since many of them arise as Jacobian algebras of quivers with
potentials, we review this notion as well, and introduce the notion of
hyperpotential which is useful over ground fields of positive characteristic.
Then we present two new results whose details will appear elsewhere;
the first concerns the periodicity of modules over symmetric 2-CY-tilted
algebras (Proposition~\ref{p:period}), and the second concerns derived
equivalences of neighboring 2-CY-tilted algebras (Proposition~\ref{p:dereq}). 

As a consequence of the first result, we deduce that indecomposable, tame,
symmetric, 2-CY-tilted algebras are of quasi-quaternion type. For more
background on 2-CY-tilted algebras, we refer the reader to the survey
article~\cite{Reiten10}.

\subsection{2-CY-tilted algebras}

Let $\cC$ be a $K$-linear triangulated category with
suspension $\Sigma$. We assume:
\begin{itemize}
\item
$\cC$ has finite-dimensional morphism spaces.

\item
$\cC$ is Krull Schmidt (i.e.\ any object has a decomposition
into a finite direct sum of indecomposables which is unique up to
isomorphism and change of order).

\item
$\cC$ is $2$-Calabi-Yau.
\end{itemize}

Such triangulated categories $\cC$ arise in the additive categorification
of cluster algebras, see the survey~\cite{Keller10}.
The role of the clusters in a cluster algebra
is played by cluster-tilting objects in the category $\cC$.

\begin{definition}
An object $T \in \cC$ is \emph{cluster-tilting} if:
\begin{enumerate}
\renewcommand{\theenumi}{\roman{enumi}}
\item
$\Hom_{\cC}(T, \Sigma T) = 0$;

\item
For any $X \in \cC$ with $\Hom_{\cC}(T, \Sigma X) = 0$,
we have that $X \in \add T$, where 
$\add T$ denotes the full subcategory of $\cC$ consisting of
the objects isomorphic to finite direct sums of summands of $T$.
\end{enumerate}
\end{definition}

\begin{definition}
An algebra is called \emph{2-CY-tilted} if it is isomorphic to
an algebra of the form $\End_{\cC}(T)$ with $\cC$ as above and
$T$ a cluster-tilting object in $\cC$.
\end{definition}

The cluster categories associated to quivers without oriented cycles
\cite{BMRRT06} were the first instances of triangulated 2-Calabi-Yau
categories with cluster-tilting object.
They are constructed as orbit categories of the
bounded derived category of the path algebra of the quiver
with respect to a suitable auto-equivalence~\cite{Keller05}.
The corresponding endomorphism algebras of cluster-tilting objects
are called cluster-tilted algebras~\cite{BMR07}.
Self-injective cluster-tilted algebras were classified by
Ringel~\cite{Ringel08}; there are very few such algebras
as all of them are of
finite representation type and up to Morita equivalence there are at most two
such algebras having a given number of non-isomorphic simple modules.
In particular, except for the quiver $A_1$ with
one vertex whose cluster-tilted algebra equals the ground field,
cluster-tilted algebras are never symmetric.

More generally, 2-CY-tilted algebras were investigated by Keller and
Reiten~\cite{KellerReiten07}. The next proposition records the
relevant properties we need.

\begin{proposition} \label{p:2CYtilted}
Let $\gL$ be a 2-CY-tilted algebra. Then:
\begin{enumerate}
\renewcommand{\theenumi}{\alph{enumi}}
\item
\emph{\cite[Prop.~2.1]{KellerReiten07}}
$\gL$ is Gorenstein of dimension at most 1, i.e.\ the
projective dimension of any injective module and the injective dimension of
any projective module are at most 1;

\item
\emph{\cite[Theorem~3.3]{KellerReiten07}}
The singularity category of $\gL$ is $3$-Calabi-Yau.
\end{enumerate}
\end{proposition}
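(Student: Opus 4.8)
The plan for part~(a) is to use the $2$-Calabi-Yau duality to bound the projective dimension of the injective $\gL$-modules. Write $F = \Hom_{\cC}(T,-) \colon \cC \to \modf\gL$; then $FT = \gL$ and $F(\add T) = \add\gL$ is exactly the class of projective $\gL$-modules. Specializing the functorial isomorphism $\Hom_{\cC}(X,Y) \simeq D\Hom_{\cC}(Y,\Sigma^2 X)$ to $X = Y = T$ identifies the injective cogenerator $D\gL = D\End_{\cC}(T)$ with $\Hom_{\cC}(T,\Sigma^2 T) = F(\Sigma^2 T)$ as right $\gL$-modules, the module structure coming from functoriality in the source copy of $T$. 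I would then choose a right $\add T$-approximation of $\Sigma^2 T$ and complete it to a triangle $T_1 \to T_0 \to \Sigma^2 T \to \Sigma T_1$ with $T_0, T_1 \in \add T$, whose existence is guaranteed by the cluster-tilting property. Applying the homological functor $F$ and using the rigidity $\Hom_{\cC}(T,\Sigma T_i) = 0$ for $T_i \in \add T$ (which forces both surjectivity onto $F(\Sigma^2 T)$ and injectivity of $FT_1 \to FT_0$ via the neighboring term $\Hom_{\cC}(T,\Sigma T)=0$), the long exact sequence collapses to a short exact sequence
\[
0 \to FT_1 \to FT_0 \to D\gL \to 0
\]
of right $\gL$-modules with $FT_0, FT_1$ projective. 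Hence $\operatorname{pd}_{\gL} D\gL \le 1$, so every injective right $\gL$-module has projective dimension at most~$1$; running the same argument over $\cC^{op}$, where $\End_{\cC^{op}}(T) \simeq \gL^{op}$, and dualizing yields $\operatorname{id}_{\gL}\gL \le 1$, whence every projective $\gL$-module has injective dimension at most~$1$. This gives~(a); the only delicate points are the existence and functoriality of the approximation triangle and keeping track of module (rather than merely vector-space) structures in the $2$-Calabi-Yau identification.

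For part~(b) I would first invoke~(a): since $\gL$ is Gorenstein, Buchweitz's theorem identifies the singularity category $\dgL/\per\gL$ with the stable category $\underline{\operatorname{CM}}\,\gL$ of maximal Cohen-Macaulay $\gL$-modules, a triangulated category whose suspension is the cosyzygy $\Omega^{-1}$. The goal is to exhibit functorial isomorphisms $\stHom_{\gL}(M,N) \simeq D\,\stHom_{\gL}(N, \Omega^{-3}M)$ witnessing the $3$-Calabi-Yau property. The strategy is to transport the $2$-Calabi-Yau Serre duality of $\cC$ along $F$. Recall that $F$ is full and induces isomorphisms $\Hom_{\gL}(FX,FY) \simeq \Hom_{\cC}(X,Y)$ modulo the morphisms factoring through $\add\Sigma T$ (here $\Sigma T$ appears because $\tau \simeq \Sigma$ in a $2$-Calabi-Yau category), while passing to the stable category additionally divides out the morphisms factoring through the projectives $\add T$. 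One would therefore try to descend $\Hom_{\cC}(X,Y) \simeq D\Hom_{\cC}(Y,\Sigma^2 X)$ through these quotients to the Cohen-Macaulay modules and read off the Serre functor.

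The hard part will be precisely this descent. First one must pin down which objects of $\cC$ correspond to Cohen-Macaulay (equivalently Gorenstein-projective) $\gL$-modules and promote $F$ to a genuine triangle equivalence between $\underline{\operatorname{CM}}\,\gL$ and the relevant subquotient of $\cC$, rather than a mere comparison of $\Hom$-spaces. Second, under the $2$-Calabi-Yau duality the morphisms factoring through $\add(T \oplus \Sigma T)$ are sent to morphisms factoring through $\add(\Sigma^2 T \oplus \Sigma^3 T)$, so descending the duality to a clean quotient-to-quotient statement, functorially in both variables, requires genuine bookkeeping. The increase from the $2$-Calabi-Yau property of $\cC$ to the $3$-Calabi-Yau property of the singularity category arises from the Gorenstein shift of dimension~$1$ recorded in~(a); controlling how $\Sigma$ on $\cC$ matches $\Omega^{-1}$ on $\underline{\operatorname{CM}}\,\gL$ and how the subcategories $\add T$ and $\add\Sigma T$ are interchanged by $\Sigma^{\pm 2}$ is where the real work lies. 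I expect this compatibility of triangulated structures together with the exact tracking of the extra shift to be the main obstacle, and making it rigorous is the substance of Keller and Reiten's argument.
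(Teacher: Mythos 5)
The paper itself gives no proof of this proposition: both parts are simply quoted from Keller--Reiten \cite[Prop.~2.1, Theorem~3.3]{KellerReiten07}, so there is no internal argument to compare yours against. That said, your part~(a) is a correct reconstruction of the standard argument. The identification $D\gL \simeq \Hom_{\cC}(T,\Sigma^2 T)$ from the $2$-Calabi-Yau property, the approximation triangle $T_1 \to T_0 \to \Sigma^2 T \to \Sigma T_1$ with $T_0, T_1 \in \add T$ (where $T_1 \in \add T$ follows from the second defining property of a cluster-tilting object applied to the cone of the approximation), and the collapse of the long exact sequence using $\Hom_{\cC}(T,\Sigma T) = 0$ on both flanking terms, all go through exactly as you describe and yield $\operatorname{pd}_\gL D\gL \le 1$; the opposite-category argument handles the injective dimension of $\gL$.

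Part~(b), however, is a plan rather than a proof, and you say so yourself. Everything you flag as ``the hard part'' --- promoting $F$ from the equivalence $\cC/(\add \Sigma T) \simeq \modf \gL$ to a triangle equivalence onto the stable category of Cohen--Macaulay modules, matching $\Sigma$ on $\cC$ with $\Omega^{-1}$ on that stable category, and tracking how $\Sigma^{\pm 2}$ moves the ideals of morphisms factoring through $\add T$ and $\add \Sigma T$ so that the $2$-Calabi-Yau duality descends with the extra shift to give $\Sigma^3$ as Serre functor --- is precisely the content of Keller--Reiten's Theorem~3.3, and none of it is carried out. In particular you never actually produce the functorial isomorphism $\stHom_\gL(M,N) \simeq D\,\stHom_\gL(N,\Omega^{-3}M)$, nor do you verify that the comparison functor is triangulated. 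So the proposal establishes~(a) but leaves~(b) as a genuine gap; for the purposes of this survey the honest thing is to do what the author does and cite \cite{KellerReiten07} for both parts.
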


Given a 2-CY-tilted algebra $\gL$, there is a procedure to construct new
2-CY-tilted algebras from idempotents of $\gL$.
The corresponding statement for cluster-tilted algebras has been shown
in~\cite[Theorem~2.13]{BMR08}, see also~\cite[Theorem~5]{CalderoKeller06}.
The general case follows from Calabi-Yau reduction~\cite{IyamaYoshino08},
see also~\cite[\S{II.2}]{BIRS09}. For the convenience of the reader, we give
the short proof.
\begin{proposition}
Let $\gL$ be a 2-CY-tilted algebra and let $e \in \gL$ be an idempotent.
Then the algebra $\gL/\gL e \gL$ is 2-CY-tilted.
\end{proposition}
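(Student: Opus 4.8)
The plan is to realize $\gL/\gL e \gL$ as another $2$-CY-tilted algebra by exhibiting it as the endomorphism algebra of a cluster-tilting object in a suitable $2$-Calabi-Yau category obtained from $\cC$ via Iyama-Yoshino Calabi-Yau reduction~\cite{IyamaYoshino08}. Writing $\gL = \End_{\cC}(T)$ for a cluster-tilting object $T$ in a $2$-Calabi-Yau triangulated category $\cC$, the first step is to translate the idempotent $e \in \gL$ into a direct-sum decomposition of $T$. Since $\gL$ is basic (or after passing to a basic version), idempotents of $\gL$ correspond to summands of $T$; so I would write $T = T_0 \oplus T_1$, where $T_1$ is the summand on which $e$ acts as the identity and $T_0$ is the complementary summand, so that $e\gL e \simeq \End_{\cC}(T_1)$.

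Next I would form the Calabi-Yau reduction of $\cC$ with respect to the rigid summand corresponding to $e$. Concretely, set $\cC' = (T_1)^{\perp}/(\add T_1)$, the subfactor category whose objects are those $X \in \cC$ with $\Hom_{\cC}(T_1, \Sigma X) = 0$, modulo morphisms factoring through $\add T_1$. By the main result of~\cite{IyamaYoshino08}, $\cC'$ is again a triangulated $2$-Calabi-Yau category which is Krull-Schmidt with finite-dimensional morphism spaces, and the image $\overline{T_0}$ of $T_0$ is a cluster-tilting object in $\cC'$. Hence $\End_{\cC'}(\overline{T_0})$ is by definition a $2$-CY-tilted algebra.

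The remaining step, which I expect to be the main point, is to identify this reduced endomorphism algebra with the quotient $\gL/\gL e \gL$. The morphism spaces in the reduction are computed as $\Hom_{\cC'}(\overline{T_0}, \overline{T_0}) = \Hom_{\cC}(T_0, T_0)/[\add T_1](T_0, T_0)$, where $[\add T_1](T_0,T_0)$ denotes the morphisms factoring through an object of $\add T_1$. The content is then to check that, under the isomorphism $\End_{\cC}(T_0) \simeq (1-e)\gL(1-e)$, the ideal of maps factoring through $\add T_1$ corresponds exactly to the image of the two-sided ideal $\gL e \gL$ in the corner algebra $(1-e)\gL(1-e)$, together with the standard fact that $\gL/\gL e\gL \simeq (1-e)\gL(1-e)/\big((1-e)\gL e\gL(1-e)\big)$. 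A map $T_0 \to T_0$ in $\cC$ factors through $\add T_1$ precisely when the corresponding element of $(1-e)\gL(1-e)$ factors through $e$, i.e. lies in $(1-e)\gL e \gL (1-e)$; this is the identification that does the real work. Once this is verified, $\End_{\cC'}(\overline{T_0}) \simeq \gL/\gL e\gL$, and since the left side is $2$-CY-tilted, so is the right, completing the proof.
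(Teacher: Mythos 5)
Your proposal is correct and follows essentially the same route as the paper: both invoke Iyama--Yoshino's Calabi--Yau reduction $\cC' = {}^{\perp}(\Sigma T_1)/(\add T_1)$ (Theorems 4.7 and 4.9 of that paper) and identify the resulting endomorphism algebra with $\gL/\gL e\gL$ via the observation that morphisms factoring through $\add T_1$ form exactly the ideal generated by $e$. The only cosmetic difference is that the paper keeps the whole object $T$ as the cluster-tilting object of $\cC'$ (its image there coincides with that of $T_0$), so it reads off $\End_{\cC'}(T)\simeq \gL/\gL e\gL$ directly without passing through the corner algebra $(1-e)\gL(1-e)$.
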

\begin{proof}
Let $\gL=\End_{\cC}(T)$ where $\cC$ is a triangulated 2-Calabi-Yau category
and $T$ is a cluster-tilting object in $\cC$. Let $T'$ be the summand of $T$ 
corresponding to the idempotent $e$. The category
$\cC' = \{X \in \cC : \Hom_{\cC}(X,\Sigma T')=0\}/(\add T')$
is a triangulated 2-Calabi-Yau category by~\cite[Theorem~4.7]{IyamaYoshino08}
and $T$ is a cluster-tilting object in $\cC'$
by~\cite[Theorem~4.9]{IyamaYoshino08}.
Finally, $\End_{\cC'}(T) \simeq \gL/\gL e \gL$.
\end{proof}

\subsection{Quivers with potentials}
\label{ssec:QP}

Thanks to the works of Amiot~\cite{Amiot09} and Keller~\cite{Keller11},
a rich source of 2-Calabi-Yau triangulated categories with cluster-tilting
object is provided 
by quivers with potentials whose Jacobian algebras are finite-dimensional.
Quivers with potentials and their Jacobian algebras were defined and
studied by Derksen, Weyman and Zelevinsky~\cite{DWZ08}.

A \emph{quiver} is a finite directed graph. Formally, it is a quadruple
$Q=(Q_0,Q_1,s,t)$ where $Q_0$ and $Q_1$ are finite sets (of \emph{vertices}
and \emph{arrows}, respectively) and $s,t \colon Q_1 \to Q_0$ are functions
specifying for each arrow its starting and terminating
vertex, respectively.

The \emph{path algebra} $KQ$ has the set of paths of $Q$ as a basis, with the
product of two paths being their concatenation, if defined, and zero otherwise.
The \emph{complete path algebra} $\wh{KQ}$ is the completion of
$KQ$ with respect to the ideal generated by all the arrows of $Q$.
It is a topological algebra, with a topological basis given by
the paths of $Q$. Thus, an element in $\wh{KQ}$ is a possibly
infinite linear combination of paths. We denote by $\bar{I}$ the closure of an
ideal $I$ in $\wh{KQ}$.

\begin{example}
The path algebra of the quiver with one vertex and one loop at that vertex
is the ring $K[x]$ of polynomials in one variable, 
whereas the complete path algebra is the ring $K[[x]]$ of power series 
in one variable.
\end{example}

A \emph{cycle} in $Q$ is a path that starts and ends at the same vertex.
One can consider the equivalence relation on the set of cycles given by
rotations, i.e.\
\[
\alpha_1 \alpha_2 \dots \alpha_n \sim
\alpha_i \dots \alpha_n \alpha_1 \dots \alpha_{i-1}
\]
for a cycle $\alpha_1 \alpha_2 \dots \alpha_n$ and $1 \leq i \leq n$.

The \emph{zeroth continuous Hochschild homology} $\HH_0(\wh{KQ})$ is $\wh{KQ}/\overline{[\wh{KQ},\wh{KQ}]}$, i.e.\ the
quotient of $\wh{KQ}$ by the closure of the subspace spanned by all the
commutators of elements in $\wh{KQ}$. It has a topological
basis given by the equivalence classes of cycles of $Q$ modulo rotation.

\begin{definition}[\protect{\cite[Definition~3.1]{DWZ08}}]
A \emph{potential} on $Q$ is an element in $\HH_0(\wh{KQ})$.
In explicit terms, a potential is a (possibly infinite) linear
combination of cycles in $Q$, considered up to rotations.

A pair $(Q,W)$ where $Q$ is a quiver and $W$ is a potential on $Q$
is called a \emph{quiver with potential}.
\end{definition}

For any arrow $\alpha$ of $Q$, there is a \emph{cyclic derivative} map
$\partial_\alpha \colon \HH_0(\wh{KQ}) \to \wh{KQ}$ which is
the unique continuous linear map whose value on each
cycle $\alpha_1 \alpha_2 \dots \alpha_n$ is given by
\[
\partial_\alpha(\alpha_1 \alpha_2 \dots \alpha_n) =
\sum_{i \,:\, \alpha_i = \alpha}
\alpha_{i+1} \dots \alpha_n \alpha_1 \dots \alpha_{i-1}
\]
where the sum goes over all indices $1 \leq i \leq n$ such that
$\alpha_i=\alpha$.

\begin{definition}[\cite{DWZ08}]
Let $(Q,W)$ be a quiver with potential. Its \emph{Jacobian algebra}
$\cP(Q,W)$ is the quotient of the complete path algebra $\wh{KQ}$
by the closure of its ideal generated by the
cyclic derivatives $\partial_\alpha W$ with respect to the arrows
$\alpha$ of $Q$,
\[
\cP(Q,W) = \wh{KQ} / \overline{(\partial_\alpha W : \alpha \in Q_1)} .
\]
\end{definition}

\begin{remark}
When the potential $W$ is a finite linear combination of cycles, one
can also consider a non-complete version of the Jacobian algebra, 
namely, the quotient of the path algebra $KQ$ by its ideal generated
by the cyclic derivatives of $W$. While in many cases this variation
gives the same result,
the next example shows that in general these two notions differ.
\end{remark}

\begin{example} \label{ex:QP}
Let $Q$ be the quiver
\[
\xymatrix@=0.5pc{
& {\bullet_3} \ar[ddl]_{\gamma} \\ \\
{\bullet_1} \ar[rr]_{\alpha} && {\bullet_2} \ar[uul]_{\beta}
}
\]
with the potential $W = \alpha \beta \gamma - \alpha \beta \gamma
\alpha \beta \gamma$. Let $\cJ$ be the closure of the ideal generated
by the cyclic derivatives of $W$, so that $\cP(Q,W) = \wh{KQ}/\cJ$.

Computing the cyclic derivative with respect to the arrow $\gamma$,
we get
\[
\partial_\gamma W = \alpha \beta - \alpha \beta \gamma \alpha \beta
- \alpha \beta \gamma \alpha \beta
= \alpha \beta - 2 \alpha \beta \gamma \alpha \beta ,
\]
hence $\alpha \beta - 2 \alpha \beta \gamma \alpha \beta \in \cJ$.
Therefore, for any $n \geq 1$,
\[
\alpha \beta - (2 \alpha \beta \gamma)^n \alpha \beta
= \sum_{i=0}^{n-1}
(2 \alpha \beta \gamma)^i 
(\alpha \beta - 2 \alpha \beta \gamma \alpha \beta) \in \cJ .
\]

Since $\cJ$ is closed, this implies that $\alpha \beta \in \cJ$.
Moreover, one can verify that $\cP(Q,W) \simeq KQ/(\alpha \beta,
\beta \gamma, \gamma \alpha)$. In particular, we see that in the
presentation of the Jacobian algebra as quiver with relations,
the relations are not necessarily the cyclic derivatives of the potential.

Consider now the non-complete Jacobian algebra $A$ and assume that
$\ch K \neq 2$. Since $\alpha \beta = 2 \alpha \beta \gamma \alpha \beta$
in $A$, one has
\begin{align*}
2 \alpha \beta \gamma &= 4 \alpha \beta \gamma \alpha \beta \gamma
= (2 \alpha \beta \gamma)^2 ,\\
(e_1 - 2 \alpha \beta \gamma)^2
&= e_1^2 - 4 \alpha \beta \gamma + 4 \alpha \beta \gamma \alpha \beta \gamma
= e_1 - 2 \alpha \beta \gamma ,\\
2 \alpha \beta \gamma (e_1 - 2 \alpha \beta \gamma) &= 
(e_1 - 2 \alpha \beta \gamma) 2 \alpha \beta \gamma = 0 ,
\end{align*}
hence the idempotents in $A$ corresponding to the paths of length zero are
no longer primitive; for example, $e_1$ can be written as a sum
$e_1 = (e_1 - 2 \alpha \beta \gamma) + 2 \alpha \beta \gamma$
of two orthogonal idempotents. Using these idempotents one can
verify that the algebra $A$ decomposes into a direct
sum of $\cP(Q,W)$ and the matrix ring $M_3(K)$.
\end{example}

For a quiver with potential $(Q,W)$, Ginzburg~\cite[\S4.2]{Ginzburg06}
has defined a dg-algebra $\Gamma(Q,W)$ which is concentrated in
non-positive degrees and its zeroth cohomology is isomorphic to the
Jacobian algebra, i.e.\ $\hh^0(\Gamma(Q,W)) \simeq \cP(Q,W)$.
In~\cite[Theorem~6.3]{Keller11}, Keller shows that the Ginzburg dg-algebra
$\Gamma=\Gamma(Q,W)$ is homologically smooth and bimodule 3-Calabi-Yau,
that is, $\RHom_{\Gamma^e}(\Gamma, \Gamma^e) \simeq \Gamma[-3]$ in
$\cD(\Gamma^e)$, where $\Gamma^e = \Gamma^{op} \otimes_K \Gamma$.

Given a dg-algebra $\Gamma$ which is concentrated in non-positive degrees,
homologically smooth, bimodule 3-Calabi-Yau and whose zeroth cohomology
$\hh^0(\Gamma)$ is finite-dimensional, 
Amiot constructs in~\cite[\S2]{Amiot09} a triangulated 
2-Calabi-Yau category with a cluster-tilting object
whose endomorphism algebra is $\hh^0(\Gamma)$. She then
applies this construction to $\Gamma(Q,W)$ for quivers with potentials
$(Q,W)$ whose Jacobian algebra is finite-dimensional to obtain
the \emph{generalized cluster category} associated with $(Q,W)$
\cite[Theorem~3.5]{Amiot09}.

\begin{proposition}[\protect{\cite[Corollary~3.6]{Amiot09}}]
Any finite-dimensional Jacobian algebra of a quiver with potential is
2-CY-tilted.
\end{proposition}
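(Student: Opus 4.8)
The plan is to assemble the statement from the three deep results already quoted: Ginzburg's construction of the dg-algebra $\Gamma(Q,W)$, Keller's theorem on its bimodule $3$-Calabi-Yau property, and Amiot's construction of the generalized cluster category. The whole point of the argument is that the finite-dimensionality hypothesis on the Jacobian algebra is precisely the one extra ingredient needed to feed $\Gamma(Q,W)$ into Amiot's machine, so the proof is really just a verification that all the required hypotheses line up, followed by an invocation of the black boxes.

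First I would fix a quiver with potential $(Q,W)$ whose Jacobian algebra $\cP(Q,W)$ is finite-dimensional, and form the Ginzburg dg-algebra $\Gamma=\Gamma(Q,W)$. By Ginzburg's construction this dg-algebra is concentrated in non-positive degrees and satisfies $\hh^0(\Gamma)\simeq\cP(Q,W)$; in particular $\hh^0(\Gamma)$ is finite-dimensional by our hypothesis. Next I would invoke Keller's theorem, which guarantees that $\Gamma$ is homologically smooth and bimodule $3$-Calabi-Yau, i.e.\ $\RHom_{\Gamma^e}(\Gamma,\Gamma^e)\simeq\Gamma[-3]$ in $\cD(\Gamma^e)$.

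At this point every hypothesis of Amiot's construction is in place: a dg-algebra concentrated in non-positive degrees, homologically smooth, bimodule $3$-Calabi-Yau, and with finite-dimensional zeroth cohomology. Applying that construction yields a triangulated $2$-Calabi-Yau category $\cC$ — the generalized cluster category of $(Q,W)$ — together with a cluster-tilting object $T$ whose endomorphism algebra satisfies $\End_{\cC}(T)\simeq\hh^0(\Gamma)$. Composing with the isomorphism $\hh^0(\Gamma)\simeq\cP(Q,W)$ identifies $\cP(Q,W)$ with $\End_{\cC}(T)$, so $\cP(Q,W)$ is $2$-CY-tilted by definition, which completes the proof.

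The hard part does not lie in this assembly but is entirely absorbed into the cited results. The genuine content is Keller's verification that the Ginzburg dg-algebra is bimodule $3$-Calabi-Yau and, above all, Amiot's delicate construction of $\cC$ as the Verdier quotient of the perfect derived category $\per\Gamma$ by the subcategory of dg-modules with finite-dimensional total cohomology; it is there that the finite-dimensionality of $\hh^0(\Gamma)$ is used to ensure that the quotient has finite-dimensional morphism spaces and the Krull-Schmidt property, and that the image of $\Gamma$ is indeed a cluster-tilting object. I would therefore not attempt to reprove any of these and would simply cite them, since the proposition is exactly the specialization of Amiot's theorem to $\Gamma=\Gamma(Q,W)$.
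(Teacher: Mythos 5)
Your proposal is correct and follows exactly the route the paper takes: the statement is cited from Amiot, and the surrounding discussion in Section~\ref{ssec:QP} assembles it in precisely your way — Ginzburg's dg-algebra with $\hh^0(\Gamma)\simeq\cP(Q,W)$, Keller's theorem that $\Gamma$ is homologically smooth and bimodule $3$-Calabi-Yau, and Amiot's construction of the generalized cluster category, with finite-dimensionality of the Jacobian algebra as the hypothesis that makes her machinery apply. Nothing is missing.
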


A notion of equivalence of quivers with potentials was introduced by Derksen,
Weyman and Zelevinsky~\cite{DWZ08}. Let $Q$ be a quiver. Any continuous
algebra automorphism $\vphi \colon \wh{KQ} \to \wh{KQ}$ induces a continuous
linear
automorphism, denoted $\overline{\vphi}$, of the topological vector space
$\HH_0(\wh{KQ})=\wh{KQ}/\overline{[\wh{KQ},\wh{KQ}]}$.
For a vertex $i$ of $Q$, denote by $e_i$ the path of length zero at $i$.
It is an idempotent of the algebra $\wh{KQ}$.

\begin{definition}[\protect{\cite[Definition~4.2]{DWZ08}}]
Two potentials $W$ and $W'$ on $Q$ are \emph{right equivalent} if there exists
a continuous algebra automorphism $\vphi \colon \wh{KQ} \to \wh{KQ}$ satisfying
$\vphi(e_i)=e_i$ for each $i \in Q_0$ and $W'=\overline{\vphi}(W)$ in
$\HH_0(\wh{KQ})$.
\end{definition}

The Ginzburg dg-algebras of right equivalent potentials are isomorphic
and hence also their Jacobian algebras~\cite[Lemmas~2.8 and~2.9]{KellerYang11}.
If the latter are finite-dimensional, then the associated 2-Calabi-Yau
categories are equivalent as triangulated categories, since they depend only on
the corresponding Ginzburg dg-algebras.

\begin{example}
Consider the quiver with potential $(Q,W)$ of Example~\ref{ex:QP} and let
$W'=\alpha \beta \gamma$ be another potential on $Q$.
A continuous algebra automorphism of $\wh{KQ}$ fixing each $e_i$ is determined
by its value on the arrows. The endomorphism $\vphi$ whose value on the arrows
is given by
\begin{align*}
\vphi(\alpha) = \alpha - \alpha \beta \gamma \alpha &,&
\vphi(\beta)  = \beta &,&
\vphi(\gamma) = \gamma
\end{align*}
is an automorphism of $\wh{KQ}$; indeed,
\[
\vphi^{-1}(\alpha) = \alpha + \alpha \beta \gamma \alpha
+ 2 (\alpha \beta \gamma)^2 \alpha + 5 (\alpha \beta \gamma)^3 \alpha
+ 14 (\alpha \beta \gamma)^4 \alpha + \dots
\]
(where the coefficients are the Catalan numbers).
Moreover, $\vphi(W')=W$, hence the potentials $W$ and $W'$ are right
equivalent.
\end{example}

\subsection{Hyperpotentials}
The following extension of the notion of a potential, introduced 
in~\cite{Ladkani14a}, allows to prove that certain algebras defined over
ground fields of positive characteristic are 2-CY-tilted. This will
be particularly important when considering blocks of group algebras.

\begin{definition}[\cite{Ladkani14a}]
A \emph{hyperpotential} on $Q$ is an element in $\HH_1(\wh{KQ})$.
In explicit terms, it is a collection of elements
$(\rho_\alpha)_{\alpha \in Q_1}$ in $\wh{KQ}$ indexed by the arrows of $Q$ satisfying the
following conditions:
\begin{enumerate}
\renewcommand{\theenumi}{\roman{enumi}}
\item
If $\alpha \colon i \to j$ then $\rho_\alpha \in e_j \wh{KQ} e_i$.
In other words, $\rho_\alpha$ is a (possibly infinite) linear combination
of paths starting at $j$ and ending at $i$.

\item \label{it:comm}
$\sum_{\alpha \in Q_1} \alpha \rho_\alpha =
\sum_{\alpha \in Q_1} \rho_\alpha \alpha$ in $\wh{KQ}$.
\end{enumerate}

The \emph{Jacobian algebra} of $(\rho_\alpha)_{\alpha \in Q_1}$ is the
quotient of $\wh{KQ}$ by the closure of the ideal generated by
the elements $\rho_\alpha$,
\[
\cP(Q, (\rho_\alpha)_{\alpha \in Q_1}) = \wh{KQ}/
\overline{(\rho_\alpha : \alpha \in Q_1)} .
\]
\end{definition}

Any potential $W$ gives rise to a hyperpotential by taking its cyclic
derivatives $(\partial_\alpha W)_{\alpha \in Q_1}$. This is essentially
Connes' map $B$ from $\HC_0(\wh{KQ})$ to $\HH_1(\wh{KQ})$.
Conversely, when $\ch K = 0$, any hyperpotential arises in this way, see
the discussion at the end of~\cite[\S6.1]{Keller11}.

It is possible to define a Ginzburg dg-algebra for a hyperpotential and
follow Keller's proof to show that it has the same homological properties
as in the case of potentials, see~\cite{Ladkani14a}.
Therefore Amiot's construction applies and we deduce the following.

\begin{proposition} \label{p:hyperCY}
Any finite-dimensional Jacobian algebra of a quiver with hyperpotential is
2-CY-tilted.
\end{proposition}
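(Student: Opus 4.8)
The plan is to reduce the statement to Amiot's construction by attaching to the hyperpotential a Ginzburg-type dg-algebra $\Gamma$ whose zeroth cohomology is the Jacobian algebra $\cP(Q,(\rho_\alpha)_{\alpha \in Q_1})$ and which satisfies the homological hypotheses needed to run that construction. Once $\Gamma$ is shown to be concentrated in non-positive degrees, homologically smooth and bimodule $3$-Calabi-Yau, with $\hh^0(\Gamma)\simeq\cP(Q,(\rho_\alpha)_{\alpha \in Q_1})$ finite-dimensional, Amiot's construction~\cite[\S2]{Amiot09} produces a triangulated $2$-Calabi-Yau category with a cluster-tilting object whose endomorphism algebra is $\hh^0(\Gamma)$, and the Jacobian algebra is therefore $2$-CY-tilted. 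This is exactly the route taken for ordinary potentials in the proposition preceding this one.

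First I would build $\Gamma$ as the completed path algebra of the graded quiver $\widetilde{Q}$ obtained from $Q$ by adjoining, for every arrow $\alpha\colon i\to j$, a reverse arrow $\alpha^*\colon j\to i$ in degree $-1$, and for every vertex $i$ a loop $t_i$ in degree $-2$. The differential is the unique continuous degree $+1$ derivation with $d(\alpha)=0$ on the original arrows, $d(\alpha^*)=\rho_\alpha$, and $d(t_i)=e_i\bigl(\sum_{\alpha\in Q_1}(\alpha\alpha^*-\alpha^*\alpha)\bigr)e_i$. Since each $\rho_\alpha\in\wh{KQ}$ sits in degree $0$, we have $d^2(\alpha^*)=d(\rho_\alpha)=0$ automatically, so the only genuine check is on the degree $-2$ generators: applying $d$ together with the Leibniz rule gives $d^2(t_i)=e_i\bigl(\sum_\alpha(\alpha\rho_\alpha-\rho_\alpha\alpha)\bigr)e_i$, which vanishes precisely by condition~\eqref{it:comm} in the definition of a hyperpotential. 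As $\Gamma$ is concentrated in non-positive degrees with $\Gamma^0=\wh{KQ}$, and the image of the degree $-1$ component of $\Gamma$ under $d$ is the closed two-sided ideal generated by the $\rho_\alpha$, we read off $\hh^0(\Gamma)\simeq\cP(Q,(\rho_\alpha)_{\alpha \in Q_1})$.

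The substantive step, and the main obstacle, is to prove that $\Gamma$ is homologically smooth and bimodule $3$-Calabi-Yau, i.e.\ $\RHom_{\Gamma^e}(\Gamma,\Gamma^e)\simeq\Gamma[-3]$ in $\cD(\Gamma^e)$. Here I would transcribe Keller's argument from~\cite[Theorem~6.3]{Keller11}: one writes the standard self-dual bimodule resolution of $\Gamma$ assembled from the vertices, the arrows, the reverse arrows and the loops, and checks that its differentials are interchanged by the Calabi-Yau duality up to the shift $[-3]$. In Keller's setting the compatibility of these differentials is guaranteed by the identity relating the potential to its cyclic derivatives; the point of the hyperpotential formalism is that this identity is invoked only through the relation $\sum_\alpha\alpha\rho_\alpha=\sum_\alpha\rho_\alpha\alpha$, which is exactly~\eqref{it:comm}. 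Thus the same resolution and the same verification apply verbatim, and the real work---carried out in~\cite{Ladkani14a}---is to confirm that no property of $(\rho_\alpha)_{\alpha \in Q_1}$ beyond~\eqref{it:comm} enters Keller's proof. Granting this, all hypotheses of Amiot's construction are satisfied and the conclusion follows as in the first paragraph.
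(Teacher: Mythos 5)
Your proposal follows exactly the route the paper takes: define a Ginzburg-type dg-algebra for the hyperpotential (with $d(\alpha^*)=\rho_\alpha$ and $d(t_i)$ given by the commutator sum, so that $d^2=0$ reduces to condition~\eqref{it:comm}), follow Keller's argument to establish homological smoothness and the bimodule $3$-Calabi-Yau property, and then apply Amiot's construction; the paper likewise defers the verification that Keller's proof only uses $\sum_\alpha \alpha\rho_\alpha=\sum_\alpha\rho_\alpha\alpha$ to~\cite{Ladkani14a}. The argument is correct and essentially identical to the paper's.
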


\begin{example} \label{ex:KxnCY}
Consider the algebra $A=K[x]/(x^n)$ of Example~\ref{ex:Kxn}
over a field $K$ with characteristic $p \geq 0$, and
consider the quiver $Q$ consisting of one vertex and one loop, denoted $x$,
at that vertex.
If $p$ does not divide $n+1$, then for any $c \in K^{\times}$, the algebra
$A$ is the Jacobian algebra of the potential $W=cx^{n+1}$ on $Q$.
However, if $p$ divides $n+1$, then $A$ is not a Jacobian algebra of a
potential on $Q$.
Nevertheless, the sequence consisting of the single element $x^n$ is always
a hyperpotential on $Q$,
hence $A$ is 2-CY-tilted regardless of the characteristic of $K$.
\end{example}

\subsection{Periodicity}

A large class of symmetric 2-CY-tilted algebras has been constructed by
Burban, Iyama, Keller and Reiten~\cite{BIKR08}. In their construction, the
ambient 2-Calabi-Yau triangulated categories are the stable categories of
maximal Cohen-Macaulay
modules over odd dimensional isolated hypersurface singularities.
These categories are also $0$-Calabi-Yau since the square of the suspension
functor is isomorphic to the identity. Therefore, the endomorphism algebra of
any object is symmetric (cf.\ Proposition~\ref{p:symmetric}).

The next proposition provides a partial converse. We start with one
cluster-tilting object in a 2-Calabi-Yau category $\cC$ whose endomorphism
algebra $\gL$ is symmetric and study the implications this has on the structure
of $\cC$ and $\stmod \gL$.

\begin{proposition} \label{p:period}
Let $\gL$ be a finite-dimensional symmetric algebra that is also
2-CY-tilted, i.e.\ $\gL = \End_{\cC}(T)$ for some cluster-tilting object $T$
within a triangulated 2-Calabi-Yau category $\cC$ with suspension functor
$\Sigma$.
\begin{enumerate}
\renewcommand{\theenumi}{\alph{enumi}}
\item \label{it:p:omega4}
The functor $\Omega^4$ on the stable module category $\stmod \gL$
is isomorphic to the identity, hence all non-projective
$\gL$-modules are $\Omega$-periodic with period dividing $4$.

\item \label{it:p:sigma2}
The functor $\Sigma^2$ acts as the identity on the objects of $\cC$.

\item \label{it:p:nonrigid}
Assume that $\gL$ is a Jacobian algebra of a hyperpotential.
Then this hyperpotential is rigid if and only if $\gL$ is semi-simple.
\end{enumerate}
\end{proposition}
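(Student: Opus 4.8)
The plan is to handle the three parts in order, feeding~\eqref{it:p:omega4} into~\eqref{it:p:sigma2} and treating~\eqref{it:p:nonrigid} by a separate socle computation. For~\eqref{it:p:omega4} I would invoke the uniqueness of the Serre functor on $\stmod \gL$. Being symmetric, $\gL$ is self-injective, so $\stmod \gL$ is the singularity category of $\gL$, a Krull--Schmidt triangulated category with finite-dimensional morphism spaces whose suspension is $\Omega^{-1}$. Remark~\ref{rem:CYminus1} states that $\stmod \gL$ is $(-1)$-Calabi--Yau, which is exactly the assertion that its Serre functor is $\Sigma^{-1}=\Omega$; on the other hand, since $\gL$ is 2-CY-tilted, Proposition~\ref{p:2CYtilted} shows that this same category is $3$-Calabi--Yau, so its Serre functor is $\Sigma^{3}=\Omega^{-3}$. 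Because the Serre functor is unique up to isomorphism, $\Omega \simeq \Omega^{-3}$, hence $\Omega^{4}\simeq \operatorname{id}$ on $\stmod \gL$ and every non-projective module is $\Omega$-periodic of period dividing $4$. I expect no genuine difficulty here beyond assembling the cited inputs.

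For~\eqref{it:p:sigma2} I would transport this periodicity across the functor $F=\Hom_{\cC}(T,-)\colon \cC \to \modf \gL$, which annihilates $\add \Sigma T$ and induces an equivalence $\cC/[\add \Sigma T]\xrightarrow{\sim}\modf \gL$ carrying the indecomposable summands of $T$ to the indecomposable projectives. First I would show that $\Sigma^{2}$ fixes the summands of $T$: the Serre functor of the $2$-Calabi--Yau category $\cC$ is $\Sigma^{2}$, and comparing the Serre duality $\Hom_{\cC}(T_i,T_j)\simeq D\Hom_{\cC}(T_j,\Sigma^{2}T_i)$ with the isomorphism $e_j\gL e_i \simeq D(e_i\gL e_j)$ furnished by a symmetrizing form on $\gL$ yields $F(\Sigma^{2}T_i)\simeq F(T_i)$, and hence $\Sigma^{2}T_i\simeq T_i$. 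Next, applying $F$ to an $\add T$-approximation triangle $T_1\to T_0\to X\to \Sigma T_1$, and using both $\Hom_{\cC}(T,\Sigma T)=0$ and $\Sigma^{2}T\simeq T$ to identify $\Hom_{\cC}(T,\Sigma^{2}T_i)$ with the projective $F(T_i)$, I would read off the relation $F(\Sigma X)\simeq \Omega^{2}F(X)$ in $\stmod \gL$. Iterating this and invoking~\eqref{it:p:omega4} gives $F(\Sigma^{2}X)\simeq \Omega^{4}F(X)\simeq F(X)$, and the equivalence $\cC/[\add \Sigma T]\simeq \modf \gL$, together with the fact that $\Sigma^{2}X$ is indecomposable when $X$ is, then forces $\Sigma^{2}X\simeq X$ for every object (the summands of $T$ and of $\Sigma T$ being handled by the previous step). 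The main obstacle is the bookkeeping behind the relation $F(\Sigma X)\simeq \Omega^{2}F(X)$: keeping track of the connecting maps in the long exact sequence and passing cleanly between $\cC$, the quotient $\cC/[\add\Sigma T]$ and $\stmod \gL$ without losing control of projective summands.

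For~\eqref{it:p:nonrigid} I would argue by contraposition, using the characterization that rigidity of the hyperpotential amounts to the vanishing of the positive-degree part of $\HH_0(\gL)=\gL/\overline{[\gL,\gL]}$, i.e.\ that every cycle of positive length is a sum of commutators. If $\gL$ is semisimple then $\gL\simeq\prod_i Ke_i$ has no cycles of positive length, so the hyperpotential is rigid. Conversely, suppose $\gL$ is not semisimple. Since $\gL$ is symmetric its Nakayama permutation is trivial, so each indecomposable projective $e_i\gL$ has simple socle isomorphic to its top $S_i$, spanned by a cycle $c_i\in e_i\gL e_i$ with $c_i\gL=Kc_i$; as $\gL$ is not semisimple, $c_i$ lies in the radical and is therefore of positive length. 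Fix a symmetrizing form $\lambda$. Then $\lambda([\gL,\gL])=0$, while $\lambda(c_i)\neq 0$: otherwise, since $\lambda(a c_i)=\lambda(c_i a)$ and $c_i a\in Kc_i$, we would get $\gL c_i\subseteq\ker\lambda$, a nonzero left ideal, contradicting the defining property of $\lambda$. Hence $c_i\notin[\gL,\gL]=\overline{[\gL,\gL]}$ (the commutator space being closed as $\gL$ is finite-dimensional), so $c_i$ represents a nonzero positive-degree class in $\HH_0(\gL)$ and the hyperpotential is not rigid. Here the one point requiring care is matching the formal definition of rigidity of a hyperpotential in~\cite{Ladkani14a} with the trace-space condition used above; once this is in place the socle computation is immediate, and~\eqref{it:p:omega4} provides the complementary explanation that a non-semisimple such $\gL$, being a periodic and hence non-smooth algebra, cannot be rigid.
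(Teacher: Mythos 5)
Your proof of part~\eqref{it:p:omega4} is exactly the paper's argument: the paper's proof of this part consists precisely of playing the $(-1)$-Calabi--Yau property of $\stmod\gL$ (from symmetry, Remark~\ref{rem:CYminus1}) against the $3$-Calabi--Yau property (from Proposition~\ref{p:2CYtilted}) and invoking uniqueness of the Serre functor. For parts~\eqref{it:p:sigma2} and~\eqref{it:p:nonrigid} the paper gives no proof (it defers the details elsewhere), so there is nothing to compare against; for what it is worth, your sketches look sound. In~\eqref{it:p:sigma2} the approximation-triangle computation does work --- applying $F=\Hom_{\cC}(T,-)$ to the rotated triangle and using $\Hom_{\cC}(T,\Sigma T)=0$ together with the functorial identification $F\circ\Sigma^2|_{\add T}\simeq F|_{\add T}$ (Serre duality plus the symmetrizing form) yields the exact sequence $0\to F(\Sigma X)\to F(T_1)\to F(T_0)\to F(\Sigma^2 X)\to 0$, which in fact gives $F(\Sigma^2X)\simeq F(X)$ as modules directly, so the detour through $\Omega^4\simeq\mathrm{id}$ and the projective-summand bookkeeping can be avoided. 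In~\eqref{it:p:nonrigid} one small step should be made explicit: from $\lambda(c_i)\neq 0$ you get $c_i\notin[\gL,\gL]$, but rigidity fails only if the class of $c_i$ is also not a combination of the idempotent classes; this follows because $\gL/\rad\gL$ is commutative, so $[\gL,\gL]\subseteq\rad\gL$, and hence $c_i\in\rad\gL$ cannot equal $\sum a_je_j+u$ with $u\in[\gL,\gL]$ unless all $a_j=0$.
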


For part~\eqref{it:p:nonrigid}, note that rigid quivers with potentials have
been defined in~\cite[Definitions~3.4 and~6.10]{DWZ08} in terms of vanishing
of the deformation space of their Jacobian algebras. This definition carries
over without any modification to hyperpotentials. In particular,
a hyperpotential with finite-dimensional Jacobian algebra $\gL$ is
\emph{rigid} if and only if $\HH_0(\gL) = \gL/[\gL,\gL]$ is spanned by the
images of the primitive idempotents corresponding to the vertices.

Let us give the short proof of part~\eqref{it:p:omega4}. We note that
parts~\eqref{it:p:omega4} and~\eqref{it:p:sigma2} of the proposition have
also been recently observed by Valdivieso-Diaz~\cite{Valdivieso13}.

\begin{proof}[Proof of part~(a)]
On the one hand, $\gL$ is symmetric, hence $\stmod \gL$ is $(-1)$-Calabi-Yau
(Remark~\ref{rem:CYminus1}). On the other hand, $\gL$ is $2$-CY-tilted, hence
$\stmod \gL$ is $3$-Calabi-Yau (Prop.~\ref{p:2CYtilted}). The uniqueness of
the Serre functor implies that the fourth power of the suspension on
$\stmod \gL$ is isomorphic to the identity functor, and since the suspension
is $\Omega^{-1}$, we get the result.
\end{proof}

\begin{example}
Let $n \geq 1$ and consider the algebra $A = K[x]/(x^n)$. It is symmetric
and 2-CY-tilted (Example~\ref{ex:KxnCY}). By Proposition~\ref{p:period},
$\Omega^4_A M \simeq M$ for any $M \in \stmod A$.
Indeed, in this case even $\Omega^2_A M \simeq M$, see Example~\ref{ex:Kxn}.
\end{example}

As a direct consequence of Proposition~\ref{p:period}\eqref{it:p:omega4} and
Definition~\ref{def:quasi}, we obtain the next statement.

\begin{corollary}
An indecomposable, symmetric, 2-CY-tilted algebra of tame representation type
is of quasi-quaternion type.
\end{corollary}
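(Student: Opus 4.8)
The plan is to check that an indecomposable, symmetric, 2-CY-tilted algebra $\gL$ of tame representation type satisfies each of the three conditions in Definition~\ref{def:quasi}. The first condition---that $\gL$ be symmetric and indecomposable as a ring---and the second---that $\gL$ have tame but not finite representation type---are immediate from the hypotheses, once one reads ``tame representation type'' in the standard sense of tame and not finite within the finite/tame/wild trichotomy. Hence the only condition whose verification draws on the preceding theory is the periodicity condition~\eqref{it:qq:period4}, namely $\Omega^4 M \simeq M$ for every $M \in \stmod \gL$.

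For this I would simply invoke Proposition~\ref{p:period}\eqref{it:p:omega4}: since $\gL$ is at once symmetric and 2-CY-tilted, that proposition yields an isomorphism of functors between $\Omega^4$ and the identity on $\stmod \gL$, and evaluating at an arbitrary $M$ gives $\Omega^4 M \simeq M$, which is precisely condition~\eqref{it:qq:period4}. I do not anticipate any real obstacle, since the substantive work has already been carried out in Proposition~\ref{p:period}, whose proof combines the $(-1)$-Calabi-Yau property of $\stmod \gL$ for symmetric $\gL$ (Remark~\ref{rem:CYminus1}) with the $3$-Calabi-Yau property of the singularity category for 2-CY-tilted $\gL$ (Proposition~\ref{p:2CYtilted}) and appeals to uniqueness of the Serre functor. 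The corollary thus reduces to matching the hypotheses against the items of the definition.
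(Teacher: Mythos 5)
Your proposal is correct and matches the paper exactly: the corollary is stated there as a direct consequence of Proposition~\ref{p:period}\eqref{it:p:omega4} together with Definition~\ref{def:quasi}, which is precisely the reduction you carry out.
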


\subsection{Derived equivalences}
\label{ssec:dereq}

In this section all cluster-tilting objects are assumed to be
\emph{basic}, i.e.\ they decompose into a direct sum of
non-isomorphic indecomposable objects.
Iyama and Yoshino~\cite{IyamaYoshino08} have shown that there is
a well-defined notion of mutation of (basic) cluster-tilting objects in a
triangulated 2-Calabi-Yau category $\cC$.

\begin{proposition}[\protect{\cite[Theorem~5.3]{IyamaYoshino08}}]
Let $T$ be a cluster-tilting object in $\cC$, let $X$ be an 
indecomposable summand of $T$ and write $T = \bar{T} \oplus X$.
Then there exists a unique indecomposable object $X'$ of $\cC$ which
is not isomorphic to $X$ such that $T' = \bar{T} \oplus X'$ is 
a cluster-tilting object in $\cC$.
\end{proposition}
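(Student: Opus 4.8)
The plan is to realize $X'$ explicitly through an exchange triangle and then verify the two defining properties of a cluster-tilting object for $\bar{T}\oplus X'$. First I would note that since $\cC$ is Krull--Schmidt with finite-dimensional morphism spaces and $\bar{T}$ has only finitely many indecomposable summands, the subcategory $\add\bar{T}$ is functorially finite; in particular $X$ admits a minimal left $\add\bar{T}$-approximation $f\colon X\to B$. Completing $f$ to a triangle
\[
X \xrightarrow{f} B \xrightarrow{g} X' \xrightarrow{h} \Sigma X
\]
I would take this $X'$ as the candidate. Minimality of $f$ should guarantee that $X'$ has no nonzero summand in $\add\bar{T}$ and that $X'\not\simeq X$, while the 2-Calabi--Yau duality produces a symmetric ``dual'' triangle built from a right approximation, which I expect to be convenient for the uniqueness argument.

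The second step is to check that $T'=\bar{T}\oplus X'$ is rigid, i.e.\ $\Hom_{\cC}(T',\Sigma T')=0$. Since $\bar{T}$ is already rigid, three vanishings remain. Applying $\Hom_{\cC}(\bar{T},-)$ to the suspension of the exchange triangle and using $\Hom_{\cC}(\bar{T},\Sigma B)=0$ reduces $\Hom_{\cC}(\bar{T},\Sigma X')$ to the kernel of a map $\Hom_{\cC}(\bar{T},\Sigma^2 X)\to\Hom_{\cC}(\bar{T},\Sigma^2 B)$; under the 2-CY isomorphism $\Hom_{\cC}(\bar{T},\Sigma^2-)\simeq D\Hom_{\cC}(-,\bar{T})$ this map is the dual of the precomposition $\Hom_{\cC}(B,\bar{T})\to\Hom_{\cC}(X,\bar{T})$, which is surjective precisely because $f$ is a left $\add\bar{T}$-approximation; hence $\Hom_{\cC}(\bar{T},\Sigma X')=0$. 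The vanishing $\Hom_{\cC}(X',\Sigma\bar{T})=0$ then follows immediately by 2-CY duality. The remaining self-extension vanishing $\Hom_{\cC}(X',\Sigma X')=0$ is the most delicate point of this step: I would derive it by combining the long exact sequences attached to the exchange triangle, which reduce any self-extension to a class factoring through $\Sigma X$ (using $\Hom_{\cC}(X,\Sigma X)=0$ and $\Hom_{\cC}(X',\Sigma B)=0$), with the 2-CY pairing, the relation $hg=0$ forcing such a class to vanish.

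The harder part will be the maximality condition: given any $Y$ with $\Hom_{\cC}(T',\Sigma Y)=0$ I must show $Y\in\add T'$. Here I would exploit that $T$ is already cluster-tilting, transporting the approximation property across the exchange triangle to compare $\Hom_{\cC}(X,\Sigma Y)$ with $\Hom_{\cC}(X',\Sigma Y)$, deduce $\Hom_{\cC}(T,\Sigma Y)=0$, and thus $Y\in\add T$, after which a further argument removes the $X$-summand. An attractive alternative, using machinery already invoked in the excerpt, is to pass to the Calabi--Yau reduction $\cC'=\{Z:\Hom_{\cC}(Z,\Sigma\bar{T})=0\}/(\add\bar{T})$, which is again a triangulated 2-Calabi--Yau category by~\cite[Theorem~4.7]{IyamaYoshino08} and in which the image of $X$ is an indecomposable cluster-tilting object by~\cite[Theorem~4.9]{IyamaYoshino08}; the statement then reduces to the assertion that an indecomposable cluster-tilting object in a 2-CY category admits exactly one non-isomorphic complement, which the exchange triangle in $\cC'$ produces and whose uniqueness follows because any such complement must sit in a triangle carrying the same approximation data. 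I expect this reduction, together with the uniqueness of $X'$, to carry the real weight of the argument, whereas the Krull--Schmidt and Hom-finiteness hypotheses are exactly what make all the minimal approximations and completions available.
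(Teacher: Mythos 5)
First, a point of comparison: the paper does not prove this proposition at all --- it is quoted verbatim as \cite[Theorem~5.3]{IyamaYoshino08}, so there is no in-paper argument to measure yours against. Your plan does follow the standard Iyama--Yoshino strategy (exchange triangles from minimal approximations, rigidity via long exact sequences and the $2$-Calabi--Yau pairing, and Calabi--Yau reduction for uniqueness), and the rigidity verification is essentially complete: the computation of $\Hom_{\cC}(\bar{T},\Sigma X')$ as the kernel of the dual of the surjection $\Hom_{\cC}(B,\bar{T})\to\Hom_{\cC}(X,\bar{T})$ is correct, and your sketch of $\Hom_{\cC}(X',\Sigma X')=0$ has the right ingredients (every self-extension factors as $X'\xrightarrow{h}\Sigma X\to\Sigma X'$ by $\Hom_{\cC}(B,\Sigma X')=0$, the map $\Sigma X\to\Sigma X'$ factors through $\Sigma B$ by $\Hom_{\cC}(X,\Sigma X)=0$, and then $\Hom_{\cC}(X',\Sigma B)=0$ kills the composite).

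The genuine gaps are exactly where you say "the real weight" lies, and they are not closed. (1) Indecomposability of $X'$ and $X'\not\simeq X$ do \emph{not} follow from minimality of $f$ alone; minimality only controls summands of $X'$ lying in $\add\bar{T}$. In Iyama--Yoshino these facts come either from the invariance of the number of indecomposable summands of cluster-tilting objects (itself part of their Theorem~5.3) or, in the reduction $\cC'=\{Z:\Hom_{\cC}(Z,\Sigma\bar{T})=0\}/(\add\bar{T})$, from the observation that $X'$ becomes the suspension of $X$ in $\cC'$, which is indecomposable and non-isomorphic to $X$ because $X$ is rigid. You never make this identification, and without it your "exchange triangle in $\cC'$" is vacuous: in $\cC'$ the left $\add\bar{T}$-approximation is the zero map, so the triangle degenerates. (2) The uniqueness/maximality in the reduced category is asserted ("must sit in a triangle carrying the same approximation data") but not argued; the actual content is that since $X$ is cluster-tilting in $\cC'$, every object lies in $\add X * \Sigma(\add X)$, so an indecomposable rigid $Y$ sits in a triangle $X_1\to Y\to \Sigma X_0\to\Sigma X_1$ with $X_i\in\add X$, and rigidity forces $Y\in\add X$ or $Y\in\add \Sigma X$. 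Until that step is supplied, both the existence of a \emph{complement} satisfying the maximality condition and its uniqueness remain unproved.
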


The cluster-tilting object $T'$ in the proposition is called the
\emph{Iyama-Yoshino mutation} of $T$ at $X$. The algebras
$\gL=\End_{\cC}(T)$ and $\gL'=\End_{\cC}(T')$ are said to be
\emph{neighboring} 2-CY-tilted algebras.

Let $(Q,W)$ be a quiver with potential and let $k$ be a vertex in $Q$
such that no 2-cycle (i.e.\ a cycle of length 2) passes through $k$.
Derksen, Weyman and Zelevinsky have defined in~\cite[\S5]{DWZ08}
the \emph{mutation} of $(Q,W)$ at $k$, which is a quiver with
potential denoted $\mu_k(Q,W)$.
Buan, Iyama, Reiten and Smith have shown in~\cite{BIRS11} that 
under some mild conditions the notions of Iyama-Yoshino
mutation and mutation of quivers with potentials are compatible.
This is expressed in the next proposition.
\begin{proposition}[\protect{\cite[Theorem~5.2]{BIRS11}}] \label{p:QPmutIY}
Let $T$ be a cluster-tilting object in $\cC$. Assume that
$\End_{\cC}(T) \simeq \cP(Q,W)$ for some quiver with potential $(Q,W)$
and that $\End_{\cC}(T)$ satisfies the vanishing condition.
Let $k$ be a vertex of $Q$ such that no $2$-cycle passes through $k$,
let $X$ be the corresponding indecomposable summand of $T$ and let
$T'$ be the Iyama-Yoshino mutation of $T$ at $X$. Then
$\End_{\cC}(T') \simeq \cP(\mu_k(Q,W))$.
\end{proposition}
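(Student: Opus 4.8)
The plan is to deduce the statement from the compatibility of mutations at the level of Ginzburg dg-algebras established by Keller and Yang~\cite{KellerYang11}, after first reconstructing the category $\cC$ from $(Q,W)$. First I would use the vanishing condition to identify $\cC$, up to triangle equivalence, with Amiot's generalized cluster category $\cC_{(Q,W)}$ associated to $(Q,W)$ (Section~\ref{ssec:QP}, \cite{Amiot09}), in such a way that $T$ corresponds to the canonical cluster-tilting object, namely the image of the free module over the Ginzburg dg-algebra $\Gamma = \Gamma(Q,W)$, and the given isomorphism $\End_{\cC}(T) \simeq \cP(Q,W) = \hh^0(\Gamma)$ is the canonical one. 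It is precisely here that the vanishing condition is needed: it ensures that $\cC$ is not merely some $2$-Calabi-Yau category containing a cluster-tilting object with the prescribed endomorphism algebra, but is genuinely equivalent to $\cC_{(Q,W)}$.

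Next I would invoke the key input of Keller and Yang~\cite{KellerYang11}: since no $2$-cycle passes through $k$, the mutation $\mu_k(Q,W)$ is defined and there is a triangle equivalence
\[
\Phi \colon \cD(\Gamma(\mu_k(Q,W))) \xrightarrow{\sim} \cD(\Gamma(Q,W))
\]
restricting to equivalences between the subcategories of perfect complexes and between the bounded derived categories of finite-dimensional modules, and therefore descending to a triangle equivalence $\bar\Phi \colon \cC_{\mu_k(Q,W)} \to \cC_{(Q,W)} = \cC$ of the associated generalized cluster categories. Let $U$ denote the canonical cluster-tilting object of $\cC_{\mu_k(Q,W)}$, so that $\End_{\cC_{\mu_k(Q,W)}}(U) \simeq \hh^0(\Gamma(\mu_k(Q,W))) = \cP(\mu_k(Q,W))$. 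The crux of the proof is to show that $\bar\Phi$ carries $U$ precisely to the Iyama-Yoshino mutation $T'$ of $T$ at $X$.

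To establish this object-level matching I would compare the two descriptions of how the $k$-th summand changes. On the Iyama-Yoshino side, $X'$ is characterised by the exchange triangles
\[
X \to B \to X' \to \Sigma X, \qquad X' \to B' \to X \to \Sigma X'
\]
with $B, B' \in \add \bar{T}$ minimal approximations, while on the dg side $\Phi$ is constructed so that the image of the mutated free summand is a cone realising exactly such an approximation after passing to $\cC$. Writing $Y = \bar\Phi(S)$ for the image of the $k$-th summand $S$ of $U$, I would verify that $Y$ is indecomposable, lies in $\cC$, is not isomorphic to $X$, and that $\bar{T} \oplus Y$ is cluster-tilting; the uniqueness statement for Iyama-Yoshino mutation~\cite[Theorem~5.3]{IyamaYoshino08} then forces $Y \simeq X'$, so that $\bar\Phi(U) \simeq T' = \bar{T} \oplus X'$. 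Granting this, the conclusion is immediate, since an equivalence identifying the cluster-tilting objects induces an isomorphism of their endomorphism algebras:
\[
\End_{\cC}(T') \simeq \End_{\cC_{\mu_k(Q,W)}}(U) \simeq \cP(\mu_k(Q,W)).
\]

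I expect the main obstacle to be this object-level identification, for it requires reconciling the purely combinatorial recipe defining $\mu_k(Q,W)$ --- the adjoined composite arrows through $k$, the correction term, and the subsequent cancellation of $2$-cycles in the reduction step --- with the approximation-theoretic description of $X'$ supplied by the exchange triangles. In particular one must check that the reduction on potentials corresponds exactly to passing from a possibly non-minimal model of $Y$ to its indecomposable incarnation in $\cC$; this is the point at which the hypothesis that no $2$-cycle passes through $k$ enters essentially, since it is what makes the premutation and reduction steps of the Derksen-Weyman-Zelevinsky construction well defined and unambiguous.
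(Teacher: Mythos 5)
The paper does not actually prove this statement; it quotes it verbatim from \cite[Theorem~5.2]{BIRS11}, whose argument is a direct computation inside the ambient category $\cC$: one presents $\End_{\cC}(T')$ by a quiver with relations read off from the two exchange triangles attached to the Iyama--Yoshino mutation and verifies that these relations are exactly the cyclic derivatives of the reduced potential $\mu_k(W)$. Your route through Amiot's construction and the Keller--Yang equivalence is a genuinely different strategy, but it breaks down at its very first step.

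The gap is the claim that the vanishing condition allows you to identify $\cC$ with the generalized cluster category $\cC_{(Q,W)}$. The vanishing condition of \cite{BIRS11} is a condition on the algebra $\End_{\cC}(T)$ --- the survey itself points out that it holds whenever this algebra is self-injective --- and it carries no information that pins down the ambient triangulated category. The proposition is deliberately stated for an \emph{arbitrary} Hom-finite $2$-Calabi--Yau triangulated category $\cC$ containing a cluster-tilting object whose endomorphism algebra happens to be $\cP(Q,W)$; such a $\cC$ need not be algebraic, and even when it is, there is no recognition theorem in this generality identifying it with $\cC_{(Q,W)}$ (Keller--Reiten-type recognition results require much stronger hypotheses than anything assumed here). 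Without that identification the canonical cluster-tilting object of $\cC_{\mu_k(Q,W)}$ cannot be transported into $\cC$, so the Keller--Yang equivalence $\Phi$ never comes into contact with the object $T'$ you need to understand. The remainder of your argument --- descending $\Phi$ to the cluster categories, matching the image of the mutated free summand with the Iyama--Yoshino mutation via the exchange triangles and the uniqueness statement of \cite[Theorem~5.3]{IyamaYoshino08} --- is essentially correct in the special case $\cC = \cC_{(Q,W)}$ (where one must additionally assume $\cP(Q,W)$ and $\cP(\mu_k(Q,W))$ finite-dimensional for Amiot's construction to apply), but that special case does not yield the proposition as stated. A correct proof in the stated generality has to argue directly in $\cC$, as \cite{BIRS11} does.
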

For the precise formulation of the vanishing condition we refer the
reader to~\cite{BIRS11}, but for our purposes it is sufficient to note
that this condition holds when the algebra $\End_{\cC}(T)$ is
self-injective, and in particular when it is symmetric.

Neighboring 2-CY-tilted algebras are nearly Morita equivalent in the
sense of Ringel~\cite{Ringel07}, that is, there is an equivalence of categories
\[
\modf \gL/\add S \simeq \modf \gL'/\add S'
\]
where $S$ (respectively, $S'$) is the simple module which is the top of the
indecomposable projective $\gL$-module (respectively, $\gL'$-module)
corresponding to the summand $X$ of $T$ (respectively, $X'$ of $T'$),
provided there are ``no loops'', i.e.\ any non-isomorphism $X \to X$
(or $X' \to X'$) factors through $\add \bar{T}$,
see~\cite[Proposition~2.2]{KellerReiten07}.
However, neighboring 2-CY-tilted algebras are not necessarily derived
equivalent, see for example~\cite[Example~5.2]{Ladkani10}.

The next statement concerns the derived equivalence of neighboring
2-CY-tilted algebras. 
It is an improvement of~\cite[Theorem~5.3]{Ladkani10} which has
turned out to be a very useful tool in derived equivalence
classifications of various cluster-tilted algebras and Jacobian
algebras~\cite{BHL13,BHL14,Ladkani11a}.
The derived equivalences are instances of
(refined version of) good mutations introduced in our previous
work~\cite{Ladkani10}.
Before formulating the result, we recall some relevant notions.

Let $\gL$ be a basic algebra and $P$ an indecomposable projective
$\gL$-module and write $\gL = P \oplus Q$. Consider the silting
mutations in the sense of Aihara and Iyama~\cite{AiharaIyama12}
of $\gL$ at $P$ within the triangulated category $\per \gL$
of perfect complexes, which are the following two-term complexes
\begin{align} \label{e:silt}
U^-_P(\gL) = (P \to Q') \oplus Q &,&
U^+_P(\gL) = (Q'' \to P) \oplus Q ,
\end{align}
where $Q', Q'' \in \add Q$, the maps are left (resp., right)
$(\add Q)$-approximations and $Q, Q', Q''$ are in degree 0.
These two-term complexes of projective modules are known also as
Okuyama-Rickard complexes. In~\cite{Ladkani10} we considered these
complexes in relation with our definition of mutations of algebras.

An algebra is \emph{weakly symmetric} if for any simple module, its
projective cover is isomorphic to its injective envelope. Symmetric
algebras are weakly symmetric and if $\gL$ is weakly symmetric, then
the complexes $U^-_P(\gL)$ and $U_P^+(\gL)$ are tilting complexes.

\begin{proposition} \label{p:dereq}
Let $T$ be a cluster-tilting object in a triangulated 2-Calabi-Yau category
$\cC$, let $X$ be an indecomposable summand of $T$ and let $T'$ be the
Iyama-Yoshino mutation of $T$ at $X$.
Consider the algebras $\gL = \End_{\cC}(T)$ and $\gL'= \End_{\cC}(T')$.
Let $P$ be the indecomposable projective $\gL$-module corresponding to $X$ and
let $P'$ be the indecomposable projective $\gL'$-module corresponding to $X'$.
\begin{enumerate}
\renewcommand{\theenumi}{\alph{enumi}}
\item
If $U^-_P(\gL)$ and $U^+_{P'}(\gL')$ are tilting complexes
(over $\gL$ and $\gL'$, respectively), then
\[
\End_{\dgL} U^-_P(\gL) \simeq \gL' \qquad \text{and} \qquad
\End_{\cD^b(\gL')} U^+_{P'}(\gL') \simeq \gL .
\]

\item
If $U^+_P(\gL)$ and $U^-_{P'}(\gL')$ are tilting complexes
(over $\gL$ and $\gL'$, respectively), then
\[
\End_{\dgL} U^+_P(\gL) \simeq \gL' \qquad \text{and} \qquad
\End_{\cD^b(\gL')} U^-_{P'}(\gL') \simeq \gL .
\]

\item \label{it:algmut}
If $\gL$ is weakly symmetric, then $\gL'$ is also weakly symmetric
by~\cite[\S4.2]{HerschendIyama11}, hence all
the complexes $U^-_P(\gL)$, $U^+_P(\gL)$, $U^-_{P'}(\gL')$
and $U^+_{P'}(\gL')$ are tilting complexes and
\[
\End_{\dgL} U^-_P(\gL) \simeq \gL' \simeq
\End_{\dgL} U^+_P(\gL) .
\]
In particular, $\gL$ and $\gL'$ are derived equivalent.

\item
If $\gL$ is symmetric then $\gL'$ is symmetric.
\end{enumerate}
\end{proposition}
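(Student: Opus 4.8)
The plan is to deduce the symmetry of $\gL'$ from the derived equivalence already furnished by part~\eqref{it:algmut}, combined with the fact that being symmetric is an invariant of the derived category. Since $\gL$ is symmetric it is in particular weakly symmetric, so part~\eqref{it:algmut} applies directly: $\gL'$ is weakly symmetric and the two-term complex $U^-_P(\gL)$ is a tilting complex with $\End_{\dgL} U^-_P(\gL) \simeq \gL'$. By Rickard's theory of tilting complexes, the existence of a tilting complex over $\gL$ whose endomorphism algebra is $\gL'$ yields a triangle equivalence $F \colon \dgL \xrightarrow{\sim} \cD^b(\gL')$. Thus $\gL$ and $\gL'$ are derived equivalent, and it remains only to show that symmetry is preserved under this equivalence.

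To see this I would use the characterization of symmetry recorded in Proposition~\ref{p:symmetric}, namely the equivalence of~\eqref{it:p:sym} and~\eqref{it:p:symper}: a finite-dimensional algebra $A$ is symmetric if and only if the triangulated category $\per A$ is $0$-Calabi-Yau. The equivalence $F$ restricts to a triangle equivalence $\per \gL \xrightarrow{\sim} \per \gL'$, because $\per$ admits an intrinsic description inside the bounded derived category (as the thick subcategory generated by the algebra, equivalently the compact objects of the unbounded derived category) and such an intrinsically defined subcategory is automatically preserved by any triangle equivalence. Being $0$-Calabi-Yau is likewise a property of a triangulated category alone: $F$ preserves the suspension and the morphism spaces and therefore transports the functorial isomorphisms $\Hom(X,Y) \simeq D\Hom(Y,X)$ of $\per \gL$ to the analogous isomorphisms for $\per \gL'$. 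Since $\gL$ is symmetric, $\per \gL$ is $0$-Calabi-Yau, hence so is $\per \gL'$, and Proposition~\ref{p:symmetric} then gives that $\gL'$ is symmetric.

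The routine checks (weak symmetry of $\gL$, the reduction to part~\eqref{it:algmut}) are immediate, and the conceptual content is entirely the derived invariance of the symmetric property, which is classical (due to Rickard) but here follows cleanly from the $0$-Calabi-Yau description already available in the excerpt. I expect the only point requiring a line of care to be the verification that $F$ respects perfect complexes and carries the Calabi-Yau isomorphisms across; this is standard, since $\per$ is characterized intrinsically and the Calabi-Yau condition is formulated purely in terms of the triangulated structure, both of which $F$ preserves.
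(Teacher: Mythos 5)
Your argument for part~(d) is correct, and it is the natural route: symmetry is a derived invariant, and the paper's own Proposition~\ref{p:symmetric} makes this transparent, since a standard derived equivalence restricts to a triangle equivalence $\per \gL \simeq \per \gL'$ (perfect complexes being characterized intrinsically, e.g.\ as the homologically finite objects of the bounded derived category) and the $0$-Calabi-Yau property is a property of the $K$-linear triangulated structure alone. So the step from part~\eqref{it:algmut} to part~(d) is sound as you present it.

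The genuine gap is that this is all you prove. Parts~(a) and~(b) are the substantive content of the proposition --- they identify the endomorphism algebra of the Okuyama--Rickard complex $U^\pm_P(\gL)$ of~\eqref{e:silt} with the endomorphism algebra $\gL'$ of the Iyama--Yoshino mutated cluster-tilting object $T'$. Establishing this requires comparing the left and right $(\add Q)$-approximations of $P$ used to build $U^\pm_P(\gL)$ with the exchange triangles in $\cC$ (built from $\add \bar{T}$-approximations of $X$) that define the Iyama--Yoshino mutation, typically by applying $\Hom_{\cC}(T,-)$ to those triangles; none of this appears in your write-up, and it is precisely the part the paper flags as an improvement of earlier work whose details are deferred elsewhere. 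Part~(c) likewise needs two inputs you invoke but do not establish: that weak symmetry passes from $\gL$ to $\gL'$ (the paper cites~\cite[\S4.2]{HerschendIyama11}) and that over a weakly symmetric algebra the complexes $U^\pm_P$ are tilting complexes. Your closing remark that ``the conceptual content is entirely the derived invariance of the symmetric property'' therefore mislocates the difficulty: the derived invariance handles only~(d), while (a)--(c), which your argument presupposes, remain unproved.
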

We note that there are related works by Dugas~\cite{Dugas15}
concerning derived equivalences of symmetric algebras and by
Mizuno~\cite{Mizuno15} concerning derived equivalences of
self-injective quivers with potential.

As the category of perfect complexes over a symmetric algebra is 0-Calabi-Yau,
the derived equivalences in part~\eqref{it:algmut} can be
considered as 0-CY analogs of the derived equivalences of
Iyama-Reiten~\cite{IyamaReiten08} and
Keller-Yang~\cite[Theorem~6.2]{KellerYang11}
for 3-CY-algebras.

\begin{definition}
Let $T$ be a cluster-tilting object in a triangulated 2-Calabi-Yau
category $\cC$.
A cluster-tilting object $T'$ in $\cC$ is \emph{reachable} from $T$
if it can be obtained from $T$ by finitely many Iyama-Yoshino mutations at
indecomposable summands.
\end{definition}

\begin{corollary} \label{c:dereq}
Let $T$ be a cluster-tilting object in a triangulated 2-Calabi-Yau 
category $\cC$ and assume that $\gL=\End_{\cC}(T)$ is (weakly) symmetric.
Then for any cluster-tilting object $T'$ in $\cC$ that is reachable from $T$,
the algebra $\gL'=\End_{\cC}(T')$ is (weakly) symmetric and derived
equivalent to $\gL$.
\end{corollary}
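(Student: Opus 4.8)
The statement to prove is Corollary~\ref{c:dereq}, which asserts that if $\gL = \End_{\cC}(T)$ is (weakly) symmetric, then for every cluster-tilting object $T'$ reachable from $T$, the algebra $\gL' = \End_{\cC}(T')$ is again (weakly) symmetric and derived equivalent to $\gL$.  The key observation is that this is purely a matter of propagating the two conclusions of Proposition~\ref{p:dereq}\eqref{it:algmut} along a finite mutation sequence, so the argument is a straightforward induction on the number of Iyama-Yoshino mutations needed to pass from $T$ to $T'$.

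**Key steps.**  First I would fix, by the very definition of reachability, a finite sequence of cluster-tilting objects
\[
T = T^{(0)},\ T^{(1)},\ \dots,\ T^{(m)} = T',
\]
in which each $T^{(i+1)}$ is the Iyama-Yoshino mutation of $T^{(i)}$ at some indecomposable summand.  Writing $\gL^{(i)} = \End_{\cC}(T^{(i)})$, the claim is that every $\gL^{(i)}$ is (weakly) symmetric and derived equivalent to $\gL = \gL^{(0)}$.  I would prove this by induction on $i$.  The base case $i=0$ is the hypothesis.  For the inductive step, assume $\gL^{(i)}$ is (weakly) symmetric and derived equivalent to $\gL$.  Since $T^{(i+1)}$ is obtained from $T^{(i)}$ by a single Iyama-Yoshino mutation at an indecomposable summand, Proposition~\ref{p:dereq}\eqref{it:algmut} applies with $\gL^{(i)}$ in the role of $\gL$ and $\gL^{(i+1)}$ in the role of $\gL'$: it yields simultaneously that $\gL^{(i+1)}$ is (weakly) symmetric and that $\gL^{(i+1)}$ is derived equivalent to $\gL^{(i)}$.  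Composing this derived equivalence with the one from the inductive hypothesis and using the transitivity of derived equivalence, I conclude that $\gL^{(i+1)}$ is derived equivalent to $\gL$, completing the induction.  Applying the conclusion at $i=m$ gives precisely the statement for $T'$.

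**The main point to get right.**  The essential content is already packaged inside Proposition~\ref{p:dereq}, whose part~\eqref{it:algmut} does the genuine work: it guarantees that weak symmetry is preserved under a single mutation (via~\cite[\S4.2]{HerschendIyama11}), that the relevant Okuyama-Rickard complexes $U^-_P$ and $U^+_P$ are therefore tilting, and that the endomorphism algebra of such a tilting complex recovers the mutated algebra $\gL'$.  Hence the only thing the corollary adds is the bookkeeping that these properties survive an arbitrary \emph{finite} composition, which is exactly what the induction formalizes.  The parenthetical ``(weakly) symmetric'' is handled uniformly: part~\eqref{it:algmut} preserves weak symmetry, while part~(d) of the same proposition upgrades this to genuine symmetry when the starting algebra is symmetric, so both readings of the corollary follow from the same inductive scheme.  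I do not expect any real obstacle here; the slight subtlety is merely to invoke transitivity of the derived equivalence relation correctly at each step and to note that the hypotheses of Proposition~\ref{p:dereq} are met at every stage precisely because weak symmetry (and hence the tilting property of the Okuyama-Rickard complexes) is carried along by the induction.
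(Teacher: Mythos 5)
Your proposal is correct and is exactly the argument the paper intends: Corollary~\ref{c:dereq} is stated as a direct consequence of Proposition~\ref{p:dereq}, and the only content beyond that proposition is the finite induction along the mutation sequence, using part~(c) for weak symmetry and derived equivalence at each step, part~(d) to retain genuine symmetry, and transitivity of derived equivalence. Nothing is missing.
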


\begin{remark}
There are examples of triangulated 2-Calabi-Yau categories $\cC$ with a
cluster-tilting object $T$ such that $\Sigma T$ is not reachable from
$T$, see~\cite[\S3]{Ladkani13} and~\cite[Example~4.3]{Plamondon13}.
Interestingly, in all of these examples the algebra $\End_{\cC}(T)$ is
symmetric. Note, however, that $\End_{\cC}(\Sigma T) \simeq \End_{\cC}(T)$ and
in particular these algebras are derived equivalent.
\end{remark}

We can rephrase part~\eqref{it:algmut} of Proposition~\ref{p:dereq} as
follows.
\begin{corollary} \label{c:sym2CYmut}
Let $\gL$ be a weakly symmetric 2-CY-tilted algebra and let $P$ be
an indecomposable projective $\gL$-module.
Then the two algebras $\End_{\dgL} U^-_P(\gL)$
and $\End_{\dgL} U^+_P(\gL)$ are isomorphic,
2-CY-tilted and derived equivalent to~$\gL$.
\end{corollary}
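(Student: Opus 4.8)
The plan is to restate Corollary~\ref{c:sym2CYmut} as a direct specialization of Proposition~\ref{p:dereq}\eqref{it:algmut}. The hypotheses match almost verbatim: we are given a weakly symmetric 2-CY-tilted algebra $\gL$, so by definition $\gL = \End_{\cC}(T)$ for some cluster-tilting object $T$ in a triangulated 2-Calabi-Yau category $\cC$, and an indecomposable projective $\gL$-module $P$ corresponds to an indecomposable summand $X$ of $T$. **First I would** invoke the Iyama-Yoshino mutation to produce $T' = \bar{T} \oplus X'$ and set $\gL' = \End_{\cC}(T')$. Part~\eqref{it:algmut} then applies directly and yields both the isomorphism $\End_{\dgL} U^-_P(\gL) \simeq \gL' \simeq \End_{\dgL} U^+_P(\gL)$ and the derived equivalence of $\gL$ and $\gL'$ to $\gL$.

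**Next I would** assemble the three claimed properties of the two endomorphism algebras. The isomorphism of $\End_{\dgL} U^-_P(\gL)$ and $\End_{\dgL} U^+_P(\gL)$ is the content of the displayed equation in part~\eqref{it:algmut}; both are identified with $\gL'$. That $\gL'$ is 2-CY-tilted is immediate from its definition $\gL' = \End_{\cC}(T')$ together with the fact that $T'$ is a cluster-tilting object in the 2-Calabi-Yau category $\cC$. The derived equivalence to $\gL$ is the final assertion of~\eqref{it:algmut}. Thus all three conclusions are already packaged inside Proposition~\ref{p:dereq}\eqref{it:algmut}, and the corollary is essentially a reformulation that suppresses the ambient categorical data in favor of the intrinsic description via the silting mutations $U^{\pm}_P(\gL)$.

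**The only point requiring care** — and the nearest thing to an obstacle — is to confirm that the weak symmetry hypothesis genuinely suffices to make all four Okuyama-Rickard complexes tilting. This is exactly what is recorded in the discussion preceding Proposition~\ref{p:dereq}: if $\gL$ is weakly symmetric then the complexes $U^-_P(\gL)$ and $U^+_P(\gL)$ are tilting complexes, and by~\cite[\S4.2]{HerschendIyama11} the mutated algebra $\gL'$ is again weakly symmetric, so that $U^-_{P'}(\gL')$ and $U^+_{P'}(\gL')$ are tilting as well. Hence the tilting hypotheses of part~\eqref{it:algmut} are automatically met, and no separate verification is needed. Since the corollary makes no reference to $T$, $X$, or $\cC$ in its statement, the slight bookkeeping lies in translating the projective module $P$ of the hypothesis into the summand $X$ of $T$ supplied by the 2-CY-tilted structure; this translation is canonical under the equivalence $\add T \simeq \operatorname{proj} \gL$.
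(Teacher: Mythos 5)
Your proposal is correct and matches the paper's own treatment: the paper introduces this corollary explicitly as a rephrasing of Proposition~\ref{p:dereq}\eqref{it:algmut}, and your derivation — translating $P$ into a summand $X$ of $T$, applying Iyama-Yoshino mutation to get $T'$ with $\gL'=\End_{\cC}(T')$ (hence 2-CY-tilted), and reading off the isomorphisms and derived equivalence from part~\eqref{it:algmut} — is exactly that argument. Your check that weak symmetry makes all four Okuyama-Rickard complexes tilting is the same point the paper records in the discussion preceding Proposition~\ref{p:dereq}.
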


We see that derived equivalences of a particular kind preserve the property
of an algebra being symmetric 2-CY-tilted. One may ask whether this is still
true for arbitrary derived equivalences.

\begin{question} \label{q:sym2CYder}
Let $\gL$ be a symmetric 2-CY-tilted algebra and let $\gL'$ be an
algebra derived equivalent to $\gL$. Is $\gL'$ also 2-CY-tilted?
\end{question}

One may also ask if a converse to Proposition~\ref{p:period}\eqref{it:p:omega4}
holds.

\begin{question} \label{q:sym42CY}
Let $\gL$ be a symmetric algebra such that $\Omega_\gL^4 M \simeq M$ for
any $M \in \stmod \gL$. Is $\gL$ then 2-CY-tilted?
\end{question}

Observe that by Proposition~\ref{p:derquasi} and Proposition~\ref{p:period},
an affirmative answer to Question~\ref{q:sym42CY} will yield
an affirmative answer to Question~\ref{q:sym2CYder}.
We note that the answer to Question~\ref{q:sym42CY}
(and hence Question~\ref{q:sym2CYder}) is positive in the following cases:
$\gL$ is of finite representation type (Theorem~\ref{t:sym2CYfin});
$\gL$ is tame with non-singular Cartan matrix (Theorem~\ref{t:quat2CY}); or
$\gL$ is tame of polynomial growth (Proposition~\ref{p:sympoly}).

\section{Ribbon quivers and triangulation quivers}
\label{sec:quivers}

In this section we develop a theory of ribbon quivers and ribbon
graphs, with an emphasis on a particular class of ribbon quivers called
triangulation quivers. The connections to ideal triangulations of marked
surfaces and dimer models will be explained in Section~\ref{sec:surface}.
Ribbon quivers and triangulation quivers are the combinatorial ingredients
underlying the definition of Brauer graph algebras and triangulation algebras
which will be introduced in Section~\ref{sec:algebras} and studied later in
this survey.
The combinatorial statements in this section will be stated without proofs,
and the details will appear elsewhere.

\subsection{Ribbon quivers}
\label{ssec:ribbon}

Recall from Section~\ref{ssec:QP} that a quiver $Q$ is quadruple
$Q=(Q_0,Q_1,s,t)$ where $Q_0,Q_1$ are finite sets and
$s,t \colon Q_1 \to Q_0$.

\begin{definition} \label{def:ribbon}
A \emph{ribbon quiver} is a pair $(Q,f)$ consisting of a quiver
$Q$ and a permutation $f \colon Q_1 \to Q_1$ on its set of arrows
satisfying the following conditions:
\begin{enumerate}
\renewcommand{\theenumi}{\roman{enumi}}
\item \label{it:ribbon:deg2}
At each vertex $i \in Q_0$ there are exactly two arrows starting at $i$ and
two arrows ending at $i$;

\item \label{it:ribbon:f}
For each arrow $\alpha \in Q_1$, the arrow $f(\alpha)$ starts where $\alpha$
ends.
\end{enumerate}
Note that loops are allowed in $Q$. A loop at a vertex is counted both
as an incoming and outgoing arrow at that vertex.
\end{definition}

\begin{example} \label{ex:ribbon1}
Consider a ribbon quiver $(Q,f)$ with one vertex.
Condition~\eqref{it:ribbon:deg2} implies that $Q$ must have two loops
as in the following picture
\[
\xymatrix@=1pc{
{\bullet} \ar@(dl,ul)[]^{\alpha} \ar@(dr,ur)[]_{\beta}
}
\]
and condition~\eqref{it:ribbon:f} is empty in this case, so that $f$ equals
one of the two permutations $f_1$ or $f_2$ on $Q_1$ given in cycle form 
by $f_1 = (\alpha)(\beta)$ and $f_2 = (\alpha \, \beta)$.
In particular we see that the underlying quiver does not determine the
ribbon quiver structure.
\end{example}

Let $(Q,f)$ be a ribbon quiver.
Since at each vertex of $Q$ there are exactly two outgoing arrows,
there is an involution $\alpha \mapsto \balpha$ on $Q_1$ mapping
each arrow $\alpha$ to the other arrow starting at the vertex $s(\alpha)$.
Composing it with $f$ gives rise to the permutation
$g \colon Q_1 \to Q_1$ given by $g(\alpha) = \overline{f(\alpha)}$
so that for each arrow $\alpha$,
the set $\{f(\alpha),g(\alpha)\}$ consists of
the two arrows starting at the vertex which $\alpha$ ends at.

Denote by $Q_1^f$ and $Q_1^g$ the subsets of arrows fixed by $f$ and $g$,
respectively, i.e.\ $Q_1^f = \{ \alpha \in Q_1 : f(\alpha)=\alpha \}$ and
$Q_1^g = \{ \alpha \in Q_1 : g(\alpha)=\alpha \}$. The set of loops in $Q$
thus decomposes as a disjoint union $Q_1^f \cup Q_1^g$.

Given a quiver $Q$ satisfying condition~\eqref{it:ribbon:deg2} in the
definition, the data of the permutation $f$ is equivalent to the data of the
permutation $g$. Thus from now on when considering a ribbon quiver $(Q,f)$
we will freely refer to the involution $\alpha \mapsto \balpha$ and
the permutation $g$ as defined above.

\begin{lemma} \label{l:gf2fg2}
Let $\alpha \in Q_1$. Then $f^{-1}(\alpha)=g^{-1}(\balpha)$
and $gf^{-2}(\alpha) = fg^{-2}(\balpha)$.
\end{lemma}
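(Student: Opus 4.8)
The plan is to unwind the definitions of $f$, $g$, and the involution $\alpha \mapsto \balpha$ and verify the two asserted identities purely formally. Recall the setup: at each vertex there are exactly two outgoing arrows, so $\alpha \mapsto \balpha$ is the involution swapping the two arrows with the same source; $f$ satisfies $s(f(\alpha)) = t(\alpha)$; and $g$ is defined by $g(\alpha) = \overline{f(\alpha)}$. The crucial structural fact I would extract first is that the two arrows $f(\alpha)$ and $g(\alpha)$ are precisely the two arrows starting at $t(\alpha)$, and that $\{f(\alpha), g(\alpha)\}$ and the bar-involution are linked by $\overline{f(\alpha)} = g(\alpha)$ and $\overline{g(\alpha)} = f(\alpha)$.

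For the first identity, I would start from $g(\alpha) = \overline{f(\alpha)}$. Applying the bar-involution (which is its own inverse) gives $\overline{g(\alpha)} = f(\alpha)$, equivalently $f(\alpha) = \overline{g(\alpha)}$. Now I would substitute $\alpha \mapsto g^{-1}(\alpha)$ into the relation $g(\beta) = \overline{f(\beta)}$ in a way that isolates $f^{-1}$. Concretely, set $\beta = f^{-1}(\alpha)$, so $f(\beta) = \alpha$; then $g(\beta) = \overline{f(\beta)} = \balpha$, whence $\beta = g^{-1}(\balpha)$. Comparing the two expressions for $\beta$ yields $f^{-1}(\alpha) = g^{-1}(\balpha)$, which is the first claim.

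For the second identity I would build on the first. The first identity can be rewritten as $g^{-1} = \mathrm{bar} \circ f^{-1} \circ \mathrm{bar}$, or more usefully $f^{-1}(\alpha) = g^{-1}(\balpha)$ for every $\alpha$. I would apply this twice, taking care that applying $f^{-1}$ or $g^{-1}$ a second time changes which vertex we sit at, so the intermediate bar-involution is pinned down by the first identity. Starting from $gf^{-2}(\alpha)$, I would peel off one $f^{-1}$ using $f^{-1}(\alpha) = g^{-1}(\balpha)$, then track the second application together with the outer $g$, rewriting everything in terms of $f$ and the bar on the $\balpha$ side until it collapses to $fg^{-2}(\balpha)$. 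The cleanest bookkeeping is to regard all of $f, g, \mathrm{bar}$ as permutations of $Q_1$ and prove the operator identity $g f^{-2} = \mathrm{bar} \circ f g^{-2} \circ \mathrm{bar}$, which follows by substituting the operator form $g^{-1} = \mathrm{bar}\, f^{-1}\, \mathrm{bar}$ (equivalently $g = \mathrm{bar}\, f\, \mathrm{bar}$) of the first identity.

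The main obstacle I anticipate is purely a matter of correct index-chasing: keeping straight, at each step, the source and target vertices so that the right instance of the bar-involution is applied, since $\mathrm{bar}$ is vertex-dependent in meaning even though $f$ and $g$ are global permutations. Phrasing the first identity as the conjugation relation $g = \mathrm{bar} \circ f \circ \mathrm{bar}$ removes this difficulty entirely, because then $g^{-1} = \mathrm{bar} \circ f^{-1} \circ \mathrm{bar}$ (using $\mathrm{bar}^2 = \mathrm{id}$), and the second identity reduces to the elementary manipulation $gf^{-2} = (\mathrm{bar}\, f\, \mathrm{bar})(\mathrm{bar}\, g^{-1}\, \mathrm{bar})^{2}$ after substituting $f^{-1} = \mathrm{bar}\, g^{-1}\, \mathrm{bar}$; the intermediate bars cancel in pairs and one reads off $\mathrm{bar}\, (f g^{-2})\, \mathrm{bar}$, giving exactly $gf^{-2}(\alpha) = fg^{-2}(\balpha)$. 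I would present it in this operator form to avoid the error-prone explicit chase.
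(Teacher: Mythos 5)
The paper states this lemma without proof (the combinatorial facts of Section~\ref{sec:quivers} are deferred), so your argument has to stand on its own. Your proof of the first identity is correct: setting $\beta=f^{-1}(\alpha)$ and applying the definition $g(\beta)=\overline{f(\beta)}=\balpha$ immediately gives $\beta=g^{-1}(\balpha)$.

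The second half, however, contains a genuine error. The relation $g(\alpha)=\overline{f(\alpha)}$ is the \emph{one-sided} composition $g=\mathrm{bar}\circ f$, equivalently $f^{-1}=g^{-1}\circ\mathrm{bar}$ and $g^{-1}=f^{-1}\circ\mathrm{bar}$ — not the conjugation $g=\mathrm{bar}\circ f\circ\mathrm{bar}$ that you use. These differ: for the one-vertex triangulation quiver with two loops $\alpha,\beta$ and $f=(\alpha)(\beta)$ one has $g=(\alpha\,\beta)$, whereas $\mathrm{bar}\circ f\circ\mathrm{bar}=\mathrm{id}\neq g$. With the incorrect conjugation form, your ``bars cancel in pairs'' computation produces $gf^{-2}=\mathrm{bar}\circ(fg^{-2})\circ\mathrm{bar}$, which is false (in the same example it would force $\mathrm{bar}=\mathrm{id}$), and even taken at face value it would yield $gf^{-2}(\alpha)=\overline{fg^{-2}(\balpha)}$, which has an extra outer bar compared with the assertion $gf^{-2}(\alpha)=fg^{-2}(\balpha)$. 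The two mistakes do not cancel. The repair is short: using the correct one-sided relations, substitute $f^{-1}=g^{-1}\circ\mathrm{bar}$ into the \emph{leftmost} factor of $g\circ f^{-1}\circ f^{-1}$ to get $gf^{-2}=\mathrm{bar}\circ f^{-1}$, and symmetrically $fg^{-2}=\mathrm{bar}\circ g^{-1}$; then $fg^{-2}(\balpha)=\overline{g^{-1}(\balpha)}=\overline{f^{-1}(\alpha)}=gf^{-2}(\alpha)$ by the first identity. Note that the correct operator form of the second claim is $gf^{-2}=fg^{-2}\circ\mathrm{bar}$, with $\mathrm{bar}$ on one side only.
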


\begin{definition}
Let $(Q,f)$ be a ribbon quiver and define $g \colon Q_1 \to Q_1$ by
$g(\alpha) = \overline{f(\alpha)}$.
The \emph{dual} of $(Q,f)$ is the ribbon quiver $(Q,g)$.
\end{definition}

\begin{example}
In Example~\ref{ex:ribbon1}, $\bar{\alpha} = \beta$ and $\bar{\beta} = \alpha$,
so in cycle form $g_1 = (\alpha \, \beta) = f_2$ and
$g_2 = (\alpha)(\beta) = f_1$.
Hence $(Q,f_1)$ and $(Q,f_2)$ are dual to each other.
\end{example}

\begin{definition}
Let $(Q,f)$ and $(Q',f')$ be ribbon quivers with $Q=(Q_0, Q_1, s, t)$,
$Q'=(Q'_0, Q'_1, s', t')$. Recall that a pair of bijections
$\vphi_0 \colon Q_0 \xrightarrow{\sim} Q'_0$ and
$\vphi_1 \colon Q_1 \xrightarrow{\sim} Q'_1$ is an \emph{isomorphism} between
the quivers $Q$ and $Q'$ if $\vphi_0 s = s' \vphi_1$ and
$\vphi_0 t = t' \vphi_1$.
If, in addition, $\vphi_1 f = f' \vphi_1$ and $\vphi_1(\balpha)
= \overline{\vphi_1(\alpha)}$ for any $\alpha \in Q_1$, 
we say that $(\vphi_0, \vphi_1)$ is \emph{isomorphism} between the ribbon
quivers $(Q,f)$ and $(Q',f')$.
\end{definition}

Ribbon quivers are closely related to ribbon graphs.
To avoid confusion, we shall use the term ``node'' for the graph
in order to distinguish it from a vertex in the quiver.
Informally speaking, a ribbon graph is a graph consisting of nodes and
edges together with a cyclic ordering of the edges around each node.
This can be made more formal in the next definition.

\begin{definition}
A \emph{ribbon graph} is a triple $(H, \iota, \sigma)$ where $H$ is a finite
set, $\iota$ is an involution on $H$ without fixed points and
$\sigma$ is a permutation on $H$.
\end{definition}

The elements of $H$ are called \emph{half-edges}.
A ribbon graph gives rise to a graph $(V,E)$ (possibly with loops and
multiple edges between nodes) as follows.
The set $V$ of nodes consists of the cycles of $\sigma$ and
the set $E$ of edges consists of the cycles of $\iota$.
An edge $e \in E$ can be written as $(h \, \iota(h))$ for some $h \in H$.
The $\sigma$-cycles that $h$ and $\iota(h)$ belong to are the nodes that
$e$ is incident to.
Moreover, $\sigma$ induces a cyclic ordering of the edges around each node.

Conversely, given a graph $(V,E)$ with a cyclic ordering of the edges around
each node, we think of each edge $e \in E$ incident to the nodes $v', v'' \in V$
(which may coincide) as composed of two half-edges $e'$ and $e''$, with $e'$
incident to $v'$ and $e''$ incident to $v''$. This yields a ribbon graph
$(H,\iota,\sigma)$ where $H$ is the set of all half-edges,
$\iota = \prod_{e \in E} (e' \, e'')$ is the product of all the transpositions
$(e' \, e'')$ for $e \in E$, and
for any half-edge $h$ incident to a node $v$, the half-edge $\sigma(h)$
is the one following $h$ in the cyclic order around $v$.

\begin{example} \label{ex:ribbon1g}
Consider a ribbon graph with one edge. In this case the set $H$ of half-edges
consists of two elements, which we denote by $\alpha$ and $\beta$, and the
involution $\iota$ can be written as $\iota = (\alpha \, \beta)$ in cycle form.
The permutation $\sigma$ equals one of the two permutations $\sigma_1$ or
$\sigma_2$ given in cycle form by $\sigma_1 = (\alpha \, \beta)$ and
$\sigma_2 = (\alpha)(\beta)$.

The corresponding graphs, with their half-edges labeled, are shown in the
picture below. Since $\sigma_1$ has one cycle, the graph of
$(H,\iota,\sigma_1)$, shown to the left, has one node.
Similarly, since $\sigma_2$ has two cycles, the graph of $(H,\iota,\sigma_2)$,
shown to the right, has two nodes.
\begin{align*}
\begin{array}{cc}
\xymatrix{{\circ} \ar@{-}@(ul,dl)_(0.15)\alpha_(0.85)\beta} &
\sigma_1 = (\alpha \, \beta)
\end{array}
&& &&
\begin{array}{cc}
\xymatrix{{\circ} \ar@{-}[r]^(0.25)\alpha_(0.75)\beta & {\circ}} &
\sigma_2 = (\alpha)(\beta)
\end{array}
\end{align*}
\end{example}

\begin{definition}
Let $(H,\iota,\sigma)$ and $(H',\iota',\sigma')$ be ribbon graphs.
An \emph{isomorphism} between $H$ and $H'$ is a bijection
$\vphi \colon H \to H'$ satisfying $\iota' \vphi = \vphi \iota$
and $\sigma' \vphi = \vphi \sigma$.
\end{definition}

Any ribbon quiver $(Q,f)$ gives rise to a ribbon graph
$(H,\iota,\sigma)$ by taking $H=Q_1$ and defining $\iota(\alpha)=\balpha$
and $\sigma(\alpha) = \overline{f(\alpha)}$ for each $\alpha \in Q_1$.

Conversely, a ribbon graph $(H,\iota,\sigma)$ gives rise to a ribbon quiver
$(Q,f)$ as follows.
Set $Q_1=H$ and take $Q_0$ to be the set of cycles of $\iota$. Define
the maps $s,t \colon Q_1 \to Q_0$ and the permutation $f \colon Q_1 \to Q_1$
by letting, for each $h \in H$, $s(h)$ to be the $\iota$-cycle that $h$ belongs
to and setting $t = s \sigma$ and $f = \iota \sigma$.

Note that these two constructions are inverses of each other, hence we deduce
the following.

\begin{proposition} \label{p:ribbon}
There is a bijection between the set of
isomorphism classes of ribbon quivers and the set of isomorphism classes
of ribbon graphs,
\[
\left(
\{\text{ribbon quivers}\}/\simeq
\right) \longleftrightarrow
\left(
\{\text{ribbon graphs} \}/\simeq
\right) .
\]
\end{proposition}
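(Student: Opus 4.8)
The plan is to verify that the two constructions described just before the statement---one sending a ribbon quiver to a ribbon graph, the other sending a ribbon graph to a ribbon quiver---are mutually inverse, and that each is compatible with the respective notion of isomorphism, so that they descend to mutually inverse maps on isomorphism classes. First I would fix notation: given a ribbon quiver $(Q,f)$ I write $\Phi(Q,f)=(H,\iota,\sigma)$ with $H=Q_1$, $\iota(\alpha)=\balpha$ and $\sigma(\alpha)=\overline{f(\alpha)}=g(\alpha)$; given a ribbon graph $(H,\iota,\sigma)$ I write $\Psi(H,\iota,\sigma)=(Q,f)$ with $Q_1=H$, $Q_0$ the set of $\iota$-cycles, $s(h)$ the $\iota$-cycle containing $h$, $t=s\sigma$ and $f=\iota\sigma$.

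Next I would check $\Psi\circ\Phi=\mathrm{id}$ and $\Phi\circ\Psi=\mathrm{id}$ on the nose. For $\Psi\Phi(Q,f)$, the arrow set is unchanged, and the recovered permutation is $\iota\sigma=\overline{(\,\overline{f(\cdot)}\,)}$; since $\alpha\mapsto\balpha$ is an involution this is exactly $f$, so the ribbon-quiver structure is recovered. The one genuinely substantive point---and the step I expect to be the main obstacle---is that the recovered \emph{vertex set} $Q_0$ agrees with the original one. Here I must use condition~\eqref{it:ribbon:deg2}: a vertex of $Q$ is recovered as an $\iota$-cycle, i.e.\ an orbit of $\alpha\mapsto\balpha$, and I must argue that these orbits are precisely the vertices. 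Since $\balpha$ is the other arrow with the same source as $\alpha$, and condition~\eqref{it:ribbon:deg2} guarantees each vertex has exactly two outgoing arrows (with a loop counted once as outgoing), the orbit $\{\alpha,\balpha\}$ is exactly the set of arrows out of $s(\alpha)$; thus the involution $\iota$ has orbits in natural bijection with $Q_0$ via $s$, and $s=s\sigma^{0}$ together with $t=s\sigma$ recovers the original $s,t$. The loop case, where $\balpha=\alpha$ so the $\iota$-cycle is a fixed point, must be handled by the convention that $\iota$ may have fixed points only at loops---but the definition of ribbon graph demands $\iota$ have \emph{no} fixed points, so I should note that a loop contributes two half-edges and the degree-two condition forces $\balpha\neq\alpha$ for the incoming/outgoing count to work, keeping $\iota$ fixed-point-free.

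For the reverse composite $\Phi\Psi(H,\iota,\sigma)$, the half-edge set is unchanged, the recovered involution is again $\iota$ (the map $h\mapsto\bar h$ is by construction the other arrow with the same source, which is $\iota(h)$), and the recovered $\sigma$ is $\overline{f(\cdot)}=\iota(f)=\iota(\iota\sigma)=\sigma$, using $\iota^2=\mathrm{id}$. Finally I would check functoriality on isomorphisms: a ribbon-quiver isomorphism $(\vphi_0,\vphi_1)$ satisfies $\vphi_1 f=f'\vphi_1$ and $\vphi_1(\balpha)=\overline{\vphi_1(\alpha)}$, which translate directly into $\vphi_1\iota=\iota'\vphi_1$ and $\vphi_1\sigma=\sigma'\vphi_1$, i.e.\ $\vphi_1$ is a ribbon-graph isomorphism; conversely $\vphi_0$ is forced by $\vphi_1$ since vertices are $\iota$-cycles, so no data is lost. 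Hence $\Phi$ and $\Psi$ induce mutually inverse bijections between isomorphism classes, which is the asserted bijection. I expect the bookkeeping around loops and the fixed-point-free requirement on $\iota$ to be the only place where the argument needs genuine care rather than routine symbol-pushing.
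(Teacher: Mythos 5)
Your proposal is correct and follows essentially the same route as the paper, which simply presents the two constructions (arrows as half-edges, $\iota(\alpha)=\balpha$, $\sigma=g$, and conversely $Q_0$ as $\iota$-cycles, $f=\iota\sigma$, $t=s\sigma$) and observes that they are mutually inverse; you supply the routine verifications, including the correct observation that condition (i) of Definition~\ref{def:ribbon} makes $\alpha\mapsto\balpha$ fixed-point-free so that $\iota$ is a legitimate ribbon-graph involution. The only cosmetic caveat is that $\Psi\circ\Phi$ returns the vertex set as the set of $\iota$-cycles rather than literally $Q_0$, so the composite is the identity only up to the canonical isomorphism induced by $s$ — which you in effect acknowledge and which suffices for a bijection on isomorphism classes.
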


Under this bijection, the various notions concerning ribbon quivers and
ribbon graphs are related as in the dictionary given in Table~\ref{tab:ribbon}.

\begin{table}
\begin{center}
\begin{tabular}{ccc}
$(Q,f)$ & $(H,\iota,\sigma)$ & $(V,E)$ \\ \hline
vertex & cycle of $\iota$ & edge \\
arrow & element of $H$ & half-edge \\
$f$ & $\iota \sigma$ \\
$g$ & $\sigma$ & cyclic ordering \\
cycle of $g$ & cycle of $\sigma$ & node
\end{tabular}
\end{center}
\caption{Dictionary between ribbon quivers and ribbon graphs.}
\label{tab:ribbon}
\end{table}

\begin{example} \label{ex:ribbonG1}
We illustrate the bijection between the ribbon quivers with one vertex
discussed in Example~\ref{ex:ribbon1} and the ribbon graphs with one edge
discussed in Example~\ref{ex:ribbon1g}.
We denote the set of half-edges by $\{\alpha, \beta\}$ and let
$\iota = (\alpha \, \beta)$. The underlying quiver $Q$ is always
\[
\xymatrix@=1pc{
{\bullet} \ar@(dl,ul)[]^{\alpha} \ar@(dr,ur)[]_{\beta}
}
\]
and the corresponding graphs are shown in the right column below.
\[
\begin{array}{lclcc}
f_1 = (\alpha)(\beta) & &
\sigma_1 = g_1 = (\alpha \, \beta) & &
\xymatrix{{\circ} \ar@{-}@(ul,dl)}
\\[3pt]
f_2 = (\alpha \, \beta) & &
\sigma_2 = g_2 = (\alpha)(\beta) & &
\xymatrix{{\circ} \ar@{-}[r] & {\circ}}
\end{array}
\]
\end{example}

\medskip

The data of a graph can be encoded in matrix form in the following way.
Let $(V,E)$ be a graph. For a node $v \in V$, define a vector
$\chi_v \in \bZ^E$ by
\[
\chi_v(e) = \begin{cases}
2 & \text{$e$ is a loop incident to $v$,} \\
1 & \text{$e$ is incident to $v$ but is not a loop,} \\
0 & \text{$e$ is not incident to $v$}
\end{cases}
\]
and think of it as a row vector. Obviously, $\chi_v(e) \geq 0$ and
$\sum_{v \in V} \chi_v(e) = 2$ for any $e \in E$, so by arranging the vectors
$\chi_v$ as a $V \times E$ matrix, one gets an integer matrix with non-negative
entries whose sum of rows equals the constant vector $(2,2,\dots,2)$.
Conversely, any such matrix $\chi$ gives rise to a graph whose nodes are
indexed by the rows of $\chi$, its edges are indexed by the columns of $\chi$
and the incidence relations are read from the entries $\chi_v(e)$.

Now let $(Q,f)$ be a ribbon quiver. In the underlying graph $(V,E)$ of the
ribbon graph corresponding to $(Q,f)$ under the bijection of
Proposition~\ref{p:ribbon}, the set $V$ corresponds to the set $\Omega_g$ of
the cycles of the permutation $g$, the set $E$ corresponds to the set $Q_0$
of vertices of $Q$ and the entries of the matrix $\chi$ are given by
$\chi_{\omega}(i) = |\{\alpha \in \omega : s(\alpha)=i\}|$
for any $g$-cycle $\omega \in \Omega_g$ and vertex $i \in Q_0$.

\subsection{Triangulation quivers}

\begin{definition}
A \emph{triangulation quiver} is a ribbon quiver $(Q,f)$ such that
$f^3$ is the identity on the set of arrows.
\end{definition}

\begin{example} \label{ex:triang1}
Considering the ribbon quivers with one vertex of Example~\ref{ex:ribbon1},
we see that $(Q,f_1)$ is a triangulation quiver whereas $(Q,f_2)$ is not.
\end{example}

\begin{remark}
Given any quiver $Q$ satisfying condition~\eqref{it:ribbon:deg2} of
Definition~\ref{def:ribbon}, there is always at least one 
(and in general, many) permutation(s) $f$ on
the arrows making $(Q,f)$ a ribbon quiver. Indeed, for each $i \in Q_0$ 
label by $\alpha, \beta$ the arrows ending at $i$ and by $\gamma, \delta$
the arrows starting at $i$ and set, for instance,
$f(\alpha)=\gamma$ and $f(\beta)=\delta$.

However, as the next example demonstrates, there may not exist
a permutation~$f$ making $(Q,f)$ a triangulation quiver.
In other words, the existence of a triangulation
quiver $(Q,f)$ imposes some restrictions on the shape of a quiver $Q$.
\end{remark}

\begin{example}
Up to isomorphism, there are two ribbon quivers whose underlying quiver is the
one given below,
\[
\xymatrix{
{\bullet_1} \ar@<1.5ex>[r] \ar@<-0.5ex>[r] &
{\bullet_2} \ar@<1.5ex>[l] \ar@<-0.5ex>[l]
}
\]
Namely, denoting the arrows from $1$ to $2$ by $\alpha$, $\gamma$ and those
from $2$ to $1$ by $\beta$, $\delta$, the ribbon quivers are given by the
permutations
$(\alpha \beta)(\gamma \delta)$ and $(\alpha \beta \gamma \delta)$.
None of them is a triangulation quiver.
\end{example}

We have seen that not every quiver satisfying condition~\eqref{it:ribbon:deg2}
of Definition~\ref{def:ribbon} is an underlying quiver of a triangulation quiver.
The next proposition tells us that if such triangulation quiver exists,
then it is unique up to isomorphism.

\begin{proposition} \label{p:triauniq}
Let $(Q,f)$ and $(Q',f')$ be two triangulation quivers. If the
quivers $Q$ and $Q'$ are isomorphic, then $(Q,f)$ and $(Q',f')$
are isomorphic as ribbon quivers.
\end{proposition}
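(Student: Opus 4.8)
The plan is to reduce the statement to an intrinsic rigidity property of the permutation $f$, and then to exploit the structure of its orbits. First I would record that the involution $\alpha\mapsto\balpha$ is determined by the quiver alone: $\balpha$ is simply the second arrow with source $s(\alpha)$, and a quiver isomorphism $(\varphi_0,\varphi_1)\colon Q\to Q'$ carries source fibres to source fibres (since $\varphi_0 s=s'\varphi_1$). Hence $\varphi_1(\balpha)=\overline{\varphi_1(\alpha)}$ holds automatically, and the only extra requirement for $(\varphi_0,\varphi_1)$ to be an isomorphism of ribbon quivers is the relation $\varphi_1 f=f'\varphi_1$. Transporting $f'$ along the given isomorphism, i.e. setting $\tilde f=\varphi_1^{-1}f'\varphi_1$, produces a permutation on $Q_1$ with $\tilde f^{\,3}=\mathrm{id}$ for which $(Q,\tilde f)$ is again a triangulation quiver. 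Thus it suffices to prove the single-quiver statement: any two permutations $f_1,f_2$ making $(Q,f_1)$ and $(Q,f_2)$ triangulation quivers are conjugate by an automorphism $\psi$ of $Q$; such a $\psi$ is automatically compatible with $\balpha$, and then $\varphi_1\psi$ is the desired ribbon-quiver isomorphism.

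Second, the structural rigidity. Because $f^{3}=\mathrm{id}$, every $f$-orbit is either a fixed loop or an oriented triangle $\alpha\colon i\to j$, $f\alpha\colon j\to k$, $f^{2}\alpha\colon k\to i$; equivalently, an admissible $f$ is a partition into loops and triangles of the ``corners'' of $Q$, the matched incoming/outgoing pairs of which there are exactly two at each vertex. The key point is that this data is rigid once seeded: if $f$ is known on one orbit $O$, then at each of the at most three vertices met by $O$ the matching of incoming to outgoing arrows is determined, which fixes one value of $f$ on each neighbouring orbit and hence, since every orbit has length $\le 3$, that whole orbit. Forming the graph whose nodes are the $f$-orbits and whose edges are the vertices of $Q$ (each vertex linking the at most two orbits through it), one checks it is connected whenever $Q$ is, for if it split, the vertices met by one part would be disjoint from those met by another and $Q$ would disconnect. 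Therefore on a connected quiver an admissible $f$ is completely determined by its single value $f(\alpha_0)$ on any chosen arrow $\alpha_0$; as there are only the two arrows at $t(\alpha_0)$ to choose from, there are at most two admissible structures per connected component.

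Third, producing the automorphism. If $f_1=f_2$ there is nothing to do. Otherwise $f_1,f_2$ are the two admissible structures distinguished by the two choices of $f(\alpha_0)$, and I must exhibit $\psi$ with $\psi f_1\psi^{-1}=f_2$. The discrepancy $h=f_2 f_1^{-1}$ is a source-preserving involution: it swaps the two arrows at each vertex of a subset $W\subseteq Q_0$ and fixes the rest, subject to the strong constraint $(h f_1)^{3}=\mathrm{id}$. The main obstacle lies exactly here, since when $W\neq\varnothing$ the map $h$ is in general not a quiver automorphism (it preserves sources but not targets, as already the quiver with two loops joined by a pair of opposite arrows shows, where the two admissible structures are interchanged by a vertex-swapping automorphism rather than by $h$). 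The local discrepancy must therefore be upgraded to a genuine, possibly vertex-permuting, automorphism. I would resolve this using the block decomposition of triangulation quivers into the three basic building blocks: on each block one checks by inspection that its one or two admissible structures form a single orbit under the block's symmetries, and that these symmetries are compatible with the gluing assembling $Q$ from its blocks. Verifying this compatibility — that the swaps prescribed by $W$ can be realised simultaneously by one global automorphism — is the delicate step, and is where the combinatorics of the building blocks does the real work.
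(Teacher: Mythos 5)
Your reduction in the first paragraph is fine: since there are exactly two arrows out of each vertex, any quiver isomorphism automatically intertwines the involutions $\alpha\mapsto\bar\alpha$, so the statement does reduce to showing that any two permutations $f_1,f_2$ making the same connected quiver $Q$ into a triangulation quiver are conjugate under $\operatorname{Aut}(Q)$. The genuine gap is in the second paragraph, and it is fatal to the strategy of the third. The claim that an admissible $f$ is determined by the single value $f(\alpha_0)$ --- hence that there are at most two admissible structures per connected component --- is false. Take the Markov quiver (quiver $3''$ of Table~\ref{tab:quivers}), with arrows $\alpha_0,\alpha_3\colon 1\to 2$, $\alpha_1,\alpha_4\colon 2\to 3$, $\alpha_2,\alpha_5\colon 3\to 1$. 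Besides the structure $(\alpha_0\,\alpha_4\,\alpha_2)(\alpha_3\,\alpha_1\,\alpha_5)$ of the table, each of $(\alpha_0\,\alpha_1\,\alpha_2)(\alpha_3\,\alpha_4\,\alpha_5)$, $(\alpha_0\,\alpha_1\,\alpha_5)(\alpha_2\,\alpha_3\,\alpha_4)$ and $(\alpha_0\,\alpha_4\,\alpha_5)(\alpha_1\,\alpha_2\,\alpha_3)$ is an admissible $f$ with $f^3=\mathrm{id}$: four structures, two of which take the same value on $\alpha_0$. The error in your propagation is the step ``fixes one value of $f$ on each neighbouring orbit and hence \ldots{} that whole orbit'': knowing $f(\beta)$ for one arrow $\beta$ of a length-$3$ orbit does not determine $f(f(\beta))$; for that you need the local matching at the vertex $t(f(\beta))$, which your induction may not yet have reached. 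What a seed orbit actually determines is the matching at each vertex it visits, and one must propagate vertex by vertex, accepting that several inequivalent seeds (here, all four orbits through $\alpha_0$) can extend to admissible structures.

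Because the dichotomy ``$f_1,f_2$ are the two admissible structures interchanged by $h$'' collapses, the third paragraph has no starting point; and even granting it, you explicitly leave unproved the one assertion that carries the entire content of the proposition, namely that the source-preserving discrepancy $h=f_2f_1^{-1}$ can be upgraded to a genuine quiver automorphism conjugating $f_1$ to $f_2$ (in the Markov example such an automorphism permutes parallel arrows; in your two-loops-plus-$2$-cycle example it permutes vertices; a uniform construction is exactly what is missing). The appeal to the block decomposition does not obviously repair this, because the blocks of $(Q,f_1)$ and of $(Q,f_2)$ are in general different decompositions of the same quiver $Q$: the blocks of types B and C are indexed by $f$-cycles, which change with $f$. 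Note that the paper states Proposition~\ref{p:triauniq} without proof (the combinatorial statements of Section~\ref{sec:quivers} are deferred elsewhere), so there is no argument to compare against; but as written your proposal does not establish the result.
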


Since the number of triangulation quivers with a given number of vertices
is finite, they can be enumerated on a computer.
Table~\ref{tab:quivers} lists (up to isomorphism)
the connected triangulation quivers with at most
three vertices and their corresponding ribbon graphs.
Note that the ribbon graph of quiver $2$ could have also been drawn as
\[
\xymatrix{
{\circ} \ar@{-}[r]^1 & \circ \ar@{-}@(ur,dr)[]^{2}
}
\]
but the drawing in Table~\ref{tab:quivers} emphasizes the relation of this
quiver to the punctured monogon, as we shall see in~Section~\ref{sec:surface}.

\begin{table}
\[
\begin{array}{c|cc}
& \text{\textbf{Triangulation quiver}}
& \text{\textbf{Ribbon graph}}
\\ \hline & & \\
1 &
\begin{array}{c}
\xymatrix@=1pc{
{\bullet_1} \ar@(dl,ul)[]^{\alpha} \ar@(dr,ur)[]_{\beta}
}
\end{array}
&
\begin{array}{c}
\xymatrix{
\circ \ar@{-}@(ul,dl)_{1}
}
\end{array}
\\ & (\alpha) (\beta)
\\ \hline
2 &
\begin{array}{c}
\xymatrix{
{\bullet_1} \ar@(ul,dl)[]_{\alpha} \ar@<-0.5ex>[r]_{\beta}
& {\bullet_2} \ar@(dr,ur)[]_{\eta} \ar@<-0.5ex>[l]_{\gamma}
}
\\ 
(\alpha \beta \gamma) (\eta)
\end{array}
&
\begin{array}{c}
\xymatrix@=0.75pc{
{_2} & {\circ} \ar@{-}[rr]^1
& & {\circ} \ar@{-}@(ul,u)[lll]_{} \ar@{-}@(dl,d)[lll]_{}
}
\end{array}
\\ \hline
3a &
\begin{array}{c}
\xymatrix{
{\bullet_1} \ar@(ul,dl)[]_{\alpha} \ar@<-0.5ex>[r]_{\beta}
& {\bullet_2} \ar@<-0.5ex>[r]_{\delta} \ar@<-0.5ex>[l]_{\gamma}
& {\bullet_3} \ar@(dr,ur)[]_{\xi} \ar@<-0.5ex>[l]_{\eta}
}
\\
(\alpha \beta \gamma) (\delta \xi \eta)
\end{array}
&
\begin{array}{c}
\xymatrix@=0.75pc{
{_2} & {\circ} \ar@{-}[rr]^1
& & {\circ} \ar@{-}@(ul,u)[lll]_{} \ar@{-}@(dl,d)[lll]_{}
\ar@{-}[rr]^3
& & {\circ}
}
\end{array}
\\ \hline
3b &
\begin{array}{c}
\xymatrix@=1pc{
& {\bullet_3} \ar@<-0.5ex>[ddl]_{\alpha_3} \ar@<-0.5ex>[ddr]_{\beta_2} \\ \\
{\bullet_1} \ar@<-0.5ex>[rr]_{\alpha_1} \ar@<-0.5ex>[uur]_{\beta_3}
&& {\bullet_2} \ar@<-0.5ex>[ll]_{\beta_1} \ar@<-0.5ex>[uul]_{\alpha_2}
}
\\
(\alpha_1 \alpha_2 \alpha_3) (\beta_3 \beta_2 \beta_1)
\end{array}
&
\begin{array}{c}
\xymatrix@=1pc{
{\circ} \ar@{-}[rr]^3 && {\circ} \ar@{-}[ddl]^2 \\ \\
& {\circ} \ar@{-}[uul]^1
}
\end{array}
\\ \hline
3' &
\begin{array}{c}
\xymatrix@=1pc{
& {\bullet_3} \ar@(ur,ul)[]_{\alpha_3} \ar[ddl]_{\beta_3} \\ \\
{\bullet_1} \ar@(ul,dl)[]_{\alpha_1} \ar[rr]_{\beta_1}
&& {\bullet_2} \ar@(dr,ur)[]_{\alpha_2} \ar[uul]_{\beta_2}
}
\\
(\alpha_1) (\alpha_2) (\alpha_3) (\beta_1 \beta_2 \beta_3)
\end{array}
&
\begin{array}{c}
\xymatrix{
{\circ} \ar@{-}@(ur,ul)[]_3 \ar@{-}@(l,dl)[]_1 \ar@{-}@(r,dr)[]^2
}
\end{array}
\\ \hline
3'' &
\begin{array}{c}
\xymatrix@=1pc{
& {\bullet_3} \ar@<-0.5ex>[ddl]_{\alpha_2} \ar@<0.5ex>[ddl]^(.4){\alpha_5} \\ \\
{\bullet_1} \ar@<-0.5ex>[rr]_{\alpha_0} \ar@<0.5ex>[rr]^(.4){\alpha_3} &&
{\bullet_2} \ar@<-0.5ex>[uul]_{\alpha_1} \ar@<0.5ex>[uul]^(.4){\alpha_4}
}
\\
(\alpha_4 \alpha_2 \alpha_0) (\alpha_5 \alpha_3 \alpha_1)
\end{array}
&
\begin{array}{c}
\xymatrix@=0.75pc{
&& {_2} \ar@{-}@(r,r)[ddd] \\ \\ \\
&& {\circ} \ar@{-}@(l,l)[uuu] \ar@{-}@(ul,ul)[ddll] \ar@{-}@(dl,dl)[ddrr]
\\ \\
{_1} \ar@{-}@(dr,dr)[uurr]
&& && {_3} \ar@{-}@(ur,ur)[uull] \\
{}
}
\end{array}
\\ \hline
\end{array}
\]
\caption{The connected triangulation quivers with at most 3 vertices.
We list the triangulation quivers and the corresponding ribbon graphs,
where we write the permutation~$f$ in cycle form below each quiver.}
\label{tab:quivers}
\end{table}

\begin{remark}
As the entries in rows $3'$ and $3''$ of Table~\ref{tab:quivers} demonstrate,
two different ribbon graphs can have the same underlying graph (in this case
a node with three loops).
\end{remark}

\begin{remark}
In a triangulation quiver $(Q,f)$,
the permutations $\alpha \mapsto \balpha$ and $\alpha \mapsto f(\alpha)$
are of orders $2$ and $3$, respectively, hence the group
$\PSL_2(\bZ)$, which is the free product of the cyclic groups
$\bZ/2\bZ$ and $\bZ/3\bZ$, acts on the set of arrows $Q_1$.
This action is transitive when $Q$ is connected.
\end{remark}

The dual of a triangulation quiver $(Q,f)$ need not be a triangulation quiver.
However, when it is, then by Proposition~\ref{p:triauniq}, it must be isomorphic
to $(Q,f)$, hence $(Q,f)$ is \emph{self dual}. The next proposition shows that
there are only two connected self dual triangulation quivers.

\begin{proposition}
A connected triangulation quiver whose dual is also a triangulation quiver
is isomorphic to one of the two triangulation quivers shown in
Figure~\ref{fig:selfdual}.
\end{proposition}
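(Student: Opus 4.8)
The plan is to translate the two triangulation conditions into a statement about a group acting on the arrow set, and then read off the answer from the subgroup lattice of a small finite group. Recall that on $Q_1$ we have the fixed-point-free involution $\alpha \mapsto \balpha$, the permutation $f$, and the permutation $g$ defined by $g(\alpha) = \overline{f(\alpha)}$; writing $\iota$ for the involution this says $g = \iota f$. The hypothesis that $(Q,f)$ is a triangulation quiver gives $f^3 = \mathrm{id}$, while the hypothesis that its dual $(Q,g)$ is a triangulation quiver gives $g^3 = \mathrm{id}$, and of course $\iota^2 = \mathrm{id}$. Hence the subgroup $\langle \iota, f \rangle \le \mathrm{Sym}(Q_1)$ satisfies the relations $s^2 = t^3 = (st)^3 = 1$ and is therefore a quotient of the $(2,3,3)$ von Dyck (triangle) group $\Delta = \langle s,t \mid s^2 = t^3 = (st)^3 = 1\rangle$, which is isomorphic to the tetrahedral group $A_4$ of order $12$. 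First I would record this identification $\Delta \cong A_4$ and fix the correspondence $\iota \leftrightarrow s$, $f \leftrightarrow t$, $g \leftrightarrow st$.

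Next I would bring in connectedness. By the remark following Proposition~\ref{p:triauniq} the group $\PSL_2(\bZ) = \langle s, t \mid s^2 = t^3 = 1\rangle$ acts transitively on $Q_1$ because $Q$ is connected; since our action factors through the further quotient $A_4$, the arrow set $Q_1$ is a transitive $A_4$-set. Transitive $A_4$-sets are classified, up to isomorphism, by conjugacy classes of subgroups $H \le A_4$ via $Q_1 \cong A_4/H$, with $H$ a point stabilizer. It then remains to impose the last structural constraint, that $\iota$ act without fixed points. Since $\iota$ corresponds to a fixed element of order $2$ in $A_4$ — necessarily one of the three double transpositions, which form a single conjugacy class and together with the identity constitute the normal Klein four-subgroup $V_4 \trianglelefteq A_4$ — the coset $gH$ is fixed by $\iota$ exactly when $g^{-1}\iota g \in H$. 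Thus $\iota$ is fixed-point-free if and only if $H$ contains no double transposition, i.e.\ if and only if $H \cap V_4 = \{1\}$.

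The enumeration is then immediate. Running through the subgroups of $A_4$ — the trivial group, the three order-$2$ subgroups generated by double transpositions, the normal $V_4$, the four Sylow $3$-subgroups, and $A_4$ itself — the only ones meeting $V_4$ trivially are $H = \{1\}$ and $H$ cyclic of order $3$. These yield precisely two arrow sets: the regular action with $|Q_1| = 12$, and the natural action of $A_4 \le S_4$ on $4$ points with $|Q_1| = 4$. Conversely, any pair $(\iota, f)$ consisting of a fixed-point-free involution and a permutation reconstructs a ribbon quiver (the two axioms of Definition~\ref{def:ribbon} hold automatically, the two arrows ending at a vertex $v$ being the preimage under $f$ of the two arrows starting at $v$), and here $f^3 = g^3 = \mathrm{id}$ makes both it and its dual triangulation quivers, while transitivity gives connectedness. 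Finally I would make the two quivers explicit and match them to Figure~\ref{fig:selfdual}: the $4$-arrow case is the two-vertex quiver with $f = (\alpha\beta\gamma)(\eta)$ (quiver~$2$ of Table~\ref{tab:quivers}), whose ribbon graph is two nodes joined by an edge with a loop attached at one of them; the $12$-arrow case has six vertices with $f$ and $g$ each a product of four $3$-cycles, and its ribbon graph (nodes $=$ the four $g$-cycles, edges $=$ the six $\iota$-cycles) is the complete graph on four nodes, i.e.\ the $1$-skeleton of the tetrahedron. Since $|Q_1|$ differs, the two are non-isomorphic, so these are exactly the two quivers of the statement.

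The one genuine obstacle is the input identification $\Delta(2,3,3) \cong A_4$ together with the translation of \emph{fixed-point-free} into the arithmetic condition $H \cap V_4 = \{1\}$; once these are in place the subgroup count and the reconstruction are routine. (As a consistency check on self-duality: the tetrahedron is a self-dual graph, and in the $4$-arrow case a direct computation gives $g = (\alpha)(\beta\eta\gamma)$, of the same cycle type $(3,1)$ as $f$, so that Proposition~\ref{p:triauniq} indeed forces $(Q,f) \cong (Q,g)$.)
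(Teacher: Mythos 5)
Your proof is correct. The paper defers the proofs of the combinatorial statements in this section "to appear elsewhere," but the remark it places immediately before this proposition (the $\PSL_2(\bZ)$-action on $Q_1$, transitive when $Q$ is connected) is precisely the setup your argument completes: adding $g^3=\mathrm{id}$ makes the action factor through the $(2,3,3)$ von Dyck group $\cong A_4$, fixed-point-freeness of $\iota$ rules out every stabilizer meeting $V_4$ nontrivially, and the two surviving conjugacy classes of subgroups ($\{1\}$ and the Sylow $3$-subgroups) give exactly the tetrahedron and the punctured monogon. The only step worth making explicit is that an isomorphism of ribbon quivers is the same thing as an isomorphism of $\Delta$-sets with the marked generators $s\mapsto\iota$, $t\mapsto f$ (the vertex bijection being determined by the arrow bijection since vertices are $\iota$-orbits), which is what lets you invoke the conjugacy classification; this is routine and your reconstruction paragraph essentially covers it.
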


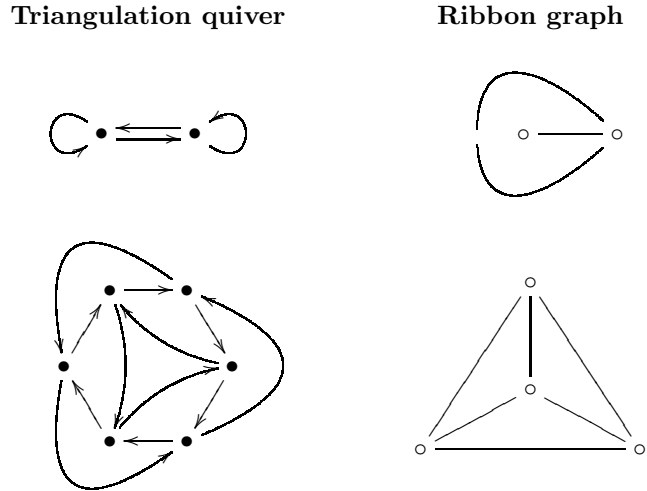
\begin{figure}
\[
\begin{array}{ccc}
 \text{\textbf{Triangulation quiver}}
&& \text{\textbf{Ribbon graph}} \\ \\
\begin{array}{c}
\xymatrix{
{\bullet} \ar@(ul,dl)[] \ar@<-0.5ex>[r]
& {\bullet} \ar@(dr,ur)[] \ar@<-0.5ex>[l]
}
\end{array}
& \qquad &
\begin{array}{c}
\xymatrix@=0.75pc{
&& {\circ} \ar@{-}[rr]
&& {\circ} \ar@{-}@(ul,u)[lll]_{} \ar@{-}@(dl,d)[lll]_{}
}
\end{array}
\\ \\ \\
\begin{array}{c}
\xymatrix@=0.5pc{
& {\bullet} \ar[rr] \ar@/^/[dddd]
&& {\bullet} \ar[ddr] \ar@/_3pc/[ddlll] \ar@{<-}@/^3pc/[dddd] \\ \\
{\bullet} \ar[uur] & && &
{\bullet} \ar[ddl] \ar@/^/[uulll] \ar@{<-}@/_/[ddlll] \\ \\
& {\bullet} \ar[uul]
&& {\bullet} \ar[ll] \ar@{<-}@/^3pc/[uulll]
}
\end{array}
&&
\begin{array}{c}
\xymatrix@=1pc{
&& {\circ} \ar@{-}[dddll] \ar@{-}[dddrr] \ar@{-}[dd] \\ \\
&& {\circ} \ar@{-}[dll] \ar@{-}[drr] \\
{\circ} \ar@{-}[rrrr] &&&& {\circ}
}
\end{array}
\end{array}
\]
\caption{The connected self dual triangulation quivers and the corresponding
ribbon graphs, a punctured monogon (top) and a tetrahedron (bottom).}
\label{fig:selfdual}
\end{figure}

We call the ribbon graph with two nodes appearing in Figure~\ref{fig:selfdual}
a \emph{punctured monogon}, for reasons that will become apparent in
Section~\ref{sec:surface}. Similarly, we call the ribbon graph with
four nodes appearing in Figure~\ref{fig:selfdual} a \emph{tetrahedron}.
In the triangulation quiver corresponding to the tetrahedron there are four
$f$-cycles and four $g$-cycles, each of length $3$, and for any arrow $\alpha$,
each of the arrows $\alpha, f(\alpha), \balpha, f(\balpha)$ belongs to a
different $g$-cycle.

\subsection{Block decomposition of triangulation quivers}
\label{ssec:blocks}

In this section we analyze the structure of triangulation quivers in terms
of three types of building blocks. This is similar in spirit to the
block decomposition of~\cite[\S13]{FST08}, however the number of blocks in
our case is smaller and only full matchings are used.

\begin{definition}
A \emph{block} is one of the three pairs, each consisting of a quiver
and a permutation on its set of arrows, shown in Figure~\ref{fig:blocks}.
A vertex of a block marked with white circle ($\circ$) is called
an \emph{outlet}.
\end{definition}

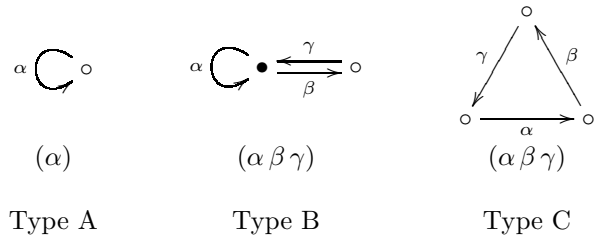
\begin{figure}
\[
\begin{array}{ccccc}
\begin{array}{c}
\xymatrix{
{\circ} \ar@(ul,dl)[]_{\alpha}
}
\end{array}
& &
\begin{array}{c}
\xymatrix{
{\bullet} \ar@(ul,dl)[]_{\alpha}
\ar@<-0.5ex>[r]_{\beta} & 
{\circ} \ar@<-0.5ex>[l]_{\gamma}
}
\end{array}
& &
\begin{array}{c}
\xymatrix@=1pc{
& {\circ} \ar[ddl]_{\gamma} \\ \\
{\circ} \ar[rr]_{\alpha} &&
{\circ} \ar[uul]_{\beta}
}
\end{array}
\\
(\alpha)
& &
(\alpha \, \beta \, \gamma)
& &
(\alpha \, \beta \, \gamma)
\\ \\
\text{Type A}
& &
\text{Type B}
& &
\text{Type C}
\end{array}
\]
\caption{Blocks for triangulation quivers. The permutation is given in
cycle form below each quiver.}
\label{fig:blocks}
\end{figure}

Let $B_1, B_2, \dots, B_s$ be a collection of blocks.
Denote by $V_1, V_2, \dots, V_s$ their corresponding sets of outlets
and let $V = \bigsqcup_{i=1}^s V_i$ be their disjoint union.
A \emph{matching} on $V$ is an involution $\theta \colon V \to V$ without
fixed points such that $\theta(V_i) \cap V_i$ is empty for each
$1 \leq i \leq s$
(in other words, an outlet cannot be matched to an outlet in the same block).

Given a collection of blocks and a matching $\theta$ on their outlets,
construct a quiver $Q$ and a permutation $f$ on its set of arrows as follows;
take the disjoint union of the blocks and identify each outlet
$v \in V$ with the outlet $\theta(v)$ to obtain $Q$.
The permutation $f$ on the set of arrows of $Q$ is induced by the permutations
on each of the blocks.

\begin{definition}
A pair $(Q,f)$ consisting of a quiver $Q$ and a permutation $f$ on its set of
arrows is \emph{block-decomposable} if it can be obtained by the above
procedure.
\end{definition}

\begin{proposition}
A block-decomposable pair $(Q,f)$ is a triangulation quiver.
Conversely, any triangulation quiver is block-decomposable.
\end{proposition}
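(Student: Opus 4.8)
The plan is to prove the two implications separately, treating the forward direction (block-decomposable $\Rightarrow$ triangulation quiver) as a direct local verification and the converse (triangulation quiver $\Rightarrow$ block-decomposable) as the substantive part, where the combinatorics of the $f$-cycles does the work.

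For the forward direction I would first note that the gluing procedure identifies only outlets and never identifies arrows, so $Q_1$ is the disjoint union of the arrow sets of the blocks and the induced permutation $f$ restricts on each block to its prescribed permutation. I would then check the two ribbon axioms locally. In each of the three blocks the solid vertices already carry two incoming and two outgoing arrows, while every outlet carries exactly one incoming and one outgoing arrow; hence after matching a pair of outlets the merged vertex has in-degree and out-degree $2$, giving condition~\eqref{it:ribbon:deg2} of Definition~\ref{def:ribbon}. The relation ``$f(\alpha)$ starts where $\alpha$ ends'' holds inside each block by inspection and is unaffected by identifying vertices, giving~\eqref{it:ribbon:f}. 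Concretely, at a merged vertex the two incoming arrows come from the two distinct blocks being glued (this is where $\theta(V_i)\cap V_i=\emptyset$ is used) and $f$ sends each to the outgoing arrow of the same block, so $f$ is a genuine permutation that preserves blocks. Finally $f^3=\mathrm{id}$ because the block permutations are products of cycles of length $1$ (Type~A) and $3$ (Types~B and~C), and blocks are preserved.

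For the converse, I would start from a triangulation quiver $(Q,f)$ and use $f^3=\mathrm{id}$ to write $f$ as a disjoint union of cycles of length $1$ and $3$, turning each $f$-cycle into a block. A $1$-cycle is a fixed arrow with $f(\alpha)=\alpha$, forcing $s(\alpha)=t(\alpha)$, i.e.\ a loop, which is Type~A. For a $3$-cycle $(\alpha\,\beta\,\gamma)$ the ribbon axiom forces a closed path $a\to b\to c\to a$, and the key step is to use the degree-$2$ condition to show that the number of loops among the three arrows is either $0$ or $1$: if two of $a,b,c$ coincide one gets a single loop (Type~B, with the repeated vertex internal), while all three coinciding would force three loops at one vertex, contradicting out-degree $2$; the loopless case is Type~C on three distinct vertices.

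The heart of the argument, and the step I expect to be most delicate, is reconstructing the matching $\theta$ and verifying faithfulness. At each vertex $v$ the map $f$ restricts to a bijection from the two arrows ending at $v$ to the two arrows starting at $v$, so exactly two strands, each a pair of an incoming arrow and its $f$-image, pass through $v$. I would show these two strands lie in a common $f$-cycle precisely when $v$ is the loop vertex of a Type~B block, since a $3$-cycle visiting $v$ twice must have a loop at $v$; in that case $v$ is internal and yields no outlet, and in every other case the two strands lie in two distinct cycles, so $v$ appears as an outlet of two different blocks. Defining $\theta$ to match these two outlets gives a fixed-point-free involution with $\theta(V_i)\cap V_i=\emptyset$, and it then remains to check that gluing the blocks along $\theta$ returns $Q$ with its permutation: identifying the two outlets at each non-internal $v$ restores the incidence of all four arrows at $v$, and since $f$ was assembled cycle-by-cycle it agrees with the original $f$ on the nose. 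This final bookkeeping is routine but must be done carefully, and I would cross-check the loop/no-loop dichotomy and the outlet count against the entries of Table~\ref{tab:quivers}.
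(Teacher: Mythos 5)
Your argument is correct, and since the paper explicitly omits proofs of the combinatorial statements in Section~\ref{sec:quivers}, there is no written proof to diverge from; your route --- partitioning $Q_1$ into $f$-cycles, classifying each cycle by its loop count into types A, B, C, and defining the matching via the two $f$-strands through each vertex --- is precisely the argument the paper's definitions of blocks and matchings are set up to make. One tiny quibble: the condition $\theta(V_i)\cap V_i=\emptyset$ is not actually what makes $f$ a block-preserving permutation in the forward direction (that holds regardless), so the parenthetical there is dispensable, but nothing depends on it.
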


\begin{example}
Since each of the blocks of types A and B has only one outlet, there is
only one way to match a pair consisting of two such blocks. In contrast,
there are two different ways to completely match two blocks of type C,
yielding the triangulation quivers $3b$ and $3''$ of Table~\ref{tab:quivers}.
The block decompositions of the triangulation quivers with at most three
vertices are given in Table~\ref{tab:block}.

\begin{table}
\begin{center}
\begin{tabular}{cc}
\textbf{Quiver} & \textbf{Block decomposition} \\
\hline
$1$ & A, A \\
$2$ & B, A \\
$3a$ & B, B \\
$3b$ & C, C \\
$3'$ & C, A, A, A \\
$3''$ & C, C
\end{tabular}
\end{center}
\caption{Block decompositions of the triangulation quivers with at most three
vertices. The numbers of the quivers refer to Table~\ref{tab:quivers}.}
\label{tab:block}
\end{table}
\end{example}

\begin{remark}
In a block decomposition of a triangulation quiver $(Q,f)$, the blocks of type
A are in bijection with the elements of $Q_1^f$, whereas those of type B are
in bijection with the elements of $Q_1^g$.
\end{remark}

In the theory of cluster algebras, quivers without loops (i.e.\ cycles of
length~$1$) and $2$-cycles (cycles of length~$2$) play an important role. The
block decomposition allows to quickly characterize those triangulation
quivers without loops and $2$-cycles. Indeed, a loop can only arise from a
block of types A or B, whereas a $2$-cycle arises either from a block
of type B or from gluing two blocks of type C, identifying two pairs of
vertices at opposing directions of the arrows. This can be rephrased as
follows.

\begin{proposition} \label{p:tri2cyc}
Let $(Q,f)$ be a triangulation quiver. Then the length of any non-trivial
cycle in $Q$ is at least 3 if and only if the following conditions hold:
\begin{enumerate}
\renewcommand{\theenumi}{\roman{enumi}}
\item
There are no arrows fixed by the permutation $f$; and

\item
the length of any cycle of the permutation $g$ is at least $3$.
\end{enumerate}
\end{proposition}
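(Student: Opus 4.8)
The plan is to reduce the statement to two separate equivalences, one for loops (cycles of length $1$) and one for $2$-cycles, since a non-trivial cycle has length at least $3$ precisely when $Q$ has neither loops nor $2$-cycles. For the loops I would simply invoke the decomposition of the set of loops as the disjoint union $Q_1^f \cup Q_1^g$ recorded earlier in the section: $Q$ has no loops if and only if $Q_1^f = \emptyset$ and $Q_1^g = \emptyset$. Now condition~(i) is exactly $Q_1^f = \emptyset$, while $Q_1^g = \emptyset$ says precisely that $g$ has no cycle of length $1$; so the loop-free condition, together with the absence of length-$2$ cycles of $g$, is what condition~(ii) supplies. It therefore remains to treat $2$-cycles.

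The heart of the argument is the claim that, once $Q$ has no loops, $Q$ contains a $2$-cycle if and only if $g$ has a cycle of length $2$. I would first record the facts used repeatedly: the involution $\alpha \mapsto \bar\alpha$ is fixed-point free (each vertex has two distinct outgoing arrows), so $f(\alpha)$ and $g(\alpha) = \overline{f(\alpha)}$ are always the two distinct arrows issuing from $t(\alpha)$. For the easy direction, given a $2$-cycle $g(\alpha)=\beta$, $g(\beta)=\alpha$ with $\alpha \neq \beta$, I would unwind $g = \overline{f(\cdot)}$ to obtain $f(\alpha)=\bar\beta$ and $f(\beta)=\bar\alpha$, and then read off from the source/target identities $t(\alpha)=s(\beta)$ and $t(\beta)=s(\alpha)$ that $\alpha$ and $\beta$ run in opposite directions between two vertices, which are distinct since there are no loops; this is a $2$-cycle of $Q$.

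For the converse, start with a $2$-cycle $\alpha\colon i \to j$, $\beta\colon j \to i$ in $Q$ with $i \neq j$. Since $\beta$ starts at $j = t(\alpha)$, it is one of $f(\alpha)$ or $g(\alpha)$. The decisive move is to rule out $\beta = f(\alpha)$: if $\beta = f(\alpha)$, then following the $f$-orbit $\alpha, f(\alpha), f^2(\alpha)$ and using $f^3 = \mathrm{id}$ forces $f^2(\alpha)$ to be a loop at $i$, contradicting the loop-free hypothesis. Hence $\beta = g(\alpha)$. Running the identical argument with the roles of $\alpha$ and $\beta$ exchanged, examining the two arrows out of $i = t(\beta)$, namely $f(\beta)$ and $g(\beta)$, and excluding $\alpha = f(\beta)$ by the same loop-forcing computation, gives $\alpha = g(\beta)$, so $\{\alpha,\beta\}$ is a $2$-cycle of $g$. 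The main obstacle is exactly this loop-forcing step: one must track carefully how sources and targets transform under $f$ and exploit $f^3 = \mathrm{id}$ to see that the ``wrong'' pairing collapses a three-element $f$-orbit into a configuration containing a loop. With the claim established, the proposition follows by combining it with the loop analysis, since conditions~(i) and~(ii) together are then equivalent to the absence of loops and of $2$-cycles, that is, to every non-trivial cycle having length at least $3$.
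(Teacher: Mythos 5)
Your proof is correct, but it takes a genuinely different route from the paper. The paper deduces the proposition from the block decomposition of triangulation quivers (Section~\ref{ssec:blocks}): blocks of type A are in bijection with $Q_1^f$ and blocks of type B with $Q_1^g$, a loop can only come from a block of type A or B, and a $2$-cycle comes either from a block of type B or from gluing two blocks of type C along two matched outlets with opposite orientations, which is exactly the configuration producing a $g$-cycle of length~$2$. You instead argue directly from the definitions: you use the decomposition of the set of loops as $Q_1^f \sqcup Q_1^g$ to handle cycles of length~$1$, and for cycles of length~$2$ you prove, under the loop-free hypothesis, that $2$-cycles of $Q$ correspond exactly to $2$-cycles of $g$, ruling out the pairing $\beta = f(\alpha)$ by the observation that $f^3 = \mathrm{id}$ would then force $f^2(\alpha)$ to be a loop. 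Both the case split and the loop-forcing computation check out (in particular the $f$-orbit of $\alpha$ must have length exactly $3$, so $f^2(\alpha)$ is a genuine third arrow whose source and target both equal $s(\alpha)$). Your argument is more self-contained, since it does not require first establishing that every triangulation quiver is block-decomposable — a statement the paper records without proof; the paper's argument is shorter once that machinery is available and makes the geometric origin of loops and $2$-cycles (self-folded triangles, pairs of triangles glued along two edges) more transparent. One small point worth making explicit in your write-up: the restriction to the loop-free case in your $2$-cycle claim is genuinely necessary, since a block of type~B contributes a $2$-cycle to $Q$ that need not be a $g$-cycle of length~$2$ — but it also contributes a loop, so this does not affect the equivalence you prove.
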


The block decomposition is also useful in proving the next statement.

\begin{proposition} \label{p:gcycles}
Let $(Q,f)$ be a triangulation quiver. Then the number of cycles of the
permutation $g$ does not exceed the number of vertices of $Q$, and equality
holds if and only if $(Q,f)$ is a disjoint union of any of the triangulation
quivers $1$, $2$, $3a$ or $3b$ of Table~\ref{tab:quivers}.
\end{proposition}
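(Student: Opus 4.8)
The plan is to reduce to the connected case and then read off both sides from the classical combinatorics of maps on surfaces encoded by the associated ribbon graph, using the block decomposition only for the final equality analysis. Both the number of $g$-cycles and the number of vertices are additive over the connected components of $(Q,f)$ (the permutation $g$ preserves components), so it suffices to assume $Q$ connected and show that the number of $g$-cycles is at most $|Q_0|$, with equality exactly for the quivers $1$, $2$, $3a$, $3b$. The key object is the ribbon graph $(H,\iota,\sigma)$ attached to $(Q,f)$, with $H=Q_1$, $\iota(\alpha)=\balpha$ and $\sigma=g$. Under the dictionary of Table~\ref{tab:ribbon}, its nodes (the $\sigma$-cycles) are the $g$-cycles, its edges (the $\iota$-cycles) are the vertices of $Q$ -- each $\iota$-cycle has length $2$, since every vertex has exactly two outgoing arrows -- and its faces are the cycles of $\iota\sigma=f$. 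A connected ribbon graph thickens to a closed oriented surface, so Euler's formula gives
\[
|\Omega_g| - |Q_0| + n_f = 2 - 2\genus ,
\]
where $n_f$ is the number of $f$-cycles and $\genus\geq 0$.

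Next I would pin down $n_f$ by parity. Since each vertex has two outgoing arrows, $|Q_1|=2|Q_0|$ is even, while $f^3=\mathrm{id}$ forces every $f$-cycle to have length $1$ or $3$, both odd. A sum of $n_f$ odd numbers equal to the even number $|Q_1|$ must have $n_f$ even, and as $Q$ is nonempty, $n_f\geq 2$. Substituting into the Euler relation yields
\[
|\Omega_g| = |Q_0| + (2-n_f) - 2\genus \leq |Q_0| ,
\]
with equality if and only if $n_f=2$ and $\genus=0$. This establishes the inequality in full generality.

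It then remains to classify the connected triangulation quivers with $n_f=2$ and genus zero. Here I would invoke the block decomposition: each block of type A, B or C contributes exactly one $f$-cycle, and gluing identifies only outlets (arrows keep their block), so $n_f$ equals the number of blocks; thus $n_f=2$ means $(Q,f)$ is built from exactly two blocks. Enumerating admissible matchings, the combinations $\{A,C\}$ and $\{B,C\}$ are impossible, since they would force two outlets of the same type-C block to be matched to each other, whereas $\{A,A\}$, $\{A,B\}$, $\{B,B\}$ produce respectively the quivers $1$, $2$, $3a$, and $\{C,C\}$ produces $3b$ or $3''$ of Table~\ref{tab:quivers}. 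Because $3''$ has genus $1$ (its Euler relation reads $|\Omega_g|-3+2=0$), the condition $\genus=0$ singles out exactly $1$, $2$, $3a$, $3b$. Hence equality in the connected case holds precisely for these four quivers, and in general precisely when every connected component is one of them.

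The step I expect to demand the most care is the justification of the Euler relation: identifying $f$ as the face permutation with the correct orientation convention, and invoking orientability and the genus formula for the thickened surface. The block-matching enumeration is routine once $n_f=2$ is secured, the only delicate point being the two matchings in the $\{C,C\}$ case, which I would separate by their genus as above; alternatively, one may bypass this by appealing directly to the classification of connected triangulation quivers with at most three vertices recorded in Table~\ref{tab:quivers} and checking $|\Omega_g|$ against $|Q_0|$ entry by entry, which gives equality for $1$, $2$, $3a$, $3b$ and strict inequality for $3'$ and $3''$.
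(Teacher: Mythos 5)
Your argument is correct. The paper does not print a proof of Proposition~\ref{p:gcycles} (the combinatorial statements of Section~\ref{sec:quivers} are deferred elsewhere), but the sentence preceding it signals that the intended tool is the block decomposition of Section~\ref{ssec:blocks}; your route is genuinely different for the inequality part. You pass to the associated ribbon graph, where the $g$-cycles are the nodes, the $\iota$-orbits $\{\alpha,\balpha\}$ are the edges (so there are $|Q_0|$ of them), and the $f$-cycles are the faces, and you combine the Euler relation $|\Omega_g|-|Q_0|+n_f=2-2\genus$ with the parity observation that $|Q_1|=2|Q_0|$ is even while every $f$-cycle has odd length $1$ or $3$, forcing $n_f\geq 2$. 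This yields the inequality together with the precise defect $|Q_0|-|\Omega_g|=n_f-2+2\genus$, which is more information than the statement asks for and meshes nicely with the surface realizations of Section~\ref{sec:surface} (e.g.\ it recovers Eq.~\eqref{e:nvertex} and explains why $3''$, of genus $1$, fails equality). The block decomposition then enters only to classify the equality case: since each block carries exactly one $f$-cycle, $n_f=2$ means exactly two blocks, and your enumeration of admissible matchings (ruling out $\{A,C\}$ and $\{B,C\}$ because a type-C block would have to match two of its own outlets, and discarding $3''$ from the $\{C,C\}$ case by its genus) is complete; the shortcut via $|Q_1|\in\{2,4,6\}$ and Table~\ref{tab:quivers} is also valid. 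A purely block-theoretic proof would instead have to track how each gluing of outlets changes $|\Omega_g|-|Q_0|$; your Euler-characteristic argument buys a cleaner global invariant at the cost of invoking orientability and the genus formula for the surface obtained by thickening the ribbon graph, which is standard for rotation systems and is implicitly used throughout Section~\ref{sec:surface} anyway.
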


\section{Triangulations of marked surfaces and their quivers}
\label{sec:surface}

In this section we explain how (ideal) triangulations of marked
surfaces give rise to triangulation quivers. Marked surfaces
were considered by Fomin, Shapiro and Thurston~\cite{FST08} in their
work on cluster algebras from surfaces. Let us recall the setup and
definitions.

A \emph{marked surface} is a pair $(S,M)$ consisting of a compact,
connected, oriented, Riemann surface $S$ (possibly with boundary
$\partial S$) and a finite non-empty set $M$ of points in $S$,
called \emph{marked points}, such that each connected component of
$\partial S$ contains at least one point from $M$. The points in $M$
which are not on $\partial S$ are called \emph{punctures}.
We exclude the following surfaces:
\begin{itemize}
\item
a sphere with one or two punctures;

\item
an unpunctured digon;
\end{itemize}
(a \emph{sphere} is a surface of genus $0$ with empty boundary, a \emph{disc}
is a surface of genus $0$ with one boundary component, an \emph{$m$-gon} is a
disc with $m$ marked points on its boundary, and for $m=1,2,3$ an $m$-gon is
called \emph{monogon}, \emph{digon} and \emph{triangle}, respectively).

Up to homeomorphism, $(S,M)$ is determined by the following discrete data:
\begin{itemize}
\item
the genus $g$ of $S$;

\item
the number $b \geq 0$ of boundary components;

\item
the sequence $(n_1,n_2,\dots,n_b)$ where
$n_i \geq 1$ is the number of marked points on the $i$-th boundary component,
considered as a multiset;

\item
the number $p$ of punctures.
\end{itemize}

\subsection{Triangulation quivers from triangulations}

\begin{figure}
\begin{align*}
\begin{array}{c}
\xymatrix@=1pc{
& & & {\circ} \ar@{-}@/^/[dd]^k \\
{\circ} \ar@{-}@/_/[drrr]_i \ar@{-}@/^/[urrr]^j \\
& & & {\circ}
}
\end{array}
& &
\begin{array}{c}
\xymatrix@=0.5pc{
{\bullet_j} \ar[drr]^{f(\alpha)} \\
&& {\bullet_k} \ar[dll] \\
{\bullet_i} \ar[uu]^{\alpha}
}
\end{array}
\end{align*}
\caption{A triangle (left) and the corresponding block in the triangulation
quiver (right).}
\label{fig:triquiver}
\end{figure}
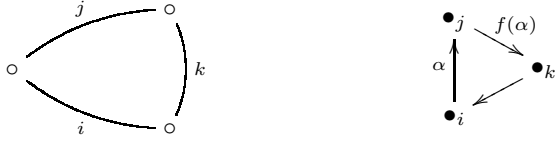

Let $(S,M)$ be a marked surface. An \emph{arc} $\gamma$ in $(S,M)$
is a curve in $S$ satisfying the following:
\begin{itemize}
\item
the endpoints of $\gamma$ are in $M$;
\item
$\gamma$ does not intersect itself, except that its endpoints may
coincide;
\item
the relative interior of $\gamma$ is disjoint from $M \cup \partial S$;
\item
$\gamma$ does not cut out an unpunctured monogon or an unpunctured digon.
\end{itemize}
Arcs are considered up to isotopy.
Two arcs are \emph{compatible} if there are curves in their respective
isotopy classes whose relative interiors do not intersect.
A \emph{triangulation} of $(S,M)$ is a maximal collection of pairwise
compatible arcs. The arcs of a triangulation cut the surface $S$ into
ideal triangles. The three sides of an ideal triangle need not be
distinct. Sides on the boundary of $S$ are called \emph{boundary
segments}.

\begin{definition}
Let $\tau$ be a triangulation of a marked surface $(S,M)$
which is not an unpunctured monogon. Construct a quiver
$Q_\tau$ and $f_\tau \colon (Q_\tau)_1 \to (Q_\tau)_1$ as follows:
\begin{itemize}
\item
The vertices of $Q_\tau$ are the arcs of $\tau$ together with
the boundary segments.
\item
At each vertex corresponding to a boundary segment add a loop
$\delta$ and set $f(\delta)=\delta$.

\item
For each ideal triangle in $\tau$ with sides $i, j, k$
(which may be arcs or boundary segments)
arranged in a clockwise order induced by the orientation of $S$,
add three arrows $i \xrightarrow{\alpha} j$, $j \xrightarrow{\beta} k$,
$k \xrightarrow{\gamma} i$ and set
$f(\alpha)=\beta$, $f(\beta)=\gamma$, $f(\gamma)=\alpha$ as in
Figure~\ref{fig:triquiver}.
\end{itemize}
\end{definition}

The next statement is immediate from the definitions, observing that in any
triangulation $\tau$, an arc $\gamma$ of $\tau$ is either the side of two
distinct triangles or there exists a triangle $\Delta$ such that two of its
sides are $\gamma$. In the latter case we say that the triangle $\Delta$ is
\emph{self-folded} and $\gamma$ is its \emph{inner side}.

\begin{lemma}
$(Q_\tau,f_\tau)$ is a triangulation quiver.
\end{lemma}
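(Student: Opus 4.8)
The plan is to verify the three defining properties directly: that $f_\tau$ is a permutation of the arrows with $f_\tau^3 = \mathrm{id}$, that $f_\tau(\alpha)$ always starts where $\alpha$ ends (condition~\eqref{it:ribbon:f}), and that every vertex has exactly two incoming and two outgoing arrows (condition~\eqref{it:ribbon:deg2}). The first two properties are almost immediate from the construction, and the real work is the local arrow count, whose only subtlety comes from the self-folded triangles.

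First I would note that the arrow set of $Q_\tau$ is partitioned into the triples contributed by the ideal triangles together with the loops $\delta$ attached to the boundary segments. On each triangle $f_\tau$ cyclically permutes its three arrows, while each boundary loop is a fixed point; as these orbits are disjoint and exhaust $(Q_\tau)_1$, the map $f_\tau$ is a well-defined permutation with $f_\tau^3 = \mathrm{id}$. Condition~\eqref{it:ribbon:f} then holds because inside a triangle $f_\tau$ carries $i \xrightarrow{\alpha} j$ to $j \xrightarrow{\beta} k$, and a boundary loop starts and ends at its own vertex.

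The heart of the argument is condition~\eqref{it:ribbon:deg2}, which I would establish by cases on the type of vertex. A boundary segment is a side of exactly one ideal triangle, since the boundary lies on its other side; this triangle contributes one incoming and one outgoing arrow, and the attached loop $\delta$ contributes one more of each, giving two and two. For an arc $\gamma$ I would invoke the dichotomy recorded just before the statement: either $\gamma$ is a side of two distinct triangles, in which case each triangle supplies exactly one incoming and one outgoing arrow, or $\gamma$ is the inner side of a self-folded triangle $\Delta$, whose three clockwise sides are $\gamma$, $\gamma$ and the enclosing loop $\ell$.

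I expect the self-folded case to be the main obstacle, so I would spell it out: the three arrows of $\Delta$ read $\gamma \to \gamma$, $\gamma \to \ell$ and $\ell \to \gamma$, so the first is a loop at $\gamma$ (counted once as incoming and once as outgoing), and together with the outgoing arrow $\gamma \to \ell$ and the incoming arrow $\ell \to \gamma$ this endows $\gamma$ with exactly two incoming and two outgoing arrows, all coming from $\Delta$. The enclosing arc $\ell$ receives one incoming and one outgoing arrow from $\Delta$ and, bounding a second triangle on its other side, one further arrow of each direction, again totalling two and two. With every vertex type accounted for, both ribbon-quiver axioms hold and $f_\tau^3 = \mathrm{id}$, so $(Q_\tau, f_\tau)$ is a triangulation quiver.
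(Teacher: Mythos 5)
Your proof is correct and follows exactly the route the paper takes: the paper declares the lemma immediate from the definitions after noting the same dichotomy you invoke (an arc is either a side of two distinct triangles or the inner side of a self-folded one), and your write-up simply spells out the resulting vertex-by-vertex degree count, including the correct convention that a loop counts once as incoming and once as outgoing. No gaps.
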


\begin{remark}
When $(S,M)$ is an unpunctured monogon, a triangulation is empty,
there is one boundary segment, and we agree that the associated
triangulation quiver is the one with one vertex shown in the top
row of Table~\ref{tab:quivers}.
\end{remark}

\begin{example}
Figure~\ref{fig:square} shows a triangulation of the square and the
corresponding triangulation quiver. There are four boundary segments and for
each $1 \leq i \leq 4$ the loop  $\delta_i$ corresponds to the boundary segment 
labeled $i$. The permutation $f$ on the arrows is given in cycle form by
$(\alpha_1 \, \alpha_2 \, \alpha_3)(\beta_1 \, \beta_2 \, \beta_3)
(\delta_1)(\delta_2)(\delta_3)(\delta_4)$.
\end{example}

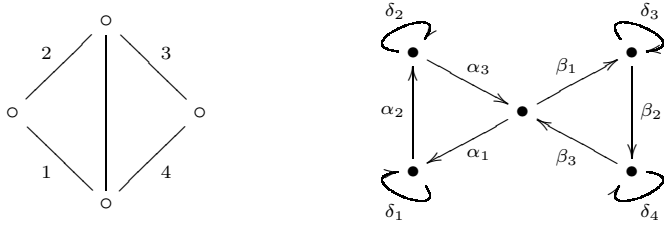
\begin{figure}
\begin{align*}
\begin{array}{c}
\xymatrix{
& {\circ} \ar@{-}[dr]^3 \ar@{-}[dd] \\
{\circ} \ar@{-}[ur]^2 && {\circ} \ar@{-}[dl]^4 \\
& {\circ} \ar@{-}[ul]^1
}
\end{array}
& &
\begin{array}{c}
\xymatrix@=1pc{
{\bullet} \ar@(l,ur)[]^{\delta_2} \ar[drr]^{\alpha_3}
&& && {\bullet} \ar@(ul,r)[]^{\delta_3} \ar[dd]^{\beta_2} \\
&& {\bullet} \ar[dll]^{\alpha_1} \ar[urr]^{\beta_1} \\
{\bullet} \ar@(dr,l)[]^{\delta_1} \ar[uu]^{\alpha_2}
&& && {\bullet} \ar@(r,dl)[]^{\delta_4} \ar[ull]^{\beta_3}
}
\end{array}
\end{align*}
\caption{A triangulation of a square (left) and the corresponding triangulation
quiver (right).}
\label{fig:square}
\end{figure}

\begin{remark} \label{rem:nvertex}
By using Euler characteristic considerations one sees that
if $(S,M)$ is not an unpunctured monogon, then
the number of vertices of the triangulation quiver associated to
any of its triangulations is
\begin{equation} \label{e:nvertex}
6(g-1) + 3(p+b) + 2(n_1 + n_2 + \dots + n_b) ,
\end{equation}
compare~\cite[Proposition~2.10]{FST08}.
\end{remark}

\begin{remark} \label{rem:triblocks}
In terms of the block decomposition of triangulation quivers described
in Section~\ref{ssec:blocks}, 
there is a natural block decomposition of $(Q_\tau, f_\tau)$ induced
by the triangulation $\tau$ with bijections
\begin{align*}
&\text{blocks of type A}
\longleftrightarrow \text{boundary segments}, \\
&\text{blocks of type B}
\longleftrightarrow \text{self-folded triangles in $\tau$}, \\
&\text{blocks of type C}
\longleftrightarrow \text{the other triangles in $\tau$}.
\end{align*}
In addition, there are also bijections
\begin{align*}
\text{cycles of $f_\tau$ of length $1$}
&\longleftrightarrow \text{boundary segments}, \\
\text{cycles of $f_\tau$ of length $3$}
&\longleftrightarrow \text{triangles in $\tau$}, \\
\text{cycles of $g_\tau$ of length $1$}
&\longleftrightarrow \text{self-folded triangles in $\tau$}, \\
\text{cycles of $g_\tau$}
&\longleftrightarrow \text{punctures and boundary components}.
\end{align*}
\end{remark}

We can also obtain the triangulation quiver via a ribbon graph naturally
associated to the triangulation. Informally speaking, one thinks of the
triangulation as the graph, but some modifications are needed at the
boundary components, as in the next definition.

\begin{definition} \label{def:trigraph}
Let $\tau$ be a triangulation of a marked surface $(S,M)$.
Associate to $\tau$ a ribbon graph defined as a graph $(V,E_\tau)$ with cyclic
ordering of the edges around each node as follows:
\begin{itemize}
\item
the set $V$ of nodes consists of the punctures in $M$ and the connected
components of $\partial S$,
\item
the set $E_\tau$ of edges consists of the arcs of $\tau$ and the boundary
segments.
\end{itemize}

Denote by $\pi \colon M \twoheadrightarrow V$ the map taking each puncture to
itself and each marked point on $\partial S$ to the boundary component it
belongs to. In the graph $(V,E_\tau)$, each edge is incident to the nodes
which are the images under $\pi$ of its endpoints.

The cyclic ordering is determined as follows.
If $v \in V$ is a puncture, then the edges incident to $v$ are arcs of $\tau$
and the cyclic ordering of them is the counterclockwise ordering induced by
the orientation of $S$.

If $v \in V$ is a boundary component, we arrange the set $\pi^{-1}(v)$ of
marked points on $v$ in a counterclockwise order $\{q_0,q_1,\dots,q_{n-1}\}$
such that for each $0 \leq i < n$ there is a boundary segment $\eps_i$ whose
endpoints are $q_i,q_{i+1}$ (where indices are taken modulo $n$).
The set of edges incident to $v$ thus consists of the boundary segments
$\eps_i$, which become loops in the graph (see Figure~\ref{fig:node}), and
the arcs incident to any of the marked points $q_i$. Their cyclic ordering is
obtained by taking the arcs incident to $q_0$ in the counterclockwise order
induced by the orientation of $S$, then $\eps_0$, then the arcs incident to
$q_1$ in a counterclockwise order, then $\eps_1$, etc.

\begin{figure}
\[
\xymatrix{
{\circ} \ar@{-}@(ur,ul)[] \ar@{-}@(ul,dl)[]
\ar@{-}@(dl,dr)[] \ar@{-}@(dr,ur)[]
}
\]
\caption{A boundary component with $4$ marked points becomes a node with $4$
loops. The arcs incident to the marked points are to be placed between these
loops.}
\label{fig:node}
\end{figure}
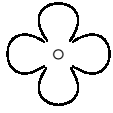
\end{definition}

The next statement is a consequence of the definitions.

\begin{proposition}
For any triangulation $\tau$ of a marked surface $(S,M)$,
the ribbon quiver corresponding under the bijection of
Proposition~\ref{p:ribbon} to the ribbon graph
constructed in Definition~\ref{def:trigraph} 
is the triangulation quiver $(Q_\tau, f_\tau)$.
\end{proposition}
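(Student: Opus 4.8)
The plan is to write down an explicit isomorphism of ribbon quivers between the ribbon quiver $(Q,f)$ obtained from the ribbon graph of Definition~\ref{def:trigraph} (via the bijection of Proposition~\ref{p:ribbon}) and the triangulation quiver $(Q_\tau,f_\tau)$, and to verify that it respects source, target and the permutation. Let $(H,\iota,\sigma)$ be the triple underlying the ribbon graph of $\tau$, so that by construction $Q_1=H$, the vertex set $Q_0$ is the set of $\iota$-cycles, $s(h)$ is the $\iota$-cycle containing $h$, $t=s\sigma$ and $f=\iota\sigma$. An $\iota$-cycle is exactly an edge of $E_\tau$, i.e.\ an arc or a boundary segment, and $(Q_\tau)_0$ is by definition this same set; so on vertices I would take $\vphi_0$ to be this tautological identification, a bijection $Q_0\to(Q_\tau)_0$.

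The heart is the bijection $\vphi_1=\Phi\colon H\to(Q_\tau)_1$. Geometrically a half-edge $h$ is an end of an edge $e$ at a node $v$, and $\sigma(h)$ is the half-edge immediately counterclockwise from $h$ around $v$; I would send $h$ to the unique arrow of $Q_\tau$ with source $e$ sitting in the angular sector at $v$ bounded by $h$ and $\sigma(h)$. Writing $e'$ for the edge of $\sigma(h)$, this sector is an interior corner of a unique ideal triangle $\Delta$ having $e,e'$ among its sides, and $\Phi(h)$ is the arrow $e\to e'$ of $\Delta$ (cf.\ Figure~\ref{fig:triquiver}). There are two degenerate instances: for the two half-edges of a boundary-segment petal one has $\sigma(h)=\iota(h)$, the sector is the whole petal, and $\Phi$ sends the relevant half-edge to the $f$-fixed loop $\delta$ attached to that boundary segment (Figure~\ref{fig:node}); for the unique half-edge at a $1$-valent puncture (the inner vertex of a self-folded triangle) one has $\sigma(h)=h$ and $\Phi(h)$ is the $g$-fixed loop of the corresponding type~B block. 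Bijectivity follows from the identification of triangle corners with consecutive pairs $(h,\sigma(h))$, together with a count: each triangle contributes three corners and each boundary segment one loop, matching $|H|=2A+2B=3T+B$, where $A,B,T$ count arcs, boundary segments and triangles and $3T=2A+B$ by the (triangle, side) incidence count.

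I would then check the three compatibilities. The source condition is immediate, since $s_{Q_\tau}(\Phi(h))=e=\vphi_0(s(h))$, and the target condition follows because the edge of $\sigma(h)$ is $e'$, whence $t_{Q_\tau}(\Phi(h))=e'=\vphi_0(s\sigma(h))=\vphi_0(t(h))$. The compatibility $f_\tau\circ\Phi=\Phi\circ(\iota\sigma)$ is the one real computation: unwinding, $\iota\sigma(h)$ is the other half-edge of $e'$, based at the other endpoint $v'$ of $e'$, and I must show $\Phi$ sends it to the next arrow $e'\to e''$ of the same triangle $\Delta$, which is exactly $f_\tau(e\to e')=f_\tau(\Phi(h))$. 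This reduces to a single orientation statement, which I expect to be the main obstacle and would isolate as a lemma: for an ideal triangle $\Delta$ with a side $e'$ running from $v$ to $v'$, the interior corner of $\Delta$ at $v'$ is the sector lying immediately counterclockwise from the $v'$-end of $e'$ with respect to the orientation of $S$. This is precisely the point at which the clockwise convention orienting the arrows $i\to j\to k\to i$ in Figure~\ref{fig:triquiver} matches the counterclockwise rotation system $\sigma$ of Definition~\ref{def:trigraph}, the sign reversal arising from reading the order of the sides along the triangle boundary versus around a shared endpoint; note that the interleaved prescription of $\sigma$ at a boundary node is engineered exactly so that consecutive half-edges again bound triangle corners or petals.

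Finally, the remaining datum of a ribbon quiver, the involution $\alpha\mapsto\balpha$, is determined by the source map, so its preservation is automatic once $s$ and $\Phi$ are compatible; hence $(\vphi_0,\Phi)$ is an isomorphism of ribbon quivers. The boundary-segment and self-folded cases are dispatched by the same local analysis applied to the degenerate sectors above, using Remark~\ref{rem:triblocks} to match type~A, B and~C blocks with boundary segments, self-folded triangles and ordinary triangles; and the excluded unpunctured monogon is covered by the separate convention already fixed.
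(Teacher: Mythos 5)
Your argument is correct and is precisely the direct unwinding of the definitions that the paper invokes: the paper states this proposition without proof, introducing it only as ``a consequence of the definitions.'' Your explicit bijection sending a half-edge $h$ to the arrow in the angular sector between $h$ and $\sigma(h)$, the orientation lemma reconciling the clockwise labelling of triangle sides with the counterclockwise rotation system, and the separate treatment of boundary-segment petals and self-folded triangles together supply exactly the verification the paper leaves to the reader.
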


\begin{example} \label{ex:surface3}
Table~\ref{tab:surface} lists the marked surfaces whose
triangulation quivers have at most three vertices. For each surface,
we list the corresponding triangulation quivers (and ribbon graphs)
appearing in Table~\ref{tab:quivers}.

Note that the unpunctured monogon and unpunctured triangle have only
empty triangulations, so for each of these surfaces there is only one quiver.
Similarly, a punctured monogon has only one triangulation, consisting of one
arc. A sphere with three punctures has two topologically inequivalent
triangulations and hence two triangulation quivers.
\end{example}

\subsection{Triangulation vs.\ adjacency quivers}
\label{ssec:triangadj}

The construction of the triangulation quiver of an ideal triangulation
resembles that of the adjacency quiver defined in~\cite[Definition~4.1]{FST08},
however there are several differences:
\begin{enumerate}
\renewcommand{\labelenumi}{\theenumi.}
\item
In the triangulation quiver there are vertices corresponding to the
boundary segments and not only to the arcs, as in the adjacency
quiver.

\item
Our treatment of self-folded triangles is different; in the triangulation
quiver there is a loop at each vertex corresponding to the inner side
of a self-folded triangle.

\item
We do not delete 2-cycles that arise in the quiver (e.g.\ when there
are precisely two arcs incident to a puncture).
\end{enumerate}

\begin{example}
Consider the triangulation of the square shown in Figure~\ref{fig:square}.
Its triangulation quiver consists of $5$ vertices whereas its adjacency
quiver is the Dynkin quiver $A_1$ (one vertex, no arrows).
\end{example}

\begin{table}
\begin{center}
\begin{tabular}[c]{cl}
\textbf{Quiver} & \textbf{Marked surface} \\
\hline
$1$ & monogon, unpunctured \\
$2$ & monogon, one puncture \\
$3a$, $3b$ & sphere, three punctures \\
$3'$ & triangle, unpunctured \\
$3''$ & torus, one puncture
\end{tabular}
\end{center}
\caption{The marked surfaces whose triangulation quivers have at most three
vertices. The numbers of the quivers refer to Table~\ref{tab:quivers}.}
\label{tab:surface}
\end{table}

As Example~\ref{ex:surface3} demonstrates,
these differences allow to attach triangulation quivers to 
marked surfaces that do not admit adjacency quivers, such as
a monogon, a triangle or a sphere with three punctures.
On the other hand, there are situations where
the triangulation quiver and the adjacency quiver of a triangulation
coincide. By abuse of notation, in the next statements by referring
to a triangulation quiver we actually mean the underlying quiver $Q$ of
the pair $(Q,f)$.
This is not ambiguous in view of Proposition~\ref{p:triauniq}

Recall that a surface $S$ is \emph{closed} if $\partial S$ is empty.
If $(S,M)$ is a marked surface and $S$ is closed, then all marked
points are punctures. The next statement is a reformulation of our result
in~\cite[\S2]{Ladkani12}.

\begin{lemma} \label{l:adjacency}
Let $(S,M)$ be a closed marked surface which is not a sphere with
less than four punctures.
Then for any triangulation $\tau$ of $(S,M)$ with at least three
arcs incident to each puncture, the triangulation quiver
and adjacency quiver associated to $\tau$ coincide.
\end{lemma}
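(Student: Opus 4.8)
The plan is to compare the two constructions arrow-by-arrow and show that under the stated hypotheses they produce the same quiver. Both the triangulation quiver $Q_\tau$ and the adjacency quiver of~\cite{FST08} are built by running over the ideal triangles of $\tau$ and adding arrows, so the natural strategy is to isolate exactly where the two recipes diverge and argue that each source of divergence is excluded by the hypotheses.

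First I would recall the three differences catalogued in Section~\ref{ssec:triangadj}. Since $S$ is closed, there are no boundary segments and hence no blocks of type A, which immediately disposes of the first difference: the vertex sets of both quivers are precisely the arcs of $\tau$. Next I would use the hypothesis that every puncture has at least three incident arcs. In the adjacency-quiver construction, self-folded triangles force a special treatment (via the map that sends the inner side to its enclosing loop), and they produce loops in $Q_\tau$; the key observation is that a self-folded triangle forces its enclosed puncture to have exactly one incident arc, so the hypothesis rules out self-folded triangles entirely. This eliminates the second difference and guarantees that every triangle is of type C, so locally both constructions add the same three arrows $i \xrightarrow{\alpha} j \xrightarrow{} k \xrightarrow{} i$ for each triangle. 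The remaining point is the deletion of $2$-cycles in the adjacency quiver: a $2$-cycle between two arcs arises exactly when those two arcs bound a pair of triangles forming a once-punctured digon, i.e.\ when a puncture has exactly two incident arcs. Again the hypothesis of at least three incident arcs per puncture forbids this, so no $2$-cycles are created and no deletion takes place.

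Having established that none of the three differences is active, I would conclude that the arrow-adding procedures coincide triangle-by-triangle, and since the vertex sets agree, the two quivers are equal (as underlying quivers, which is all that is claimed in view of Proposition~\ref{p:triauniq}). The excluded sphere with fewer than four punctures enters precisely to ensure that such a triangulation with three arcs at every puncture actually exists and that the Euler-characteristic count~\eqref{e:nvertex} is consistent; I would note that for a sphere with one, two or three punctures no triangulation meeting the incidence hypothesis exists, which is why these cases are set aside rather than handled.

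The main obstacle I anticipate is the careful local analysis of self-folded triangles and of pairs of arcs bounding a once-punctured digon, since these are exactly the configurations where the adjacency quiver applies its special rules. The heart of the argument is the implication ``$\geq 3$ arcs at each puncture'' $\Longrightarrow$ ``no self-folded triangles and no $2$-cycles,'' and making this precise requires a short but genuine argument about how low-valence punctures force these degenerate configurations. Since the paper states the combinatorial details will appear elsewhere and refers to~\cite[\S2]{Ladkani12}, I would keep this local verification concise and cite that reference for the full case analysis.
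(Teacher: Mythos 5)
Your proposal is correct and follows the same route as the paper's source for this statement: the paper gives no proof here but defers to \cite[\S2]{Ladkani12}, where the argument is exactly this elimination of the three catalogued discrepancies (no boundary segments since $S$ is closed, no self-folded triangles since their enclosed puncture has valence one, no $2$-cycles since these force a puncture of valence two). The only thing I would add is that your key implication ``at least three arc-ends at each puncture $\Rightarrow$ no loops and no $2$-cycles'' is already available inside the survey by combining Proposition~\ref{p:tri2cyc} with the valence interpretation of the $g$-cycles in Remark~\ref{rem:triblocks}, so the local case analysis you defer to the reference can be replaced by a citation internal to the paper.
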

The condition on $\tau$ in the lemma was called (T3) in~\cite{Ladkani12}.
In particular, we get the following corollary (cf.\
\cite[Lemma~5.3]{Ladkani12}).

\begin{corollary} \label{c:onep}
Let $(S,M)$ be a closed surface with exactly one puncture, i.e.\ $|M|=1$.
Then for any triangulation $\tau$ of $(S,M)$, the triangulation quiver
and the adjacency quiver associated to $\tau$ coincide.
\end{corollary}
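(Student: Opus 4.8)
The plan is to deduce the corollary directly from Lemma~\ref{l:adjacency}, by checking that its two hypotheses are automatically satisfied when $(S,M)$ is closed with a single puncture. Concretely, I must verify (i) that $(S,M)$ is not a sphere with fewer than four punctures, and (ii) that every triangulation $\tau$ meets the condition (T3) that at least three arcs are incident to each puncture. Once both are in place, the lemma yields the coincidence of the triangulation quiver and the adjacency quiver.

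First I would dispose of the sphere hypothesis. As $(S,M)$ is a legitimate marked surface, it is not among the excluded ones; in particular it is neither a sphere with one puncture nor a sphere with two punctures. Since $S$ is closed with $|M|=1$, this forces the genus to satisfy $g \geq 1$, so $S$ is not a sphere at all and hypothesis~(i) holds vacuously.

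Next I would verify condition~(T3) by counting arcs. Because $S$ is closed, $M$ consists of the single puncture, so every arc of $\tau$ has both endpoints at this puncture and is therefore incident to it; it remains only to show there are at least three arcs. Specializing the vertex count~\eqref{e:nvertex} to the closed case $b=0$, where all boundary contributions vanish, with $p=1$, the triangulation quiver $(Q_\tau,f_\tau)$ has $6(g-1)+3 = 6g-3$ vertices, and since there are no boundary segments these vertices are precisely the arcs of $\tau$. Hence $\tau$ has $6g-3 \geq 3$ arcs whenever $g \geq 1$, all incident to the puncture, so~(T3) is satisfied for every triangulation. Lemma~\ref{l:adjacency} then applies and gives the result.

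The argument is essentially bookkeeping, so I do not expect a serious obstacle; the only point needing a little care is the smallest case, where the valence bound is tight. This is the once-punctured torus ($g=1$), where $6g-3=3$ arcs are all loops at the puncture and~(T3) holds with equality; it corresponds to quiver $3''$ of Table~\ref{tab:quivers}, whose underlying quiver is indeed the familiar adjacency quiver of the once-punctured torus. For $g \geq 2$ the inequality is strict and no further attention is required.
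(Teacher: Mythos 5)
Your proposal is correct and follows exactly the route the paper intends: the corollary is stated as a direct consequence of Lemma~\ref{l:adjacency}, and your verification of its two hypotheses (non-sphericity via the excluded surfaces, and condition (T3) via the arc count $6g-3\geq 3$ from Eq.~\eqref{e:nvertex} with $b=0$, $p=1$) is precisely the bookkeeping the paper leaves implicit.
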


\begin{example}
For a torus with empty boundary and one puncture, the adjacency quiver of
any triangulation is known as the Markov quiver and is given by the
quiver~$3''$ in the last row of Table~\ref{tab:quivers}.
\end{example}

Another difference between triangulation quivers and adjacency quivers
concerns the possibility to recover the topology of the underlying marked
surface. It is known~\cite[\S12]{FST08} that a quiver may arise 
as adjacency quiver of two triangulations of topologically inequivalent marked
surfaces. On the other hand, 
if $(Q_\tau, f_\tau)$ is the triangulation quiver
corresponding to a triangulation $\tau$ of a marked surface $(S,M)$,
then the topology of
$(S,M)$ can be completely recovered from $(Q_\tau, f_\tau)$.
Indeed, the cycles of the permutation $g$ on $(Q_\tau)_1$ are in bijection
with the punctures and boundary components of $(S,M)$.
For each such cycle $\omega$ set
$m_\omega = \left| \{ \alpha \in \omega : f(\alpha)=\alpha \}\right|$.
If $m_\omega = 0$, then $\omega$ corresponds to a puncture, otherwise
it corresponds to a boundary component with $m_\omega$ marked points on it.
In this way we recovered the parameters $p$, $b$ and the numbers
$n_1, \dots, n_b$. Once these are known, the genus of $S$ can be recovered
using Eq.~\eqref{e:nvertex}.

\subsection{A dimer model perspective}
\label{ssec:dimer}

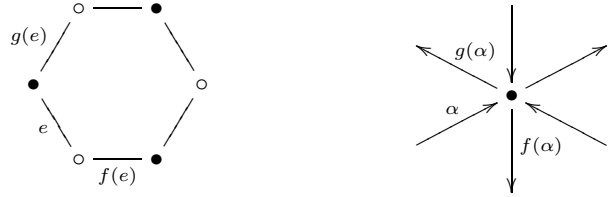
\begin{figure}
\begin{align*}
\begin{array}{c}
\xymatrix@=0.5pc{
& {\circ} \ar@{-}[rr] && {\bullet} \ar@{-}[ddr] \\ \\
{\bullet} \ar@{-}[uur]^{g(e)} &&&& {\circ} \ar@{-}[ddl] \\ \\
& {\circ} \ar@{-}[uul]^{e} && {\bullet} \ar@{-}[ll]^{f(e)}
}
\end{array}
&&
\begin{array}{c}
\xymatrix@=1pc{
&& \ar[dd] \\ 
&&&& {} \\
&& {\bullet} \ar[ull]_{g(\alpha)} \ar[urr] \ar[dd]^{f(\alpha)} \\
\ar[urr]^{\alpha} &&&& \ar[ull] \\
&& {} 
}
\end{array}
\end{align*}
\caption{A $2$-cell in a dimer model (left) and the corresponding vertex
with incident arrows (right).}
\label{fig:dimer}
\end{figure}

Dimer models on a torus have been used to construct non-commutative
crepant resolutions of toric Gorenstein
singularities~\cite{Bocklandt13,Broomhead12}.
Such resolution is a 3-Calabi-Yau algebra which is a (non-complete)
Jacobian algebra of a quiver with potential constructed from the dimer
model. In this section we explain how triangulations of closed surfaces
give rise to a very special kind of dimer models, yet the corresponding
(complete) Jacobian algebras (which are triangulation algebras to be
defined in Section~\ref{ssec:trialg}) have completely different properties,
as we shall see in Section~\ref{sec:triangquasi}.

A \emph{dimer model} on a closed, compact, connected, oriented surface $S$
is a bipartite graph
on $S$ whose complement is homeomorphic to a disjoint union of discs.
The set of nodes of this graph can thus be written as a disjoint union
$V^+ \cup V^-$. We call the elements of $V^+$ \emph{white nodes} and
those of $V^-$ \emph{black nodes}.
Denote by $E$ the set of edges. An edge $e \in E$ defines a pair
$(v^+_e,v^-_e) \in V^+ \times V^-$ consisting of the nodes incident to $e$.
Each connected component of the complement defines a $2$-cell, and an
edge is incident to exactly two $2$-cells. 

Define two permutations $f,g \colon E \to E$ on the set of edges as follows.
For an edge $e \in E$, let $f(e)$ be the edge following $e$ when going
clockwise around the node $v^+_e$ and let $g(e)$ be the edge following $e$
when going counterclockwise around the node $v^-_e$, see the left drawing
in Figure~\ref{fig:dimer}.

A dimer model gives rise to a quiver $Q$ by taking the graph dual to the
graph $(V^+ \cup V^-, E)$. The vertices of $Q$ are thus the $2$-cells,
and the arrows are in bijection with the edges. 
Let $\alpha$ be an arrow corresponding to an edge $e \in E$.
The endpoints of $\alpha$ are the two $2$-cells that $e$ is incident to,
and $\alpha$ is oriented in such a way that when going forward in the
direction of the arrow, the white node $v^+_e$ is seen to the right while
the black node $v^-_e$ is to the left, see Figure~\ref{fig:dimer}.

The permutations $f, g$ on $E$ induce permutations (denoted by the same
letters) on the set of arrows $Q_1$. For any vertex $i \in Q_0$, each of the
permutations $f$ and $g$ induces a bijection between the sets of
arrows starting at $i$ and those ending at $i$.

Now we restrict attention to dimer models whose $2$-cells are \emph{quadrilaterals}, i.e.\ consist of exactly four edges.
In this case, for each vertex $i$ of the quiver $Q$ there are exactly two 
arrows starting at $i$ and two arrows ending at $i$, and $(Q,f)$ thus becomes
a ribbon quiver.
Let us construct the corresponding ribbon graph $(V',E')$ explicitly in terms
of the dimer model. We have $V' = V^-$, that is, the nodes of the ribbon graph
are the black nodes of the dimer model, and the edges $E'$ of the ribbon graph
are in bijection with the quadrilaterals.
There are exactly two black nodes incident to each quadrilateral
and the corresponding edge $e'$ in the ribbon graph connects these nodes,
see Figure~\ref{fig:dimeribbon}.
The cyclic ordering around each node of $V'$ is induced by the embedding into
the oriented surface $S$.

\begin{figure}
\begin{align*}
\begin{array}{c}
\xymatrix@=0.5pc{
& {\circ} \ar@{-}[ddr] \\ \\
{\bullet} \ar@{-}[uur] && {\bullet} \ar@{-}[ddl] \\ \\
& {\circ} \ar@{-}[uul]
}
\end{array}
&&
\begin{array}{c}
\xymatrix@=0.5pc{
{\bullet} \ar@{-}[rr] && {\bullet}
}
\end{array}
\end{align*}
\caption{A quadrilateral in a dimer model (left) corresponds to an edge
in the ribbon graph (right).}
\label{fig:dimeribbon}
\end{figure}
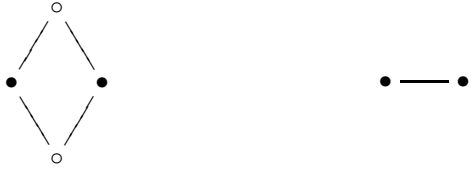

Finally we further restrict to dimer models whose $2$-cells are quadrilaterals
and moreover their white nodes are \emph{trivalent}, i.e.\ each $v^+ \in V^+$
is incident to exactly three edges. In this case the associated ribbon graph
$(V',E')$ is a triangulation of the marked surface $(S,V')$ and the
ribbon quiver $(Q,f)$ is a triangulation quiver.

Conversely, given a set $M$ of punctures, any triangulation $\tau$ of
$(S,M)$ without self-folded triangles gives rise to a dimer model
whose white nodes $V^+_\tau$ are the triangles of $\tau$, its black nodes
$V^-_\tau$ are the punctures $M$, and there is an edge connecting
$\Delta \in V^+_\tau$ with $v \in V^-_\tau$
if and only if $v$ is incident to $\Delta$ in $\tau$. The $2$-cells of
this dimer model are quadrilaterals (corresponding bijectively to the
arcs of $\tau$) and any $\Delta \in V^+_\tau$ is trivalent.
For example, Figure~\ref{fig:dimertetra} shows two dimer models; one for
the triangulation of a sphere with three punctures corresponding to the
triangulation quiver $3b$ of Table~\ref{tab:quivers}; and the other for
the tetrahedron of Figure~\ref{fig:selfdual},
which is a triangulation of a sphere with four punctures.

\begin{figure}
\begin{align*}
\begin{array}{c}
\xymatrix{
& {\circ} \ar@{-}[dl] \ar@{-}[d] \ar@{-}[dr] \\
{\bullet} & {\bullet} & {\bullet} \\
& {\circ} \ar@{-}[ul] \ar@{-}[u] \ar@{-}[ur]
}
\end{array}
&&
\begin{array}{c}
\xymatrix@=1pc{
&& {\bullet} \ar@{-}[d] \ar@{-}[ddrr] \ar@{-}[ddll] \\
&& {\circ} \ar@{-}[dr] \ar@{-}[dl] \\
{\circ} \ar@{-}[r] & {\bullet} && {\bullet} \ar@{-}[r] & {\circ} \\
&& {\circ} \ar@{-}[ur] \ar@{-}[ul] \\
&& {\bullet} \ar@{-}[u] \ar@{-}[uurr] \ar@{-}[uull]
}
\end{array}
\end{align*}
\caption{Dimer models on a sphere corresponding to the triangulation quiver
$3b$ of Table~\ref{tab:quivers} (left) and that of the tetrahedron (right).
Since they arise from triangulations,
all $2$-cells are quadrilaterals and all white nodes are trivalent.}
\label{fig:dimertetra}
\end{figure}
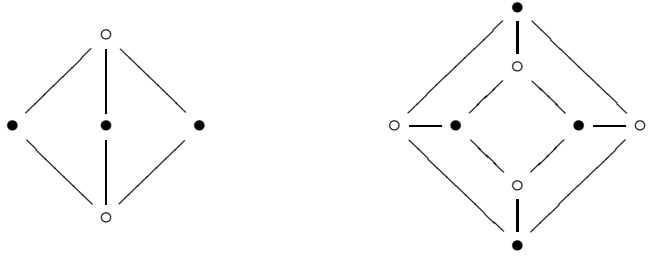

The various notions concerning dimer models, ribbon graphs, ribbon quivers and
triangulations are related as in the dictionary given in Table~\ref{tab:dimer}.

\begin{table}
\begin{center}
\begin{tabular}{cccc}
Dimer model & Ribbon graph & Ribbon quiver & Triangulation \\ \hline
$V^+$ &      & cycles of $f$ & triangles \\
$V^-$ & $V'$ & cycles of $g$ & punctures \\
$2$-cells & $E'$ & $Q_0$ & arcs \\
$E$   &      & $Q_1$
\end{tabular}
\end{center}
\caption{Dictionary between dimer models, ribbon graphs, ribbon quivers
and triangulations of a closed surface.
A ribbon graph/quiver arises when all the $2$-cells in the dimer model are
quadrilaterals, and a triangulation arises when, in addition, all the white
nodes $V^+$ are trivalent.}
\label{tab:dimer}
\end{table}

\section{Brauer graph algebras and triangulation algebras}
\label{sec:algebras}

In this section we introduce two classes of algebras which turn out to be
important for our study, one consists of the well known Brauer graph algebras
and the other consists of the newly defined triangulation algebras.
Roughly speaking, a Brauer graph algebra arises from any ribbon quiver
and auxiliary data given in the form of scalars and positive integer
multiplicities, whereas a triangulation algebra arises from any triangulation
quiver with similar auxiliary data.

\begin{definition}
Let $(Q,f)$ be a ribbon quiver. 
Recall from Section~\ref{sec:quivers} the permutation $g \colon Q_1 \to Q_1$
defined by $g(\alpha) = \overline{f(\alpha)}$ for any $\alpha \in Q_1$.
Given a function $\nu$ on the set $Q_1$ of arrows, we write
$\nu_\alpha$ instead of $\nu(\alpha)$.
We say that $\nu$ is \emph{$g$-invariant} if $\nu_{g(\alpha)} = \nu_\alpha$
for any $\alpha \in Q_1$. Similarly, we say that $\nu$ is
\emph{$f$-invariant} if $\nu_{f(\alpha)} = \nu_\alpha$ for any
$\alpha \in Q_1$.

Since the $g$-cycles are in bijection with the nodes of the
corresponding ribbon graph, 
a $g$-invariant function can thus be regarded as a function on the
nodes of that ribbon graph.
\end{definition}

Let $(Q,f)$ be a ribbon quiver. For an arrow $\alpha \in Q_1$, set
\begin{align*}
n_\alpha &= \min \{n>0 \,:\, g^n(\alpha)=\alpha\} \\
\oa &= \alpha \cdot g(\alpha) \cdot \ldots \cdot g^{n_\alpha-1}(\alpha) \\
\oa' &= \alpha \cdot g(\alpha) \cdot \ldots \cdot g^{n_\alpha-2}(\alpha)
\end{align*}

The function $\alpha \mapsto n_\alpha$ is obviously $g$-invariant, telling the
length of the $g$-cycle $\oa$ starting at $\alpha$. The path $\oa'$ is
``almost'' a cycle; when $n_\alpha=1$ the arrow $\alpha$ is a loop at some vertex
$i$ and $\oa'$ is understood to be the path of length zero starting at $i$.
Similarly, for an arrow $\alpha \in Q_1$, set
\begin{align*}
k_\alpha &= \min \{k>0 \,:\, f^k(\alpha)=\alpha\} \\
\xi_\alpha &=
\alpha \cdot f(\alpha) \cdot \ldots \cdot f^{k_\alpha-1}(\alpha) \\
\xi'_\alpha &=
\alpha \cdot f(\alpha) \cdot \ldots \cdot f^{k_\alpha-2}(\alpha)
\end{align*}

The function $\alpha \mapsto k_\alpha$ is obviously $f$-invariant, telling the
length of the $f$-cycle $\xi_\alpha$ starting at $\alpha$. The path
$\xi'_\alpha$ is ``almost'' a cycle; when $k_\alpha=1$ the arrow $\alpha$ is a
loop at some vertex $i$ and $\xi'_\alpha$ is understood to be the path of
length zero starting at $i$.

\begin{lemma}
For any $\alpha \in Q_1$, the paths $\oa'$ and $\xi'_{\balpha}$ are parallel,
i.e.\ they both start at the same vertex and end at the same vertex.
\end{lemma}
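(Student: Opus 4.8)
The plan is to read off the source and target of each of the two paths directly from the definitions of $g$ and $f$, and then to identify the two target vertices using the relation between $f$ and $g$ recorded in Lemma~\ref{l:gf2fg2}. Throughout I use the source and target maps $s,t \colon Q_1 \to Q_0$ and the two basic facts $s(g(\beta)) = t(\beta)$ and $s(f(\beta)) = t(\beta)$, valid for every arrow $\beta$: the first holds because $g(\beta) = \overline{f(\beta)}$ starts where $f(\beta)$ starts, namely at $t(\beta)$, and the second is condition~\eqref{it:ribbon:f} of Definition~\ref{def:ribbon}.

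First I would pin down the endpoints of $\oa'$. Reading $\oa' = \alpha \cdot g(\alpha) \cdots g^{n_\alpha-2}(\alpha)$ from left to right, it begins at $s(\alpha)$, and consecutive arrows compose precisely because $s(g(\beta)) = t(\beta)$. Its last arrow is $g^{n_\alpha-2}(\alpha)$, so it terminates at $t(g^{n_\alpha-2}(\alpha)) = s(g^{n_\alpha-1}(\alpha)) = s(g^{-1}(\alpha))$, where the last equality uses $g^{n_\alpha}(\alpha) = \alpha$. The identical bookkeeping applied to $\xi'_{\balpha} = \balpha \cdot f(\balpha) \cdots f^{k_{\balpha}-2}(\balpha)$, now using $s(f(\beta)) = t(\beta)$, shows that this path starts at $s(\balpha)$ and ends at $s(f^{k_{\balpha}-1}(\balpha)) = s(f^{-1}(\balpha))$.

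The source coordinates match immediately: since $\balpha$ is by definition the other arrow starting at $s(\alpha)$, one has $s(\balpha) = s(\alpha)$, so both paths emanate from $s(\alpha)$. For the target vertices I would invoke Lemma~\ref{l:gf2fg2}, which gives $f^{-1}(\alpha) = g^{-1}(\balpha)$ for every arrow; substituting $\balpha$ for $\alpha$ and using the involutivity $\overline{\balpha} = \alpha$ yields $f^{-1}(\balpha) = g^{-1}(\alpha)$ as arrows. Hence $\xi'_{\balpha}$ ends at $s(f^{-1}(\balpha)) = s(g^{-1}(\alpha))$, which is exactly where $\oa'$ ends, and the two paths are therefore parallel.

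Finally I would dispatch the degenerate cases so that the statement is well-posed. When $n_\alpha = 1$ the arrow $\alpha$ is a loop (indeed $g(\alpha) = \alpha$ forces $t(\alpha) = s(\alpha)$) and $\oa'$ is the length-zero path at $s(\alpha)$; since then $g^{-1}(\alpha) = \alpha$, the target formula $s(g^{-1}(\alpha))$ still returns $s(\alpha)$, so the computation remains consistent, and the symmetric remark handles $k_{\balpha} = 1$. I do not expect any genuine obstacle here: the whole argument is a direct endpoint computation, and its only non-formal ingredient is the identity $f^{-1}(\balpha) = g^{-1}(\alpha)$, which is immediate once Lemma~\ref{l:gf2fg2} is available.
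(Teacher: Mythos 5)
Your argument is correct and complete: the endpoint bookkeeping via $s(f(\beta))=t(\beta)$ and $s(g(\beta))=t(\beta)$ is right, the identification of the two targets via $f^{-1}(\balpha)=g^{-1}(\alpha)$ (Lemma~\ref{l:gf2fg2} applied to $\balpha$) is exactly the one non-formal step needed, and you correctly handle the length-zero conventions for $\oa'$ and $\xi'_{\balpha}$. The paper states this lemma without proof (its combinatorial statements are deferred elsewhere), so there is nothing to compare against, but your direct verification is the natural argument and I see no gap.
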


\subsection{Brauer graph algebras}

In this section we fix a field $K$.
Brauer graph algebras form a generalization of Brauer tree algebras.
They are algebras defined from combinatorial data consisting of
a ribbon graph together with multiplicities and scalars associated to its
nodes, see~\cite{Alperin86,Benson98,Kauer98}. Many authors start
with the ribbon graph and construct the quiver with relations of the
corresponding Brauer graph algebra, see for
example~\cite{GSS14,MarshSchroll14}.
We prefer to give the definition directly in terms of the associated ribbon
quiver.

\begin{definition} \label{def:BGA}
Let $(Q,f)$ be a ribbon quiver, and let $m \colon Q_1 \to \bZ_{>0}$
and $c \colon Q_1 \to K^{\times}$ be $g$-invariant functions of
\emph{multiplicities} and \emph{scalars}, respectively.
The \emph{Brauer graph algebra} 
$\Gamma(Q,f,m,c)$
associated to these data is the
quotient of the path algebra $KQ$ by the ideal generated by two
types of elements; the elements of the first type are the paths
$\alpha \cdot f(\alpha)$ for each $\alpha \in Q_1$ (``zero-relations'')
and the elements of the second type are the differences
$c_\alpha \oa^{m_\alpha} - c_{\balpha} \oba^{m_{\balpha}}$
(``commutativity-relations'').
In other words, 
\[
\Gamma(Q,f,m,c) = KQ /
( \alpha \cdot f(\alpha) \,,\,
c_\alpha \oa^{m_\alpha} - c_{\balpha} \oba^{m_{\balpha}}
)_{\alpha \in Q_1}.
\]
(It is clearly enough to take one commutativity-relation for each
pair of arrows $\alpha$ and $\balpha$, so these relations can be
seen as indexed by the vertices of $Q$).
\end{definition}

The next proposition is well known. Special biserial algebras have been defined
in~\cite[\S1]{SW83} and a classification of the indecomposable modules over
these algebras, implying that they are of tame representation type, is given
in~\cite[\S2]{WW85}.

\begin{proposition} \label{p:BGA}
A Brauer graph algebra is finite-dimensional, symmetric, special biserial and
hence of tame representation type.
\end{proposition}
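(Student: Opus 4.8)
The plan is to prove the four assertions in the order special biserial, finite-dimensional, symmetric, tame, since each step feeds the next. First I would verify that $\Gamma=\Gamma(Q,f,m,c)$ is special biserial in the sense of~\cite{SW83}. The requirement that at most two arrows start and at most two end at each vertex is immediate from the ribbon quiver axiom~\eqref{it:ribbon:deg2}, which in fact gives exactly two of each. For the second condition I would read off the consequences of the zero-relations: among the two arrows $f(\alpha)$ and $g(\alpha)$ starting at $t(\alpha)$, the relation $\alpha\cdot f(\alpha)=0$ leaves $g(\alpha)$ as the unique arrow $\beta$ with $\alpha\beta\notin I$; dually, using Lemma~\ref{l:gf2fg2} to identify the two arrows ending at $s(\alpha)$ as $f^{-1}(\alpha)$ and $g^{-1}(\alpha)$, the relation $f^{-1}(\alpha)\cdot\alpha=0$ leaves $g^{-1}(\alpha)$ as the unique arrow $\gamma$ with $\gamma\alpha\notin I$. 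Thus along any nonzero path the only admissible continuation is to apply $g$, so every nonzero path of positive length is an initial subpath of some power $\oa^{N}$.

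Building on this, I would next establish finite-dimensionality together with an explicit basis. The maximal nonzero path beginning with $\alpha$ is exactly $\oa^{m_\alpha}$: its terminal arrow is $g^{-1}(\alpha)$, so extending by $\balpha$ vanishes since $g^{-1}(\alpha)\cdot\balpha$ is not a $g$-step, while extending by $\alpha$ vanishes too, because applying the commutativity-relation $c_\alpha\oa^{m_\alpha}=c_{\balpha}\oba^{m_{\balpha}}$ reduces $\oa^{m_\alpha}\cdot\alpha$ to a multiple of $\oba^{m_{\balpha}}\cdot\alpha$, and $g^{-1}(\balpha)\cdot\alpha$ is again not a $g$-step. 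Hence $\oa^{m_\alpha}\in\soc\Gamma$ and every nonzero path has length at most $\max_\alpha n_\alpha m_\alpha$, so $\Gamma$ is finite-dimensional. I would then record the basis consisting of the vertices $e_i$ and the initial subpaths of the $\oa^{m_\alpha}$, where for each vertex $i$ the two maximal paths $\oa^{m_\alpha}$ and $\oba^{m_{\balpha}}$ attached to the two arrows $\alpha,\balpha$ at $i$ are identified, up to the scalar $c$, into a single socle generator $z_i:=c_\alpha\oa^{m_\alpha}=c_{\balpha}\oba^{m_{\balpha}}$. I expect the main obstacle to lie precisely here: proving that the listed paths are linearly independent, equivalently that the defining ideal is no larger than expected and each $z_i\neq0$, so that $\soc\Gamma=\bigoplus_i Kz_i$. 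I would settle this by exhibiting the above list as a normal form (reduction system) for $KQ$ modulo the relations.

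For symmetry I would define a $K$-linear form $\lambda\colon\Gamma\to K$ by $\lambda(z_i)=1$ for every vertex $i$ and $\lambda(b)=0$ on every other basis element, and check that it is a symmetrizing form, so that $\Gamma$ is symmetric by Proposition~\ref{p:symmetric}. Nondegeneracy of $(x,y)\mapsto\lambda(xy)$ follows from path-completion: for an initial subpath $p=\alpha\,g(\alpha)\cdots g^{\ell-1}(\alpha)$ of $\oa^{m_\alpha}$ the complementary path $q=g^{\ell}(\alpha)\cdots g^{n_\alpha m_\alpha-1}(\alpha)$ satisfies $pq=\oa^{m_\alpha}=c_\alpha^{-1}z_{s(\alpha)}$, whence $\lambda(pq)\neq0$, while no other product of basis paths lands in the socle. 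The identity $\lambda(xy)=\lambda(yx)$ is where the hypotheses enter decisively: with $p,q$ as above and $\beta:=g^{\ell}(\alpha)$ one computes $qp=\omega_\beta^{m_\beta}$, and since $\beta$ lies in the same $g$-cycle as $\alpha$ the $g$-invariance of $m$ and $c$ gives $m_\beta=m_\alpha$ and $c_\beta=c_\alpha$, so $\lambda(qp)=c_\beta^{-1}=c_\alpha^{-1}=\lambda(pq)$; all remaining pairs of basis paths are annihilated by $\lambda$ on both sides.

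Finally, being finite-dimensional and special biserial, $\Gamma$ is tame by the classification of its indecomposable modules into string and band modules in~\cite{WW85}. In summary, once the normal-form and socle computation of the second step are in hand, both the symmetry verification and the appeal to~\cite{WW85} are routine, with the $g$-invariance of the multiplicities $m$ and the scalars $c$ being exactly the input that makes $\lambda$ symmetric.
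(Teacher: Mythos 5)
Your proposal is correct, and it supplies an actual argument where the paper gives none: the paper simply declares the proposition well known, pointing to~\cite{SW83} for the definition of special biserial algebras and to~\cite{WW85} for the tameness of finite-dimensional special biserial algebras, and records the basis separately as Lemma~\ref{l:basisBGA} ``as a consequence of the definition.'' Your route is the standard one and is fully consistent with the paper's supporting material: the identification of the two arrows ending at $s(\alpha)$ as $f^{-1}(\alpha)$ and $g^{-1}(\alpha)$ via Lemma~\ref{l:gf2fg2} is exactly what makes the zero-relations $\alpha\cdot f(\alpha)$ enforce the special biserial condition on both sides, your list of $g$-paths is precisely the basis of Lemma~\ref{l:basisBGA}, and your symmetrizing form pairing a $g$-path with its complementary $g$-path is the usual one, with the $g$-invariance of $m$ and $c$ entering exactly where you say it does. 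You are also right to single out the linear independence of the listed paths (equivalently $z_i\neq 0$ and $\soc\Gamma=\bigoplus_i Kz_i$) as the only step requiring real work; the paper does not prove this either, and a reduction-system/normal-form argument is the standard way to settle it. The one small imprecision is the claim that ``no other product of basis paths lands in the socle'' --- the products $e_i\cdot z_i=z_i$ do --- but this does not affect the nondegeneracy argument, since for each basis element the unique complementary basis element still pairs nontrivially and the Gram matrix is a scaled permutation matrix.
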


Moreover, it has been recently shown that over an algebraically closed field
the classes of symmetric special biserial algebras and that of Brauer graph
algebras coincide~\cite[Theorem~1.1]{Schroll15}.

We present a few examples of Brauer graph algebras related to group algebras.

\begin{example} \label{ex:BGAf1}
Consider the ribbon quiver $(Q,f_1)$ of Example~\ref{ex:ribbonG1}.
In this case there is only one $g$-cycle and hence the auxiliary data
consists of one multiplicity $m \geq 1$ and one scalar $c \in K^{\times}$.
The path algebra $KQ$ is the free algebra $K\langle \alpha, \beta \rangle$
on the generators $\alpha$ and $\beta$, and
the Brauer graph algebra is
\[
K\langle \alpha, \beta \rangle /
( \alpha^2 , \beta^2 , c(\alpha \beta)^m - c(\beta \alpha)^m ) ,
\]
hence, up to isomorphism, we may set $c=1$. 
When $m=1$ and $\ch K = 2$, this algebra is isomorphic the group algebra of
Klein's four-group.
\end{example}

\begin{example} \label{ex:BGAf2}
Consider now the ribbon quiver $(Q,f_2)$ of Example~\ref{ex:ribbonG1}.
In this case there are two $g$-cycles and hence the auxiliary data
consists of two multiplicities $m, m' \geq 1$ and two scalars
$c, c' \in K^{\times}$.
The Brauer graph algebra is given by
\[
K\langle \alpha, \beta \rangle /
(\alpha \beta, \beta \alpha, c \alpha^m - c' \beta^{m'}) .
\]
When $m' = 1$, the arrow $\beta$ can be eliminated so the relations in the
above presentations are no longer minimal and the algebra becomes isomorphic
to the algebra $K[\alpha]/(\alpha^{m+1})$ considered in Example~\ref{ex:Kxn}.
\end{example}

\begin{example}
Let's describe as group algebras some Brauer graph algebras for a few
triangulation quivers appearing in Table~\ref{tab:quivers},
under the assumption that $\ch K = 2$.

The Brauer graph algebra of the triangulation quiver number $1$ with
multiplicity~$1$ was discussed in Example~\ref{ex:BGAf1}; it is the group
algebra of Klein's four group.

Assume now that $K$ contains a primitive third root of unity.
Then the Brauer graph algebra of the
triangulation quiver number $2$ with multiplicities $m_\alpha=2$ and
$m_\beta=m_\gamma=m_\eta=1$ is Morita equivalent to the group algebra
of the symmetric group $S_4$ \cite[V.2.5.1]{Erdmann90},
whereas that of the triangulation quiver number $3b$ with all multiplicities
set to $1$ is isomorphic to the group algebra of the alternating group
$A_4$ \cite[V.2.4.1]{Erdmann90}
(in both cases the scalars take the constant value~$1$).
\end{example}

We list a few remarks concerning Brauer graph algebras.

\begin{remark} \label{rem:BGAelim}
As Example~\ref{ex:BGAf2} shows, if $\alpha$ is an arrow such that $n_\alpha=1$
and $m_\alpha=1$, the corresponding commutativity-relation becomes
$c_\alpha \alpha - c_{\balpha} \oba^{m_{\balpha}}$ and in the
presentation of the Brauer graph algebra as quiver with relations the
arrow $\alpha$ can be eliminated at the expense of adding zero-relations
of a third kind, namely $\omega_\beta^{m_\beta} \beta$
for $\beta \in \{\balpha, g^{-1}(\balpha)\}$.
However, in order to keep the presentation unified, we will not eliminate
arrows and add the corresponding new relations.
\end{remark}

\begin{remark}
If $K$ is algebraically closed, or more generally, if $K$ contains an
$m_\alpha$-th root of $c_\alpha$ for each $\alpha \in Q_1$, then by
considering the automorphism of $KQ$ defined by choosing from each $g$-cycle
one arrow $\alpha$, sending it to $c_\alpha^{1/m_\alpha} \alpha$ and keeping
all other arrows intact, we see that
$\Gamma(Q,f,m,c) \simeq \Gamma(Q,f,m,\mathbf{1})$ where $\mathbf{1}$
is the constant function $\mathbf{1}_\alpha = 1$.
\end{remark}

In the next statements we explicitly compute the Cartan matrix of a Brauer
graph algebra and show that it depends only on the multiplicities and
the underlying graph of the ribbon graph corresponding to its defining ribbon
quiver.

Throughout, we fix a ribbon quiver $(Q,f)$ together with $g$-invariant
functions $m \colon Q_1 \to \bZ_{>0}$ and $c \colon Q_1 \to K^{\times}$
of multiplicities and scalars, and consider the Brauer graph algebra
$\Gamma=\Gamma(Q,f,m,c)$.
For any $i \in Q_0$, let $\alpha, \balpha$ be the two arrows starting at $i$.
By definition, the images of the paths $c_\alpha \oa^{m_\alpha}$ and
$c_{\balpha} \omega_{\balpha}^{m_{\balpha}}$ in $\Gamma$ are equal, and we
denote their common value by $z_i \in \Gamma$.
The next statement is a consequence of the definition.
\begin{lemma} \label{l:basisBGA}
A basis of $\Gamma(Q,f,m,c)$ is given by the images of the paths
\[
\{e_i\}_{i \in Q_0} \cup
\left\{ \alpha \cdot g(\alpha) \cdot \ldots \cdot g^r(\alpha)
\right\}_{\alpha \in Q_1, 0 \leq r < m_\alpha n_\alpha -1} \cup
\{z_i\}_{i \in Q_0} .
\]
\end{lemma}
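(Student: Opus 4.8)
The plan is to establish the two inclusions \emph{spanning} and \emph{linear independence} separately: first use the relations to reduce an arbitrary path to one of the listed form, which bounds $\dim_K\Gamma$ from above by the size of the listed set, and then produce a matching lower bound.

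First I would settle spanning. Since the defining ideal contains every monomial $\alpha\cdot f(\alpha)$, a path $\beta_1\beta_2\cdots\beta_\ell$ can have nonzero image in $\Gamma$ only if $\beta_{k+1}\neq f(\beta_k)$ for every $k$. At the vertex $t(\beta_k)$ there are exactly two outgoing arrows, and by the ribbon structure these are precisely $f(\beta_k)$ and $g(\beta_k)=\overline{f(\beta_k)}$; hence the condition forces $\beta_{k+1}=g(\beta_k)$. Consequently every nonzero reduced monomial is either a trivial path $e_i$ or a ``$g$-path'' $p^{(r)}_\alpha:=\alpha\cdot g(\alpha)\cdots g^r(\alpha)$ for some arrow $\alpha$ and some $r\geq 0$.

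Next I would truncate these $g$-paths using the socle. Writing the two arrows starting at a vertex $i$ as $\alpha,\balpha$, the last arrow of $\oa^{m_\alpha}$ is $g^{-1}(\alpha)$, and since $f\bigl(g^{-1}(\alpha)\bigr)=\overline{g(g^{-1}(\alpha))}=\balpha$, the product $\oa^{m_\alpha}\balpha$ ends in the zero-relation $g^{-1}(\alpha)\cdot f(g^{-1}(\alpha))$ and therefore vanishes; using instead the equality $z_i=c_{\balpha}\oba^{m_{\balpha}}$ and the symmetric computation gives $\oba^{m_{\balpha}}\alpha=0$, so $z_i\alpha=z_i\balpha=0$, i.e. $z_i$ is annihilated on the right by both arrows at $i$. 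It follows that $p^{(m_\alpha n_\alpha-1)}_\alpha=\oa^{m_\alpha}=c_\alpha^{-1}z_{s(\alpha)}$, while for $r\geq m_\alpha n_\alpha$ the path $p^{(r)}_\alpha$ has $\oa^{m_\alpha}$ as a prefix followed by a further arrow, hence equals a scalar multiple of $z_{s(\alpha)}$ times an arrow and so is $0$. Thus the displayed collection, consisting of the $e_i$, the $p^{(r)}_\alpha$ with $0\le r\le m_\alpha n_\alpha-2$, and the socle elements $z_i$, already spans $\Gamma$, giving $\dim_K\Gamma\leq N$ with $N=2\lvert Q_0\rvert+\sum_{\alpha\in Q_1}(m_\alpha n_\alpha-1)$.

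For the reverse inequality, and hence linear independence, I would build a ``normal form'' model: a $K$-vector space $A$ having the displayed symbols as a basis, equipped with the combinatorial multiplication that concatenates $g$-consecutive paths, replaces the top path $p^{(m_\alpha n_\alpha-1)}_\alpha$ by $c_\alpha^{-1}z_i$, sends any longer product to $0$, and identifies the two top paths at each vertex through the single symbol $z_i$. Sending each arrow of $Q$ to its length-one symbol defines a homomorphism $KQ\to A$ annihilating both families of relations, so it factors through a surjection $\Gamma\twoheadrightarrow A$; since $\dim_K A=N$ this yields $\dim_K\Gamma\geq N$, and combined with the spanning bound the displayed set is a basis. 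The main obstacle is verifying that the multiplication on $A$ is associative: the only nontrivial triples are those reaching the socle, where one must check that the two presentations $z_i=c_\alpha\oa^{m_\alpha}=c_{\balpha}\oba^{m_{\balpha}}$ produce the same value independently of the bracketing. Equivalently one could run Bergman's diamond lemma directly on $KQ$, orienting the commutativity relations and checking that the single delicate overlap, between a zero-relation and a commutativity relation sharing one arrow, resolves; its resolution again rests exactly on the identity $z_i\cdot(\text{arrow})=0$ established above.
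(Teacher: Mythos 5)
Your proof is correct and supplies exactly the argument the paper leaves implicit (the lemma is stated there as ``a consequence of the definition'', this being the standard normal-form basis of a symmetric special biserial algebra): the zero-relations force every surviving path to be a $g$-path, the commutativity relations truncate these at the socle via $z_i\cdot\alpha=z_i\cdot\balpha=0$, and the combinatorial model algebra $A$ gives the matching lower bound. The only point to tidy is the degenerate case $m_\alpha n_\alpha=1$ (a $g$-fixed loop of multiplicity one, which Definition~\ref{def:BGA} permits): there the listed set contains no $p^{(r)}_\alpha$ for that arrow, so your homomorphism $KQ\to A$ must send $\alpha$ to $c_\alpha^{-1}z_{s(\alpha)}$ rather than to a ``length-one symbol''; with that convention the argument goes through unchanged.
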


Given the basis of Lemma~\ref{l:basisBGA}, an argument as
in~\cite[\S4.4]{Ladkani12} allows to compute the Cartan matrix and to draw some
conclusions.
For a $g$-cycle $\omega$ in $Q_1$, define a row vector
$\chi_\omega \in \bZ^{Q_0}$ by
$\chi_\omega(i) = |\{\alpha \in \omega : s(\alpha)=i\}|$ for $i \in Q_0$.
Denote by $\Omega_g$ the set of $g$-cycles in $Q_1$.
Recall from Section~\ref{ssec:ribbon} that the matrix 
$(\chi_\omega(i))_{\omega \in \Omega_g, i \in Q_0}$ encodes the underlying
graph of the ribbon graph corresponding to $(Q,f)$ and hence depends only on
that graph.
For any $\omega \in \Omega_g$, the square matrix $\chi_\omega^T \chi_\omega$ is
symmetric of rank $1$ whose $(i,j)$-entry is $\chi_\omega(i) \chi_\omega(j)$
for $i,j \in Q_0$. The $g$-invariant function $m$ on $Q_1$ induces a
function on $\Omega_g$ which will be denoted by the same letter.

\begin{proposition} \label{p:CartanBGA}
Let $C_\Gamma$ be the Cartan matrix of $\Gamma(Q,f,m,c)$. Then:
\begin{enumerate}
\renewcommand{\theenumi}{\alph{enumi}}
\item
$C_\Gamma = \sum_{\omega \in \Omega_g} m_\omega \chi_\omega^T \chi_\omega$.

\item
$\dim_K \Gamma = \sum_{\omega \in \Omega_g} m_\omega |\omega|^2$.

\item
The quadratic form $q_{C_\Gamma} \colon \bZ^{Q_0} \to \bZ$ defined by
$q_{C_\Gamma}(x) = x C_{\Gamma} x^T$ takes non-negative even values;
in particular it is non-negative definite.

\item
$\rank C_\Gamma \leq \min(|Q_0|, |\Omega_g|)$.
\end{enumerate}
\end{proposition}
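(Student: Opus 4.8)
The plan is to establish part~(a) by a direct count using the basis of Lemma~\ref{l:basisBGA}, and then to read off parts~(b), (c) and~(d) as purely linear-algebraic consequences. Recall that $(C_\Gamma)_{i,j} = \dim_K e_i \Gamma e_j$ equals the number of basis elements that are paths from $j$ to $i$. The basis splits into the idempotents $e_i$, the ``top'' elements $z_i$, and the initial segments $\alpha \cdot g(\alpha) \cdots g^r(\alpha)$ with $0 \le r < m_\alpha n_\alpha - 1$. Since $e_i$ and $z_i$ both start and end at $i$ (the latter because $\omega_\alpha$ is a cycle at $s(\alpha)$), they each contribute $1$ to the diagonal entry $(C_\Gamma)_{i,i}$ and nothing off the diagonal; so the entire off-diagonal content, and the bulk of the diagonal, comes from counting the segment paths.

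To count these I would organize them by $g$-cycles. Fix a $g$-cycle $\omega$ of length $n = |\omega|$ and multiplicity $m = m_\omega$. A segment starting at an arrow $\alpha \in \omega$ of length $\ell$ (so $1 \le \ell \le mn-1$) runs from $s(\alpha)$ to $s(g^\ell(\alpha))$. Fixing a starting vertex $j$, there are $\chi_\omega(j)$ choices of $\alpha \in \omega$ with $s(\alpha)=j$; for each, the lengths $\ell$ landing at a prescribed vertex $i$ are exactly those for which $g^\ell(\alpha)$ is one of the $\chi_\omega(i)$ arrows of $\omega$ starting at $i$. Each admissible residue class modulo $n$ is represented by $m$ values of $\ell$ in $\{1,\dots,mn-1\}$, except the class $\ell\equiv 0$, which is represented only $m-1$ times; and the residue $0$ occurs precisely when $\alpha$ itself starts at $i$, i.e.\ only when $i=j$. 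A short computation then gives, for the contribution of $\omega$, the value $m\,\chi_\omega(i)\chi_\omega(j)$ when $i\ne j$ and $m\,\chi_\omega(i)^2 - \chi_\omega(i)$ when $i=j$. Summing over $\omega$ and adding the two diagonal contributions of $e_i$ and $z_i$, the off-diagonal entries come out as $\sum_\omega m_\omega \chi_\omega(i)\chi_\omega(j)$, while the diagonal combines as $2 + \sum_\omega\big(m_\omega\chi_\omega(i)^2 - \chi_\omega(i)\big)$. The key point is that $\sum_{\omega}\chi_\omega(i) = 2$, since exactly two arrows start at each vertex of a ribbon quiver; this cancels the $+2$ from $e_i$ and $z_i$ and yields $(C_\Gamma)_{i,i} = \sum_\omega m_\omega\chi_\omega(i)^2$, proving~(a).

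With~(a) in hand the rest is formal. For~(b) I would sum all entries: $\dim_K\Gamma = \sum_{i,j}(C_\Gamma)_{i,j} = \sum_\omega m_\omega\big(\sum_i\chi_\omega(i)\big)^2$, and $\sum_i\chi_\omega(i) = |\omega|$ gives $\dim_K\Gamma = \sum_\omega m_\omega|\omega|^2$ (alternatively, count the basis directly, using $|Q_1| = 2|Q_0|$). For~(d), part~(a) exhibits $C_\Gamma = X^{T} D X$ with $X$ the $|\Omega_g|\times|Q_0|$ matrix whose rows are the $\chi_\omega$ and $D=\operatorname{diag}(m_\omega)$, whence $\rank C_\Gamma \le \rank X \le \min(|Q_0|,|\Omega_g|)$. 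For the non-negativity in~(c), the same factorization yields $q_{C_\Gamma}(x) = \sum_\omega m_\omega\langle x,\chi_\omega\rangle^2$, a sum of squares with positive coefficients, hence non-negative definite.

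The genuinely delicate point, and the step I would treat most carefully, is the \emph{evenness} in~(c). Reducing modulo $2$ and using $k^2\equiv k$, one has $q_{C_\Gamma}(x)\equiv\sum_i (C_\Gamma)_{i,i}\,x_i \pmod 2$, so evenness for all $x$ is equivalent to every diagonal entry being even. By~(a), $(C_\Gamma)_{i,i} = \sum_\omega m_\omega\chi_\omega(i)^2$ equals $4m_\omega$ when the two arrows leaving $i$ lie in a single $g$-cycle (so $\chi_\omega(i)=2$), which is manifestly even, and equals $m_{\omega_1}+m_{\omega_2}$ when they lie in two distinct $g$-cycles, i.e.\ when the corresponding edge of the ribbon graph joins two distinct nodes. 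Thus the crux is to control the parity of the diagonal, namely of the sum of the two node-multiplicities meeting along each non-loop edge; I expect this, rather than the bookkeeping in~(a), to be where the argument requires the most care.
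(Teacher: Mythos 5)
Your treatment of parts~(a), (b), (d) and of the non-negativity in~(c) is correct and complete, and it follows exactly the route the paper intends: the paper gives no written proof beyond pointing to the basis of Lemma~\ref{l:basisBGA} (and to the multiplicity-one computation in the cited reference), and your count of the segment paths by residue classes modulo $|\omega|$, with the identity $\sum_{\omega}\chi_\omega(i)=2$ cancelling the contributions of $e_i$ and $z_i$ on the diagonal, is precisely how that computation goes. The factorization $C_\Gamma=X^TDX$ then yields~(d) and the non-negativity in~(c) formally, as you say.

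The genuine gap is the evenness claim in~(c), and it is not merely a delicate step you have postponed: your own reduction shows it cannot be closed under the stated hypotheses. As you observe, evenness of $q_{C_\Gamma}$ on all of $\bZ^{Q_0}$ is equivalent to evenness of every diagonal entry $\sum_\omega m_\omega\chi_\omega(i)^2$, which at a vertex $i$ whose two outgoing arrows lie in distinct $g$-cycles equals $m_{\omega_1}+m_{\omega_2}$; nothing in the hypotheses forces these two multiplicities to have equal parity. Concretely, take the ribbon quiver with one vertex and $f=(\alpha\,\beta)$, so that $g=(\alpha)(\beta)$ (the single edge joining two nodes), with multiplicities $m_\alpha=2$ and $m_\beta=1$: then $\Gamma\simeq K[x]/(x^3)$, so $C_\Gamma=(3)$ and $q_{C_\Gamma}(1)=3$ is odd. (The same phenomenon occurs for the algebras $\cQ(2\cB)_1^{k,s}$ with $k+s$ odd, whose Cartan matrix has the diagonal entry $k+s$.) So either one reads the statement with an implicit parity hypothesis --- that the two node multiplicities meeting along every non-loop edge agree modulo~$2$, which is automatic in the constant-multiplicity setting of the reference the paper points to --- or the word ``even'' must be dropped. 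Everything else in your argument stands, but the evenness in~(c) cannot be proved as stated.
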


The next remark will not be used in the sequel. Nevertheless, we list it here
for completeness.

\begin{definition}
Given a non-zero power series
$p(x) = \sum_{i=0}^{\infty} a_i x^i \in K[[x]] \setminus \{0\}$,
let $m = \min \{i \geq 0 : a_i \neq 0\}$. The \emph{least order term} of $p(x)$
is $c_m x^m$.
\end{definition}

For any non-trivial cycle $\omega$ in $Q$, the evaluation map
$\ev_\omega \colon K[[x]] \to \wh{KQ}$ sending $p \in K[[x]]$ to $p(\omega)$
is a continuous ring homomorphism.

\begin{remark} \label{rem:genBGA}
In analogy with the triangulation algebras to be defined in the next section
as quotients of complete path algebras by closed ideals,
one could consider an apparently more general, continuous, version of
a Brauer graph algebra defined by taking a $g$-invariant function
$p \colon Q_1 \to xK[[x]] \setminus \{0\}$ of non-zero power series without
constant term and forming the quotient of $\wh{KQ}$ by the closure of the
ideal generated by zero-relations and commutativity-relations
\[
\wh{KQ} /
\overline{(\alpha \cdot f(\alpha) \,,\,
p_\alpha(\oa) - p_{\balpha}(\oba))}_{\alpha \in Q_1}.
\]

However, it turns out that this algebra is isomorphic to the Brauer graph
algebra $\Gamma(Q,f,m,c)$ where the multiplicities $m_\alpha$ and scalars
$c_\alpha$ are such that each $c_\alpha x^{m_\alpha}$ is the least order
term of $p_\alpha(x)$.
\end{remark}

\subsection{Triangulation algebras}
\label{ssec:trialg}

In this section we define, for any triangulation quiver together with some
auxiliary data, a new algebra called triangulation algebra.
Throughout, we fix a field $K$.
We start with a construction of hyperpotentials on ribbon quivers described
in the following somewhat technical statement whose proof is omitted.

\begin{proposition} \label{p:hyperib}
Let $(Q,f)$ be a ribbon quiver.
\begin{enumerate}
\renewcommand{\theenumi}{\alph{enumi}}
\item \label{it:hyperib}
Let $p \colon Q_1 \to K[[x]]$ be $f$-invariant and
let $q \colon Q_1 \to K[[x]]$ be $g$-invariant.
Then the collection $(\rho_\alpha)_{\alpha \in Q_1}$ given by
\[
\rho_\alpha = 
p_\alpha(\xi_{f(\alpha)}) \cdot \xi'_{f(\alpha)} -
q_{\alpha}(\omega_{g(\alpha)}) \cdot \omega'_{g(\alpha)}
\]
is a hyperpotential on $Q$.

\item \label{it:potrib}
Let $P \colon Q_1 \to K[[x]]$ be $f$-invariant and
let $R \colon Q_1 \to K[[x]]$ be $g$-invariant.
Consider
\[
W = \sum_{\alpha} P_\alpha(\xi_\alpha) - \sum_{\beta} R_\beta(\omega_\beta)
\]
where the left sum runs over representatives $\alpha$ of the $f$-cycles
in $Q$ and the right sum runs over representatives $\beta$ of the
$g$-cycles. Then $W$ is a potential on $Q$ and
$\partial_\alpha W = \rho_\alpha$ for each $\alpha \in Q_1$, where
$\rho_\alpha$ are defined as in part~\eqref{it:hyperib} for the 
functions $p,q \colon Q_1 \to K[[x]]$ given by
$p_\alpha(x) = P'_\alpha(x)$ and $q_\alpha(x) = R'_\alpha(x)$.
\end{enumerate}
\end{proposition}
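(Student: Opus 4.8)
The plan is to verify the two defining conditions of a hyperpotential for part~(a) and then to recognize the cyclic derivatives of $W$ as the $\rho_\alpha$ for part~(b); both rest on a pair of elementary rotation identities for the $f$- and $g$-cycles. Writing $\xi_\alpha = \alpha \cdot \xi'_{f(\alpha)}$ and $\xi_{f(\alpha)} = \xi'_{f(\alpha)} \cdot \alpha$, one has $\alpha \cdot \xi_{f(\alpha)} = \xi_\alpha \cdot \alpha$, and hence, by induction on the exponent, $\alpha \cdot \xi_{f(\alpha)}^n = \xi_\alpha^n \cdot \alpha$ for all $n \geq 0$; applying this termwise to a power series gives the intertwining relation $\alpha \cdot p_\alpha(\xi_{f(\alpha)}) = p_\alpha(\xi_\alpha) \cdot \alpha$. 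The identical computation with $g$ in place of $f$ and $\omega$ in place of $\xi$ yields $\alpha \cdot q_\alpha(\omega_{g(\alpha)}) = q_\alpha(\omega_\alpha) \cdot \alpha$. These identities converge in $\wh{KQ}$ because every cycle involved has positive length (for a loop one uses the convention that $\xi'_{f(\alpha)}$, respectively $\omega'_{g(\alpha)}$, is the trivial path).

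For part~(a), condition~(i) is a matter of tracking source and target vertices: if $\alpha\colon i \to j$ then $f(\alpha)$ and $g(\alpha)$ both start at $j$, so $p_\alpha(\xi_{f(\alpha)})$ and $q_\alpha(\omega_{g(\alpha)})$ lie in $e_j \wh{KQ} e_j$, while $\xi'_{f(\alpha)}$ and $\omega'_{g(\alpha)}$ run from $j$ back to $i$ (using $f^{k_\alpha}(\alpha)=\alpha$ and $g^{n_\alpha}(\alpha)=\alpha$); thus $\rho_\alpha \in e_j \wh{KQ} e_i$. For condition~(ii) I would compute the two sides separately. Using the intertwining relations together with $\alpha \cdot \xi'_{f(\alpha)} = \xi_\alpha$ and $\alpha \cdot \omega'_{g(\alpha)} = \omega_\alpha$ gives $\alpha \rho_\alpha = p_\alpha(\xi_\alpha)\,\xi_\alpha - q_\alpha(\omega_\alpha)\,\omega_\alpha$, while $\xi'_{f(\alpha)} \cdot \alpha = \xi_{f(\alpha)}$ and $\omega'_{g(\alpha)} \cdot \alpha = \omega_{g(\alpha)}$ give $\rho_\alpha \alpha = p_\alpha(\xi_{f(\alpha)})\,\xi_{f(\alpha)} - q_\alpha(\omega_{g(\alpha)})\,\omega_{g(\alpha)}$. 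Summing over all arrows and reindexing the $p$-summand of $\sum_\alpha \rho_\alpha\alpha$ by $\beta = f(\alpha)$ (so that $p_\alpha = p_\beta$ by $f$-invariance and $\xi_{f(\alpha)} = \xi_\beta$) shows it equals the $p$-summand of $\sum_\alpha \alpha\rho_\alpha$; the $q$-summands match after the analogous reindexing $\gamma = g(\alpha)$, using $g$-invariance of $q$. Hence $\sum_\alpha \alpha\rho_\alpha = \sum_\alpha \rho_\alpha\alpha$.

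For part~(b), the element $W$ is a well-defined potential: each summand $P_\alpha(\xi_\alpha)$ is a power series in a cycle, so lies in $\HH_0(\wh{KQ})$, and it is independent of the chosen representative of the $f$-cycle because rotating $\xi_\alpha$ to start at $f(\alpha)$ produces $\xi_{f(\alpha)}$ while $P_{f(\alpha)} = P_\alpha$ by $f$-invariance (similarly for the $R$-summands and $g$). To compute $\partial_\alpha W$ I would note that $\alpha$ occurs in exactly one $f$-cycle and exactly one $g$-cycle, and exactly once in each basic cycle $\xi_\alpha$ and $\omega_\alpha$; since $\partial_\alpha$ is well defined on $\HH_0$ I may evaluate on the representatives $\xi_\alpha$ and $\omega_\alpha$. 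The power rule for the cyclic derivative then reads $\partial_\alpha(\xi_\alpha^m) = m\,\xi_{f(\alpha)}^{m-1}\,\xi'_{f(\alpha)}$, which I would prove by observing that each of the $m$ occurrences of $\alpha$ in $\xi_\alpha^m$ contributes, after rotation and deletion, the common path $\xi'_{f(\alpha)}\,\xi_\alpha^{m-1} = \xi_{f(\alpha)}^{m-1}\,\xi'_{f(\alpha)}$ (the last equality again from $\xi'_{f(\alpha)}\xi_\alpha = \xi_{f(\alpha)}\xi'_{f(\alpha)}$). Summing the power series gives $\partial_\alpha P_\alpha(\xi_\alpha) = P'_\alpha(\xi_{f(\alpha)})\,\xi'_{f(\alpha)}$, and the analogous $g$-computation gives $\partial_\alpha R_\alpha(\omega_\alpha) = R'_\alpha(\omega_{g(\alpha)})\,\omega'_{g(\alpha)}$. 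With $p_\alpha = P'_\alpha$ and $q_\alpha = R'_\alpha$ these are precisely the two terms of $\rho_\alpha$, so $\partial_\alpha W = \rho_\alpha$.

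I expect the main obstacle to be the bookkeeping behind the cyclic-derivative power rule --- verifying that all $m$ occurrences of $\alpha$ in $\xi_\alpha^m$ yield the same path and identifying it correctly as $\xi_{f(\alpha)}^{m-1}\xi'_{f(\alpha)}$ --- together with checking that the degenerate loop cases ($k_\alpha = 1$ or $n_\alpha = 1$) are absorbed by the trivial-path conventions, and that all manipulations are legitimate limits in the completed algebra $\wh{KQ}$. Once the two rotation identities are established cleanly, the remainder is routine reindexing governed by the $f$- and $g$-invariance of the data.
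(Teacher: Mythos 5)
Your proof is correct, and since the paper explicitly omits the proof of this ``somewhat technical'' statement, your direct verification is exactly the intended routine argument: the rotation identities $\xi_\alpha=\alpha\,\xi'_{f(\alpha)}$, $\xi_{f(\alpha)}=\xi'_{f(\alpha)}\,\alpha$ (and their $g$-analogues) give condition~(ii) after reindexing by $f$- and $g$-invariance, and the cyclic-derivative power rule $\partial_\alpha(\xi_\alpha^m)=m\,\xi_{f(\alpha)}^{m-1}\xi'_{f(\alpha)}$ yields part~(b). All the delicate points you flag (distinctness of the $m$ occurrences of $\alpha$ in $\xi_\alpha^m$, the trivial-path conventions when $k_\alpha=1$ or $n_\alpha=1$, and convergence in $\wh{KQ}$) are handled correctly.
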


\begin{remark}
If $p_\alpha(x)$ and $q_\alpha(x)$ are monomials, that is,
there exist $f$-invariant function $\ell \colon Q_1 \to \bZ_{>0}$
and $g$-invariant function $m \colon Q_1 \to \bZ_{>0}$ such that
$p_\alpha(x)=x^{\ell_\alpha-1}$ and $q_\alpha(x) = x^{m_\alpha-1}$
for any $\alpha \in Q_1$, then the (non-complete) Jacobian algebra of the
hyperpotential in Proposition~\ref{p:hyperib}\eqref{it:hyperib} is the one
associated by Bocklandt to a weighted quiver polyherdon~\cite{Bocklandt13}.
In particular, if $P_\alpha(x)=Q_\alpha(x)=x$ for any $\alpha \in Q_1$,
then the potential in Proposition~\ref{p:hyperib}\eqref{it:potrib} is the
potential arising from the dimer model corresponding to $(Q,f)$, see
Section~\ref{ssec:dimer}.
\end{remark}

We are now ready to define what a triangulation algebra is.

\begin{definition} \label{def:triang}
Let $(Q,f)$ be a triangulation quiver. Let $m \colon Q_1 \to \bZ_{>0}$
and $c \colon Q_1 \to K^{\times}$ be $g$-invariant functions of
\emph{multiplicities} and \emph{scalars}, respectively, and assume
that $m_\alpha n_\alpha \geq 2$ for any $\alpha \in Q_1$.
Let $\lambda \colon Q_1^f \to K$, i.e.\ $\lambda$ is an assignment of a scalar
$\lambda_\alpha \in K$ for each $\alpha \in Q_1$ such that
$f(\alpha)=\alpha$.

The \emph{triangulation algebra} $\gL(Q,f,m,c,\lambda)$
associated to these data is the quotient
$\gL(Q,f,m,c,\lambda) = \wh{KQ}/\overline{\cJ}$
of the complete path algebra $\wh{KQ}$ by the closure of the ideal
$\cJ$ generated by the commutativity-relations
\begin{equation} \label{e:triangJ}
\begin{split}
\cJ = \Bigl(&
\left\{\balpha \cdot f(\balpha) - c_\alpha \oa^{m_\alpha-1} \cdot \oa'
\right\}_{\alpha \in Q_1 \,:\, f(\balpha) \neq \balpha},  \\
& \left\{\balpha^2 - \lambda_{\balpha} \balpha^3
- c_\alpha \oa^{m_\alpha-1} \cdot \oa'
\right\}_{\alpha \in Q_1 \,:\, f(\balpha) = \balpha}
\Bigr)
\end{split}
\end{equation}
(when the set $Q_1^f$ is empty then evidently $\lambda$ does not play any role
in the definition).
\end{definition}

The data defining a triangulation algebra can be used to define an
$f$-invariant function $p \colon Q_1 \to K[[x]]$ and a $g$-invariant function
$q \colon Q_1 \to K[[x]]$ as follows; set $p_\alpha(x)=x^2-\lambda_\alpha x^3$
if $\alpha \in Q_1^f$ and $p_\alpha(x)=1$ otherwise. Similarly, set
$q_\alpha(x) = c_\alpha x^{m_\alpha-1}$ for any $\alpha \in Q_1$.
We observe that the commutativity-relation in~\eqref{e:triangJ} corresponding
to an arrow $\alpha \in Q_1$ equals the element $\rho_{g^{-1}(\alpha)}$ of the
hyperpotential considered in Proposition~\ref{p:hyperib} arising from the
functions $p$ and $q$. This yields the following basic property of
triangulation algebras.

\begin{proposition} \label{p:potential}
Let $(Q,f)$ be a triangulation quiver. Let $m \colon Q_1 \to \bZ_{>0}$
and $c \colon Q_1 \to K^{\times}$ be $g$-invariant functions of
multiplicities and scalars, respectively and let $\lambda \colon Q_1^f \to K$.
\begin{enumerate}
\renewcommand{\theenumi}{\alph{enumi}}
\item
The triangulation algebra $\gL(Q,f,m,c,\lambda)$ is always a Jacobian algebra
of a hyperpotential on $Q$.

\item
Let $\mu_g = \lcm(\{m_\alpha\}_{\alpha \in Q_1})$ and let
\[
\mu_f = 
\begin{cases}
6 & \text{if $Q_1^f$ is non-empty and $\lambda_\alpha \neq 0$ for some
$\alpha \in Q_1^f$,} \\
3 & \text{if $Q_1^f$ is non-empty and $\lambda_\alpha = 0$ for any
$\alpha \in Q_1^f$,} \\
1 & \text{if $Q_1^f$ is empty.}
\end{cases}
\]
If $\ch K$ does not divide $\mu_f \mu_g$, then  $\gL(Q,f,m,c,\lambda)$
is a Jacobian algebra of a potential on $Q$.
\end{enumerate}
\end{proposition}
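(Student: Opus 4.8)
The plan is to leverage Proposition~\ref{p:hyperib} together with the identification of the defining relations of $\gL(Q,f,m,c,\lambda)$ with the components of a hyperpotential, which is already recorded in the discussion preceding the statement. Concretely, for part~(a) I would take $p_\alpha(x) = x^2 - \lambda_\alpha x^3$ for $\alpha \in Q_1^f$ and $p_\alpha(x) = 1$ otherwise, and $q_\alpha(x) = c_\alpha x^{m_\alpha - 1}$ for all $\alpha$; these are respectively $f$-invariant and $g$-invariant, so Proposition~\ref{p:hyperib}\eqref{it:hyperib} produces a genuine hyperpotential $(\rho_\alpha)_{\alpha \in Q_1}$. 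The one calculation to spell out is that the generator of $\cJ$ in~\eqref{e:triangJ} attached to an arrow $\alpha$ coincides with $\rho_{g^{-1}(\alpha)}$. This rests on Lemma~\ref{l:gf2fg2}, applied to $\balpha$, which yields $f(g^{-1}(\alpha)) = \balpha$, so that $\xi_{f(g^{-1}(\alpha))} = \xi_{\balpha}$ while trivially $\omega_{g(g^{-1}(\alpha))} = \oa$; combined with the $f$- and $g$-invariance of $p$ and $q$ and the fact that in a triangulation quiver every $f$-cycle has length $1$ or $3$ (so that $\xi'_{\balpha}$ is either $\balpha \cdot f(\balpha)$ or a path of length zero), this matches both the non-loop and the loop relations of~\eqref{e:triangJ}. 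Since $g^{-1}$ permutes $Q_1$, the ideal $\cJ$ is then exactly $\overline{(\rho_\alpha : \alpha \in Q_1)}$, and $\gL(Q,f,m,c,\lambda)$ is by definition the Jacobian algebra of the hyperpotential $(\rho_\alpha)$.

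For part~(b), the plan is to realize the same $(\rho_\alpha)$ as the cyclic derivatives of an honest potential via Proposition~\ref{p:hyperib}\eqref{it:potrib}. That reduces everything to producing an $f$-invariant $P \colon Q_1 \to K[[x]]$ with $P'_\alpha = p_\alpha$ and a $g$-invariant $R \colon Q_1 \to K[[x]]$ with $R'_\alpha = q_\alpha$; then $W = \sum_\alpha P_\alpha(\xi_\alpha) - \sum_\beta R_\beta(\omega_\beta)$ is a potential with $\partial_\alpha W = \rho_\alpha$, whence the Jacobian ideal of $W$ equals $\cJ$ and $\gL(Q,f,m,c,\lambda) = \cP(Q,W)$.

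The heart of the argument --- and the only place where the characteristic hypothesis enters --- is the existence of these antiderivatives, which I would check monomial by monomial. For $q_\alpha(x) = c_\alpha x^{m_\alpha - 1}$ the natural choice is $R_\alpha(x) = (c_\alpha/m_\alpha)\, x^{m_\alpha}$, which lies in $K[[x]]$ precisely when $m_\alpha$ is invertible in $K$; since $c$ and $m$ are $g$-invariant so is $R$, and the obstruction over all arrows is exactly $\ch K \nmid \mu_g$. For $p_\alpha$ there is nothing to do when $\alpha \notin Q_1^f$ (take $P_\alpha(x) = x$); when $\alpha \in Q_1^f$ one must integrate $x^2 - \lambda_\alpha x^3$, and $P_\alpha(x) = \tfrac{1}{3} x^3 - \tfrac{\lambda_\alpha}{4} x^4$ exists precisely when $3$ is invertible and, if $\lambda_\alpha \neq 0$, also $2$ is invertible --- matching the three-way definition of $\mu_f$. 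Invariance is automatic here, since the arrows requiring integration are exactly the $f$-fixed loops. Finally, as $\ch K$ is $0$ or prime, $\ch K \nmid \mu_f \mu_g$ is equivalent to the conjunction of $\ch K \nmid \mu_f$ and $\ch K \nmid \mu_g$, so the stated hypothesis is exactly what guarantees that both $P$ and $R$ exist. I expect the only genuinely delicate point to be this characteristic bookkeeping --- in particular verifying that the crude bound $\mu_f \in \{1,3,6\}$ uniformly captures the failure of integrability of $x^2 - \lambda_\alpha x^3$ over all loops, including the mixed case where some but not all of the $\lambda_\alpha$ vanish.
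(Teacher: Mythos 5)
Your proposal is correct and follows essentially the same route as the paper: the text immediately preceding the proposition defines exactly the functions $p_\alpha(x)=x^2-\lambda_\alpha x^3$ (for $\alpha \in Q_1^f$), $p_\alpha(x)=1$ (otherwise), $q_\alpha(x)=c_\alpha x^{m_\alpha-1}$, and observes that the relation attached to $\alpha$ in~\eqref{e:triangJ} equals $\rho_{g^{-1}(\alpha)}$ from Proposition~\ref{p:hyperib}, with part~(b) then following from Proposition~\ref{p:hyperib}\eqref{it:potrib} via the antiderivatives whose existence is governed by $\mu_f\mu_g$. Your verification via Lemma~\ref{l:gf2fg2} that $f(g^{-1}(\alpha))=\balpha$ and your characteristic bookkeeping are exactly the details the paper leaves implicit.
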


\sloppy 
Additional properties of triangulation algebras will be presented in
Section~\ref{sec:triangquasi}. Since these algebras are given as quotients by
closure of ideals generated by commutativity-relations, \emph{a-priori} it is
not even clear from the outset if they are finite-dimensional or not.
However, it turns out that under some mild conditions on the auxiliary data,
this is indeed the case as the closure $\overline{\cJ}$ contains sufficiently
many paths (that is, zero-relations), see Section~\ref{ssec:findim}.

\fussy 
It turns out that the concept of triangulation algebra is versatile enough to
capture two seemingly unrelated classes of algebras occurring in the literature.
Indeed,
\begin{itemize}
\item
many algebras of quaternion type are in fact triangulation algebras
(see Section~\ref{ssec:quat}); and

\item
for many triangulations of closed surfaces with punctures, the Jacobian
algebras of the quivers with potentials associated by
Labardini-Fragoso~\cite{Labardini09} are triangulation algebras (Section~\ref{ssec:Jaclosed}).
\end{itemize}

Let us quickly discuss the triangulation algebras on the triangulation quivers
with small number of vertices shown in Table~\ref{tab:quivers}
and refer to the relevant statements in the sequel. Under some mild
conditions on the auxiliary data, the triangulation algebras on the quivers
$1$, $2$, $3a$ and $3b$ are algebras of quaternion type
(Remark~\ref{rem:quat}); triangulation algebras on the quiver $1$ are
further discussed in Section~\ref{ssec:triang1}, whereas those on the quivers
$2$ and $3b$ are considered in Lemma~\ref{l:quatriang}.
A triangulation algebra on the quiver $3''$ with all multiplicities set to
$1$ coincides with the Jacobian algebra of the quiver with potential
associated with a triangulation of a torus with one puncture
(a special case of Proposition~\ref{p:QPtriang}); this algebra
has been considered in~\cite[Example~8.2]{Labardini09b}
and~\cite[Example~4.3]{Plamondon13}.

In order to complete the picture and also to provide some concrete examples,
the triangulation algebras on the quivers $3a$ and $3'$ are given in the next
two examples.

\begin{example} \label{ex:triang3a}
Let $(Q,f)$ be the triangulation quiver $3a$ of Table~\ref{tab:quivers} shown
in the picture below
\[
\xymatrix{
{\bullet_1} \ar@(ul,dl)[]_{\alpha} \ar@<-0.5ex>[r]_{\beta}
& {\bullet_2} \ar@<-0.5ex>[r]_{\delta} \ar@<-0.5ex>[l]_{\gamma}
& {\bullet_3} \ar@(dr,ur)[]_{\xi} \ar@<-0.5ex>[l]_{\eta}
}
\]
with $f=(\alpha \beta \gamma)(\xi \eta \delta)$.
Then $g=(\alpha)(\beta \delta \eta \gamma)(\xi)$ has three cycles and
any $g$-invariant function $\nu$ on $Q_1$ satisfies
$\nu_\beta = \nu_\gamma = \nu_\delta = \nu_\eta$, hence it depends on
three values which by abuse of notation will be denoted by $\nu_1, \nu_2, \nu_3$
where $\nu_1=\nu_\alpha$, $\nu_2 = \nu_\beta$ and $\nu_3=\nu_\xi$.

The auxiliary data needed to define a triangulation algebra on $(Q,f)$
thus consists of three positive integer multiplicities $m_1, m_2, m_3$
satisfying $m_1, m_3 \geq 2$ and three scalars $c_1, c_2, c_3 \in K^{\times}$
(the function $\lambda$ has empty domain and hence can be ignored).
The triangulation algebra  $\gL(Q,f,m,c,\lambda)$ is the quotient of the
complete path algebra of $Q$ by the closure of the ideal generated by the six
elements
\begin{align*}
&\beta \gamma - c_1 \alpha^{m_1-1}, &&
\alpha \beta - c_2 (\beta \delta \eta \gamma)^{m_2-1} \beta \delta \eta, &&
\delta \xi - c_2 (\gamma \beta \delta \eta)^{m_2-1} \gamma \beta \delta, \\
&\eta \delta - c_3 \xi^{m_3-1}, &&
\xi \eta - c_2 (\eta \gamma \beta \delta)^{m_2-1} \eta \gamma \beta, &&
\gamma \alpha - c_2 (\delta \eta \gamma \beta)^{m_2-1} \delta \eta \gamma.
\end{align*}
\end{example}

\begin{example}
Let $(Q,f)$ be the triangulation quiver $3'$ of Table~\ref{tab:quivers} shown
in the picture below
\[
\xymatrix@=1pc{
& {\bullet_3} \ar@(ur,ul)[]_{\alpha_3} \ar[ddl]_{\beta_3} \\ \\
{\bullet_1} \ar@(ul,dl)[]_{\alpha_1} \ar[rr]_{\beta_1}
&& {\bullet_2} \ar@(dr,ur)[]_{\alpha_2} \ar[uul]_{\beta_2}
}
\]
with $f=(\alpha_1)(\alpha_2)(\alpha_3)(\beta_1 \beta_2 \beta_3)$.
Then $g=(\alpha_1 \beta_1 \alpha_2 \beta_2 \alpha_3 \beta_3)$ has one cycle,
the set $Q_1^f$ consists of the arrows $\alpha_1, \alpha_2, \alpha_3$ and the
auxiliary data needed to define a triangulation algebra on $(Q,f)$ consists of
a multiplicity $m \geq 1$, one scalar $c \in K^{\times}$ and three scalars
$\lambda_1, \lambda_2, \lambda_3 \in K$.

The triangulation algebra  $\gL(Q,f,m,c,\lambda)$ is the quotient of the
complete path algebra of $Q$ by the closure of the ideal generated by the six
elements
\begin{align*}
&\beta_i \beta_{i+1} -
c (\alpha_i \beta_i \alpha_{i+1} \beta_{i+1} \alpha_{i-1} \beta_{i-1})^{m-1}
\alpha_i \beta_i \alpha_{i+1} \beta_{i+1} \alpha_{i-1} 
&& (1 \leq i \leq 3) \\
&\alpha_i^2 - \lambda_i \alpha_i^3 - 
c (\beta_i \alpha_{i+1} \beta_{i+1} \alpha_{i-1} \beta_{i-1} \alpha_i)^{m-1}
\beta_i \alpha_{i+1} \beta_{i+1} \alpha_{i-1} \beta_{i-1}
&& (1 \leq i \leq 3)
\end{align*}
where index arithmetic is taken modulo $3$ (i.e.\ $3+1=1$ and $1-1=3$).
\end{example}

\begin{definition}
We say that a $g$-invariant multiplicity function
$m \colon Q_1 \to \bZ_{>0}$ is \emph{admissible}
if $m_\alpha n_\alpha \geq 3$ for any arrow $\alpha \in Q_1$.
\end{definition}

One needs to check the condition in the definition only for the arrows $\alpha$
with $n_\alpha \leq 2$. In particular, these arrows occur as loops or as
part of $2$-cycles. The admissibility condition thus reads as follows:
if $n_\alpha=1$ then $m_\alpha \geq 3$, while
if $n_\alpha=2$ then $m_\alpha \geq 2$.
Note that when the pair $(n_\alpha,m_\alpha)$ equals $(1,2)$ or $(2,1)$ the
triangulation algebra is defined but the multiplicity is not admissible,
and when it equals $(1,1)$ the triangulation algebra is not even defined.

\begin{example}
For the triangulation quiver $3a$ of Table~\ref{tab:quivers} considered in
Example~\ref{ex:triang3a}, one has $n_\alpha = n_\xi = 1$ and $n_\beta = 4$.
Hence the multiplicity function $m$ is admissible if and only if
$m_\alpha \geq 3$ and $m_\xi \geq 3$.
\end{example}

We conclude this section by a series of remarks concerning the definition of
triangulation algebras and possible extensions thereof. The reader might skip
these remarks on first reading.

\begin{remark}
Since the path $\oa^{m_\alpha-1} \cdot \oa'$ is of length
$m_\alpha n_\alpha - 1$, the definition of a triangulation algebra makes
perfect sense when $m_\alpha n_\alpha=2$ for some arrow $\alpha$, but in this
case the right hand side of the corresponding commutativity-relation is just
$c_\alpha \alpha$, so the arrow $\alpha$ could be eliminated from $Q$
complicating somewhat the remaining relations.
The admissibility condition ensures that the generating relations lie
in the square of the ideal generated by all arrows of $Q$ so that no arrows
have to be deleted, compare with Remark~\ref{rem:BGAelim} for Brauer graph
algebras.
\end{remark}

\begin{remark}
When $\ch K \neq 2$, the scalars $\lambda_{\alpha}$ occurring in the
definition of a triangulation algebra do not play any role, 
i.e.\ $\gL(Q,f,m,c,\lambda) \simeq \gL(Q,f,m,c,\mathbf{0})$.
This can be shown by considering the automorphism of
$\wh{KQ}$ sending each arrow $\alpha$ with $f(\alpha)=\alpha$ 
to $\alpha - (\lambda_\alpha/2) \alpha^2$ and keeping the other arrows
unchanged. For the proof ones needs to know the additional zero relations
that hold in a triangulation algebra given in Proposition~\ref{p:qrel}.
\end{remark}

\begin{remark}
Even if $\lambda_\alpha=0$ for all the arrows $\alpha \in Q_1^f$,
there may be different $g$-invariant functions of scalars
$c,c' \colon Q_1 \to K^{\times}$ yielding isomorphic triangulation algebras,
that is,
\[
\gL(Q,f,m,c,\mathbf{0}) \simeq \gL(Q,f,m,c',\mathbf{0}),
\]
but in this survey we will not pursue a systematic study of this equivalence
relation on $g$-invariant functions of scalars.
\end{remark}

\begin{remark} \label{rem:trianga}
It is possible to slightly generalize Definition~\ref{def:triang} by
considering
also an $f$-invariant function $a \colon Q_1 \to K^{\times}$ of scalars
and setting
\begin{align*}
\cJ = \Bigl(&
\left\{a_{\balpha} \balpha \cdot f(\balpha) - c_\alpha \oa^{m_\alpha-1} \cdot \oa'
\right\}_{\alpha : f(\balpha) \neq \balpha},  \\
& \left\{a_{\balpha} \balpha^2 - \lambda_{\balpha} \balpha^3
- c_\alpha \oa^{m_\alpha-1} \cdot \oa'
\right\}_{\alpha : f(\balpha) = \balpha}
\Bigr) 
\end{align*}
(the current definition uses the constant function $a=\mathbf{1}$).

All the results of Section~\ref{sec:triangquasi} are valid also in this
more general setting, but for simplicity, we chose to present the material
without these extra scalars, since in many cases this apparent generalization
does not yield any new algebras.
Indeed, by using scalar transformation of the arrows
and replacing the scalar function $c$ by another $g$-invariant function
$c' \colon Q_1 \to K^{\times}$, we may always assume
that $a_{\alpha}=1$ for any arrow with $f(\alpha) \neq \alpha$, and if
$\alpha$ is an arrow such that $f(\alpha) = \alpha$ and the
ground field $K$ contains a third root of $a_\alpha$, we may assume
that $a_{\alpha}=1$ as well. This holds in particular when $K$ is
algebraically closed.
\end{remark}

\begin{remark} \label{rem:gentrian}
One could define an even more general version of a triangulation algebra by
utilizing the full power of Proposition~\ref{p:hyperib},
taking an $f$-invariant function $p \colon Q_1 \to K[[x]]^{\times}$ of
invertible power series, a $g$-invariant function $q \colon Q_1 \to
K[[x]] \setminus \{0\}$ such that the least order term
$c_\alpha x^{m_\alpha-1}$ of
each $q_\alpha(x)$ satisfies $m_\alpha n_\alpha \geq 2$,
and forming the quotient of the complete path algebra $\wh{KQ}$ by the
closure of the ideal $\cJ$ given by
\[
\cJ = \left(
\bigl\{
p_{\balpha}(\xi_{\balpha}) \cdot \balpha \cdot f(\balpha) -
q_{\alpha}(\oa) \cdot \oa' \bigr\}_{\alpha \in Q_1}
\right).
\]

However, it turns out by using techniques similar to that in the proof of
Theorem~\ref{t:quasi} that if the induced multiplicity function
$m \colon Q_1 \to \bZ_{>0}$ is admissible and $((Q,f),m)$ is not exceptional
(see Section~\ref{ssec:exceptional} below)
then the algebra $\wh{KQ}/\bar{\cJ}$ already occurs as an algebra of the form
discussed in Remark~\ref{rem:trianga} above,
compare with Remark~\ref{rem:genBGA} for Brauer graph algebras.

Nevertheless, for some triangulation quivers with non-admissible multiplicities
this generalized version does yield new algebras, see for example
Proposition~\ref{p:newquat} describing some new algebras of quaternion type not
appearing in the known lists.
\end{remark}

\subsection{Example -- triangulation algebras with one vertex}
\label{ssec:triang1}

In this section we work out in some detail the case of triangulation algebras
with one vertex. Already in this rather special case, one is able to
demonstrate many of the ideas and techniques that apply also in the general
case to be treated in Section~\ref{ssec:findim}.

Recall that the only triangulation quiver $(Q,f)$ with one
vertex has two loops $\alpha$ and $\beta$ with $f$ being the identity function
(Example~\ref{ex:triang1}).
Hence $\bar{\alpha} = \beta$, $\bar{\beta}=\alpha$
and $\oa = \alpha \beta$, $\oa' = \alpha$,
$\omega_\beta = \beta \alpha$, $\omega'_\beta = \beta$.
Since there is only one $g$-cycle, the multiplicities and scalars
are given by an integer $m \geq 1$ and some $c \in K^{\times}$.
In addition, there are parameters $\lambda_\alpha, \lambda_\beta \in K$
corresponding to the fixed points of $f$.

The triangulation algebra is the quotient
$\gL$ = $\wh{K \langle \alpha, \beta \rangle} / \bar{\cJ}$
(see the notation in Example~\ref{ex:BGAf1}),
where the generators of the ideal $\cJ$ are given by
\begin{align*}
\alpha^2 - \lambda_\alpha \alpha^3 - c (\beta \alpha)^{m-1} \beta &,&
\beta^2 - \lambda_\beta \beta^3 - c (\alpha \beta)^{m-1} \alpha .
\end{align*}

If $m=1$, the multiplicity is not admissible and
$\beta \in (\alpha^2, \cJ)$, $\alpha \in (\beta^2, \cJ)$, so by
induction we get $\alpha \in (\alpha^{4n}, \cJ)$ for any
$n \geq 1$, therefore
$\alpha \in \bar{\cJ}$ and similarly for $\beta$.
Hence the image of the arrows $\alpha$, $\beta$ in $\gL$ vanishes
and $\gL = K$.

If $m \geq 2$, the multiplicity is admissible. Define elements
$E_\alpha, E_\beta \in \wh{K \langle \alpha, \beta \rangle}$ by
\begin{align*}
E_\alpha &= (1 - \lambda_\alpha \alpha)^{-1} = 
1 + \lambda_\alpha \alpha + \lambda_\alpha^2 \alpha^2 + \dots \\
E_\beta &= (1 - \lambda_\beta \beta)^{-1} = 
1 + \lambda_\beta \beta + \lambda_\beta^2 \beta^2 + \dots
\end{align*}
Then $\alpha^2 = E_\alpha (\alpha^2 - \lambda_\alpha \alpha^3)$
and $\beta^2 = (\beta^2 - \lambda_\beta \beta^3) E_\beta$, hence
\begin{align*}
\alpha^2 \beta - c E_\alpha (\beta \alpha)^{m-2} \beta \alpha \beta^2
&= E_\alpha \left(
\alpha^2 - \lambda_\alpha \alpha^3
- c (\beta \alpha)^{m-1} \beta \right) \beta \in \cJ,
\\
\alpha \beta^2 - c \alpha^2 \beta (\alpha \beta)^{m-2} \alpha E_\beta
&= \alpha \left( (\beta^2 - \lambda_\beta \beta^3)
- c (\alpha \beta)^{m-1} \alpha \right) E_\beta \in \cJ,
\end{align*}
so
$\alpha^2 \beta - u \alpha \beta^2 \in \cJ$ and
$\alpha \beta^2 - \alpha^2 \beta v \in \cJ$
for some (infinite) linear combinations $u$ and $v$ of paths of positive
lengths. Therefore
\[
\alpha^2 \beta - u \alpha^2 \beta v 
= (\alpha^2 \beta - u \alpha \beta^2) + u(\alpha \beta^2 - \alpha^2 \beta v)
\in \cJ .
\]

It follows that $\alpha^2 \beta - u^n \alpha^2 \beta v^n
=\sum_{i=0}^{n-1} u^i ( \alpha^2 \beta - u \alpha^2 \beta v ) v^i \in \cJ$
for any $n \geq 1$, hence $\alpha^2 \beta \in \bar{\cJ}$. Similarly,
$\alpha \beta^2, \beta^2 \alpha, \beta \alpha^2 \in \bar{\cJ}$ so their
image in the quotient $\gL$ is zero.
Therefore, in $\gL$ we have
\[
\alpha^4 = E_\alpha (\alpha^2 - \lambda_\alpha \alpha^3) \alpha^2 =
E_\alpha c (\beta \alpha)^{m-1} \beta \alpha^2 = 0
\]
and similarly $\beta^4=0$. We deduce that
\begin{align*}
(\alpha \beta)^m &= (\alpha \beta)^{m-1} \alpha \beta = 
c^{-1}(\beta^2 - \lambda_\beta \beta^3) \beta = c^{-1} \beta^3 =
c^{-1} \beta (\beta^2 - \lambda_\beta \beta^3) \\
&= \beta (\alpha \beta)^{m-1} \alpha = (\beta \alpha)^m
= (\beta \alpha)^{m-1} \beta \alpha =
c^{-1}(\alpha^2 - \lambda_\alpha \alpha^3) \alpha = c^{-1} \alpha^3
\end{align*}
(compare with Remark~\ref{rem:dimercyc})
and a basis for $\gL$ is given by the $4m$ elements
\[
\{1\} \cup \left\{(\alpha \beta)^i, (\beta \alpha)^i \right\}_{0 < i < m}
\cup \left\{(\alpha \beta)^i \alpha, (\beta \alpha)^i \beta
\right\}_{0 \leq i < m} \cup \{(\alpha \beta)^m = (\beta \alpha)^m\}
\]
(compare with Proposition~\ref{p:basis}).

The discussion above shows parts~\eqref{it:p:rel1} and~\eqref{it:p:dim4m}
of the next statement.
For part~\eqref{it:p:rel2} one uses similar considerations whose details
will appear elsewhere.

\begin{proposition}
Let $\gL$ be a triangulation algebra with one vertex.
Then there exist parameters $m \geq 2$, $c \in K^{\times}$
and $\lambda_\alpha, \lambda_\beta \in K$ such that the following hold.
\begin{enumerate}
\renewcommand{\theenumi}{\alph{enumi}}
\item \label{it:p:rel1}
$\gL = KQ/I$ where $Q$ is the quiver
\[
\xymatrix@=1pc{
{\bullet} \ar@(dl,ul)[]^{\alpha} \ar@(dr,ur)[]_{\beta}
}
\]
and $I$ is the ideal of $KQ$ generated by the three elements
\begin{align}
\label{e:I1gen}
& \alpha^2 - c (\beta \alpha)^{m-1} \beta -
c \lambda_\alpha (\beta \alpha)^m, \, 
\beta^2 -  c (\alpha \beta)^{m-1} \alpha - c \lambda_\beta (\alpha \beta)^m \\
& \alpha^2 \beta .
\end{align}

\item \label{it:p:rel2}
The ideal $I$ is generated by the elements in~\eqref{e:I1gen} together with
the elements
\[
(\alpha \beta)^m - (\beta \alpha)^m ,\, 
(\alpha \beta)^m \alpha ,\, (\beta \alpha)^m \beta .
\]

\item \label{it:p:dim4m}
The algebra $\gL$ is finite-dimensional and $\dim_K \gL = 4m$.
\end{enumerate}
\end{proposition}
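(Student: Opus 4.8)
The plan is to extract parts~(a) and~(c) directly from the explicit reduction performed in the preceding discussion, and to obtain part~(b) by passing to the associated graded algebra. Part~(c) is immediate: for $m\ge 2$ the discussion exhibits the $4m$ alternating paths
\[
\{1\} \cup \{(\alpha\beta)^i,(\beta\alpha)^i\}_{0<i<m} \cup \{(\alpha\beta)^i\alpha,(\beta\alpha)^i\beta\}_{0\le i<m} \cup \{(\alpha\beta)^m\}
\]
as a $K$-basis of $\gL$, so $\gL$ is finite-dimensional with $\dim_K\gL = 4m$.

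For part~(a) I would first use finite-dimensionality to discard the completion: the canonical map $KQ\to\gL$ is surjective, since its image contains each of the basis paths above, so $\gL = KQ/(\bar\cJ\cap KQ)$, and it remains to identify $\bar\cJ\cap KQ$ with the ideal $I$ generated by the two elements of~\eqref{e:I1gen} and by $\alpha^2\beta$. That $I\subseteq\bar\cJ\cap KQ$ is a short check in $\gL$: the discussion gives $\alpha^2\beta = 0$ and $\alpha^3 = c(\beta\alpha)^m$ there, and the second identity turns each defining relation $\balpha^2-\lambda_{\balpha}\balpha^3-c_\alpha\oa^{m_\alpha-1}\oa'$ into the corresponding element of~\eqref{e:I1gen}. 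The reverse inclusion is the substance of the discussion's computation inside $\wh{KQ}$: iterating the relations of~\eqref{e:I1gen} replaces $\alpha^2$ and $\beta^2$ by paths of strictly greater length, and the closure argument shows that all sufficiently long paths already lie in $\bar\cJ$, so $I$ and $\bar\cJ$ have the same closure; finite-dimensionality then yields $\bar\cJ\cap KQ = I$.

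For part~(b) write $I'$ for the ideal generated by the two elements of~\eqref{e:I1gen} together with $(\alpha\beta)^m-(\beta\alpha)^m$, $(\alpha\beta)^m\alpha$ and $(\beta\alpha)^m\beta$. The inclusion $I'\subseteq I$ is immediate, since in $\gL=KQ/I$ the discussion gives $(\alpha\beta)^m=(\beta\alpha)^m=c^{-1}\alpha^3=c^{-1}\beta^3$ and $\alpha^4=\beta^4=0$, whence $(\alpha\beta)^m\alpha=c^{-1}\alpha^4=0$ and $(\beta\alpha)^m\beta=c^{-1}\beta^4=0$. For the reverse inclusion I would bound $\dim_K KQ/I'$ using the filtration of $KQ$ by path length. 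The leading terms of the five generators of $I'$ are $\alpha^2$, $\beta^2$, $(\alpha\beta)^m-(\beta\alpha)^m$, $(\alpha\beta)^m\alpha$ and $(\beta\alpha)^m\beta$, so $\operatorname{gr}(KQ/I')$ is a quotient of $KQ/(\alpha^2,\beta^2,(\alpha\beta)^m-(\beta\alpha)^m,(\alpha\beta)^m\alpha,(\beta\alpha)^m\beta)$. In the latter only alternating paths survive; identifying $(\alpha\beta)^m$ with $(\beta\alpha)^m$ and killing every alternating path of length exceeding $2m$ leaves exactly the $4m$ monomials listed above. Hence $\dim_K KQ/I'\le\dim_K\operatorname{gr}(KQ/I')\le 4m$. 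Since $I'\subseteq I$ gives a surjection $KQ/I'\twoheadrightarrow\gL$ and $\dim_K\gL=4m$ by part~(c), this forces $I'=I$; in particular $\alpha^2\beta\in I'$.

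The main obstacle is the dimension bound in part~(a). Unlike in part~(b), the leading terms of the generators of $I$ are only $\alpha^2$ and $\beta^2$, because the leading term $\alpha^2\beta$ of the third generator is already a multiple of $\alpha^2$; thus the associated graded is $KQ/(\alpha^2,\beta^2)$, which is infinite-dimensional, and the finiteness of $\gL$ is invisible at the graded level. One must instead exploit the higher-degree tails $c\,\oa^{m_\alpha-1}\oa'$ of the relations, which is precisely why the discussion proceeds through the completion $\wh{KQ}$ and a closure argument rather than through leading terms. Once this is in place for part~(a), part~(b) follows cleanly from the graded count above.
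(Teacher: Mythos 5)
Your part~(c) and the forward inclusions in (a) and (b) track the paper's own treatment: the paper proves (a) and (c) by exactly the preceding discussion in the completed path algebra, and explicitly defers the details of (b) to appear elsewhere. The real content of both (a) and (b), however, is the reverse direction: showing that elements known to lie in the \emph{closed} ideal $\bar{\cJ} \subseteq \wh{KQ}$ (in particular $\alpha^2\beta$ and, more importantly, all sufficiently long paths) already lie in the \emph{algebraic} ideals $I$, respectively $I'$, of the ordinary path algebra. Neither of your two mechanisms for this step is valid. For (a), from $\bar{I}=\bar{\cJ}$ you only get $\bar{\cJ}\cap KQ=\bar{I}\cap KQ$; to conclude $\bar{I}\cap KQ=I$ you need $I$ itself to contain all paths beyond some length, and finite-dimensionality of $\gL$ only tells you such paths lie in $\bar{\cJ}$ -- which is exactly the point at issue.

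For (b), the inequality $\dim_K KQ/I'\le\dim_K\operatorname{gr}(KQ/I')$ for the arrow-ideal filtration is false in general: the correct inequality goes the other way, with strictness precisely when the induced filtration on the quotient fails to be separated. The algebra $K[x]/(x^2-x^3)$ is a counterexample of exactly the relevant shape: the lowest-degree term of the relation is $x^2$, so the associated graded is a quotient of $K[x]/(x^2)$ of dimension at most $2$, yet the algebra has dimension $3$ because $x^2=x^3=x^4=\cdots\neq 0$. Your generators $\alpha^2-c(\beta\alpha)^{m-1}\beta-c\lambda_\alpha(\beta\alpha)^m$ have precisely this form (a quadratic leading term with a tail that pushes the element to ever higher degree), so separatedness is exactly what is at stake and cannot be read off from leading terms; this is the same reason the paper's computation must pass through $\wh{KQ}$, where the filtration is separated by construction. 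The finiteness of $KQ/I'$ has to be extracted from the monomial relations $(\alpha\beta)^m\alpha$ and $(\beta\alpha)^m\beta$ by a direct (terminating) computation -- e.g.\ one first gets $\alpha^3=\alpha\cdot\alpha^2=c(\alpha\beta)^m$ in $KQ/I'$ using $(\alpha\beta)^m\alpha=0$, hence $\alpha^4=c(\alpha\beta)^m\alpha=0$, and then shows $\alpha^2\beta\in I'$ by rewriting until one of these monomials appears -- rather than from a graded count. Until that is supplied, both reverse inclusions, and hence (a) and (b), remain unproved.
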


Comparing the description of $\gL$ with that of the local algebras of
quaternion type in~\cite[\S III]{Erdmann90}, we deduce:

\begin{corollary} \label{c:triang1}
An algebra of quaternion type with one simple module is a triangulation
algebra.
\end{corollary}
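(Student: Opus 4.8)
The plan is to deduce the corollary by matching the explicit presentation of one-vertex triangulation algebras furnished by the preceding proposition against Erdmann's classification of the local algebras of quaternion type in~\cite[\S III]{Erdmann90}. Since throughout this section $K$ is algebraically closed, an algebra of quaternion type with a single simple module is Morita equivalent to one presented on the quiver $Q$ with one vertex and two loops $\alpha,\beta$ by relations of the shape
\[
\alpha^2 = a(\beta\alpha)^{m-1}\beta + a'(\beta\alpha)^m, \qquad
\beta^2 = b(\alpha\beta)^{m-1}\alpha + b'(\alpha\beta)^m,
\]
together with the zero-relation $\alpha^2\beta = 0$ (which forces also $\beta^2\alpha=\alpha\beta^2=\beta\alpha^2=0$), where $m\geq 2$, the leading scalars $a,b$ lie in $K^{\times}$, the socle scalars $a',b'$ lie in $K$, and the total dimension is $4m$. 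First I would record this normal form together with the value $4m$, observing that it is exactly the dimension computed in part~(c) of the preceding proposition.

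Next I would line up this presentation with the output of the preceding proposition, which writes a one-vertex triangulation algebra as $KQ/I$ with $I$ generated by
\[
\alpha^2 - c(\beta\alpha)^{m-1}\beta - c\lambda_\alpha(\beta\alpha)^m,\quad
\beta^2 - c(\alpha\beta)^{m-1}\alpha - c\lambda_\beta(\alpha\beta)^m,\quad
\alpha^2\beta,
\]
and where part~(b) guarantees that $I$ also contains $(\alpha\beta)^m-(\beta\alpha)^m$, $(\alpha\beta)^m\alpha$ and $(\beta\alpha)^m\beta$. Hence the remaining zero-relations $\beta^2\alpha=\alpha\beta^2=\beta\alpha^2=0$ hold automatically, so the defining relations of a one-vertex triangulation algebra are literally of Erdmann's form, with both leading coefficients equal to the single $g$-invariant scalar $c$ and with socle coefficients $c\lambda_\alpha$, $c\lambda_\beta$. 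As $\lambda_\alpha,\lambda_\beta$ range over $K$ these socle coefficients sweep out all of $K$, so the socle deformations appearing in Erdmann's list are all accounted for.

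The only genuine point to verify is that Erdmann's two independent leading scalars $a,b$ can be reduced to a single scalar $c$, since a triangulation algebra has just one $g$-cycle and hence one scalar. I would do this by applying the algebra automorphism of $\wh{KQ}$ sending $\alpha\mapsto u\alpha$, $\beta\mapsto v\beta$ with $u,v\in K^{\times}$; tracking the effect on the two relations shows that the leading coefficients are rescaled to $av^mu^{m-3}$ and $bu^mv^{m-3}$, while the socle coefficients transform compatibly into multiples of $u$ and $v$ respectively. Requiring both leading coefficients to equal a common $c$ amounts to solving $(v/u)^3=b/a$ together with one further monomial equation in $u$; since $2m-3\geq 1$ for $m\geq 2$ and $K$ is algebraically closed, suitable $u,v$ exist, and they determine $\lambda_\alpha,\lambda_\beta$ uniquely. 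After this rescaling the algebra is exactly $\gL(Q,f,m,c,\lambda)$ for the induced parameters, which proves the corollary. I expect this scalar bookkeeping --- confirming simultaneous solvability of the normalization equations, which is precisely where algebraic closure enters --- to be the only mildly delicate step; everything else is a direct comparison of presentations.
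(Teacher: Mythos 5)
Your proposal is correct and follows exactly the route the paper takes: the paper's entire proof is the one-line remark that one compares the presentation of one-vertex triangulation algebras from the preceding proposition with Erdmann's list of local algebras of quaternion type in \S III of her monograph, and you simply carry out that comparison in detail. The scalar-normalization step you flag (reducing two leading coefficients to the single $g$-invariant scalar $c$ by rescaling the loops, solvable since $K$ is algebraically closed) is a correct and worthwhile elaboration of what the paper leaves implicit, and the rest of your matching of relations and dimensions is consistent with parts (a)--(c) of the proposition.
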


\subsection{Exceptional triangulation quivers with multiplicities}
\label{ssec:exceptional}

In proving the results on triangulation algebras one needs to distinguish
two exceptional cases where the triangulation quiver is self dual and
the admissible multiplicities are the minimal possible.

\begin{definition}
A pair $((Q,f),m)$ 
consisting of a connected triangulation quiver $(Q,f)$ and
a $g$-invariant multiplicity function $m \colon Q_1 \to \bZ_{>0}$
is \emph{exceptional}
if $(Q,f)$ is one of the two self-dual triangulation quivers shown
in Figure~\ref{fig:selfdual} and the function $m$ is the following:
\begin{itemize}
\item
$m_\alpha=3$ and $m_\beta = m_\gamma = m_\eta = 1$ in the punctured
monogon case (for the labeling of the arrows see row $2$ of
Table~\ref{tab:quivers});

\item
$m_\alpha=1$ for any arrow $\alpha$ in the tetrahedron case.
\end{itemize}
\end{definition}

The next statement characterizes the exceptional triangulation quivers with
multiplicities among all triangulation quivers with admissible multiplicities.

\begin{proposition} \label{p:except}
Let $(Q,f)$ be a connected triangulation quiver and
$m \colon Q_1 \to \bZ_{>0}$ an admissible $g$-invariant multiplicity function.
Then the following conditions are equivalent:
\begin{enumerate}
\renewcommand{\theenumi}{\alph{enumi}}
\item \label{it:p:except}
$((Q,f),m)$ is exceptional;

\item \label{it:p:mn3}
$m_\alpha n_\alpha = 3$ for all $\alpha \in Q_1$;

\item \label{it:p:mn1}
$(m_\alpha n_\alpha)^{-1} + (m_{f(\alpha)}n_{f(\alpha)})^{-1} + 
(m_{f^2(\alpha)} n_{f^2(\alpha)})^{-1} = 1$ for some $\alpha \in Q_1$.
\end{enumerate}
\end{proposition}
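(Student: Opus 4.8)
The plan is to prove the cyclic chain $(a)\Rightarrow(b)\Rightarrow(c)\Rightarrow(a)$. Throughout I write $d_\alpha=m_\alpha n_\alpha$ for the product in the statement. Since $m$ is $g$-invariant and $n_\alpha$ is constant along the $g$-cycle of $\alpha$, the function $\alpha\mapsto d_\alpha$ is itself $g$-invariant, and admissibility says exactly that $d_\alpha\ge 3$ for every arrow. As $m_\alpha,n_\alpha$ are positive integers, $d_\alpha=3$ holds precisely when $(n_\alpha,m_\alpha)\in\{(1,3),(3,1)\}$; that is, $d_\alpha=3$ iff either $\alpha$ is a $g$-fixed loop with $m_\alpha=3$, or $\alpha$ lies on a $g$-cycle of length $3$ with $m_\alpha=1$. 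Also, since $f^3$ is the identity, every $f$-cycle has length $1$ or $3$, so the three summands in $(c)$ really are the contributions of the arrows $\alpha,f(\alpha),f^2(\alpha)$ of one $f$-cycle.

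First I would dispose of the two easy implications. For $(a)\Rightarrow(b)$ one reads off the $g$-cycles of the two exceptional quivers of Figure~\ref{fig:selfdual}: for the punctured monogon (row $2$ of Table~\ref{tab:quivers}) one has $g=(\alpha)(\beta\,\eta\,\gamma)$, so with the prescribed $m_\alpha=3$ and $m_\beta=m_\gamma=m_\eta=1$ every arrow has $d_\alpha=3$; for the tetrahedron every $g$-cycle has length $3$ and every multiplicity is $1$, so again $d_\alpha=3$ throughout, which is $(b)$. The implication $(b)\Rightarrow(c)$ is immediate, since then each summand equals $\tfrac13$.

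The substance is $(c)\Rightarrow(a)$. Fixing $\alpha$ for which the sum in $(c)$ equals $1$, admissibility gives each of $\tfrac{1}{d_\alpha},\tfrac{1}{d_{f(\alpha)}},\tfrac{1}{d_{f^2(\alpha)}}\le\tfrac13$, so the sum is $1$ only if each term equals $\tfrac13$; thus $d\equiv 3$ on the whole $f$-cycle of $\alpha$, and by $g$-invariance the $g$-cycles meeting it have length $1$ or $3$. I would then run a local rigidity argument. In the generic case, where this $f$-cycle is a genuine triangle (a block of type C), its arrows are non-loops, so each lies on a $g$-cycle of length $3$; in the surface picture this says the triangle has three corners each surrounded by exactly three triangles. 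Examining the links of these corners shows the three neighbouring triangles share a single fourth vertex, so the connected component consists of exactly four triangles glued into a tetrahedron; as $Q$ is connected this is all of $(Q,f)$, and the dictionary above forces all multiplicities to be $1$, yielding the tetrahedron.

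The remaining cases, where the flat $f$-cycle is a block of type B (a self-folded triangle, whose inner arrow is the $g$-fixed loop) or of type A (a boundary loop), are where the naive link argument degenerates and constitute the main obstacle: one must instead track $f$ and $g$ through the glued blocks, using the block decomposition of Section~\ref{ssec:blocks} and Proposition~\ref{p:triauniq}, to see that flatness propagates and collapses the connected quiver onto the punctured monogon with its exceptional multiplicities. A clean way to organise and cross-check this is an Euler-characteristic count. Writing $N_A,N_B,N_C$ for the numbers of blocks of each type, and using that the $g$-fixed loops are exactly the type-B loops, the condition $d\equiv 3$ forces every non-trivial $g$-cycle to have length $3$, whence $|\Omega_g|=\tfrac23(V+N_B)$; together with $V=\tfrac12(N_A+3N_B+3N_C)$ and the face count $F=N_A+N_B+N_C$, the Euler relation $|\Omega_g|-V+F=2-2\tilde g$ for the closed orientable ribbon surface gives $2-2\tilde g=\tfrac16(5N_A+7N_B+3N_C)>0$. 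Hence the surface is a sphere and $5N_A+7N_B+3N_C=12$, whose only non-negative integer solutions are $(N_A,N_B,N_C)=(1,1,0)$ and $(0,0,4)$ — the punctured monogon and the tetrahedron. Combined with Proposition~\ref{p:gcycles} and the self-duality of these quivers, this recovers exactly case $(a)$.
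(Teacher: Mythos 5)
First, note that the paper does not actually prove the hard direction: it records that (a)$\Rightarrow$(b) and (b)$\Rightarrow$(c) are trivial and explicitly defers (c)$\Rightarrow$(a) to another article, so there is no in-paper argument to measure you against there. Your two easy implications agree with the paper's remarks, and your Euler-characteristic count is the genuinely valuable part of the proposal: under the hypothesis $m_\alpha n_\alpha=3$ \emph{for all} arrows it correctly yields genus $0$ and $5N_A+7N_B+3N_C=12$, hence $(N_A,N_B,N_C)\in\{(1,1,0),(0,0,4)\}$, and this is a clean proof of (b)$\Rightarrow$(a). The problem is that this count presupposes (b), while (c) only gives you $m_\alpha n_\alpha=3$ on a single $f$-cycle.

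That is where the gap sits, and it is not a cosmetic one. Condition (c) constrains only the $g$-cycles that actually meet the chosen $f$-orbit, and in both exceptional quivers there is a $g$-cycle that a given $f$-cycle does not meet: in the tetrahedron each $f$-cycle (face) meets exactly three of the four $g$-cycles (this follows from the paper's remark that $\alpha, f(\alpha), \balpha, f(\balpha)$ lie in four distinct $g$-cycles), and in the punctured monogon the $f$-fixed loop $\eta$ lies on the $g$-cycle $(\beta\,\eta\,\gamma)$ and never sees $(\alpha)$. So your step ``the dictionary above forces all multiplicities to be $1$'' is unjustified: your link argument does pin down the quiver as the tetrahedron, but says nothing about the multiplicity on the fourth $g$-cycle. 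Concretely, take the tetrahedron with $m\equiv 1$ on three $g$-cycles and $m\equiv 2$ on the fourth: this is $g$-invariant and admissible, and (c) holds for any arrow in the face opposite the heavy vertex, yet the pair is not exceptional; likewise the punctured monogon with $m_\alpha=4$ satisfies (c) at $\alpha:=\eta$. These configurations show that the propagation you wave at (``flatness propagates'') cannot be carried out from the existential hypothesis (c) alone, so either (c) must be read with ``for all $\alpha$'' (in which case (c)$\Rightarrow$(b) is immediate and your Euler count finishes the job), or the target statement itself needs amending. The unproved type A/B starting cases, which you acknowledge but do not execute, are a secondary incompleteness by comparison.
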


The implications \eqref{it:p:except}$\Rightarrow$\eqref{it:p:mn3}
and~\eqref{it:p:mn3}$\Rightarrow$\eqref{it:p:mn1} are trivial.
As with the other combinatorial statements of Section~\ref{sec:quivers},
the proof of the implication~\eqref{it:p:mn1}$\Rightarrow$\eqref{it:p:except}
will be given elsewhere.

\section{Representation-finite symmetric 2-CY-tilted algebras}
\label{sec:sym2CYfin}

In this section we classify all the symmetric 2-CY-tilted
algebras of finite representation type over an algebraically closed field.

Assume that $K$ is algebraically closed and consider first Brauer graph
algebras. By invoking the results of Erdmann and
Skowro\'{n}ski on the structure of the stable Auslander-Reiten
quiver of a self-injective special biserial
algebra~\cite[Theorems~2.1 and~2.2]{ES92}, recalling
that $\tau$-periodicity and $\Omega$-periodicity are equivalent for
symmetric algebras, we obtain:

\begin{proposition} \label{p:BGAperiodic}
The following conditions are equivalent for a Brauer graph algebra $\Gamma$.
\begin{enumerate}
\renewcommand{\theenumi}{\alph{enumi}}
\item
Any indecomposable non-projective $\Gamma$-module is $\Omega_{\Gamma}$-periodic;

\item
$\Gamma$ is of finite representation type.
\end{enumerate}
\end{proposition}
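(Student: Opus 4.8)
The plan is to reduce the statement to the structure of the stable Auslander--Reiten quiver of $\Gamma$ and to invoke the Erdmann--Skowro\'{n}ski classification. First I would record the relevant properties of $\Gamma$: by Proposition~\ref{p:BGA} it is finite-dimensional, symmetric and special biserial, hence self-injective and of tame representation type. Being symmetric, one has $\tau = \Omega^2$ on $\stmod \Gamma$, so an indecomposable non-projective module is $\Omega_\Gamma$-periodic if and only if it is $\tau$-periodic. Thus condition~(a) is equivalent to the assertion that every vertex of the stable Auslander--Reiten quiver of $\Gamma$ lies in a finite $\tau$-orbit.

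For the implication (b)$\Rightarrow$(a) I would argue directly, with no deep input: if $\Gamma$ is of finite representation type, then its stable Auslander--Reiten quiver has only finitely many vertices, and since $\tau$ restricts to a permutation of this finite set, every indecomposable non-projective module lies in a finite $\tau$-orbit and is therefore $\tau$-periodic, hence $\Omega_\Gamma$-periodic.

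For the converse (a)$\Rightarrow$(b) I would argue by contraposition, and here the cited structural results do the work. Suppose $\Gamma$ is not of finite representation type; by Proposition~\ref{p:BGA} it is then tame and infinite. Applying the Erdmann--Skowro\'{n}ski description of the stable Auslander--Reiten components of a self-injective special biserial algebra~\cite[Theorems~2.1 and~2.2]{ES92}, the stable quiver of such a representation-infinite algebra must contain a component that is not a stable tube, namely one of type $\bZ A_\infty^\infty$. The modules in such a component are not $\tau$-periodic, so $\Gamma$ admits an indecomposable non-projective module that is not $\Omega_\Gamma$-periodic, contradicting~(a). The main obstacle is precisely the correct invocation of this classification: one must know that for these algebras representation-infiniteness forces the presence of a non-periodic (infinite, non-tube) component rather than only stable tubes. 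Granting that, both implications follow at once.
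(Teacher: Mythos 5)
Your proof is correct and takes essentially the same route as the paper: the paper obtains the proposition by combining the identity $\tau=\Omega^2$ on $\stmod\Gamma$ for symmetric algebras with the Erdmann--Skowro\'{n}ski description of the stable Auslander--Reiten components of self-injective special biserial algebras \cite[Theorems~2.1 and~2.2]{ES92}, exactly as you do. Your write-up simply makes explicit what the paper leaves implicit, namely the trivial direction (b)$\Rightarrow$(a) via finiteness of $\tau$-orbits and the contrapositive of (a)$\Rightarrow$(b) via the non-periodic $\bZ A_\infty^\infty$ components forced by representation-infiniteness.
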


The class of Brauer graph algebras of finite representation type coincides
with that of the Brauer tree algebras.
A \emph{Brauer tree algebra} is a Brauer graph algebra whose underlying ribbon
graph is a tree and at most one node has multiplicity greater than 1
(this node is called \emph{exceptional}).

One of the first applications of Rickard's Morita theory for derived
categories was the derived equivalence classification of the Brauer
tree algebras~\cite[Theorem~4.2]{Rickard89}.
Rickard proved that any Brauer tree algebra is derived equivalent to a
Brauer tree algebra whose graph has a special shape, called a
\emph{Brauer star}, with the same number of edges and same
multiplicity of the exceptional node.

\begin{figure}
\begin{align*}
\xymatrix@=1pc{
& & {\circ} \\
{\circ} && && {\circ} \\
{\vdots} && {\circ_m} \ar@{-}[ull] \ar@{-}[uu] \ar@{-}[urr] && {\vdots}
}
& &
\xymatrix@=0.5pc{
&& {\bullet} \ar@(ur,ul)[]_{\beta} \ar[dll]_{\alpha} \\
{\bullet} \ar[dd]_{\alpha} \ar@(u,l)[]_{\beta} && &&
{\bullet} \ar[ull]_{\alpha} \ar@(r,u)[]_{\beta} \\ \\
\ar@{.}@/_1pc/[rrrr] && && \ar[uu]_{\alpha}
}
\end{align*}
\caption{A Brauer star and the corresponding ribbon quiver.}
\label{fig:star}
\end{figure}
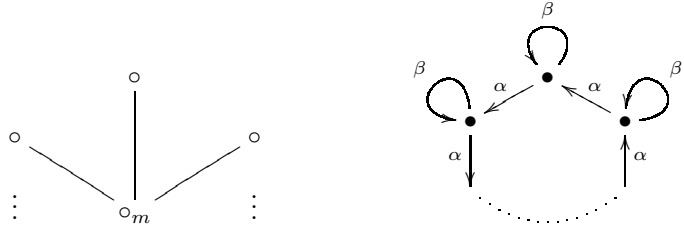

Figure~\ref{fig:star} shows a Brauer star with $n$ edges and multiplicity
$m$ of the exceptional node. The corresponding ribbon quiver is shown to
the right. The Brauer tree algebra has commutativity-relations of the form
$\beta - \alpha^{nm}$ and zero-relations
$\alpha \beta$ and $\beta \alpha$, hence the arrows $\beta$ can be
eliminated and one gets a symmetric Nakayama algebra with zero-relations
$\alpha^{nm+1}$. Some properties of this algebra are reviewed
in~\cite[\S4]{KessarLinckelmann10}. For the next statement, see 
paragraphs 4.11 and 4.12 there.

\begin{lemma} \label{l:Bstar}
Let $\Gamma$ be the Brauer star algebra with $n$ simple modules and
multiplicity $m$ of the exceptional node.
Let $S$ be a simple $\Gamma$-module.
If $m=n=1$ then $\Omega_\Gamma S \simeq S$, otherwise $\Omega^{2n}_\Gamma S
\simeq S$ but $\Omega^i_\Gamma S \not \simeq S$ for any $0 < i < 2n$.
\end{lemma}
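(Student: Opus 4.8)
The plan is to use the explicit description of $\Gamma$ recalled just above the statement: up to Morita equivalence, $\Gamma$ is the self-injective Nakayama algebra whose quiver is the cyclic quiver on the vertex set $\bZ/n\bZ$ with arrows $\alpha_i \colon i \to i+1$ and relations forcing $\operatorname{rad}^{nm+1}=0$, so that every indecomposable projective is uniserial of length $nm+1$. All $\Gamma$-modules are then uniserial, and I would write $[i;\ell]$ for the uniserial module of length $\ell$ whose top is the simple $S_i$, so that its composition factors read $S_i, S_{i+1}, \dots, S_{i+\ell-1}$ from top to bottom (indices modulo $n$). In this notation $S_i=[i;1]$ and the projective cover of $S_i$ is $P_i=[i;nm+1]$; it suffices to treat a general $S=S_i$ since the computation is uniform in $i$.

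First I would compute two syzygies. The projective cover $P_i \twoheadrightarrow S_i$ has kernel $\operatorname{rad} P_i=[i+1;nm]$, hence $\Omega S_i=[i+1;nm]$. Applying $\Omega$ once more, the projective cover of $[i+1;nm]$ is $P_{i+1}=[i+1;nm+1]$, and since the two lengths differ by one the kernel equals the socle $\soc P_{i+1}$. Because $nm\equiv 0 \pmod n$, the bottom composition factor of $P_{i+1}=[i+1;nm+1]$ is $S_{i+1+nm}=S_{i+1}$, so $\soc P_{i+1}=S_{i+1}$ and therefore $\Omega^2 S_i=S_{i+1}$. As a consistency check this recovers the known action $\tau=\Omega^2$ of the Auslander-Reiten translate on the simples of a self-injective Nakayama algebra.

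From $\Omega^2 S_i=S_{i+1}$ I would iterate to obtain $\Omega^{2k} S_i=S_{i+k}$ for every $k\ge 0$, and consequently $\Omega^{2k+1} S_i=\Omega(S_{i+k})=[i+k+1;nm]$ for the odd powers. In particular $\Omega^{2n} S_i=S_{i+n}=S_i$, while for $0<k<n$ one has $\Omega^{2k}S_i=S_{i+k}\neq S_i$ since $k\not\equiv 0 \pmod n$.

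It then remains to separate the two cases of the statement. If $(m,n)\neq(1,1)$ then $nm\ge 2$, so every odd syzygy $\Omega^{2k+1}S_i=[i+k+1;nm]$ has length $nm\ge 2$ and is in particular not simple, hence not isomorphic to $S_i$; combined with the even computation this gives $\Omega^j S_i\not\simeq S_i$ for all $0<j<2n$, whereas $\Omega^{2n}S_i\simeq S_i$. If $m=n=1$ then $P=[1;2]$ and, working modulo $n=1$, $\Omega S=\operatorname{rad}P=[1;1]=S$, so $\Omega S\simeq S$. The only delicate point is the socle identification $\soc P_{i+1}=S_{i+1}$, which is exactly where the specific Loewy length $nm+1$ (equivalently, the trivial Nakayama permutation, i.e.\ the weak symmetry of $\Gamma$) enters; everything else is routine bookkeeping with uniserial modules.
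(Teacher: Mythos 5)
Your proof is correct and follows the same route the paper takes: the paper reduces $\Gamma$ to the symmetric Nakayama algebra with cyclic quiver and relations $\alpha^{nm+1}$ and then simply cites Kessar--Linckelmann (\S4.11--4.12) for the syzygy computation, which is exactly the uniserial-module bookkeeping you carry out. In particular your identifications $\Omega S_i=\operatorname{rad}P_i$ of length $nm$ and $\Omega^2 S_i=\soc P_{i+1}=S_{i+1}$ (using $nm\equiv 0 \pmod n$), together with the length argument ruling out the odd powers when $nm\ge 2$, are precisely the content of the cited paragraphs.
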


We are now ready to state the classification of symmetric, 2-CY-tilted
algebras of finite representation type.

\begin{theorem} \label{t:sym2CYfin}
The following conditions are equivalent for an indecomposable, basic,
finite-dimensional algebra $\Gamma$ which is not simple.
\begin{enumerate}
\renewcommand{\theenumi}{\alph{enumi}}
\item \label{it:t:sym2CYfin}
$\Gamma$ is symmetric, 2-CY-tilted of finite representation type;

\item \label{it:t:sym4fin}
$\Gamma$ is symmetric of finite representation type and
$\Omega_\Gamma^4 M \simeq M$ for any $M \in \stmod \Gamma$;

\item \label{it:t:BGA2CY}
$\Gamma$ is a 2-CY-tilted Brauer graph algebra;

\item \label{it:t:BGA4}
$\Gamma$ is a Brauer graph algebra and $\Omega_\Gamma^4 M \simeq M$ for any
$M \in \stmod \Gamma$;

\item \label{it:t:BTA2}
$\Gamma$ is a Brauer tree algebra with at most two simple modules;

\item \label{it:t:BTAfig}
$\Gamma$ belongs to one of the three families of Brauer tree algebras shown in
Figure~\ref{fig:BGA2CY}.
\end{enumerate}
\end{theorem}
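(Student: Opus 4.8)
The plan is to prove the six conditions equivalent by running the cycle
\[
\text{(a)} \Rightarrow \text{(b)} \Rightarrow \text{(e)} \Rightarrow \text{(f)}
\Rightarrow \text{(c)} \Rightarrow \text{(a)},
\]
inserting (d) through the easy implications $\text{(c)} \Rightarrow \text{(d)} \Rightarrow \text{(e)}$. Several arrows are immediate from results already at hand. The implication $\text{(a)} \Rightarrow \text{(b)}$ is exactly Proposition~\ref{p:period}\eqref{it:p:omega4}, and the same statement gives $\text{(c)} \Rightarrow \text{(d)}$. For $\text{(c)} \Rightarrow \text{(a)}$ I would note that a Brauer graph algebra is symmetric by Proposition~\ref{p:BGA}; being in addition 2-CY-tilted, Proposition~\ref{p:period}\eqref{it:p:omega4} yields $\Omega_\Gamma^4 \simeq \mathrm{id}$, so every non-projective module is $\Omega_\Gamma$-periodic and hence $\Gamma$ is representation-finite by Proposition~\ref{p:BGAperiodic}. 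Thus the substantive steps are $\text{(b)} \Rightarrow \text{(e)}$, $\text{(d)} \Rightarrow \text{(e)}$, $\text{(e)} \Leftrightarrow \text{(f)}$ and $\text{(f)} \Rightarrow \text{(c)}$.

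For $\text{(d)} \Rightarrow \text{(e)}$ I would first apply Proposition~\ref{p:BGAperiodic}: the hypothesis $\Omega_\Gamma^4 \simeq \mathrm{id}$ forces all non-projective modules to be periodic, so $\Gamma$ is representation-finite, hence a Brauer tree algebra. To bound the number of simples, invoke Rickard's theorem~\cite{Rickard89} that $\Gamma$ is derived equivalent to the Brauer star algebra with the same number $n$ of edges and the same exceptional multiplicity $m$. A derived equivalence of self-injective algebras induces a stable equivalence~\cite{Rickard89} commuting with $\Omega$, so the star algebra again satisfies $\Omega^4 \simeq \mathrm{id}$; comparing with Lemma~\ref{l:Bstar}, whose minimal period for a simple module is $2n$ (unless $m=n=1$), we obtain $2n \mid 4$, whence $n \le 2$. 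The equivalence $\text{(e)} \Leftrightarrow \text{(f)}$ is then a direct enumeration: a Brauer tree with at most two edges is either a single edge with one possibly exceptional node, or a path of two edges with the exceptional node at an end or in the middle, and these are exactly the three families depicted in Figure~\ref{fig:BGA2CY}.

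For $\text{(f)} \Rightarrow \text{(c)}$ only 2-CY-tiltedness is at issue, since each listed algebra is a Brauer tree algebra, hence a Brauer graph algebra. The one-edge family, after eliminating the redundant arrow as in Figure~\ref{fig:star}, is the local algebra $K[x]/(x^{m+1})$, which is 2-CY-tilted by Example~\ref{ex:KxnCY} (equivalently it is the triangulation algebra on the one-vertex quiver, cf.\ Corollary~\ref{c:triang1}). For the two two-edge families I would write down an explicit quiver with hyperpotential whose Jacobian algebra is the given algebra and appeal to Proposition~\ref{p:hyperCY}; concretely these arise as triangulation algebras on the relevant two-vertex triangulation quivers, which are 2-CY-tilted by the results of Section~\ref{sec:triangquasi}. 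Using hyperpotentials rather than potentials is what keeps this step characteristic-free.

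The main obstacle is $\text{(b)} \Rightarrow \text{(e)}$: one must show that a symmetric, representation-finite algebra $\Gamma$ with $\Omega_\Gamma^4 \simeq \mathrm{id}$ is necessarily a Brauer tree algebra with at most two simple modules, \emph{without} assuming at the outset that $\Gamma$ is special biserial. Since $\Gamma$ is symmetric we have $\tau = \Omega^2$, so the hypothesis says every non-projective module has $\tau$-period dividing $2$; in the Riedtmann-type description of the stable Auslander--Reiten quiver of a representation-finite self-injective algebra as $\mathbb{Z}\Delta/G$, such a small uniform $\tau$-period severely restricts the Dynkin type $\Delta$ and the admissible group $G$, and I expect it to force type $A$ with at most two vertices, i.e.\ a Brauer tree algebra with at most two edges. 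I would carry this out by invoking the classification of representation-finite symmetric algebras together with the known computation of the $\Omega$-periods of their simple modules (as in~\cite{ES08,Skowronski06}), the period dividing $4$ being the decisive numerical constraint. This classification input is the part I expect to require the most care, and in a survey it may reasonably be cited rather than reproved.
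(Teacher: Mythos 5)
Your proposal is correct and follows essentially the same route as the paper: the easy arrows come from Proposition~\ref{p:period}, the implication to (e) uses Proposition~\ref{p:BGAperiodic} together with Lemma~\ref{l:Bstar} and Rickard's star theorem, (f) gives 2-CY-tiltedness via hyperpotentials and Proposition~\ref{p:hyperCY}, and the key step (b)$\Rightarrow$(e) rests on the Riedtmann description of the stable Auslander--Reiten quiver combined with a classification (the paper uses Asashiba's derived-equivalence types, concluding type $(A_n,r/n,1)$ with $r\le 2$ dividing $n$). One small imprecision: the period constraint does not force the tree class $\Delta$ to have at most two vertices (e.g.\ $K[x]/(x^{n})$ has tree class $A_{n-1}$); it bounds the parameters $r$ and $t$ of the type, hence the number of simple modules, which is what yields a Brauer tree with at most two edges.
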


\begin{figure}
\[
\begin{array}{ccccc}
\xymatrix{
{\circ_m} \ar@{-}[r] & {\circ_1}
}
& \quad &
\xymatrix{
{\circ_1} \ar@{-}[r] & {\circ_m} \ar@{-}[r] & {\circ_1}
}
& \quad &
\xymatrix{
{\circ_m} \ar@{-}[r] & {\circ_1} \ar@{-}[r] & {\circ_1}
}
\\ \\
\xymatrix{
{\bullet} \ar@(ul,dl)[]_{\alpha}
}
& \quad &
\xymatrix{
{\bullet} \ar@<-0.5ex>[r]_{\beta} & {\bullet} \ar@<-0.5ex>[l]_{\gamma}
}
& \quad &
\xymatrix{
{\bullet} \ar@(ul,dl)[]_{\alpha} \ar@<-0.5ex>[r]_{\beta}
& {\bullet} \ar@<-0.5ex>[l]_{\gamma}
} \\
(\alpha^{m+1})
& \quad &
\left((\gamma \beta)^m \gamma, (\beta \gamma)^m \beta\right)
& \quad &
\left(\beta \gamma - \alpha^m, \gamma \alpha, \alpha \beta \right)
\\
_{m \geq 1}
& \quad &
_{m \geq 1}
& \quad &
_{m \geq 2}
\end{array}
\]
\caption{The indecomposable 2-CY-tilted symmetric algebras of
finite representation type that are not simple.
These algebras are Brauer tree algebras, and
for each family we show the ribbon graph with multiplicities (top);
the quiver, where we eliminated arrows (middle); and the corresponding
hyperpotential (bottom).
By allowing the value $m=0$ in the leftmost family one includes also the
simple, symmetric, 2-CY-tilted algebra $K$.}
\label{fig:BGA2CY}
\end{figure}
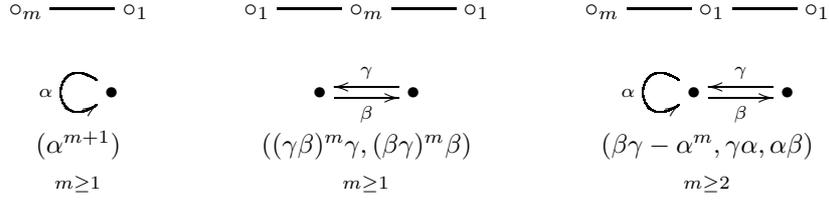

The implications \eqref{it:t:sym2CYfin}$\Rightarrow$\eqref{it:t:sym4fin}
and~\eqref{it:t:BGA2CY}$\Rightarrow$\eqref{it:t:BGA4} follow from
Proposition~\ref{p:period}.
The implication~\eqref{it:t:BGA4}$\Rightarrow$\eqref{it:t:BTA2} is
a consequence of Proposition~\ref{p:BGAperiodic} and Lemma~\ref{l:Bstar}.
The equivalence~\eqref{it:t:BTA2}$\Leftrightarrow$\eqref{it:t:BTAfig} is
clear. As each of the algebras shown in Figure~\ref{fig:BGA2CY} is
a Jacobian algebra of a hyperpotential and hence 2-CY-tilted by
Proposition~\ref{p:hyperCY}, this proves the implications
\eqref{it:t:BTAfig}$\Rightarrow$\eqref{it:t:sym2CYfin}
and~\eqref{it:t:BTAfig}$\Rightarrow$\eqref{it:t:BGA2CY}.

It remains to show that~\eqref{it:t:sym4fin}$\Rightarrow$\eqref{it:t:BTA2}.
By Riedtmann~\cite{Riedtmann80}, the stable Auslander-Reiten quiver of a
self-injective algebra of finite representation type has the form
$\bZ\Delta/\langle \phi \tau^{-r} \rangle$,
where $\Delta$ is a Dynkin graph $A_n$ ($n \geq 1$),
$D_n$ ($n \geq 4$) or $E_n$ ($n=6,7,8$), $\tau$ is the
translation of $\bZ\Delta$, $\phi$ is an automorphism of $\bZ\Delta$ with a
fixed vertex and $r \geq 1$.
Following Asashiba~\cite{Asashiba99}, these data are encoded in the
\emph{type} $(\Delta, r/(h_\Delta-1), t)$, where $h_\Delta$ is the Coxeter
number of $\Delta$ and $t$ is the order of $\phi$.

Asashiba~\cite{Asashiba99} classified the self-injective algebras up to
derived equivalence and described the possible types that
can occur. If $\Gamma$ is symmetric of finite representation type,
then our assumption in~\eqref{it:t:sym4fin} implies that $\tau_{\Gamma}^2$
acts as the identity on the vertices of the stable Auslander-Reiten quiver
of $\Gamma$ and hence either $r=2$ and $t=1$ or $r=1$ and $t \leq 2$.
Comparing this with the list of possible types in~\cite{Asashiba99},
one gets that the type of $\Gamma$ must be $(A_n, r/n, 1)$
for some $n \geq 1$ and $r \leq 2$ dividing $n$.
In particular, $\Gamma$ is derived equivalent to a symmetric Nakayama
algebra with $r \leq 2$ simple modules.
By~\cite{GabrielRiedtmann79,Rickard89}, $\Gamma$ is a Brauer tree algebra.

\begin{remark}
As a consequence of Theorem~\ref{t:sym2CYfin}, we see that the answer to
Question~\ref{q:sym2CYder} and Question~\ref{q:sym42CY}
is affirmative in the representation-finite case.
\end{remark}

\begin{remark}
The algebras listed in Figure~\ref{fig:BGA2CY} occur also as the endomorphism
algebras of cluster-tilting objects in the 2-Calabi-Yau
stable categories of maximal Cohen-Macaulay
modules over one dimensional simple hypersurface singularities
of types $A_{2m+1}$ and $D_{2m+2}$, see Proposition~2.4 and
Proposition~2.6 in~\cite{BIKR08}.
\end{remark}

\section{Triangulation algebras are of quasi-quaternion type}
\label{sec:triangquasi}

Let $K$ be a field.
Consider a connected triangulation quiver $(Q,f)$ together with
the following auxiliary data:
\begin{itemize}
\item
$g$-invariant function $m \colon Q_1 \to \bZ_{>0}$ of multiplicities;

\item
$g$-invariant function $c \colon Q_1 \to K^{\times}$ of scalars;

\item
a function $\lambda \colon Q_1^f \to K$, i.e.\ a scalar $\lambda_\alpha \in K$
for each arrow $\alpha$ with $f(\alpha)=\alpha$.
\end{itemize}
Assume that the following conditions hold:
\begin{itemize}
\item
$m$ is admissible, i.e.\ $m_\alpha n_\alpha \geq 3$ for each arrow $\alpha$;

\item
$((Q,f),m)$ is not exceptional; or 

\item
$((Q,f),m)$ is exceptional and the scalars $c \colon Q_1 \to K^{\times}$
satisfy
\begin{itemize}
\item
$\prod_{\alpha \in Q_1} c_\alpha \neq 1$ in the punctured monogon case; or
\item
$c_\alpha c_{\balpha} c_{f(\alpha)} c_{f(\balpha)} \neq 1$ for some $\alpha \in Q_1$
in the tetrahedron case.
\end{itemize}
\end{itemize}

Denote by $\gL = \gL(Q,f,m,c,\lambda)$ the triangulation algebra
(Definition~\ref{def:triang}) associated with the above data and by
$\Gamma = \Gamma(Q,f,m,c)$ the Brauer graph algebra
(Definition~\ref{def:BGA}).

\begin{theorem} \label{t:quasi}
Under the above conditions, we have:
\begin{enumerate}
\renewcommand{\theenumi}{\alph{enumi}}
\item \label{it:t:finite}
$\gL$ is finite-dimensional.

\item \label{it:t:symmetric}
$\gL$ is symmetric.

\item \label{it:t:tame}
$\gL$ is of tame representation type. Moreover, if $((Q,f),m)$ is not
exceptional then the Brauer graph algebra $\Gamma$ is a degeneration of $\gL$.

\item \label{it:t:potential}
$\gL$ is a Jacobian algebra of a hyperpotential
and therefore it is 2-CY-tilted, i.e.\ there is a 2-Calabi-Yau
triangulated category $\cC$ and a cluster-tilting object $T$ in $\cC$
such that $\gL \simeq \End_{\cC}(T)$.

\item \label{it:t:quasi}
$\gL$ is of quasi-quaternion type.

\item \label{it:t:quasimut}
More generally, for any cluster-tilting object $T'$ in $\cC$
which is reachable from $T$, the 2-CY-tilted algebra $\End_{\cC}(T')$
is derived equivalent to $\gL$ and of quasi-quaternion type.
\end{enumerate}
\end{theorem}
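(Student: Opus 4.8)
The plan is to treat the six assertions in logical rather than listed order, since parts \eqref{it:t:potential}--\eqref{it:t:quasimut} are formal consequences of \eqref{it:t:finite}--\eqref{it:t:tame} combined with the structural results of Sections~\ref{sec:sym2CY} and~\ref{sec:algebras}. So I would first establish \eqref{it:t:finite}, then \eqref{it:t:symmetric} and \eqref{it:t:tame}, and finally deduce the rest.

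The core of the argument, and the step I expect to be the main obstacle, is finite-dimensionality \eqref{it:t:finite}. The generators of $\cJ$ are the commutativity-relations $\balpha\cdot f(\balpha)-c_\alpha\oa^{m_\alpha-1}\cdot\oa'$ together with their loop variants, so modulo $\cJ$ each product $\balpha\cdot f(\balpha)$ is replaced by a path of length $m_\alpha n_\alpha-1$, which is $\geq 2$ by admissibility and, away from the exceptional cases, is genuinely longer. Mimicking the one-vertex computation of Section~\ref{ssec:triang1}, I would substitute these relations into one another and exploit that $\overline{\cJ}$ is closed to show that sufficiently long paths lie in $\overline{\cJ}$; this is precisely the derivation of the additional zero-relations recorded in Proposition~\ref{p:qrel}. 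Once enough zero-relations are in hand, a spanning-set argument paralleling Lemma~\ref{l:basisBGA} produces the finite basis of Proposition~\ref{p:basis}. The difficulty is that the bookkeeping of which paths survive in $\gL$ must be controlled uniformly across the three block types, and the exceptional configurations of Section~\ref{ssec:exceptional} require the scalar conditions on $c$ (the products $\prod c_\alpha\neq 1$, resp.\ $c_\alpha c_{\balpha}c_{f(\alpha)}c_{f(\balpha)}\neq 1$) to prevent a collapse, hence must be checked separately.

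With the explicit basis available, part \eqref{it:t:symmetric} follows by exhibiting a symmetrizing form: as in the Brauer graph case the socle of each indecomposable projective is one-dimensional, spanned by the element $z_i$, and the linear map $\gL\to K$ reading off the coefficients of the $z_i$ satisfies the trace and non-degeneracy conditions of Proposition~\ref{p:symmetric}\eqref{it:p:symform}. For part \eqref{it:t:tame} I would invoke Proposition~\ref{p:degen}: in the non-exceptional case $\gL$ is a flat deformation of the Brauer graph algebra $\Gamma$, which is tame by Proposition~\ref{p:BGA}, and a semicontinuity argument for representation type transfers tameness to $\gL$, the finitely many exceptional pairs being handled by hand. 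Part \eqref{it:t:potential} is then immediate: $\gL$ is a Jacobian algebra of a hyperpotential by Proposition~\ref{p:potential}(a), and being finite-dimensional by \eqref{it:t:finite}, it is $2$-CY-tilted by Proposition~\ref{p:hyperCY}, giving $\gL\simeq\End_{\cC}(T)$ for a suitable $\cC$ and cluster-tilting $T$.

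Finally, part \eqref{it:t:quasi} assembles the pieces. Connectedness of $(Q,f)$ forces $\gL$ to be indecomposable as a ring, while admissibility together with the non-exceptional and scalar hypotheses guarantee that $\Gamma$ is not a Brauer tree algebra, so by Proposition~\ref{p:BGAperiodic} neither $\Gamma$ nor $\gL$ is representation-finite; hence $\gL$ is tame but not finite. Since $\gL$ is simultaneously symmetric \eqref{it:t:symmetric} and $2$-CY-tilted \eqref{it:t:potential}, Proposition~\ref{p:period}\eqref{it:p:omega4} yields $\Omega^4_\gL M\simeq M$ for all $M\in\stmod\gL$, so the three conditions of Definition~\ref{def:quasi} hold. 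For part \eqref{it:t:quasimut} I would apply Corollary~\ref{c:dereq} to the symmetric algebra $\gL=\End_{\cC}(T)$: every cluster-tilting object $T'$ reachable from $T$ gives a symmetric algebra $\End_{\cC}(T')$ derived equivalent to $\gL$, which is then again of quasi-quaternion type by Proposition~\ref{p:derquasi}.
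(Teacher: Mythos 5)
Your proposal is correct and follows essentially the same route as the paper: finite-dimensionality via forcing the zig-zag zero-relations of Proposition~\ref{p:qrel} out of the commutativity-relations and the closedness of $\overline{\cJ}$ (with the scalar conditions rescuing the exceptional cases), the basis of Proposition~\ref{p:basis} and the socle elements $z_i$ for symmetry, the degeneration to the Brauer graph algebra plus Geiss's theorem for tameness, and then Propositions~\ref{p:potential}, \ref{p:hyperCY}, \ref{p:period}, \ref{p:derquasi} and Corollary~\ref{c:dereq} for the remaining parts. Your explicit justification that $\gL$ is representation-infinite and indecomposable (needed for Definition~\ref{def:quasi}) is a small but welcome addition that the paper leaves implicit.
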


\begin{remark}
The theorem holds also when the multiplicities are not admissible
(but still $m_\alpha n_\alpha \geq 2$ for any arrow $\alpha$),
but then more exceptional cases are needed to be taken care of.
\end{remark}

\begin{remark}
Remark~\ref{rem:nvertex} implies that for any positive integer $n \geq 1$
there exists a triangulation quiver with $n$ vertices and hence
a triangulation algebra with $n$ simple modules.
The algebras of part~\eqref{it:t:quasimut} of the theorem thus provide
many instances of tame, symmetric, indecomposable algebras with periodic
modules which seem
to be missing from the classification announced
in~\cite[Theorem~6.2]{ES08} and~\cite[Theorem~8.7]{Skowronski06}. Moreover,
they provide counterexamples to~\cite[Corollary~8.8(3)]{Skowronski06} which
claims to bound the number of simple modules of such algebras 
of infinite representation type by $10$.
\end{remark}

Part~\eqref{it:t:potential} of the theorem follows from
part~\eqref{it:t:finite}, Proposition~\ref{p:potential}
and Proposition~\ref{p:hyperCY}.
Part~\eqref{it:t:quasi} is a consequence of parts~\eqref{it:t:symmetric},
\eqref{it:t:tame}, \eqref{it:t:potential} and Proposition~\ref{p:period}.
Finally, part~\eqref{it:t:quasimut} is a consequence of
parts~\eqref{it:t:symmetric}, \eqref{it:t:potential},
Corollary~\ref{c:dereq}, part~\eqref{it:t:quasi}
and Proposition~\ref{p:derquasi}.
The ideas behind the proof of parts~\eqref{it:t:finite} and~\eqref{it:t:tame}
are explained in the next sections.

\subsection{Remarks on finite-dimensionality}
\label{ssec:findim}

We keep the notations as in the preceding section.

Motivated by the dimer model perspective of Section~\ref{ssec:dimer},
a path $\alpha \cdot f(\alpha) \cdot gf(\alpha)$ may be called 
\emph{zig-zag} path. A crucial point in proving part~\eqref{it:t:finite}
of Theorem~\ref{t:quasi} is the vanishing of the images of these zig-zag paths
in $\gL$, whose proof we sketch below.
Let $\cJ$ be the ideal defining the triangulation algebra, so that
$\gL = \wh{KQ} / \bar{\cJ}$, and let $\alpha \in Q_1$ be any arrow.
Lemma~\ref{l:gf2fg2} implies that $gf(\beta) = fg^{-2}(\beta)$ for any
arrow $\beta$, so we can repeatedly use the commutativity-relations
defining $\cJ$ to deduce that
\[
\alpha \cdot f(\alpha) \cdot gf(\alpha) -
u \cdot \alpha \cdot f(\alpha) \cdot gf(\alpha) \cdot v \in \cJ
\]
for some $u, v \in \wh{KQ}$.

If $((Q,f),m)$ is not exceptional, then one shows using
Proposition~\ref{p:except} that $u$ and $v$ are
linear combinations of paths of positive length, and therefore
deduces that $\alpha \cdot f(\alpha) \cdot gf(\alpha) \in \bar{\cJ}$
as done in the case discussed in Section~\ref{ssec:triang1}.

If $((Q,f),m)$ is exceptional, then one shows that $u = C_u + u'$ and
$v = C_v + v'$ where $u',v'$ are linear combinations of paths of positive
length (in the tetrahedron case even $u'=v'=0$) and
$C_u, C_v \in K^{\times}$ are scalars satisfying $C_u C_v \neq 1$ by our
additional assumption on the function $c \colon Q_1 \to K^{\times}$. Hence
$\alpha \cdot f(\alpha) \cdot gf(\alpha) \in \bar{\cJ}$ as well.

The next proposition provides a more refined version of part~\eqref{it:t:finite}
of Theorem~\ref{t:quasi} by giving a presentation of the triangulation algebra
as quiver with relations. 
\begin{proposition} \label{p:qrel}
Under the hypotheses of Theorem~\ref{t:quasi}, the triangulation algebra
$\gL(Q,f,m,c,\lambda)$ is the quotient of the path algebra $KQ$ by the
ideal generated by the elements
\begin{align}
\label{e:comm3}
&\balpha \cdot f(\balpha) - c_\alpha \oa^{m_\alpha-1} \cdot \oa'
&& \alpha \in Q_1 \text{ and } f(\balpha) \neq \balpha, \\
\label{e:comm1}
&\balpha^2 - c_\alpha \oa^{m_\alpha-1} \cdot \oa'
- c_\alpha \lambda_{\balpha} \oa^{m_\alpha}
&& \alpha \in Q_1 \text{ and } f(\balpha) = \balpha, \\
\label{e:zigzeroF}
&\alpha \cdot f(\alpha) \cdot gf(\alpha) && \alpha \in Q_1.
\end{align}
\end{proposition}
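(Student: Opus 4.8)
The plan is to exhibit the stated presentation as an isomorphism $KQ/I \xrightarrow{\sim} \gL$, where $\gL = \wh{KQ}/\overline{\cJ}$ is the triangulation algebra of Definition~\ref{def:triang} and $I$ denotes the ordinary two-sided ideal of the non-complete path algebra $KQ$ generated by the elements~\eqref{e:comm3}, \eqref{e:comm1} and~\eqref{e:zigzeroF}. I would split the argument into three parts: checking that all the listed generators of $I$ vanish in $\gL$; proving that $I$ already contains every path of length at least some $N$, so that $KQ/I$ is finite-dimensional and passing from $\wh{KQ}$ to $KQ$ costs nothing; and finally a basis comparison forcing the induced surjection $KQ/I \twoheadrightarrow \gL$ to be injective.

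For the first part, the relations~\eqref{e:comm3} are literally among the generators of $\cJ$, hence vanish. For the loop relations~\eqref{e:comm1} I would first establish the identity $\balpha^3 = c_\alpha \oa^{m_\alpha}$ in $\gL$ for every $\alpha$ with $f(\balpha)=\balpha$; this is the general-position analogue of the computation $(\alpha\beta)^m = c^{-1}\beta^3$ of Section~\ref{ssec:triang1}, obtained by multiplying the defining loop relation by $\balpha$ and substituting commutativity relations until the socle power $\oa^{m_\alpha}$ is reached. Granting it, the defining generator $\balpha^2 - \lambda_{\balpha}\balpha^3 - c_\alpha\oa^{m_\alpha-1}\oa'$ of $\cJ$ and the generator $\balpha^2 - c_\alpha\oa^{m_\alpha-1}\oa' - c_\alpha\lambda_{\balpha}\oa^{m_\alpha}$ of~\eqref{e:comm1} differ by $\lambda_{\balpha}\bigl(c_\alpha\oa^{m_\alpha}-\balpha^3\bigr)$ and therefore coincide in $\gL$. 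For the zig-zag relations~\eqref{e:zigzeroF} I would invoke the argument of Section~\ref{ssec:findim}: Lemma~\ref{l:gf2fg2} gives $gf(\beta)=fg^{-2}(\beta)$, and iterating the commutativity relations yields $\alpha\cdot f(\alpha)\cdot gf(\alpha) - u\cdot\alpha\cdot f(\alpha)\cdot gf(\alpha)\cdot v \in \cJ$ with $u,v$ having no constant term in the non-exceptional case (and satisfying the scalar condition $C_uC_v\neq 1$ in the two exceptional cases), so that $\alpha\cdot f(\alpha)\cdot gf(\alpha)\in\overline{\cJ}$.

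The main obstacle is the finiteness step, namely showing that $I$ contains every path of length $\geq N$ for a suitable $N$. Here I would use the dichotomy at each vertex: a length-two path is either an \emph{$f$-step} $\beta\cdot f(\beta)$ or a \emph{$g$-step} $\beta\cdot g(\beta)$. Relations~\eqref{e:comm3} and~\eqref{e:comm1} rewrite every $f$-step as a power of a $g$-cycle times its truncation, whereas~\eqref{e:zigzeroF} places every $f$-step immediately followed by a $g$-step into $I$. Normalising an arbitrary long path by converting its $f$-steps into $g$-paths, I expect to show that any path longer than roughly $\max_\alpha m_\alpha n_\alpha$ either contains a zig-zag subword or winds around a single $g$-cycle more than $m_\alpha$ times, and in either case lies in $I$ after finitely many rewrites. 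This generalises the one-vertex phenomenon where $\alpha^2\beta,\alpha\beta^2\in\overline{\cJ}$ force $\alpha^4=\beta^4=0$ and cut the algebra down to dimension $4m$.

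With this inclusion in hand, $KQ/I$ is finite-dimensional and spanned by the images of the finitely many paths of length $<N$. Since the ideal generated by the same relations inside $\wh{KQ}$ also contains all these long paths, its closure adds nothing, giving $\gL = \wh{KQ}/\overline{\cJ} \cong KQ/I$ as algebras; in particular the completion in Definition~\ref{def:triang} becomes redundant once the zig-zag relations are adjoined. To promote the resulting surjection $KQ/I\twoheadrightarrow\gL$ to an isomorphism I would compare with the explicit $K$-basis of $\gL$ (the general form of the $4m$-element basis of Section~\ref{ssec:triang1}, recorded in Proposition~\ref{p:basis}): the surviving normal-form paths in $KQ/I$ are exactly that basis, so no further collapsing occurs and the map is bijective.
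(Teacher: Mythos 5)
Your proposal is correct in outline and follows essentially the same route as the paper: the zig-zag vanishing via Lemma~\ref{l:gf2fg2} and the closure argument of Section~\ref{ssec:findim}, the loop identity $\balpha^3=c_\alpha\oa^{m_\alpha}$ of Remark~\ref{rem:dimercyc} to reconcile~\eqref{e:comm1} with the defining generator of $\cJ$, and the reduction of long paths to the normal forms of Proposition~\ref{p:basis} (exactly as in the one-vertex computation of Section~\ref{ssec:triang1}). The paper itself only sketches these steps and defers the full rewriting and dimension-count details, so your plan matches its level of rigor and its strategy.
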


\begin{remark}
The ideal of relations in Proposition~\ref{p:qrel} is not changed if 
the relations of type~\eqref{e:zigzeroF} are replaced by
\begin{align}
\label{e:zigzeroG}
&\alpha \cdot g(\alpha) \cdot fg(\alpha) && \alpha \in Q_1.
\end{align}

Moreover, it turns out that it is enough to specify a zero-relation as
in~\eqref{e:zigzeroF} or~\eqref{e:zigzeroG} for just one arrow
$\alpha \in Q_1$, as the relations for the other arrows would then
follow from the commutativity-relations~\eqref{e:comm3} and~\eqref{e:comm1}.

In the cases where $((Q,f),m)$ is exceptional, 
our assumption on the scalars~$c$ implies that all the
zero-relations~\eqref{e:zigzeroF} and~\eqref{e:zigzeroG} already follow from
the relations~\eqref{e:comm3} and~\eqref{e:comm1},
provided that in the punctured monogon case the scalar $\lambda_\eta$
associated to the loop $\eta \in Q_1^f$ vanishes
(no additional assumption is needed in the tetrahedron case).
\end{remark}

\begin{remark} \label{rem:dimercyc}
Let $i \in Q_0$ and let $\alpha, \balpha$ be the two arrows starting at $i$.
Then the images in $\gL$ of the following cycles starting at $i$ are equal:
\[
\alpha \cdot f(\alpha) \cdot f^2(\alpha) =
c_{\balpha} \omega_{\balpha}^{m_{\balpha}} =
c_{\alpha} \oa^{m_\alpha} = \balpha \cdot f(\balpha) \cdot f^2(\balpha).
\]
Denote this common value by $z_i \in \gL$.
\end{remark}

The next statement is a generalization of our result
in~\cite[\S4.1]{Ladkani12} and its proof is similar.

\begin{proposition} \label{p:basis}
A basis of the triangulation algebra $\gL(Q,f,m,c,\lambda)$ is given
by the images of the paths
\[
\{e_i\}_{i \in Q_0} \cup
\left\{ \alpha \cdot g(\alpha) \cdot \ldots \cdot g^r(\alpha)
\right\}_{\alpha \in Q_1, 0 \leq r < m_\alpha n_\alpha -1} \cup
\{z_i\}_{i \in Q_0} .
\]
\end{proposition}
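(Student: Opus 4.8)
The plan is to reduce the statement to a spanning computation together with a dimension count, following the strategy of~\cite[\S4.1]{Ladkani12}. First I would invoke Proposition~\ref{p:qrel} to replace the complete path algebra presentation by the finite one $\gL = KQ/I$, where $I$ is generated by the commutativity-relations~\eqref{e:comm3} and~\eqref{e:comm1} together with the zig-zag zero-relations~\eqref{e:zigzeroF}. Working in $KQ/I$ rather than in $\wh{KQ}/\bar{\cJ}$ is what turns the length of a path into usable bookkeeping.

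\textbf{Spanning and counting.} The first task is to show the listed images span $\gL$. Given a path $p$ in $Q$, I would put it into normal form: whenever $p$ contains an ``$f$-turn'' $\balpha\cdot f(\balpha)$, the relation~\eqref{e:comm3} (or~\eqref{e:comm1} when $f(\balpha)=\balpha$) rewrites this length-two piece as $c_\alpha\oa^{m_\alpha-1}\cdot\oa'$, i.e.\ as a scalar multiple of a path continuing along the $g$-cycle through $\alpha$. Iterating, every path is congruent modulo $I$ to a linear combination of pure $g$-paths $\alpha\cdot g(\alpha)\cdots g^r(\alpha)$. It then remains to bound $r$: by Remark~\ref{rem:dimercyc} a $g$-path of length exactly $m_\alpha n_\alpha$ is a scalar multiple of the socle element $z_{s(\alpha)}$, while a longer $g$-path continues a representative of $z_{s(\alpha)}$ by an arrow and hence lies in $\bar{\cJ}$, by the same computation (via Lemma~\ref{l:gf2fg2} and the vanishing of the zig-zag paths) that establishes finite-dimensionality in Section~\ref{ssec:findim}. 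This shows the three families span and that $\dim_K\gL\le N$, where $N=2|Q_0|+\sum_{\alpha\in Q_1}(m_\alpha n_\alpha-1)$. The chosen normal forms are designed so that $N$ matches: grouping the $g$-paths by $g$-cycles gives $N=\sum_{\omega\in\Omega_g}m_\omega|\omega|^2$, which is exactly $\dim_K\Gamma$ by Proposition~\ref{p:CartanBGA}, the spanning set having the same shape as the basis of $\Gamma$ in Lemma~\ref{l:basisBGA}.

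\textbf{Independence.} What remains is linear independence, equivalently the bound $\dim_K\gL\ge N$, and this is the heart of the matter. In the non-exceptional case I would get it for free from Theorem~\ref{t:quasi}\eqref{it:t:tame}: there $\Gamma$ is a degeneration of $\gL$, so the two algebras have equal $K$-dimension, forcing $\dim_K\gL=\dim_K\Gamma=N$ and upgrading the spanning set to a basis. Concretely, one introduces a parameter $t$ scaling the right-hand sides $c_\alpha\oa^{m_\alpha-1}\cdot\oa'$ of~\eqref{e:comm3} and~\eqref{e:comm1}, runs the spanning reduction uniformly over $K[t]$, and checks that the $N$ normal forms form a free $K[t]$-basis of the resulting family; specializing $t=0$ yields $\Gamma$, where independence is already known, and $t\neq 0$ recovers $\gL$. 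The delicate point, and the main obstacle, is precisely that no \emph{further} collapse occurs; this is where admissibility and the remaining hypotheses of Theorem~\ref{t:quasi} are essential, since (as the degenerate case $m=1$ in Section~\ref{ssec:triang1} shows) dropping them can force arrows themselves into $\bar{\cJ}$ and genuinely shrink the algebra.

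\textbf{Exceptional cases.} Finally, the two exceptional pairs $((Q,f),m)$ are not degenerations of their Brauer graph algebras (Proposition~\ref{p:except}), so for these the degeneration shortcut is unavailable and I would argue independence directly. The idea is that the spanning reduction above is in fact confluent: one performs a finite overlap check and verifies that the only ambiguities resolve without identifying two distinct normal forms, the scalar conditions $\prod_{\alpha\in Q_1}c_\alpha\neq 1$ (punctured monogon) and $c_\alpha c_{\balpha}c_{f(\alpha)}c_{f(\balpha)}\neq 1$ (tetrahedron) being exactly what guarantees this, just as in the explicit one-vertex computation of Section~\ref{ssec:triang1}. Together with the spanning step this pins down $\dim_K\gL=N$ and identifies the listed paths as a basis in all cases.
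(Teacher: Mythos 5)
Your spanning half is fine in outline and matches what the paper intends: reduce to the finite presentation of Proposition~\ref{p:qrel}, rewrite every $f$-turn into a $g$-path, kill everything of length greater than $m_\alpha n_\alpha$ via the zig-zag relations, and observe that the resulting normal forms number $\sum_{\omega\in\Omega_g}m_\omega|\omega|^2=\dim_K\Gamma$. (The paper itself gives no proof beyond pointing to~\cite[\S4.1]{Ladkani12} and the one-vertex computation of Section~\ref{ssec:triang1}, which carries out exactly this reduction.) The genuine gap is in the linear independence step. Invoking Theorem~\ref{t:quasi}\eqref{it:t:tame} is circular: in the paper's logical order, the equality $\dim_K\gL=\dim_K\Gamma$ (Proposition~\ref{p:Cartan}\eqref{it:p:dim}) and the degeneration statement (Proposition~\ref{p:degen}) are \emph{consequences} of Proposition~\ref{p:basis}, since one cannot place $\Gamma$ in the $\GL_d(K)$-orbit closure of $\gL$ inside $\alg_d(K)$ before knowing that $\gL$ is a point of $\alg_d(K)$ for $d=\dim_K\Gamma$. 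Worse, the inequality a generators-and-relations family $\gL_t=KQ/I_t$ gives for free goes the wrong way: fibre dimension in such a family is upper semicontinuous, so one only obtains $\dim_K\gL=\dim_K\gL_{t\neq 0}\leq\dim_K\gL_0=\dim_K\Gamma$, which is the bound you already have from spanning.

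Your concrete patch over $K[t]$ does not close this. Put $M=KQ[t]/I$; spanning of $M$ by the $N$ normal forms together with their independence in $M/tM\simeq\Gamma$ (Lemma~\ref{l:basisBGA}) yields freeness only if $M$ has no $t$-torsion. A module such as $K[t]^{N-1}\oplus K[t]/(t)$ is generated by $N$ elements that remain independent modulo $t$, yet its fibre at $t=1$ has dimension $N-1$; so independence at the special fibre says nothing about the fibre at $t=1$ unless flatness is established, and flatness here is precisely equivalent to the independence you are trying to prove. The argument that actually works is the direct one you reserve for the exceptional cases: verify confluence of the rewriting system given by the relations of Proposition~\ref{p:qrel} (a Diamond-Lemma overlap check on the normal forms), or exhibit a symmetrizing form pairing the proposed basis non-degenerately. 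That verification has to be carried out uniformly, not only in the two exceptional cases. Two smaller points: Proposition~\ref{p:except} is a purely combinatorial characterization and does not assert that the exceptional pairs fail to degenerate to their Brauer graph algebras; and the termination of your rewriting needs a word, since replacing $\balpha\cdot f(\balpha)$ by $c_\alpha\oa^{m_\alpha-1}\oa'$ lengthens the path and can create new $f$-turns at the junctions, so one must work modulo increasing powers of the arrow ideal exactly as in Section~\ref{ssec:findim}.
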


\begin{example}
In the case of the triangulation quiver with one vertex, the same basis for the
triangulation algebra has been constructed in Section~\ref{ssec:triang1}.
\end{example}

Using the basis of Proposition~\ref{p:basis}, one can explicitly compute the
Cartan matrix of a triangulation algebra in terms of its defining combinatorial
data. As the details of the computation are similar to those given
in~\cite[\S4.4]{Ladkani12}, we will state the result without proof.

For a triangulation quiver $(Q,f)$, recall that $\Omega_g$ denotes the set
of $g$-cycles in $Q_1$. The vectors $\chi_\omega \in \bZ^{Q_0}$ for
$\omega \in \Omega_g$ have been defined at the end of Section~\ref{ssec:ribbon},
see also the paragraph preceding Proposition~\ref{p:CartanBGA}.

\begin{proposition} \label{p:Cartan}
Consider $\gL=\gL(Q,f,m,c,\lambda)$ and $\Gamma=\Gamma(Q,f,m,c)$.
Under the hypotheses of Theorem~\ref{t:quasi}, we have:
\begin{enumerate}
\renewcommand{\theenumi}{\alph{enumi}}
\item \label{it:p:Cartan}
$C_\gL = C_{\Gamma} =
\sum_{\omega \in \Omega_g} m_\omega \chi_\omega^T \chi_\omega$.

\item \label{it:p:dim}
$\dim_K \gL = \dim_K \Gamma = 
\sum_{\omega \in \Omega_g} m_\omega |\omega|^2$.

\item \label{it:p:nonneg}
The quadratic form $q_{C_\gL} \colon \bZ^{Q_0} \to \bZ$ defined by
$q_{C_\gL}(x) = x C_{\gL} x^T$ takes non-negative even values;
in particular it is non-negative definite.

\item \label{it:p:rank}
$\rank C_\gL \leq |\Omega_g| \leq |Q_0|$.

\item \label{it:p:det0}
$\det C_\gL \neq 0$ if and only if $(Q,f)$ is one of the
triangulation quivers $1$, $2$, $3a$ or $3b$ of Table~\ref{tab:quivers}.
In this case, $\det C_{\gL} = 4 \cdot \prod_{\omega \in \Omega_g} m_\omega$.
\end{enumerate}
\end{proposition}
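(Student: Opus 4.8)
The plan is to reduce parts~\eqref{it:p:Cartan}--\eqref{it:p:rank} to the corresponding facts about the Brauer graph algebra $\Gamma$ already recorded in Proposition~\ref{p:CartanBGA}, and then to treat the determinant in part~\eqref{it:p:det0} by hand. The key observation is that the explicit basis of $\gL$ provided by Proposition~\ref{p:basis} is given by exactly the same list of paths $\{e_i\} \cup \{\alpha \cdot g(\alpha)\cdots g^r(\alpha)\} \cup \{z_i\}$ as the basis of $\Gamma$ furnished by Lemma~\ref{l:basisBGA}; in both algebras each such path begins and ends at a prescribed pair of vertices, and (by Remark~\ref{rem:dimercyc}) the socle elements $z_i$ are cycles at $i$ in either case. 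Hence for every pair $i,j \in Q_0$ the number of basis paths lying in $e_i(-)e_j$ is the same for $\gL$ and for $\Gamma$, which gives $C_\gL = C_\Gamma$. Part~\eqref{it:p:Cartan} then follows from Proposition~\ref{p:CartanBGA}.

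First I would deduce parts~\eqref{it:p:dim}--\eqref{it:p:rank} from part~\eqref{it:p:Cartan}. For~\eqref{it:p:dim}, summing all entries of $C_\gL$ gives $\dim_K \gL = \sum_{i,j}(C_\gL)_{i,j} = \sum_{\omega}m_\omega \sum_{i,j}\chi_\omega(i)\chi_\omega(j) = \sum_\omega m_\omega\bigl(\sum_i \chi_\omega(i)\bigr)^2 = \sum_\omega m_\omega |\omega|^2$, using that $\sum_i \chi_\omega(i) = |\omega|$. For~\eqref{it:p:nonneg} I would write $q_{C_\gL}(x) = x\bigl(\sum_\omega m_\omega \chi_\omega^T\chi_\omega\bigr)x^T = \sum_\omega m_\omega\langle \chi_\omega, x\rangle^2$, a non-negative combination of squares, so the form is non-negative definite (the parity assertion being inherited from Proposition~\ref{p:CartanBGA} via $q_{C_\gL}=q_{C_\Gamma}$). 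For~\eqref{it:p:rank}, each $\chi_\omega^T\chi_\omega$ has rank $1$, so $\rank C_\gL \le |\Omega_g|$, and $|\Omega_g|\le |Q_0|$ is exactly Proposition~\ref{p:gcycles}.

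The substantive part is~\eqref{it:p:det0}. Arranging the row vectors $\chi_\omega$ into an $|\Omega_g|\times|Q_0|$ integer matrix $X$ and setting $D = \operatorname{diag}(m_\omega)_{\omega\in\Omega_g}$, part~\eqref{it:p:Cartan} reads $C_\gL = X^T D X$. Since $D$ has positive diagonal, $\det C_\gL \neq 0$ forces $\rank C_\gL = |Q_0|$, hence $|\Omega_g| = |Q_0|$ by~\eqref{it:p:rank}; by Proposition~\ref{p:gcycles} together with the standing connectedness of $(Q,f)$, this happens precisely when $(Q,f)$ is one of the quivers $1$, $2$, $3a$, $3b$ of Table~\ref{tab:quivers}. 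Conversely, for each of these four quivers $X$ is square and $\det C_\gL = (\det X)^2\prod_{\omega}m_\omega$, so it remains to compute $\det X$. I would read off $X$ from the $g$-cycles listed in Table~\ref{tab:quivers} (for instance $X=(2)$ for quiver $1$, $X=\left(\begin{smallmatrix}1&0\\1&2\end{smallmatrix}\right)$ for quiver $2$, and the analogous $3\times 3$ matrices for $3a$ and $3b$) and find $|\det X| = 2$ in every case, yielding $\det C_\gL = 4\prod_\omega m_\omega$.

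I expect the determinant evaluation to be the only real obstacle, and it is a finite and harmless one: the four matrices $X$ are the node--edge incidence matrices of the four ribbon graphs in question, each of which is connected with $|Q_0|$ edges and $|\Omega_g| = |Q_0|$ nodes, i.e.\ unicyclic, with its unique cycle of odd length (a loop in quivers $1$, $2$, $3a$ and the triangle in $3b$); this is exactly the situation in which such an incidence matrix has determinant $\pm 2$, explaining the uniform value and completing the proof.
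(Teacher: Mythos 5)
Your proposal is correct and follows the paper's own proof very closely. Parts~\eqref{it:p:Cartan}--\eqref{it:p:rank} are handled exactly as in the paper: the bases of Proposition~\ref{p:basis} and Lemma~\ref{l:basisBGA} are images of one and the same set of paths, so $C_\gL=C_\Gamma$ and everything reduces to Proposition~\ref{p:CartanBGA}; the inequality $|\Omega_g|\leq|Q_0|$ and the ``only if'' half of~\eqref{it:p:det0} come from Proposition~\ref{p:gcycles} together with the standing connectedness of $(Q,f)$. The one place you organize things differently is the ``if'' half of~\eqref{it:p:det0}: the paper simply writes out the four matrices $\sum_{\omega}m_\omega\chi_\omega^T\chi_\omega$ for quivers $1$, $2$, $3a$, $3b$ and computes their determinants, whereas you factor $C_\gL=X^TDX$ with $X$ the unoriented node--edge incidence matrix and $D=\operatorname{diag}(m_\omega)$, so that $\det C_\gL=(\det X)^2\prod_\omega m_\omega$, and then check $|\det X|=2$ in the four cases. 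This is the same finite verification, but your factorization isolates the combinatorial source of the uniform factor $4$ (a connected unicyclic graph whose unique cycle is odd has unoriented incidence determinant $\pm 2$), which is a small but genuine gain in transparency over the paper's case-by-case arithmetic.

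One caveat, which applies to the paper's argument just as much as to yours: the evenness assertion in~\eqref{it:p:nonneg} is never actually proved --- both you and the paper ``inherit'' it from Proposition~\ref{p:CartanBGA}, which is itself stated without proof. Modulo $2$ one has $q_{C_\gL}(x)\equiv\sum_{i\in Q_0}\bigl(\sum_{\omega\in\Omega_g}m_\omega\chi_\omega(i)\bigr)x_i$, and the coefficient of $x_i$ is even only when the two $g$-cycles through the arrows leaving $i$ have multiplicities of equal parity; for instance, for quiver $2$ with $m_\alpha=3$ and $m_\eta=2$ (admissible and non-exceptional) one gets $(C_\gL)_{1,1}=m_\alpha+m_\eta=5$, so $q_{C_\gL}$ takes odd values. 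The evenness does hold when all multiplicities equal $1$ (the closed-surface setting the paper's computation is modelled on), and in any case only the non-negativity, which your sum-of-squares expression does establish, is used elsewhere in the paper.
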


\begin{remark} 
From part~\eqref{it:p:det0} we see that any triangulation algebra with
$n>10$ simple modules provides a counterexample
to~\cite[Corollary~8.8(1)]{Skowronski06} which states that a tame, symmetric
algebra of infinite representation type with periodic modules and singular
Cartan matrix must be isomorphic to the trivial extension of a tubular algebra
(and hence has at most $10$ simple modules).
\end{remark}

\begin{remark} \label{rem:quat}
Combining part~\eqref{it:p:det0} of Proposition~\ref{p:Cartan} with 
part~\eqref{it:t:quasi} of Theorem~\ref{t:quasi} we deduce that the
triangulation algebra $\gL(Q,f,m,c,\lambda)$ is of quaternion type
(and not just of quasi-quaternion type)
if and only if $(Q,f)$ is any of the triangulation quivers $1$, $2$, $3a$ or
$3b$. Further details will be given in Section~\ref{ssec:quat}.
\end{remark}

Parts~\eqref{it:p:Cartan}, \eqref{it:p:dim}, \eqref{it:p:nonneg} follow from
the corresponding statements of Proposition~\ref{p:CartanBGA}, observing that
the basis constructed in Proposition~\ref{p:basis} for $\gL$ and
that constructed in Lemma~\ref{l:basisBGA} for $\Gamma$ consist of images (in
the respective algebras) of the same set of paths.
The inequality $|\Omega_g| \leq |Q_0|$ in part~\eqref{it:p:rank} is a
consequence of Proposition~\ref{p:gcycles}, which also implies the ``only if''
direction of part~\eqref{it:p:det0}. The ``if'' direction follows from explicit
calculations which are presented in the next example for the purpose of
illustration.

\begin{example}
For each of the triangulation quivers $1$, $2$, $3a$ and $3b$ in
Table~\ref{tab:quivers} we compute the Cartan matrix of a triangulation algebra
on that quiver. Recall that for a $g$-cycle $\omega$, the quantity $m_\omega$
used in Proposition~\ref{p:Cartan} equals any of the values $m_\alpha$ for
$\alpha \in \omega$.

\begin{enumerate}
\item
There is one $g$-cycle $(\alpha \beta)$ with $\chi_{(\alpha \beta)} = (2)$.
The Cartan matrix is $(4m_\alpha)$.

\item
There are two $g$-cycles $(\alpha)$ and $(\eta \gamma \beta)$ with
$\chi_{(\alpha)}=(1,0)$ and $\chi_{(\eta \gamma \beta)} = (1,2)$, the Cartan
matrix is
\[
m_\alpha \begin{pmatrix} 1 & 0 \\ 0 & 1 \end{pmatrix}
+ m_\eta \begin{pmatrix} 1 & 2 \\ 2 & 4 \end{pmatrix}
\]
and its determinant is $4 m_\alpha m_\eta$.

\item[($3a$)]
There are three $g$-cycles $(\alpha)$, $(\beta \delta \eta \gamma)$ and $(\xi)$
with
$\chi_{(\alpha)} = (1,0,0)$,
$\chi_{(\beta \delta \eta \gamma)} = (1,2,1)$ and
$\chi_{(\xi)} = (0,0,1)$,
the Cartan matrix is
\[
m_\alpha \begin{pmatrix} 1 & 0 & 0 \\ 0 & 0 & 0 \\ 0 & 0 & 0 \end{pmatrix} +
m_\beta \begin{pmatrix} 1 & 2 & 1 \\ 2 & 4 & 2 \\ 1 & 2 & 1 \end{pmatrix} +
m_\xi \begin{pmatrix} 0 & 0 & 0 \\ 0 & 0 & 0 \\ 0 & 0 & 1 \end{pmatrix}
\]
and its determinant is
$4 m_\alpha m_\beta m_\xi$.

\item[($3b$)]
There are three $g$-cycles $(\alpha_1 \beta_1)$, $(\alpha_2 \beta_2)$ and
$(\alpha_3 \beta_3)$ with
$\chi_{(\alpha_1 \beta_1)} = (1,1,0)$,
$\chi_{(\alpha_2 \beta_2)} = (0,1,1)$ and
$\chi_{(\alpha_3 \beta_3)} = (1,0,1)$,
the Cartan matrix is
\[
m_{\alpha_1}
\begin{pmatrix} 1 & 1 & 0 \\ 1 & 1 & 0 \\ 0 & 0 & 0 \end{pmatrix} +
m_{\alpha_2}
\begin{pmatrix} 0 & 0 & 0 \\ 0 & 1 & 1 \\ 0 & 1 & 1 \end{pmatrix} +
m_{\alpha_3}
\begin{pmatrix} 1 & 0 & 1 \\ 0 & 0 & 0 \\ 1 & 0 & 1 \end{pmatrix}
\]
and its determinant is
$4 m_{\alpha_1} m_{\alpha_2} m_{\alpha_3}$.
\end{enumerate}
\end{example}

\subsection{Remarks on tameness}

In this section we sketch the proof of part~\eqref{it:t:tame} of
Theorem~\ref{t:quasi}. We keep the notations as in the preceding sections.

First, we recall the notion of degeneration of algebras appearing in the
statement of part~\eqref{it:t:tame}.
For a positive integer $d$, denote by $\alg_d(K)$ the affine variety of
associative algebra structures with unit on the vector space $K^d$. The
group $\GL_d(K)$ acts on $\alg_d(K)$ by transport of structure and its
orbits correspond bijectively to the isomorphism classes of
$d$-dimensional $K$-algebras. Given two $d$-dimensional algebras
$\Gamma$ and $\gL$ viewed as points in $\alg_d(K)$, we say that $\Gamma$ is a
\emph{degeneration} of $\gL$ if $\Gamma$ lies in the closure of the 
$\GL_d(K)$-orbit of $\gL$ in the Zariski topology of $\alg_d(K)$.

Assume that $((Q,f),m)$ is not exceptional and
let $N=\lcm(m_\alpha n_\alpha)_{\alpha \in Q_1}$, so that
$N/(m_\alpha n_\alpha)$ is a positive integer for any $\alpha \in Q_1$.
For each $\alpha \in Q_1$,
set
\[
e_\alpha = 1 - \left(
(m_\alpha n_\alpha)^{-1} + (m_{f(\alpha)} n_{f(\alpha)})^{-1}
+ (m_{f^2(\alpha)} n_{f^2(\alpha)})^{-1} \right) .
\]
If $f(\alpha)=\alpha$, set also $e'_\alpha = 1 - 2 (m_\alpha n_\alpha)^{-1}$.
Note that $e_\alpha$ and $e'_\alpha$ are rational numbers and moreover
$N e_\alpha$, $N e'_\alpha$ are integers by construction.

Since $((Q,f),m)$ is not exceptional, Proposition~\ref{p:except}
implies that $e_\alpha > 0$ for any arrow $\alpha$ (hence also
$e'_\alpha > 0$ for any $\alpha \in Q_1^f$).
For $t \in K$, let $I_t$ be the ideal of the path algebra $KQ$ generated by
the elements
\begin{align*}
&\balpha \cdot f(\balpha)
- c_\alpha t^{N e_{\balpha}} \oa^{m_\alpha-1} \oa'
&& \alpha \in Q_1 \text{ and } f(\balpha) \neq \balpha, \\
&\balpha^2
- c_\alpha t^{N e_{\balpha}} \oa^{m_\alpha-1} \oa'
- c_\alpha \lambda_{\balpha} t^{N e'_{\balpha}} \oa^{m_\alpha}
&& \alpha \in Q_1 \text{ and } f(\balpha) = \balpha, \\
& \alpha \cdot f(\alpha) \cdot gf(\alpha) && \alpha \in Q_1, \\
& c_\alpha \oa^{m_\alpha} - c_{\balpha} \oba^{m_{\balpha}} && \alpha \in Q_1,
\end{align*}
and let $\gL_t = KQ/I_t$.

\begin{proposition} \label{p:degen}
Assume that $((Q,f),m)$ is not exceptional.

\noindent
Let $\Gamma=\Gamma(Q,f,m,c)$ be the Brauer graph algebra
and let $\gL=\gL(Q,f,m,c,\lambda)$ be the triangulation algebra
as in Theorem~\ref{t:quasi}. Then:
\begin{enumerate}
\renewcommand{\theenumi}{\alph{enumi}}
\item
$\gL_0 \simeq \Gamma$.

\item
$\gL_1 \simeq \gL$.

\item
For any $t \in K^{\times}$, the automorphism of $KQ$ defined by sending each
arrow $\alpha$ to $t^{N/(m_\alpha n_\alpha)} \alpha$ maps $I_1$ onto $I_t$,
hence $\gL_t \simeq \gL$.

\item
$\Gamma$ is a degeneration of $\gL$.
\end{enumerate}
\end{proposition}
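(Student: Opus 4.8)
The plan is to verify (a)--(c) by direct comparison of presentations and then deduce (d) by a standard orbit-closure argument. Throughout I abbreviate $d_\alpha = N/(m_\alpha n_\alpha)$; since $m$ is $g$-invariant and $n_\alpha$ is the length of the $g$-cycle through $\alpha$ (hence also $g$-invariant), the integer $d_\alpha$ is $g$-invariant as well. For part~(a) I would set $t=0$. Because $((Q,f),m)$ is not exceptional, Proposition~\ref{p:except} gives $e_\alpha>0$ and $e'_\alpha>0$ for every arrow, so each exponent $Ne_{\balpha}$ and $Ne'_{\balpha}$ is a positive integer and the monomials $t^{Ne_{\balpha}}$, $t^{Ne'_{\balpha}}$ vanish at $t=0$. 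The generators of $I_0$ then collapse to the zero-relations $\balpha\cdot f(\balpha)$ (when $f(\balpha)\neq\balpha$) and $\balpha^2=\balpha\cdot f(\balpha)$ (when $f(\balpha)=\balpha$), the zig-zag relations $\alpha\cdot f(\alpha)\cdot gf(\alpha)$, and the commutativity-relations $c_\alpha\oa^{m_\alpha}-c_{\balpha}\oba^{m_{\balpha}}$. As $\alpha$ runs over $Q_1$ the involution $\alpha\mapsto\balpha$ is a bijection, so the first two families together yield $\alpha\cdot f(\alpha)$ for every $\alpha$; the zig-zag relations are then redundant, being right multiples of these. Comparison with Definition~\ref{def:BGA} identifies $\gL_0$ with $\Gamma$.

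For part~(b) I would set $t=1$, so that the generators of $I_1$ become exactly the relations \eqref{e:comm3}, \eqref{e:comm1}, \eqref{e:zigzeroF} of Proposition~\ref{p:qrel}, together with the extra commutativity-relations $c_\alpha\oa^{m_\alpha}-c_{\balpha}\oba^{m_{\balpha}}$. Since the latter already hold in $\gL$ by Remark~\ref{rem:dimercyc}, they are redundant and $\gL_1\simeq\gL$.

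Part~(c) is the computational heart, and I expect the exponent bookkeeping to be the main obstacle. The map $\phi_t$ scales every arrow of a fixed $g$-cycle by the same factor $t^{d_\alpha}$, so $\phi_t(\oa^{m_\alpha})=t^{N}\oa^{m_\alpha}$ independently of $\alpha$ (sending each commutativity-relation to $t^N$ times itself), while $\phi_t(\oa^{m_\alpha-1}\oa')=t^{N-d_\alpha}\oa^{m_\alpha-1}\oa'$ because that path has length $m_\alpha n_\alpha-1$; the zig-zag relations are clearly sent to scalar multiples of themselves. The delicate check is the two commutativity families. For $f(\balpha)\neq\balpha$, factoring $t^{d_{\balpha}+d_{f(\balpha)}}$ out of $\phi_t$ of the generator requires $N-d_\alpha-d_{\balpha}-d_{f(\balpha)}=Ne_{\balpha}=N-d_{\balpha}-d_{f(\balpha)}-d_{f^2(\balpha)}$, i.e.\ $d_\alpha=d_{f^2(\balpha)}$; this is where I would invoke Lemma~\ref{l:gf2fg2}, which gives $f^2(\balpha)=f^{-1}(\balpha)=g^{-1}(\alpha)$, so $g$-invariance of $d$ closes the case. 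For $f(\balpha)=\balpha$ the two required identities are $N-d_\alpha-2d_{\balpha}=N-3d_{\balpha}$ and $N-2d_{\balpha}=Ne'_{\balpha}$; the second is immediate from the definition of $e'_{\balpha}$, and the first reduces to $d_\alpha=d_{\balpha}$, which holds because $g(\balpha)=\overline{f(\balpha)}=\overline{\balpha}=\alpha$ puts $\alpha$ and $\balpha$ in a common $g$-cycle. Hence $\phi_t$ carries each generator of $I_1$ to a unit multiple of the matching generator of $I_t$; being an algebra automorphism it sends $I_1$ onto $I_t$ and induces $\gL_1\simeq\gL_t$, whence $\gL_t\simeq\gL$ by~(b).

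Finally, for part~(d) I would package the family as an algebraic degeneration. By Proposition~\ref{p:basis} and Lemma~\ref{l:basisBGA} a single set of paths $B$ is a basis of both $\gL$ and $\Gamma$; since for $t\neq0$ the isomorphism $\phi_t$ merely rescales paths, $B$ is a basis of $\gL_t$ for every $t\in K$, and all the $\gL_t$ share the dimension $d=\sum_{\omega\in\Omega_g}m_\omega|\omega|^2$ by Proposition~\ref{p:Cartan}\eqref{it:p:dim}. Reducing products of basis paths modulo $I_t$ shows the structure constants relative to $B$ are polynomial in $t$, giving a morphism $\psi\colon\mathbb{A}^1\to\alg_d(K)$ with $\psi(0)$ the structure of $\Gamma$ and $\psi(t)$ for $t\in K^{\times}$ isomorphic to $\gL$, hence in the $\GL_d(K)$-orbit of $\gL$. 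As $K^{\times}$ is dense in $\mathbb{A}^1$ and $\psi$ is continuous, $\psi(0)=\Gamma$ lies in the closure of that orbit, which is precisely the assertion that $\Gamma$ is a degeneration of $\gL$.
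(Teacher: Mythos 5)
Your proof is correct and follows the same route the paper intends: the paper states the key automorphism in part~(c) and leaves the verification implicit, and your exponent bookkeeping (reducing $d_\alpha=d_{f^2(\balpha)}$ to $g$-invariance via $f^2(\balpha)=f^{-1}(\balpha)=g^{-1}(\alpha)$, and $d_\alpha=d_{\balpha}$ for $f$-fixed loops) fills it in correctly, as do the specializations at $t=0,1$ and the standard orbit-closure argument for~(d).
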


We constructed a one-parameter family of algebras $\{\gL_t\}$ for $t \in K$
such that $\gL_t \simeq \gL$ for $t \neq 0$ and $\gL_0 \simeq \Gamma$.
Since $\Gamma$ is tame (Proposition~\ref{p:BGA}), a degeneration
theorem of Geiss~\cite{Geiss95} implies that $\gL$ is also tame.

\begin{remark}
In~\cite[\S6]{Holm99} Holm establishes the tameness of the algebras of
quaternion type with $2$ or $3$ simple modules
by showing that some of them degenerate to algebras of dihedral type
and then applying the result in~\cite{Geiss95}. Proposition~\ref{p:degen}
can be seen as a generalization of this statement to arbitrary triangulation
quivers.
\end{remark}

\begin{example}
The algebras of quaternion type with one simple module are precisely the
algebras listed as items (5) and (5') in the paper~\cite{Ringel74} by Ringel
dealing with the representation type of local algebras, but their representation
type was not determined in that paper.
Their tameness was later established by Erdmann~\cite[III.1.2]{Erdmann90}
as a consequence of the result of~\cite{BD75} mentioned in
Section~\ref{ssec:group}.

Since these algebras are triangulation algebras (see Corollary~\ref{c:triang1}),
Proposition~\ref{p:degen} thus yields an alternative proof of their tameness.
For the purpose of illustration, let us carry out the explicit calculations.

Recall from Section~\ref{ssec:triang1} that a triangulation quiver with one
vertex has two loops $\alpha$ and $\beta$ with the function $f$ being the
identity. Hence there is one $g$-cycle and the auxiliary algebraic data
is given by a positive integer multiplicity $m$, which is admissible if
$m \geq 2$, and scalars $c \in K^{\times}$ and $\lambda_\alpha, \lambda_\beta
\in K$.

Therefore $n_\alpha = n_\beta = 2$ and $m_\alpha = m_\beta = m$, hence
$N=2m$ and
\begin{align*}
e_\alpha = e_\beta = 1- \frac{3}{2m} &,&
N e_\alpha = N e_\beta = 2m-3, \\
e'_\alpha = e'_\beta = 1 - \frac{2}{2m} &,&
N e'_\alpha = N e'_\beta = 2m-2,
\end{align*}
so the defining relations of the algebra $\gL_t$ (for any $t \in K$)
are given by
\begin{align*}
&\alpha^2 - c t^{2m-3} (\beta \alpha)^{m-1} \beta
- c \lambda_\alpha t^{2m-2} (\beta \alpha)^m,\, \beta^2 \alpha,\\
&\beta^2 - c t^{2m-3} (\alpha \beta)^{m-1} \alpha
- c \lambda_\beta t^{2m-2} (\alpha \beta)^m,\, \alpha^2 \beta,\,
(\alpha \beta)^m - (\beta \alpha)^m .
\end{align*}

If $t \neq 0$, the linear map defined by sending $\alpha$ to $t \alpha$
and $\beta$ to $t \beta$ induces an isomorphism between the algebras $\gL_1$ 
and $\gL_t$, the former being equal to the triangulation algebra associated
with the auxiliary data as described in Section~\ref{ssec:triang1}.
Therefore the algebra $\gL_0$, which is precisely the Brauer graph algebra
associated with these data (see Example~\ref{ex:BGAf1}), is a degeneration of
the corresponding triangulation algebra. It follows that the latter algebra is
also tame.
\end{example}

\begin{remark}
When $((Q,f),m)$ is exceptional, Proposition~\ref{p:degen} does not apply but
the triangulation algebra is still tame since it is of tubular type~\cite{BS03},
see also Section~\ref{ssec:tubular} below.
\end{remark}

\section{Known families of algebras as triangulation algebras}
\label{sec:known}

\subsection{Algebras of quaternion type}
\label{ssec:quat}

In~\cite[pp.~303-306]{Erdmann90}, Erdmann gave a list of the possible quivers
with relations of the algebras of quaternion type, and asked whether any such
algebra is indeed of quaternion type~\cite[VII.9]{Erdmann90}.
Later, Holm~\cite[\S6]{Holm99}
proved that the algebras in this list are of tame representation type.
Erdmann and Skowro\'{n}ski~\cite[Theorem~5.9]{ES06} proved that these algebras
are periodic
of period dividing $4$ by constructing projective bimodule resolutions for them
and deduced that they are indeed of quaternion type.
In this section we give an alternative proof of the periodicity 
of modules for these algebras by showing that all the algebras in Erdmann's
list are 2-CY-tilted.

Assume that the ground field is algebraically closed.
We say that an algebra is of
\emph{possibly quaternion type} if it appears in Erdmann's list.
Consider two families of algebras in Erdmann's list whose quivers with
relations are shown in Figure~\ref{fig:quat}, where 
for the convenience of the reader we tried to keep the
notations as close as possible to the original ones.
The first family $\cQ(2\cB)_1^{k,s}(a,c)$ depends on integer parameters
$k \geq 1$, $s \geq 2$ such that $k+s \geq 4$
and scalars $a \in K^{\times}$ and $c \in K$.
If $(k,s)=(1,3)$, one should assume that $a \neq 1$, otherwise one could set
$a=1$.
The second family $\cQ(3\cK)^{a,b,c}$
depends on three integers $1 \leq a \leq b \leq c$ such that at most one
of them equals $1$. The scalar $d \in K^{\times}$ should be set to $1$,
unless $(a,b,c)=(1,2,2)$ and then $d \neq 1$.

\begin{figure}
\[
\begin{array}{lcl}
\begin{array}{l}
\cQ(2\cB)_1^{k,s}(a, c) \\
_{k \geq 1,\, s \geq 2,\, k+s \geq 4} \\
_{a \in K^{\times}\!, \, c \in K}
\end{array}
&
\begin{array}{c}
\xymatrix{
{\bullet_0} \ar@(ul,dl)[]_{\alpha} \ar@<-0.5ex>[r]_{\beta}
& {\bullet_1} \ar@(dr,ur)[]_{\eta} \ar@<-0.5ex>[l]_{\gamma}
}
\end{array}
&
\begin{array}{l}
\alpha^2 - a (\beta \gamma \alpha)^{k-1} \beta \gamma
- c (\beta \gamma \alpha)^k \\
\beta \eta - (\alpha \beta \gamma)^{k-1} \alpha \beta \\
\eta \gamma - (\gamma \alpha \beta)^{k-1} \gamma \alpha \\
\gamma \beta - \eta^{s-1} \\
\alpha^2 \beta \,,\, \gamma \alpha^2
\end{array}
\\ \\
\begin{array}{l}
\cQ(3\cK)^{a,b,c} \\
_{1 \leq a, \, \max(2,a) \leq b \leq c} \\
_{d \in K^{\times}}
\end{array}
&
\begin{array}{c}
\xymatrix@=1pc{
& {\bullet_2} \ar@<-0.5ex>[ddl]_{\lambda} \ar@<-0.5ex>[ddr]_{\eta} \\ \\
{\bullet_0} \ar@<-0.5ex>[rr]_{\beta} \ar@<-0.5ex>[uur]_{\kappa}
&& {\bullet_1} \ar@<-0.5ex>[ll]_{\gamma} \ar@<-0.5ex>[uul]_{\delta}
}
\end{array}
&
\begin{array}{l}
\beta \delta - (\kappa \lambda)^{a-1} \kappa \\
\eta \gamma - (\lambda \kappa)^{a-1} \lambda \\
\delta \lambda - (\gamma \beta)^{b-1} \gamma \\
\kappa \eta - (\beta \gamma)^{b-1} \beta \\
\lambda \beta - d (\eta \delta)^{c-1} \eta \\
\gamma \kappa - d (\delta \eta)^{c-1} \delta \\
\lambda \beta \gamma \,,\, \kappa \eta \delta
\end{array}
\end{array}
\]
\caption{Quivers with relations of some algebras of possibly quaternion type.}
\label{fig:quat}
\end{figure}
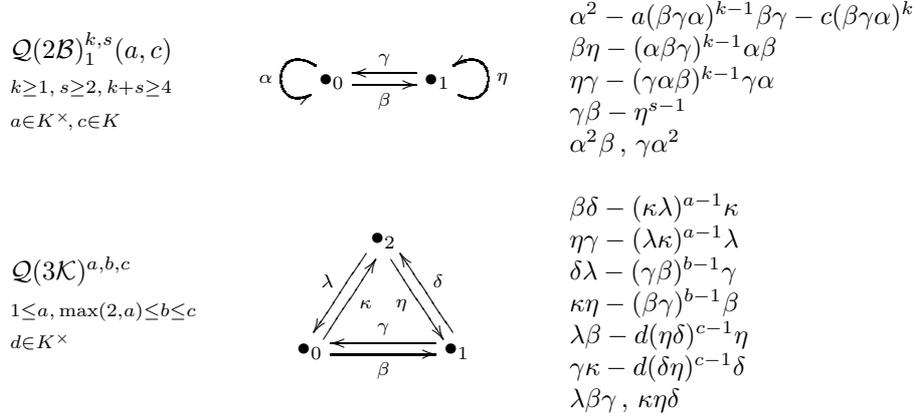

Our presentation slightly deviates from the lists in the existing literature.
The next two remarks explain the differences and the motivation behind them.

\begin{remark}
In the literature, the family $\cQ(2\cB)^{k,s}_1(a,c)$ is defined only for
$s \geq 3$ and $k \geq 1$. We extended the definition to include the case
where $s=2$ and $k \geq 2$. In this case the arrow $\eta$ can be
eliminated from the quiver and one actually gets the algebras in another
family $\cQ(2\cA)^k(c)$ for $k \geq 2$ and $c \in K$.
The reason for including these algebras is to have a complete list of the
derived equivalence classes of the algebras of (possibly) quaternion type,
needed in the proof of Theorem~\ref{t:quat2CY} below. 
Thus one has to modify the statements of~\cite[Proposition~5.8]{ES06}, \cite[Theorem~5.7]{ES08}, \cite[Theorem~5.1]{Holm99}
and~\cite[Theorem~8.6]{Skowronski06} accordingly,
otherwise the derived equivalence classes of the algebras $\cQ(2\cA)^2(c)$
would be missing.
\end{remark}

\begin{remark}
The parameter $d$ for the family $\cQ(3\cK)^{a,b,c}$ does not appear in the
literature. In fact, in the original tables of~\cite{Erdmann90}, the
parameters were assumed to satisfy $2 \leq a \leq b \leq c$.
Only in~\cite{Holm99} one value of $1$ was allowed. Note that if $a=1$
then the two arrows $\kappa$ and $\lambda$ can be eliminated from the
quiver and one actually gets the algebras in another family
$\cQ(3\cA)_1^{b,c}(d)$ for $b,c \geq 2$. 
For this family, if $(b,c)=(2,2)$, one should assume that $d \neq 1$,
otherwise one could set $d=1$.

We also slightly modified the presentation of the zero-relations.
If $a \geq 2$, then as noted in~\cite[Theorem~VII.8.8]{Erdmann90},
all the twelve zig-zag paths
\begin{align*}
\beta \gamma \kappa &,& \beta \delta \eta &,&
\gamma \beta \delta &,& \gamma \kappa \lambda &,&
\delta \eta \gamma &,& \delta \lambda \kappa &,&
\eta \delta \lambda &,& \eta \gamma \beta &,&
\lambda \kappa \eta &,& \lambda \beta \gamma &,&
\kappa \lambda \beta &,& \kappa \eta \delta
\end{align*}
vanish, and it suffices to specify one such zero-relation.
However, if $a=1$, there are zig-zag paths that do not vanish; only those
paths containing $\kappa$ or $\lambda$ do vanish, and it suffices to
specify two zero-relations as in Figure~\ref{fig:quat}. These relations
correspond to the two zero-relations occurring in the definition of the
family $\cQ(3\cA)_1^{b,c}(d)$.
\end{remark}

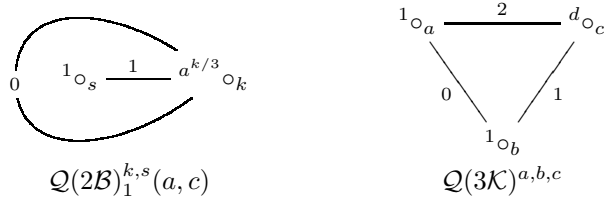
\begin{figure}
\[
\begin{array}{ccc}
\begin{array}{c}
\xymatrix@=0.75pc{
{_0} & {{^1}\circ_{s}} \ar@{-}[rr]^(0.4)1
&& {{^{a^{k/3}}}\circ_{k}} \ar@{-}@(ul,u)[lll]_{} \ar@{-}@(dl,d)[lll]_{}
}
\end{array}
& \qquad &
\begin{array}{c}
\xymatrix@=1pc{
{{^1}\circ_a} \ar@{-}[rr]^2 && {{^d}\circ_c} \ar@{-}[ddl]^1 \\ \\
& {{^1}\circ_b} \ar@{-}[uul]^0
}
\end{array}
\\
\cQ(2\cB)_1^{k,s}(a,c)
& \qquad &
\cQ(3\cK)^{a,b,c}
\end{array}
\]
\caption{Description of some families of algebras of (possibly) quaternion
type~\cite{Erdmann90} as triangulation algebras.
The labeling of the edges
corresponds to that of the vertices in Figure~\ref{fig:quat}.
The subscript at each node indicates the corresponding multiplicity whereas
the superscript indicates the scalar.}
\label{fig:quatriang}
\end{figure}

\begin{lemma} \label{l:quatriang}
An algebra in the family $\cQ(2\cB)_1^{k,s}(a,c)$ or
$\cQ(3\cK)^{a,b,c}$ is a triangulation algebra.
\end{lemma}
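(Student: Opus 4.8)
The goal is to show that each algebra in the two families $\cQ(2\cB)_1^{k,s}(a,c)$ and $\cQ(3\cK)^{a,b,c}$ of Figure~\ref{fig:quat} is isomorphic to a triangulation algebra $\gL(Q,f,m,c,\lambda)$ for suitable data. The natural strategy is to identify, for each family, the underlying triangulation quiver from Table~\ref{tab:quivers} and then to match the multiplicities, scalars and $\lambda$-parameters so that the defining ideals coincide. Figure~\ref{fig:quatriang} essentially announces the answer: the family $\cQ(2\cB)_1^{k,s}(a,c)$ should correspond to triangulation quiver $2$ and the family $\cQ(3\cK)^{a,b,c}$ to triangulation quiver $3b$. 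Thus the proof is a direct verification, carried out separately for each family, comparing the two presentations as quivers with relations.

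\textbf{First family.} For $\cQ(2\cB)_1^{k,s}(a,c)$, I would identify its quiver (vertices $0,1$, loop $\alpha$ at $0$, loop $\eta$ at $1$, and arrows $\beta\colon 0\to 1$, $\gamma\colon 1\to 0$) with the underlying quiver of triangulation quiver $2$, taking $f=(\alpha\,\beta\,\gamma)(\eta)$ so that $\eta\in Q_1^f$. Then $g=(\alpha)(\eta\gamma\beta)$ has two cycles, the $g$-invariant multiplicity function has two values, and comparison with Example~\ref{ex:triang3a}-style computations of $\omega_\alpha,\omega_\beta,\omega_\gamma,\omega_\eta$ should make the triangulation relations~\eqref{e:comm3}, \eqref{e:comm1} of Proposition~\ref{p:qrel} match the listed relations. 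Concretely I expect to set the multiplicity on the $\alpha$-cycle to $k$ and on the $\eta\gamma\beta$-cycle to $s$, to read the scalar $a$ off the commutativity relation $\gamma\beta-\eta^{s-1}$, to identify $\lambda_\eta$ with the parameter $c$ (via the cubic correction term in~\eqref{e:comm1}), and to check that the zig-zag zero-relations $\alpha^2\beta$, $\gamma\alpha^2$ are exactly the relations~\eqref{e:zigzeroF}. The subtlety here is that the two presentations may differ by a permissible scalar transformation of the arrows (cf.\ Remark~\ref{rem:trianga}); so I would allow rescaling arrows to normalise scalars, and I would keep track of whether the field contains the needed roots, which is fine since $K$ is assumed algebraically closed in this section.

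\textbf{Second family.} For $\cQ(3\cK)^{a,b,c}$, I would identify its quiver (three vertices $0,1,2$ with two arrows in each direction forming the triangle $3b$) with triangulation quiver $3b$, choosing $f=(\alpha_1\alpha_2\alpha_3)(\beta_3\beta_2\beta_1)$, so that $Q_1^f$ is empty and $\lambda$ plays no role. Here $g$ has three cycles of length $2$, giving three multiplicities, which I would match to $a,b,c$; the scalar $d$ should arise as the ratio appearing in one commutativity relation after normalising the others to $1$ by rescaling arrows. Again the relations $\lambda\beta\gamma$, $\kappa\eta\delta$ are the zig-zag relations~\eqref{e:zigzeroF}, and the six commutativity relations should line up with~\eqref{e:comm3}. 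I would then invoke Proposition~\ref{p:qrel} to assert that these relations generate the full triangulation ideal, so that matching generators suffices to conclude the algebras are isomorphic.

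\textbf{Main obstacle.} The bookkeeping of scalars is where the real work lies: the author's presentations in Figure~\ref{fig:quat} have many scalars normalised to $1$ and a single distinguished scalar ($a$, $c$, or $d$), whereas the triangulation-algebra presentation carries a $g$-invariant scalar function $c\colon Q_1\to K^\times$ on every cycle. The hard part will be exhibiting the explicit diagonal change of variables on the arrows that reduces the general $g$-invariant scalar data to the normalised form of Figure~\ref{fig:quat}, and verifying that the one genuinely non-normalisable scalar survives as $a$ (resp.\ $c$, $d$). This matches the exceptional-case condition $\prod_{\alpha}c_\alpha\neq 1$ in the punctured-monogon case of Theorem~\ref{t:quasi}, which is presumably why $a\neq 1$ must be assumed when $(k,s)=(1,3)$ and $d\neq 1$ when $(a,b,c)=(1,2,2)$; I would pay particular attention to these boundary parameter values, where a scalar cannot be normalised away and an arrow-elimination (as in Remark~\ref{rem:BGAelim}) may be needed to reconcile the two presentations.
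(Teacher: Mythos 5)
Your overall strategy is the same as the paper's: exhibit each family as $\gL(Q,f,m,c,\lambda)$ by matching the presentation of Figure~\ref{fig:quat} against the presentation of Proposition~\ref{p:qrel}, allow a diagonal rescaling of arrows to reconcile the scalars, and use Proposition~\ref{p:qrel} to justify that the finitely many listed relations (commutativity plus zig-zag) generate the whole defining ideal. Your treatment of $\cQ(3\cK)^{a,b,c}$ (quiver $3b$, two $f$-cycles of length $3$, empty $Q_1^f$, three $g$-cycles of length $2$ carrying the multiplicities $a,b,c$ and the scalar $d$ on one of them) agrees with the paper.

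However, your dictionary for $\cQ(2\cB)_1^{k,s}(a,c)$ is wrong and the verification would fail as stated. The relations force the ribbon structure: $\gamma\beta-\eta^{s-1}$ says that $\gamma\beta$ is a left-hand side $\balpha\cdot f(\balpha)$ and that $\eta^{s-1}=\omega_\eta^{m_\eta-1}\omega_\eta'$, so $f(\gamma)=\beta$, $g(\eta)=\eta$, $n_\eta=1$, $m_\eta=s$; this determines $f=(\alpha)(\eta\,\gamma\,\beta)$ and $g=(\alpha\,\beta\,\gamma)(\eta)$. In particular $\alpha$ (not $\eta$) is the $f$-fixed loop, the multiplicity $k$ sits on the length-$3$ $g$-cycle $(\alpha\beta\gamma)$ and $s$ on the length-$1$ cycle $(\eta)$, and the parameter $c$ is absorbed into $\lambda_\alpha$, not $\lambda_\eta$. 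With your choice $f=(\alpha\,\beta\,\gamma)(\eta)$ the commutativity relations would have left-hand sides $\alpha\beta$, $\gamma\eta$, $\eta^2$ rather than $\beta\eta$, $\eta\gamma$, $\gamma\beta$, $\alpha^2$, and your multiplicity assignment would make admissibility read $k\geq 3$ and would not reproduce the exceptional case at $(k,s)=(1,3)$; also $a$ does not appear in the relation $\gamma\beta-\eta^{s-1}$ (whose scalar is $1$) but in the $\alpha^2$-relation. Finally, the rescaling you invoke generically must be made precise: since $c$ is $g$-invariant it takes a single value on the cycle $(\alpha\beta\gamma)$, yet Figure~\ref{fig:quat} shows the scalar $a$ in one of the three corresponding relations and $1$ in the other two; the fix is the substitution $\alpha\mapsto a^{1/3}\alpha$, after which all three carry the common scalar $a^{k/3}$ and one reads off $c_\alpha=c_\beta=c_\gamma=a^{k/3}$, $c_\eta=1$, $\lambda_\alpha=\lambda$ for a suitable $\lambda\in K$ determined by $a$ and $c$.
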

\begin{proof}[Proof (sketch)]
First, rescaling the arrow $\alpha$ by a factor of $a^{1/3}$
we slightly change the presentation of the algebra $\cQ(2\cB)_1^{k,s}(a,c)$
and get the relations
\begin{align*}
&\alpha^2 - a^{k/3}(\beta \gamma \alpha)^{k-1} \beta \gamma
- \lambda (\beta \gamma \alpha)^k
&& \gamma \beta - \eta^{s-1} \\
&\beta \eta - a^{k/3}(\alpha \beta \gamma)^{k-1} \alpha \beta
&& \alpha^2 \beta \\
&\eta \gamma - a^{k/3}(\gamma \alpha \beta)^{k-1} \gamma \alpha
&& \gamma \alpha^2
\end{align*}
for some scalar $\lambda \in K$.

This algebra is isomorphic to the triangulation algebra 
$\gL(Q,f,m,c,\lambda)$ with the following data:
\begin{itemize}
\item
The triangulation quiver $(Q,f)$ is isomorphic to quiver $2$ in 
Table~\ref{tab:quivers}, with the permutation $f$ written in cycle form as
$(\alpha)(\eta \, \gamma \, \beta)$, so that the permutation $g$ is
$(\alpha \, \beta \, \gamma)(\eta)$;

\item
In terms of the corresponding ribbon graph, the $g$-invariant functions $m$
and~$c$ are shown in Figure~\ref{fig:quatriang}.
The multiplicities are admissible when $s \geq 3$. The triangulation quiver
with multiplicities is exceptional precisely when $(k,s)=(1,3)$, and the
assumption that $a \neq 1$ in this case ensures that Theorem~\ref{t:quasi}
holds;

\item
The scalar $\lambda_\alpha$ for the loop $\alpha$ with $f(\alpha)=\alpha$ 
is $\lambda$.
\end{itemize}

Similarly, the algebra $\cQ(3\cK)^{a,b,c}$ is a triangulation algebra
for the following data:
\begin{itemize}
\item
the triangulation quiver is isomorphic to quiver $3b$ in
Table~\ref{tab:quivers}, with the permutation $f$ written in cycle form as
$(\beta \, \delta \, \lambda)(\kappa \, \eta \, \gamma)$,
so that the permutation $g$ is
$(\beta \, \gamma)(\delta \, \eta)(\kappa \, \lambda)$;

\item
In terms of the corresponding ribbon graph, the $g$-invariant multiplicity
and scalar functions are shown in Figure~\ref{fig:quatriang}.
The multiplicities are admissible when $a \geq 2$.
\end{itemize}

In both cases Theorem~\ref{t:quasi} holds (even when the multiplicities are
not admissible) and one deduces the extra zero-relations as in
Section~\ref{ssec:findim}.
\end{proof}

\begin{theorem} \label{t:quat2CY}
The following assertions are true:
\begin{enumerate}
\renewcommand{\theenumi}{\alph{enumi}}
\item
An algebra of possibly quaternion type is 2-CY-tilted.

\item \label{it:t:posquat}
An algebra of possibly quaternion type is actually of quaternion type.

\item \label{it:t:quat2CY}
An algebra of quaternion type is 2-CY-tilted.
\end{enumerate}
\end{theorem}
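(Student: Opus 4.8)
The plan is to reduce every algebra of possibly quaternion type to the triangulation algebras already identified in Corollary~\ref{c:triang1} and Lemma~\ref{l:quatriang}, and then to transport the 2-CY-tilted property along the derived equivalences of Holm's classification. For part~(a) I would first dispose of the standard representatives in each of Erdmann's derived equivalence classes. An algebra of possibly quaternion type with one simple module is a triangulation algebra by Corollary~\ref{c:triang1}; an algebra in the family $\cQ(2\cB)_1^{k,s}(a,c)$---which, under the extended conventions of the preceding remarks, also subsumes $\cQ(2\cA)$---and one in the family $\cQ(3\cK)^{a,b,c}$---which likewise subsumes $\cQ(3\cA)_1$---are triangulation algebras by Lemma~\ref{l:quatriang}. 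In each of these cases Theorem~\ref{t:quasi}\eqref{it:t:potential} shows that the algebra is 2-CY-tilted.

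It remains to handle the members of Erdmann's list that are not directly realized as triangulation algebras above, namely those families (such as $\cQ(2\cB)_2$ and $\cQ(3\cB)$) not captured by the extended families. By Holm's derived equivalence classification~\cite{Holm99} each of these is derived equivalent to one of the standard representatives just treated. The crucial observation I would make is that Holm's equivalences are realized by compositions of Okuyama-Rickard tilting complexes obtained by tilting at a single vertex, that is, by the complexes $U^{\pm}_P$ of~\eqref{e:silt}. Since every algebra in the list is symmetric, hence weakly symmetric, Corollary~\ref{c:sym2CYmut} applies and shows that passing from a 2-CY-tilted algebra to $\End_{\dgL} U^{\pm}_P(\gL)$ again yields a 2-CY-tilted algebra; iterating along Holm's chains then proves part~(a). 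Verifying that Holm's equivalences are genuinely of this restricted mutation type---rather than arbitrary derived equivalences, for which the preservation of 2-CY-tiltedness is the open Question~\ref{q:sym2CYder}---is the main obstacle of the whole argument.

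For part~(b) I would check the defining conditions of quaternion type one by one. Indecomposability as a ring follows from connectedness of the quivers in Figure~\ref{fig:quat}. By part~(a) the algebra is 2-CY-tilted; it is symmetric because the standard representatives are symmetric by Theorem~\ref{t:quasi}\eqref{it:t:symmetric} and symmetry is preserved under the equivalences above by the last assertion of Proposition~\ref{p:dereq}, and it is tame but not of finite type because the standard representatives are so by Theorem~\ref{t:quasi}\eqref{it:t:tame}, while derived equivalence induces a stable equivalence~\cite{Rickard89} and representation type is a stable invariant~\cite{Krause97}. Being symmetric and 2-CY-tilted, the algebra satisfies $\Omega^4 M \simeq M$ for all $M$ by Proposition~\ref{p:period}\eqref{it:p:omega4}. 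Finally the Cartan determinant is nonzero: the standard representatives lie on the triangulation quivers $1$, $2$ and $3b$, for which $\det C_\gL \neq 0$ by Proposition~\ref{p:Cartan}\eqref{it:p:det0}, and since $\det C$ is a derived invariant up to sign this persists throughout each class.

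Part~(c) is then immediate. By Erdmann's Morita classification~\eqref{it:q12lists} any algebra of quaternion type is Morita equivalent to a member of her list; the property of being 2-CY-tilted is Morita invariant, since a non-basic cluster-tilting object $\bigoplus T_i^{n_i}$ has the same additive closure as $T$ and is therefore again cluster-tilting; and part~(a) supplies the conclusion for the list member. Thus the quaternion-type algebra is itself 2-CY-tilted.
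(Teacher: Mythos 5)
Your proposal is correct and follows essentially the same route as the paper: realize the standard representatives (the local case, $\cQ(2\cB)_1^{k,s}$ and $\cQ(3\cK)^{a,b,c}$) as triangulation algebras via Corollary~\ref{c:triang1} and Lemma~\ref{l:quatriang}, transport 2-CY-tiltedness along Holm's chains of Okuyama--Rickard tilts using Corollary~\ref{c:sym2CYmut}, and deduce parts (b) and (c) from Proposition~\ref{p:period}, Holm's tameness results and Erdmann's Morita classification. You also correctly flag as the main unproved step exactly what the paper itself only asserts by "a careful look" at Holm's constructions, namely that his derived equivalences are compositions of single-vertex mutations of the form $U^{\pm}_P$.
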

\begin{proof}[Proof (sketch)]
An algebra of possibly quaternion type has at most three simple modules.
The case of one simple module was considered in Corollary~\ref{c:triang1},
so let $\gL$ be such algebra with two or three simple modules.
A careful look at the derived equivalences constructed by Holm~\cite{Holm99}
for algebras of (possibly) quaternion type shows that there exist algebras
$\gL_0, \gL_1, \dots, \gL_n$ such that:
\begin{itemize}
\item
$\gL_0$ is one of the algebras of possibly quaternion type appearing in
Figure~\ref{fig:quat};

\item
For each $0 \leq i < n$, there exists an indecomposable projective
$\gL_i$-module $P_i$ such that $\gL_{i+1} \simeq \End U^+_{P_i}(\gL_i)$
or $\gL_{i+1} \simeq \End U^-_{P_i}(\gL_i)$ (cf.\ Section~\ref{ssec:dereq});

\item
$\gL_n \simeq \gL$.
\end{itemize}

The algebra $\gL_0$ is a triangulation algebra by Lemma~\ref{l:quatriang},
hence it is 2-CY-tilted by Theorem~\ref{t:quasi}\eqref{it:t:potential}.
By repeatedly applying Corollary~\ref{c:sym2CYmut} we see that
since $\gL_i$ is symmetric and 2-CY-tilted, so is $\gL_{i+1}$.
Therefore $\gL \simeq \gL_n$ is 2-CY-tilted.

Part~\eqref{it:t:posquat} now follows from Proposition~\ref{p:period} and
the tameness of the algebras of possibly quaternion type established by
Holm~\cite[\S6]{Holm99}. Part~\eqref{it:t:quat2CY} is a consequence of
Erdmann's classification, but see the caveat in Proposition~\ref{p:newquat}
below.
\end{proof}

The above proof also shows that all the algebras of quaternion type arise
as algebras of the form given in Theorem~\ref{t:quasi}\eqref{it:t:quasimut}.

\begin{corollary} \label{c:quat2CY}
Blocks of finite groups with generalized quaternion defect group are
2-CY-tilted.
\end{corollary}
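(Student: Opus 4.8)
The plan is to deduce the corollary by chaining together two facts that are already available. First I would invoke Erdmann's theorem, recalled in Section~\ref{ssec:group}, which asserts that any block of a group algebra whose defect group is a generalized quaternion group $Q_{2^n}$ is an algebra of quaternion type. Then I would apply Theorem~\ref{t:quat2CY}\eqref{it:t:quat2CY}, which states that every algebra of quaternion type is 2-CY-tilted. Since the defining conditions of quaternion type require only that the algebra be symmetric and indecomposable as a ring (and tame with $\Omega^4 \simeq \mathrm{id}$), a block with generalized quaternion defect group satisfies them directly, so the two inputs combine to give the claim. At the level of the statement, this is a one-line reduction.

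The one point that deserves care is that a block $B$ is in general not basic, whereas the algebras in Erdmann's lists, and hence the cluster-tilting objects produced in the proof of Theorem~\ref{t:quat2CY}, are basic. To close this gap I would argue as follows. The block $B$ is Morita equivalent to a basic algebra $A$ of quaternion type, and by Theorem~\ref{t:quat2CY} we may write $A \simeq \End_{\cC}(T)$ for a basic cluster-tilting object $T$ in a triangulated 2-Calabi-Yau category $\cC$. Writing $B \simeq \End_A(P)$ for a progenerator $P = \bigoplus_i P_i^{d_i}$, where the $P_i$ are the indecomposable projective $A$-modules with multiplicities $d_i \geq 1$, each $P_i$ corresponds to an indecomposable summand $T_i$ of $T$ under the equivalence induced by $\Hom_{\cC}(T,-)$ between $\add T$ and the finitely generated projective $A$-modules. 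Setting $T' = \bigoplus_i T_i^{d_i}$ we have $\add T' = \add T$, and since the two conditions in the definition of a cluster-tilting object depend only on $\add T$, the object $T'$ is again cluster-tilting. Consequently $\End_{\cC}(T') \simeq \End_A(P) \simeq B$, so $B$ is 2-CY-tilted.

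The substantive content sits entirely in Theorem~\ref{t:quat2CY}, whose proof goes through Lemma~\ref{l:quatriang} and the derived equivalences of Corollary~\ref{c:sym2CYmut}; against that backdrop the hard part here is merely the basic-versus-nonbasic bookkeeping just outlined, namely checking that passing to a non-basic direct sum of the indecomposable summands of a cluster-tilting object both preserves the cluster-tilting property and realizes exactly the desired Morita-equivalent endomorphism algebra. Once this is in place the corollary follows formally, with no further information about the ambient group $G$ beyond the isomorphism type of its defect group.
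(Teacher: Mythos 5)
Your proof is correct and follows the paper's own (implicit) argument exactly: the corollary is deduced by combining Erdmann's theorem that blocks with generalized quaternion defect group are of quaternion type with Theorem~\ref{t:quat2CY}\eqref{it:t:quat2CY}. Your additional care about the basic versus non-basic issue is valid and goes slightly beyond what the paper spells out: since both conditions defining a cluster-tilting object depend only on $\add T$, replacing $T$ by a non-basic object with the same additive closure is again cluster-tilting and realizes the Morita-equivalent block as an endomorphism algebra.
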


\begin{remark}
It is actually possible to present all the algebras of (possibly) quaternion
type as Jacobian algebras of hyperpotentials and thus deduce an alternative,
direct proof of Theorem~\ref{t:quat2CY}.

It is also possible to present more families in the list of algebras of
quaternion type as triangulation algebras.
However, not all the algebras of quaternion type are triangulation algebras.
For example, the algebras in the family $\cQ(3\cC)^{k,s}$ have the quiver
\[
\xymatrix{
{\bullet} \ar@<-0.5ex>[r] &
{\bullet} \ar@<-0.5ex>[r] \ar@<-0.5ex>[l] \ar@(ur,ul)[]^{} &
{\bullet} \ar@<-0.5ex>[l]
}
\]
which is not a triangulation quiver or obtained from one by
deleting arrows.
\end{remark}

\begin{remark}
When the ground field is of characteristic zero,
Burban, Iyama, Keller and Reiten have shown in~\cite[\S7]{BIKR08}
that certain algebras of quaternion
type occur as endomorphism algebras of cluster-tilting objects in
the 2-Calabi-Yau stable categories of maximal Cohen-Macaulay modules over
minimally elliptic curve singularities, and hence they are 2-CY-tilted.
Moreover, they described these algebras as quotients of the complete path
algebra by closed ideals. 

These algebras are organized in two families, denoted
$A_q(\lambda)$, where $q \geq 2$ and $\lambda \in K^{\times}$, 
and $B_{p,q}(\lambda)$, where $p,q \geq 1$ and $\lambda \in K^{\times}$.
The scalar $\lambda$ could be set to~$1$ except for the algebras
$A_2(\lambda)$ and $B_{1,1}(\lambda)$ corresponding to the simply elliptic
singularities, where one should assume $\lambda \neq 1$.

Comparing their definition in~\cite[\S7]{BIKR08} with
Definition~\ref{def:triang}, we see that
the algebras $A_q(\lambda)$ and $B_{p,q}(\lambda)$ are triangulation algebras
with the triangulation quivers numbered $2$ and $3a$ of
Table~\ref{tab:quivers}, respectively. The corresponding
multiplicities and scalars are shown in Figure~\ref{fig:elliptic}.
\end{remark}

\begin{figure}
\begin{align*}
\begin{array}{c}
\xymatrix@=0.75pc{
{} & {{^{\lambda^{-1}}}\circ_{q+1}} \ar@{-}[rr]^{}
& & {{^1}\circ_1} \ar@{-}@(ul,u)[lll]_{} \ar@{-}@(dl,d)[lll]_{}
}
\end{array}
& &
\begin{array}{c}
\xymatrix@=0.75pc{
{} & {{^1}\circ_{p+1}} \ar@{-}[rr]
& & {{^1}\circ_1} \ar@{-}@(ul,u)[lll]_{} \ar@{-}@(dl,d)[lll]_{}
\ar@{-}[rr]
& & {{^\lambda}\circ_{q+1}}
}
\end{array}
\\
A_q(\lambda) 
\text{ \scriptsize{($q \geq 2$, $\lambda \in K^{\times}$)}}
& &
B_{p,q}(\lambda)
\text{ \scriptsize{($p,q \geq 1$, $\lambda \in K^{\times}$)}}
\end{align*}
\caption{Description of 2-CY-tilted algebras arising from minimally elliptic
curve singularities~\cite{BIKR08} as triangulation algebras.
The subscript at each node indicates the corresponding multiplicity whereas
the superscript indicates the scalar.}
\label{fig:elliptic}
\end{figure}
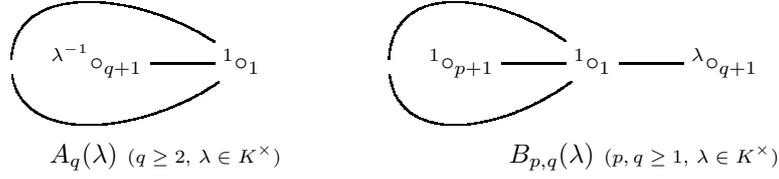

As the next proposition shows, by using triangulation quivers and power series
as in Remark~\ref{rem:gentrian}, we are able to find algebras of quaternion
type which seem not to appear in the known lists. Consider a new family of
algebras $\cQ(3\cA)_3^k$ defined for the integers $k>2$
by the quivers with relations given in Figure~\ref{fig:newquat}.
By computing their Cartan matrices, one verifies that these algebras do
not belong to any of the families in Erdmann's list.

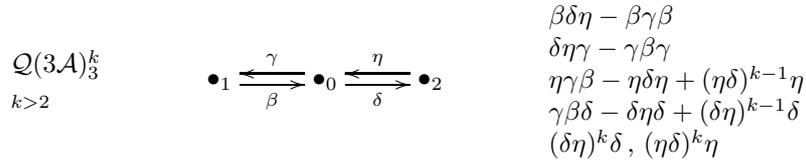
\begin{figure}
\begin{align*}
\begin{array}{l}
\cQ(3\cA)_3^k \\
_{k>2}
\end{array}
& &
\begin{array}{c}
\xymatrix{
{\bullet_1} \ar@<-0.5ex>[r]_{\beta} &
{\bullet_0} \ar@<-0.5ex>[r]_{\delta} \ar@<-0.5ex>[l]_{\gamma} &
{\bullet_2} \ar@<-0.5ex>[l]_{\eta}
}
\end{array}
& &
\begin{array}{l}
\beta \delta \eta - \beta \gamma \beta  \\
\delta \eta \gamma - \gamma \beta \gamma \\
\eta \gamma \beta - \eta \delta \eta + (\eta \delta)^{k-1} \eta \\
\gamma \beta \delta - \delta \eta \delta + (\delta \eta)^{k-1} \delta \\
(\delta \eta)^k \delta \,,\, (\eta \delta)^k \eta
\end{array}
\end{align*}
\caption{A new family of algebras of quaternion type.}
\label{fig:newquat}
\end{figure}

\begin{proposition} \label{p:newquat}
The algebras in the family $\cQ(3\cA)_3^k$ are 2-CY-tilted and of quaternion
type.
\end{proposition}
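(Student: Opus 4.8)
The plan is to realize each algebra $\cQ(3\cA)_3^k$ as a generalized triangulation algebra, in the sense of Remark~\ref{rem:gentrian}, on the triangulation quiver $3a$ of Table~\ref{tab:quivers}, and then to check the defining conditions of an algebra of quaternion type one at a time. The loop-free quiver of Figure~\ref{fig:newquat} is exactly what one gets from quiver $3a$ by deleting its two loops $\alpha,\xi$, and this deletion is forced once the two length-one $g$-cycles $(\alpha)$ and $(\xi)$ carry multiplicity $1$, which is non-admissible. First I would pin down the power-series data $p\colon Q_1 \to K[[x]]^{\times}$ and $q\colon Q_1 \to K[[x]]\setminus\{0\}$ of Remark~\ref{rem:gentrian}: the central $4$-cycle $(\beta\delta\eta\gamma)$ of $g$ should carry a multiplicity recorded by $k$, while the non-monomial series assigned to the loops is what produces the higher-order perturbations $(\eta\delta)^{k-1}\eta$ and $(\delta\eta)^{k-1}\delta$ occurring in the relations. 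After rescaling the arrows as in the proof of Lemma~\ref{l:quatriang}, I would match the resulting commutativity- and zero-relations with those listed in Figure~\ref{fig:newquat}.

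With this presentation in hand, the 2-CY-tilted property is immediate: by Proposition~\ref{p:hyperib} the data defines a hyperpotential on $Q$, and Proposition~\ref{p:hyperCY} then shows that its finite-dimensional Jacobian algebra is 2-CY-tilted. Finite-dimensionality itself I would establish as sketched in Section~\ref{ssec:findim}, by showing that the images of the zig-zag paths $\alpha\cdot f(\alpha)\cdot gf(\alpha)$ lie in $\overline{\cJ}$; because the loop multiplicities are non-admissible, these estimates must be run with the power-series data rather than with monomials, but the mechanism---repeatedly applying the commutativity-relations together with $gf=fg^{-2}$ from Lemma~\ref{l:gf2fg2}---is unchanged. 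Symmetry I would obtain by producing an explicit symmetrizing form out of a basis analogous to that of Proposition~\ref{p:basis}, and indecomposability is clear since $Q$ is connected. Granting symmetry and the 2-CY-tilted property, Proposition~\ref{p:period}\eqref{it:p:omega4} delivers $\Omega^4 M \simeq M$ for every $M\in\stmod\gL$, which is the periodicity condition. For the Cartan condition $\det C_\gL\neq 0$ I would compute $C_\gL$ directly from the basis; the computation parallels the one for quiver $3a$ underlying Proposition~\ref{p:Cartan}\eqref{it:p:det0}, and remains non-singular when the two loop multiplicities are set to $1$.

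The hard part will be tameness, the single condition not handed to us by the general machinery, precisely because the non-admissible loop multiplicities put these algebras outside the hypotheses of Theorem~\ref{t:quasi}\eqref{it:t:tame} and of the degeneration Proposition~\ref{p:degen}. I see two routes. The first is to adapt the degeneration argument of Proposition~\ref{p:degen}: introduce a grading in which the perturbing terms $(\eta\delta)^{k-1}\eta$ and $(\delta\eta)^{k-1}\delta$ acquire strictly positive weight, scale them away, and land on a special biserial (Brauer-graph-type) tame algebra, whereupon the degeneration theorem of Geiss forces $\cQ(3\cA)_3^k$ itself to be tame. The second route exploits the 2-CY-tilted structure already established: by Corollary~\ref{c:dereq} every Iyama-Yoshino mutation preserves being symmetric, 2-CY-tilted and derived equivalent, so it would suffice to exhibit a finite mutation sequence reaching a \emph{standard} admissible triangulation algebra on quiver $3a$, which is tame by Theorem~\ref{t:quasi}; tameness then transfers back along the derived equivalence via Proposition~\ref{p:derquasi}. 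Either way, once tameness is secured the conditions of Definition~\ref{def:quasi} together with $\det C_\gL\neq 0$ show that $\cQ(3\cA)_3^k$ is of quaternion type, which completes the proof.
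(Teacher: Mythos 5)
Your realization of $\cQ(3\cA)_3^k$ as a generalized triangulation algebra is on the wrong triangulation quiver, and this is not a relabelling issue. The paper realizes these algebras on quiver $3b$ of Table~\ref{tab:quivers}: one deletes the pair of arrows forming the $2$-cycle attached to the $g$-cycle of multiplicity $1$ (exactly as in the passage from $\cQ(3\cK)^{1,b,c}$ to $\cQ(3\cA)_1^{b,c}$), and the perturbing power series $q(x)$, with $q(x)-x$ of order $k-1$, sits on one of the two remaining length-two $g$-cycles. You instead delete the two loops of quiver $3a$. The underlying quivers after deletion coincide, but the algebras do not: to eliminate a loop $\alpha$ with $n_\alpha=1$ one needs $m_\alpha n_\alpha=2$, i.e.\ $m_\alpha=2$ (with $m_\alpha=1$, as you write at one point, the triangulation algebra is not even defined), and then the loop is nilpotent of index at most $3$ in the quotient. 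Consequently the correction terms $(\eta\delta)^{k-1}\eta$ and $(\delta\eta)^{k-1}\delta$, which in your setup must be produced by inverting a power series attached to a loop, land beyond the socle and vanish: the algebra you obtain is independent of $k$ and has Cartan determinant $4\cdot 2\cdot 1\cdot 2=16$ by Proposition~\ref{p:Cartan}, whereas $\cQ(3\cA)_3^k$ is derived equivalent to $\cQ(3\cK)^{1,2,k}$ and therefore has Cartan determinant $8k$. The 2-CY-tilted part of your argument must therefore be rerun on quiver $3b$; once that is done, the hyperpotential/Amiot mechanism you invoke does give the conclusion, as in the paper.

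For the quaternion-type part the paper is much more economical: adapting Lemma~5.12 of \cite{Holm99}, it exhibits the explicit two-term tilting complex $U^-_{P_2}(\gL)$ whose endomorphism algebra is $\cQ(3\cA)_1^{k,2}=\cQ(3\cK)^{1,2,k}$, already known to be of quaternion type by Theorem~\ref{t:quat2CY}; since quaternion type is a derived invariant, symmetry, tameness, periodicity and non-singularity of the Cartan matrix all follow at once. Your plan verifies the axioms one at a time and correctly isolates tameness as the step not covered by Theorem~\ref{t:quasi}, but both of your proposed routes have gaps. The mutation route is the paper's in spirit, yet every vertex of this quiver lies on a $2$-cycle, so the quiver-with-potential mutation machinery (Proposition~\ref{p:QPmutIY}, Proposition~\ref{p:mutriang}) is unavailable and you cannot identify the endomorphism algebra of the mutated cluster-tilting object without the hands-on tilting-complex computation the paper imports from Holm. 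The degeneration route also needs real work: Proposition~\ref{p:degen} assumes admissible, non-exceptional, monomial data, and killing only the terms $(\eta\delta)^{k-1}\eta$, $(\delta\eta)^{k-1}\delta$ does not yield a special biserial algebra (the surviving relations are still commutativity relations between length-three paths), so one would have to degenerate all the right-hand sides and re-verify positivity of the weights.
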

\begin{proof}
Let $\gL$ be the algebra $\cQ(3\cA)_3^k$ for some $k>2$.
By slightly modifying the proof of Lemma~5.12 in~\cite{Holm99}, one shows that
$U^-_{P_2}(\gL)$ is a tilting complex over $\gL$ whose endomorphism algebra
is isomorphic to an algebra of the form $\cQ(3\cA)_1^{k,2}$ in Erdmann's list.
The latter algebra has also the form $\cQ(3\cK)^{1,2,k}$.
Hence, by~\cite[Proposition~2.1]{Holm99} and Theorem~\ref{t:quat2CY},
the algebra $\gL$ is of quaternion type.

Corollary~\ref{c:sym2CYmut} would then imply that $\gL$ is 2-CY-tilted, but
let us give a direct proof of this fact. Indeed, the algebra $\gL$ is
a generalized version of a triangulation algebra, as considered in 
Remark~\ref{rem:gentrian}, for the triangulation quiver $3b$ 
of Table~\ref{tab:quivers} with the
$f$-invariant invertible power series $p_\alpha(x)$ all set to $1$
and the $g$-invariant power series given on the nodes of the corresponding
ribbon graph by
\[
\xymatrix@=1pc{
{\circ_{1}} \ar@{-}[rr]^2 && {\circ_{q(x)}} \ar@{-}[ddl]^1 \\ \\
& {\circ_{x}} \ar@{-}[uul]^0
}
\]
where $q(x)$ is any power series such that the least order term of $q(x)-x$
has degree $k-1$.
\end{proof}

All the algebras of quasi-quaternion type constructed so far are 2-CY-tilted.
In view of Theorem~\ref{t:sym2CYfin} and Theorem~\ref{t:quat2CY},
the following question, which is a reformulation of
Question~\ref{q:sym42CY} in the tame case, arises naturally.
\begin{question}
Let $\gL$ be an algebra of quasi-quaternion type. Is $\gL$ 2-CY-tilted?
\end{question}

\subsection{2-CY-tilted blocks}

By using results on the stable Auslander-Reiten quivers of
tame blocks~\cite{ES92} and wild blocks~\cite{Erdmann95},
Erdmann and Skowro\'{n}ski have characterized the blocks of group algebras
whose non-projective modules are periodic~\cite{ES15},
see also~\cite[Theorem~5.3]{ES08}.
As a consequence, by invoking Proposition~\ref{p:period},
Theorem~\ref{t:sym2CYfin} and Corollary~\ref{c:quat2CY}
we obtain the following characterization of 2-CY-tilted blocks.

\begin{proposition} \label{p:block2CY}
Let $B$ be a block of a group algebra over an algebraically
closed  field of characteristic~$p$ with defect group $D$.
Then $B$ is a 2-CY-tilted algebra if and only if either:
\begin{enumerate}
\renewcommand{\theenumi}{\alph{enumi}}
\item
$D$ is cyclic and $B$ has at most two simple modules; or

\item
$p=2$ and $D$ is a generalized quaternion group.
\end{enumerate}
\end{proposition}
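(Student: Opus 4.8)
The plan is to assemble the statement from four inputs already available: the periodicity forced by Proposition~\ref{p:period}, the Erdmann--Skowro\'{n}ski classification~\cite{ES15} of blocks whose non-projective modules are periodic, the finite-representation-type classification of Theorem~\ref{t:sym2CYfin}, and the quaternion case handled by Corollary~\ref{c:quat2CY}. Throughout I use that a block $B$ is a finite-dimensional, symmetric, indecomposable algebra, so that Proposition~\ref{p:period}\eqref{it:p:omega4} becomes applicable the moment $B$ is assumed 2-CY-tilted. I also record the (easy, Morita-invariant) observation that if $B=\End_{\cC}(T)$ for a cluster-tilting object $T$, then its basic version $B_0$ equals $\End_{\cC}(T_0)$ for the basic version $T_0$ of $T$, which is again cluster-tilting; hence $B$ is 2-CY-tilted if and only if $B_0$ is, and $B,B_0$ have the same number of simple modules. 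This lets me freely pass to the basic algebra when invoking Theorem~\ref{t:sym2CYfin}.

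For the forward implication, suppose $B$ is 2-CY-tilted. Since $B$ is symmetric and 2-CY-tilted, Proposition~\ref{p:period}\eqref{it:p:omega4} yields $\Omega_B^4 M \simeq M$ for every $M \in \stmod B$, so every non-projective $B$-module is $\Omega$-periodic. The classification of~\cite{ES15} of the blocks all of whose non-projective modules are periodic then forces the defect group $D$ to be either cyclic or, when $p=2$, a generalized quaternion group. In the cyclic case $B$ is of finite representation type by Higman~\cite{Higman54}; passing to the basic algebra and applying Theorem~\ref{t:sym2CYfin} (the implication from condition~(a) to condition~(e) there, or the trivial observation that $B\simeq K$ has one simple module in the semisimple case) shows that $B$ has at most two simple modules, which is case~(a). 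The quaternion case is case~(b), so the forward implication is complete.

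For the reverse implication I treat the two cases separately. In case~(a), $D$ cyclic implies $B$ is of finite representation type and Morita equivalent to a Brauer tree algebra; if moreover $B$ has at most two simple modules, then its basic version satisfies condition~(e) of Theorem~\ref{t:sym2CYfin}, hence condition~(a), so $B$ is 2-CY-tilted (the semisimple block $B\simeq K$ coming from a trivial defect group is 2-CY-tilted directly, cf.\ the leftmost family of Figure~\ref{fig:BGA2CY} with $m=0$). In case~(b), $p=2$ with $D$ generalized quaternion, the block $B$ is 2-CY-tilted by Corollary~\ref{c:quat2CY}. This exhausts both directions.

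The genuine content is imported rather than produced here. The hard step is the periodicity classification of~\cite{ES15}, which rests on a delicate analysis of the stable Auslander--Reiten quivers of tame~\cite{ES92} and wild~\cite{Erdmann95} blocks; it is precisely what converts the abstract periodicity $\Omega^4\simeq\mathrm{id}$ into the explicit dichotomy on $D$. The two subtleties to keep honest are that periodicity alone is only a \emph{necessary} condition, so in the cyclic case one must still rule out three or more simple modules --- exactly the extra information supplied by Theorem~\ref{t:sym2CYfin} --- and that one should confirm 2-CY-tiltedness is preserved under Morita equivalence so that the basic-algebra hypothesis of Theorem~\ref{t:sym2CYfin} is harmless.
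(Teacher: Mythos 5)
Your proof is correct and follows essentially the same route as the paper: the result is obtained by combining Proposition~\ref{p:period} (to get $\Omega^4\simeq\mathrm{id}$ and hence periodicity), the Erdmann--Skowro\'{n}ski classification of blocks with periodic modules, Theorem~\ref{t:sym2CYfin} for the cyclic-defect case, and Corollary~\ref{c:quat2CY} for the quaternion case. Your added care about Morita invariance and the semisimple block is sensible but does not change the argument.
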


\subsection{Symmetric algebras of tubular type $(2,2,2,2)$}
\label{ssec:tubular}

\begin{table}
\begin{center}
\begin{tabular}{clllc}
\textbf{No.\ of} & \textbf{Algebra} &
\textbf{Alternative} &
\textbf{Marked surface} & 
\textbf{Multiplicities} \\
\textbf{simples} & & \textbf{description} \\
\hline
\rule{0pt}{2.6ex}
2 & $A_2(\lambda)$ & $\cQ(2\cB)_1^{1,3}(\lambda,0)$ &
punctured monogon & $(1,3)$ \\
  & $\gL_3(\lambda)$ & $\cQ(2\cB)_1^{1,3}(\lambda,\lambda)$ \\
\hline 
\rule{0pt}{2.6ex}
3 & $A_1(\lambda)$ & $\cQ(3\cK)^{1,2,2}(\lambda)$ &
sphere, 3 punctures & $(1,2,2)$ \\
\hline
\rule{0pt}{2.6ex}
6 & $T(B_i(\lambda))$ & \cite[Fig.\ 1]{GKO13},
& sphere, 4 punctures & $(1,1,1,1)$ \\
& $^{1 \leq i \leq 4}$ & \cite[Fig.\ 1.6]{Jasso15} \\
\hline
\end{tabular}
\end{center}
\caption{The symmetric algebras of tubular type $(2,2,2,2)$ and their socle
deformations. 
Each family depends on a parameter $\lambda \in K \setminus \{0,1\}$.}
\label{tab:tubular}
\end{table}

In this section we show that the class of algebras considered in
Theorem~\ref{t:quasi} contains all the symmetric algebras of tubular type
$(2,2,2,2)$ and their socle deformations.
As a consequence, Question~\ref{q:sym42CY} has a positive
answer for the tame symmetric algebras of polynomial growth. For the
definitions of the terms in the next proposition we refer the reader to
the classification of tame symmetric algebras of polynomial growth by
Skowro\'{n}ski~\cite{Skowronski89} and to the surveys~\cite{ES08,Skowronski06}.
Recall that two self-injective algebras $\gL$ and $\gL'$ are \emph{socle
equivalent} if the factor algebras $\gL/\soc \gL$ and $\gL'/\soc \gL'$
are isomorphic.

\begin{proposition} \label{p:sympoly}
Let $\gL$ be a basic, indecomposable, representation-infinite tame symmetric
algebra of polynomial growth.
Then the following conditions are equivalent:
\begin{enumerate}
\renewcommand{\theenumi}{\alph{enumi}}
\item \label{it:p:sympoly4}
$\Omega_{\gL}^4 M \simeq M$ for any $M \in \stmod \gL$;

\item \label{it:p:tubular2222}
$\gL$ is socle equivalent to a symmetric algebra of tubular type $(2,2,2,2)$;

\item \label{it:p:sympoly2CY}
$\gL$ is a 2-CY-tilted algebra.
\end{enumerate}
\end{proposition}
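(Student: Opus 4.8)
The plan is to establish the cyclic chain of implications
\eqref{it:p:sympoly2CY}$\Rightarrow$\eqref{it:p:sympoly4}$\Rightarrow$\eqref{it:p:tubular2222}$\Rightarrow$\eqref{it:p:sympoly2CY}.
The first implication \eqref{it:p:sympoly2CY}$\Rightarrow$\eqref{it:p:sympoly4} is immediate from Proposition~\ref{p:period}\eqref{it:p:omega4}: since $\gL$ is assumed symmetric and is now also 2-CY-tilted, the functor $\Omega^4$ on $\stmod \gL$ is isomorphic to the identity, which is exactly condition~\eqref{it:p:sympoly4}.

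For \eqref{it:p:tubular2222}$\Rightarrow$\eqref{it:p:sympoly2CY} I would rely on the explicit description recorded in Table~\ref{tab:tubular}, which exhibits each symmetric algebra of tubular type $(2,2,2,2)$ and each of its socle deformations as a triangulation algebra $\gL(Q,f,m,c,\lambda)$ on one of three triangulation quivers: the punctured monogon (quiver~$2$ of Table~\ref{tab:quivers}) with multiplicities $(1,3)$, the sphere with three punctures (quiver~$3b$) with multiplicities $(1,2,2)$, and the tetrahedron, i.e.\ the sphere with four punctures, with all multiplicities equal to $1$. The task here is to check that these data satisfy the hypotheses of Theorem~\ref{t:quasi}: admissibility of the multiplicities, and in the two exceptional cases (the punctured monogon and the tetrahedron) the condition on the product of the scalars $c$, which is guaranteed by the restriction $\lambda \in K \setminus \{0,1\}$ appearing in Table~\ref{tab:tubular}. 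Granting this, Theorem~\ref{t:quasi}\eqref{it:t:potential} yields that each such algebra is 2-CY-tilted.

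The substantial implication is \eqref{it:p:sympoly4}$\Rightarrow$\eqref{it:p:tubular2222}. Here I would appeal to Skowro\'{n}ski's classification~\cite{Skowronski89} of basic, indecomposable, representation-infinite tame symmetric algebras of polynomial growth. Using $\tau = \Omega^2$, condition~\eqref{it:p:sympoly4} is equivalent to $\tau^2 M \simeq M$ for every $M \in \stmod \gL$, that is, to the stable Auslander--Reiten quiver of $\gL$ consisting entirely of tubes of rank dividing $2$. Running through the families in the classification, the ranks of the stable tubes are governed by the tubular type: the types $(3,3,3)$, $(2,4,4)$ and $(2,3,6)$ produce tubes of rank $3$, $4$ and $6$ respectively and therefore violate the condition, whereas the type $(2,2,2,2)$ produces only tubes of rank $1$ and $2$. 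Since the periodicity condition is an invariant of the stable module category, and socle equivalent self-injective algebras have equivalent stable categories, the condition can only pin down $\gL$ up to socle equivalence; one concludes that $\gL$ is socle equivalent to a symmetric algebra of tubular type $(2,2,2,2)$.

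The main obstacle is this last step. It forces one to work with the full statement of the classification and, for each family occurring there, to read off the ranks of the stable tubes in order to single out precisely the tubular type $(2,2,2,2)$ as the one compatible with $\tau^2 = \mathrm{id}$. The delicate point is the socle-deformation bookkeeping: because $\Omega$-periodicity is detected only in the stable category, the argument naturally yields socle equivalence rather than isomorphism, and one must verify that all the socle deformations arising in the classification remain within the scope of condition~\eqref{it:p:tubular2222} (equivalently, that they too appear in Table~\ref{tab:tubular} as triangulation algebras), so that the cycle of implications genuinely closes.
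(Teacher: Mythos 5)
Your overall strategy coincides with the paper's: the cycle \eqref{it:p:sympoly2CY}$\Rightarrow$\eqref{it:p:sympoly4}$\Rightarrow$\eqref{it:p:tubular2222}$\Rightarrow$\eqref{it:p:sympoly2CY}, with the first arrow coming from Proposition~\ref{p:period} and the last from identifying the classified algebras as 2-CY-tilted. For \eqref{it:p:sympoly4}$\Rightarrow$\eqref{it:p:tubular2222} the paper simply cites known results (\cite[Proposition~6.2]{BES15}, \cite{BS02}, \cite[Theorem~6.1]{ES08}) rather than re-running Skowro\'{n}ski's classification; your sketch of that step is plausible but, as you note yourself, it is the delicate part, and you would also have to dispose of the domestic representation-infinite algebras (which have non-periodic stable components, so condition~\eqref{it:p:sympoly4} already forces the stable Auslander--Reiten quiver to consist of tubes). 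That difference is one of packaging, not substance.

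The genuine gap is in your implication \eqref{it:p:tubular2222}$\Rightarrow$\eqref{it:p:sympoly2CY}, where you assert that Table~\ref{tab:tubular} exhibits \emph{every} algebra in the classification as a triangulation algebra satisfying the hypotheses of Theorem~\ref{t:quasi}. Two points fail. First, for the three-simple-module case $A_1(\lambda)\simeq\cQ(3\cK)^{1,2,2}(\lambda)$ the multiplicity function is \emph{not} admissible (the $g$-cycle $(\kappa\,\lambda)$ has length $2$ and multiplicity $1$, so $m_\alpha n_\alpha=2$), so the hypotheses of Theorem~\ref{t:quasi} as stated are not met; one needs Lemma~\ref{l:quatriang} together with the remark that the theorem extends to non-admissible multiplicities with $m_\alpha n_\alpha\geq 2$. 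Second, and more seriously, among the four six-simple-module algebras $T(B_i(\lambda))$ only $T(B_4(\lambda))$ is identified as a triangulation algebra (for the tetrahedron with all multiplicities $1$); the paper handles $T(B_1),T(B_2),T(B_3)$ by matching their quivers with relations against the lists of~\cite{GKO13,Jasso15} of endomorphism algebras of cluster-tilting objects in the cluster category of the weighted projective line of weights $(2,2,2,2)$. Your argument, which invokes only Theorem~\ref{t:quasi}\eqref{it:t:potential} for the tetrahedron data, does not cover these three algebras; to close the cycle you would need either that direct identification or an argument that they arise from cluster-tilting objects reachable from the one realizing $T(B_4(\lambda))$, as in Theorem~\ref{t:quasi}\eqref{it:t:quasimut}.
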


The implication \eqref{it:p:sympoly4}$\Rightarrow$\eqref{it:p:tubular2222}
follows from known results in the literature, we refer
to~\cite[Proposition~6.2]{BES15}, \cite{BS02} or~\cite[Theorem~6.1]{ES08}.
The implication \eqref{it:p:sympoly2CY}$\Rightarrow$\eqref{it:p:sympoly4} is
a consequence of Proposition~\ref{p:period}. We prove the implication
\eqref{it:p:tubular2222}$\Rightarrow$\eqref{it:p:sympoly2CY} by using the
classification of the tame symmetric algebras of tubular type and their
socle deformations in~\cite{BS03,BS04}, keeping the notation introduced in
these papers.

Let $\gL$ be socle equivalent to a symmetric algebra of tubular type
$(2,2,2,2)$. Then $\gL$ may have 2, 3 or 6 simple modules.

In the case of 2 simple modules, the algebra $\gL$ is either $A_2(\lambda)$
of~\cite{BS03} or the non-standard $\gL_3(\lambda)$ of~\cite{BS04}, where
$\lambda \in K \setminus \{0,1\}$. We observe that $A_2(\lambda)$ is
isomorphic to the algebra $\cQ(2\cB)_1^{1,3}(\lambda,0)$ whereas
$\gL_3(\lambda)$ is isomorphic to the algebra
$\cQ(2\cB)_1^{1,3}(\lambda,\lambda)$, hence both algebras are triangulation
algebras by Lemma~\ref{l:quatriang}.

In the case of 3 simple modules, the algebra $\gL$ is $A_1(\lambda)$
of~\cite{BS03},
which is isomorphic to $\cQ(3\cK)^{1,2,2}(\lambda)$, so again
Lemma~\ref{l:quatriang} gives that $\gL$ is a triangulation algebra.

In the case of 6 simple modules, by~\cite[Proposition~5.2]{BHS03}
the algebra $\gL$ is the trivial extension algebra of a tubular algebra of
type $(2,2,2,2)$, and there are exactly four such algebras, denoted
by $T(B_i(\lambda))$ for $1 \leq i \leq 4$,
see~\cite[\S3.3]{Skowronski89} or~\cite[\S4]{BS02}.
It is instructive to compare the description of these trivial extension
algebras as quivers with relations with the lists of quivers with potentials
given in~\cite[Figure~1]{GKO13} or in~\cite[Figure~1.6]{Jasso15} describing
the endomorphism algebras of the cluster-tilting objects within the cluster
category associated to a weighted projective line with weights $(2,2,2,2)$,
and to see that these are identical.

Moreover, we observe that $T(B_4(\lambda))$ is a triangulation algebra for the
triangulation quiver whose ribbon graph is the tetrahedron with all
multiplicities set to~$1$. The marked surfaces realizing the
symmetric algebras of tubular type $(2,2,2,2)$ and their socle deformations
are summarized in Table~\ref{tab:tubular}.

\subsection{Jacobian algebras from closed surfaces}
\label{ssec:Jaclosed}

In this section we explain how Theorem~\ref{t:quasi}
implies that the Jacobian algebras of the quivers with potentials associated by
Labardini-Fragoso to triangulations of closed surfaces with punctures are of
quasi-quaternion type.

In~\cite{Labardini09}, Labardini-Fragoso constructed potentials on the
adjacency quivers of triangulations of marked surfaces and proved that flips
of triangulations result in mutations of their associated quivers with
potentials.
Denote by $Q'_\tau$ the adjacency quiver of a triangulation $\tau$ of
a marked surface $(S,M)$ as defined by Fomin, Shapiro and
Thurston~\cite[Definition~4.1]{FST08}
(we use the notation $Q'_\tau$ to distinguish it
from the underlying quiver $Q_\tau$ of the triangulation quiver associated
to $\tau$, see Section~\ref{ssec:triangadj}) and let $W_\tau$ be the
associated potential on $Q'_\tau$.
The notion of flip occurring in the next proposition is explained later in 
Section~\ref{ssec:flip}.

\begin{proposition}[\protect{\cite[Theorem~30]{Labardini09}}] \label{p:QPflip}
If a triangulation $\tau'$ of $(S,M)$ is obtained from $\tau$ by flipping an
arc $\gamma$, then the quiver with potential $(Q'_{\tau'},W_{\tau'})$ is
right equivalent to the mutation
of $(Q'_\tau,W_\tau)$ at the vertex of $Q'_\tau$ corresponding to $\gamma$.
\end{proposition}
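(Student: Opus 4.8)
The plan is to reduce the statement to a local computation around the flipped arc, using that both the flip and the Derksen--Weyman--Zelevinsky mutation $\mu_k$ are local operations. First I would note that flipping $\gamma$ changes the triangulation only inside the region covered by the two ideal triangles sharing $\gamma$; outside this region the arcs, and hence the arrows of $Q'_\tau$ coming from triangles and all triangle-terms of $W_\tau$, are untouched. Dually, the vertex $k$ of $Q'_\tau$ corresponding to $\gamma$ enters $\mu_k(Q'_\tau,W_\tau)$ only through the arrows incident to $k$ and the potential terms passing through $k$. So I would enumerate the finitely many local configurations around $\gamma$ --- the generic quadrilateral, together with the degenerate pictures arising when one or both adjacent triangles are self-folded or when an endpoint of $\gamma$ is a puncture of small valency --- and treat each in turn.

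For a fixed local configuration I would run three computations side by side: (i) record the local contribution of the two adjacent triangles to $(Q'_\tau,W_\tau)$, including any cycle term attached to a puncture incident to $\gamma$; (ii) apply DWZ's two-step recipe for $\mu_k$, first forming the premutation (reverse the arrows through $k$, adjoin a shortcut arrow $[\beta\alpha]$ for each path $i \xrightarrow{\alpha} k \xrightarrow{\beta} j$, and set the new potential to $[W_\tau] + \sum [\beta\alpha]\,\alpha^\ast \beta^\ast$), and then passing to the reduced part via the splitting theorem of \cite{DWZ08}; and (iii) read off $(Q'_{\tau'},W_{\tau'})$ directly from the flipped picture. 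In each case the target is an explicit right equivalence, that is, a continuous automorphism $\vphi$ of $\wh{KQ'_{\tau'}}$ fixing the vertex idempotents that carries the reduced mutated potential to $W_{\tau'}$.

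The hard part will be the reduction in step (ii) together with the behaviour of the puncture-cycle terms. The premutated quiver with potential is never reduced: the term $\sum [\beta\alpha]\,\alpha^\ast\beta^\ast$ creates $2$-cycles $\alpha^\ast\beta^\ast$ that must be cancelled, and the splitting theorem guarantees this only up to right equivalence, not up to isomorphism. I would have to choose the cancelling automorphism so that it simultaneously transports the surviving cubic terms and, crucially, the transformed puncture cycles into exactly the shape prescribed by $W_{\tau'}$; when $\gamma$ sits in a self-folded triangle or is incident to a puncture of small valency, the naive mutated potential and $W_{\tau'}$ agree only after absorbing higher-order corrections, and verifying that the cyclic derivatives $\partial_\alpha$ match after this absorption is the most delicate point. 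Finally, since each local automorphism fixes the idempotents and acts as the identity on arrows lying outside the affected region, the separate local right equivalences assemble into a single right equivalence on the whole surface between $\mu_k(Q'_\tau,W_\tau)$ and $(Q'_{\tau'},W_{\tau'})$, which is the assertion.
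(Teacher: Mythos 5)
This proposition is not proved in the paper at all: it is imported verbatim from Labardini-Fragoso \cite[Theorem~30]{Labardini09}, so there is no internal argument to compare yours against. That said, your outline is essentially the strategy of the cited proof: reduce to the local picture around the flipped arc, enumerate the finitely many configurations of the two adjacent ideal triangles (generic quadrilateral, self-folded triangles, low-valency punctures), compute the DWZ premutation, pass to the reduced part, and exhibit an explicit right equivalence in each case. So the route is the correct one.

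As a proof, however, what you have written is a plan rather than an argument: every substantive verification is deferred to ``treat each case in turn,'' and in this theorem the cases \emph{are} the content --- the generic quadrilateral is easy, and all the work sits in the degenerate configurations. Two points in your outline need repair before the plan can be executed. First, the locality claim is overstated for the potential: the puncture-cycle summands of $W_\tau$ are not supported on the two triangles adjacent to $\gamma$ --- a cycle around a puncture traverses \emph{all} arcs incident to that puncture --- so ``outside this region the terms of $W_\tau$ are untouched'' is false for those summands, and both the case-by-case computation and the final assembly must track how these long cycles are rewritten when the arrows near $k$ are replaced. Second, the degenerate cases interact with the conventions built into the adjacency quiver $Q'_\tau$ (deletion of $2$-cycles, the treatment of inner sides of self-folded triangles, the fact that such inner sides are not flippable at all), so the list of ``local configurations'' is not just a matter of small valency; it has to be set up so that $\mu_k$ is actually defined (no $2$-cycle through $k$) in every case you consider. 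Until those computations are carried out, the statement remains an appeal to \cite{Labardini09} rather than a proof.
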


We now assume that the surface $S$ is closed. In this case the potentials
depend on scalars attached to the punctures of $S$.
For ``nice'' triangulations of $(S,M)$, an equivalent
description of the quivers with potentials was given in~\cite{Ladkani12},
where we also showed that their Jacobian algebras are finite-dimensional and
symmetric.
In particular, the scalars can be encoded as a $g$-invariant function
$c \colon Q_1 \to K^{\times}$ and the Jacobian
algebra of the associated potential is a triangulation algebra, where all the
multiplicities are set to $1$.

\begin{proposition}[\protect{\cite[\S2]{Ladkani12}}] \label{p:QPtriang}
Let $\tau$ be a triangulation of a closed surface which is not a sphere with
less than four punctures, and assume that
at each puncture there are at least three incident arcs. Then 
$Q'_\tau=Q_\tau$, the constant multiplicity function $\mathbf{1}$ is
admissible, and the Jacobian
algebra $\cP(Q_\tau, W_\tau)$ is isomorphic to the triangulation algebra
$\gL(Q_\tau, f_\tau, \mathbf{1}, c)$.
\end{proposition}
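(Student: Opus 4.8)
The plan is to verify the three assertions in turn, using the combinatorial dictionary of Sections~\ref{sec:quivers}--\ref{sec:surface} and reducing the identification of potentials to~\cite[\S2]{Ladkani12}. The equality $Q'_\tau = Q_\tau$ is precisely Lemma~\ref{l:adjacency}: the surface $S$ is closed and not a sphere with fewer than four punctures, and by hypothesis each puncture carries at least three incident arcs (condition (T3)), so the triangulation quiver and the adjacency quiver of $\tau$ coincide. From now on I identify the two quivers and write $Q_\tau$.

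For the admissibility of $\mathbf{1}$ I would appeal to the bijections of Remark~\ref{rem:triblocks}. Since $S$ is closed there are no boundary segments, hence $Q_1^f$ is empty and every cycle of the permutation $g_\tau$ corresponds to a puncture of $(S,M)$; moreover the length $n_\alpha$ of the $g$-cycle attached to a puncture is its valency, the number of incident arc-ends. The hypothesis that every puncture has at least three incident arcs then forces $n_\alpha \geq 3$ for all $\alpha \in (Q_\tau)_1$, so for the constant function $m = \mathbf{1}$ one has $m_\alpha n_\alpha = n_\alpha \geq 3$ and $\mathbf{1}$ is admissible. In particular there are no self-folded triangles, and no loops or $2$-cycles are forced by low valency.

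It remains to identify the two algebras, and here I would first exhibit the potential defining the triangulation algebra. As $Q_1^f$ is empty we have $\mu_f = 1$, and since all multiplicities equal $1$ also $\mu_g = \lcm(1,\dots,1) = 1$; thus $\ch K$ does not divide $\mu_f \mu_g = 1$, and Proposition~\ref{p:potential}(b) realizes $\gL(Q_\tau, f_\tau, \mathbf{1}, c)$ as the Jacobian algebra $\cP(Q_\tau, W)$ of a genuine potential. Specializing Proposition~\ref{p:hyperib}(b) to the functions $p_\alpha(x) = 1$ and $q_\alpha(x) = c_\alpha$, so that $P_\alpha(x) = x$ and $R_\alpha(x) = c_\alpha x$, gives the explicit form
\[
W = \sum_{\alpha} \xi_\alpha - \sum_{\beta} c_\beta\, \omega_\beta ,
\]
where $\alpha$ runs over representatives of the $f$-cycles and $\beta$ over representatives of the $g$-cycles. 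Under the identifications above the $f$-cycles are the ideal triangles of $\tau$, each contributing the $3$-cycle of its arrows, while the $g$-cycles are the punctures, each contributing the cycle encircling it weighted by $c_\beta$.

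The hard part is matching $W$ with the Labardini-Fragoso potential $W_\tau$ on the nose. In general $W_\tau$ carries extra terms associated to self-folded triangles and to punctures of small valency, and it is exactly the standing hypotheses---closedness, the exclusion of small spheres, and at least three incident arcs at each puncture---that suppress these terms and reduce $W_\tau$ to the form displayed above, with Labardini-Fragoso's puncture scalars recorded by the $g$-invariant function $c$. This reduction, together with the sign and scalar bookkeeping needed to absorb those scalars into $c$, is carried out in~\cite[\S2]{Ladkani12}; invoking it shows that $W_\tau$ is right equivalent to $W$. Since right equivalent potentials have isomorphic Jacobian algebras, I conclude that $\cP(Q_\tau, W_\tau) \simeq \cP(Q_\tau, W) \simeq \gL(Q_\tau, f_\tau, \mathbf{1}, c)$, which completes the proof.
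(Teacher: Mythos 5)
Your proposal is correct and follows essentially the same route as the paper, which proves the first assertion via Lemma~\ref{l:adjacency}, reads off admissibility from the bijection between $g$-cycles and punctures in Remark~\ref{rem:triblocks}, and defers the identification of $W_\tau$ with the potential defining the triangulation algebra to \cite[\S2]{Ladkani12} — exactly the citation you invoke for the final step. The intermediate bookkeeping (emptiness of $Q_1^f$, $\mu_f\mu_g=1$, the explicit form of $W$ via Proposition~\ref{p:hyperib}) is accurate.
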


Let $(S,M)$ be a closed surface which is not a sphere with less than four
punctures. In~\cite[\S5]{Ladkani12} we proved the existence of a triangulation
$\tau$ of $(S,M)$ satisfying the condition in Proposition~\ref{p:QPtriang}.
Therefore Theorem~\ref{t:quasi} applies for the triangulation algebra
$\cP(Q_\tau, W_\tau)$.
We note that in the case of a sphere with exactly four
punctures the ribbon graph of $\tau$ is a tetrahedron and
the corresponding assumption on the scalars attached to the
punctures has to be made.

Let $\cC$ be the triangulated 2-Calabi-Yau category of 
Theorem~\ref{t:quasi}\eqref{it:t:potential} such that
$\End_{\cC}(T) \simeq \cP(Q_\tau,W_\tau)$ for some cluster-tilting object
$T$ of $\cC$.
It is well known that any other triangulation $\tau'$ of $(S,M)$ can be
obtained from $\tau$ by a sequence of flips. Let $T'$ be the
cluster-tilting object of $\cC$ obtained from $T$ by the corresponding
sequence of Iyama-Yoshino mutations. Repeated application of
Proposition~\ref{p:QPmutIY} and Proposition~\ref{p:QPflip} shows that
$\End_{\cC}(T') \simeq \cP(Q'_{\tau'}, W_{\tau'})$,
hence part~\eqref{it:t:quasimut} of Theorem~\ref{t:quasi} applies and we get
the following result.

\begin{corollary} \label{c:Jaclosed}
Let $(S,M)$ be a closed surface which is not a sphere with less than
four punctures. Then the Jacobian algebras of the quivers with potentials
associated to the ideal triangulations of $(S,M)$ are finite-dimensional of
quasi-quaternion type and they are all derived equivalent to each other.
Moreover, each of these algebras arises as an algebra in
part~\eqref{it:t:quasimut} of Theorem~\ref{t:quasi} for a suitable
triangulation quiver.
\end{corollary}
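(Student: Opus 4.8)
The plan is to deduce the corollary from Theorem~\ref{t:quasi} by a flip--mutation induction, using Labardini-Fragoso's flip/QP-mutation compatibility to transport the conclusion from one distinguished triangulation to all the others. First I would fix a ``nice'' triangulation $\tau_0$ of $(S,M)$, i.e.\ one with at least three arcs incident to each puncture; such a triangulation exists by \cite{Ladkani12}. For $\tau_0$, Proposition~\ref{p:QPtriang} gives $Q'_{\tau_0}=Q_{\tau_0}$, identifies the Jacobian algebra $\cP(Q_{\tau_0},W_{\tau_0})$ with the triangulation algebra $\gL(Q_{\tau_0},f_{\tau_0},\mathbf{1},c)$, and guarantees that the constant multiplicity $\mathbf{1}$ is admissible. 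I would then verify the hypotheses of Theorem~\ref{t:quasi}: admissibility is in place, and the only exceptional configuration that can arise is the tetrahedron occurring when $(S,M)$ is a sphere with exactly four punctures, where I would invoke the scalar condition $c_\alpha c_{\balpha} c_{f(\alpha)} c_{f(\balpha)} \neq 1$ coming from the scalars Labardini attaches to the punctures. Theorem~\ref{t:quasi} then produces a $2$-Calabi-Yau category $\cC$ and a cluster-tilting object $T$ with $\End_{\cC}(T)\simeq\cP(Q_{\tau_0},W_{\tau_0})$, and certifies that this algebra is finite-dimensional, symmetric and of quasi-quaternion type.

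Next I would realise every other triangulation by mutation. Any ideal triangulation $\tau$ of $(S,M)$ is joined to $\tau_0$ by a finite sequence of flips (connectivity of the flip graph is classical), and I would argue by induction on the length of such a sequence that the cluster-tilting object $T_\tau$ obtained from $T$ by the matching sequence of Iyama-Yoshino mutations satisfies $\End_{\cC}(T_\tau)\simeq\cP(Q'_\tau,W_\tau)$. In the inductive step a single flip at an arc $\gamma$ corresponds, by Proposition~\ref{p:QPflip}, to the mutation of $(Q'_\tau,W_\tau)$ at the vertex for $\gamma$, while the corresponding Iyama-Yoshino mutation of $T_\tau$ yields, by Proposition~\ref{p:QPmutIY}, an algebra isomorphic to $\cP(\mu_k(Q'_\tau,W_\tau))$. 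To apply Proposition~\ref{p:QPmutIY} I must know that the current endomorphism algebra is self-injective, so that the vanishing condition holds; this is maintained along the whole sequence because $\End_{\cC}(T)$ is symmetric and symmetry is preserved under Iyama-Yoshino mutation by Proposition~\ref{p:dereq}(d) (equivalently Corollary~\ref{c:dereq}).

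Finally, since each $T_\tau$ is reachable from $T$ by construction, part~\eqref{it:t:quasimut} of Theorem~\ref{t:quasi} applies directly and shows that $\End_{\cC}(T_\tau)\simeq\cP(Q'_\tau,W_\tau)$ is of quasi-quaternion type (in particular finite-dimensional) and derived equivalent to $\gL$; transitivity and symmetry of derived equivalence then make all these Jacobian algebras derived equivalent to one another. The main obstacle I anticipate lies in keeping Proposition~\ref{p:QPmutIY} applicable at every step: although self-injectivity propagates automatically, one must check that the flipped arc always corresponds to a vertex through which no $2$-cycle passes and that the quiver-with-potential mutation $\mu_k$ matches Labardini's flip output on the nose, including the $2$-cycle reduction. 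Since the intermediate triangulations along the sequence need not be ``nice'', this compatibility has to be controlled for arbitrary triangulations and not merely for $\tau_0$; the exceptional tetrahedron case, where the scalar hypothesis is genuinely required, is the other point demanding separate care.
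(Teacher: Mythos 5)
Your proposal follows essentially the same route as the paper: fix a triangulation with at least three arcs at each puncture (which exists by~\cite{Ladkani12}), identify its Jacobian algebra with a triangulation algebra via Proposition~\ref{p:QPtriang}, apply Theorem~\ref{t:quasi} (with the tetrahedron scalar condition in the four-punctured sphere case), and then transport the conclusion to all other triangulations by matching flips with Iyama-Yoshino mutations through Propositions~\ref{p:QPflip} and~\ref{p:QPmutIY}, concluding with part~\eqref{it:t:quasimut}. Your explicit induction and the remark that symmetry (hence the vanishing condition) propagates along the mutation sequence are just slightly more detailed versions of what the paper states as ``repeated application'' of the same two propositions.
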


\begin{remark}
The tameness of the algebras $\cP(Q'_\tau, W_\tau)$ has also been proved 
in~\cite{GLS16} using a different degeneration argument.
\end{remark}

\begin{remark}
Labardini-Fragoso showed also that the potentials $W_\tau$ are
non-degenerate~\cite{Labardini16}, but this fact is not needed in order to
establish Corollary~\ref{c:Jaclosed}.
\end{remark}

\section{Mutations}
\label{sec:mut}

Many of the algebras occurring in part~\eqref{it:t:quasimut} of
Theorem~\ref{t:quasi} are themselves triangulation algebras. In this
section we introduce a notion of mutation for triangulation quivers
and study its relations to other notions of mutation in the literature
including flips of triangulations, Kauer's elementary moves for Brauer graph
algebras~\cite{Kauer98}, mutations of quivers with potentials~\cite{DWZ08}
and Iyama-Yoshino mutations~\cite{IyamaYoshino08} within the triangulated
2-Calabi-Yau categories appearing in Theorem~\ref{t:quasi}.

\subsection{Mutation of triangulation quivers}
\label{ssec:mut}

A mutation of a triangulation quiver at some vertex is a new triangulation
quiver. We first give the definition in the case the vertex we mutate at
has no loops.

\begin{definition} \label{def:mut}
Let $(Q,f)$ be a triangulation quiver and let $k$ be a vertex of $Q$
without loops. Denote by $\alpha$, $\balpha$ the two arrows that start at $k$
and observe that our assumption on $k$ implies that there are six distinct arrows
\begin{align*}
\alpha_1 = \alpha &,&
\beta_1 = f(\alpha) &,&
\gamma_1 = f^2(\alpha) &,&
\alpha_2 = \balpha &,&
\beta_2 = f(\balpha) &,&
\gamma_2 = f^2(\balpha)
\end{align*}
which form two cycles of the permutation $f$.

The \emph{mutation} of $(Q,f)$ at $k$, denoted $\mu_k(Q,f)$,
is the triangulation quiver $(Q',f')$
obtained from $(Q,f)$ by performing the following steps:
\begin{enumerate}
\item
Remove the two arrows $\beta_1$ and $\beta_2$;

\item
Replace the four arrows $\alpha_1$, $\alpha_2$, $\gamma_1$ and $\gamma_2$ 
with arrows in the opposite direction $\alpha^*_1$, $\alpha^*_2$,
$\gamma^*_1$ and $\gamma^*_2$;

\item
Add new arrows $\delta_{12}$ and $\delta_{21}$ with
\begin{align*}
s(\delta_{12}) = s(\gamma_1) &,&
t(\delta_{12}) = t(\alpha_2) &,&
s(\delta_{21}) = s(\gamma_2) &,&
t(\delta_{21}) = t(\alpha_1) ,
\end{align*}
see Figure~\ref{fig:mut}(a).

\item
Define the permutation $f'$ on the new set of arrows $Q'_1$ 
by $f'(\eps)=f(\eps)$ if $\eps$ is an arrow of $Q$ which has not been
changed, and by
\begin{align*}
f'(\alpha^*_1) = \gamma^*_2 &,&
f'(\gamma^*_2) = \delta_{21} &,&
f'(\delta_{21}) = \alpha^*_1 \\
f'(\alpha^*_2) = \gamma^*_1 &,&
f'(\gamma^*_1) = \delta_{12} &,&
f'(\delta_{12}) = \alpha^*_2
\end{align*}
for the other arrows.
\end{enumerate}
\end{definition}
At the level of the underlying quivers, this is similar to Fomin-Zelevinsky
mutation~\cite{FZ02}.
Note, however, that the quivers $Q$ and $Q'$ may have $2$-cycles.
 
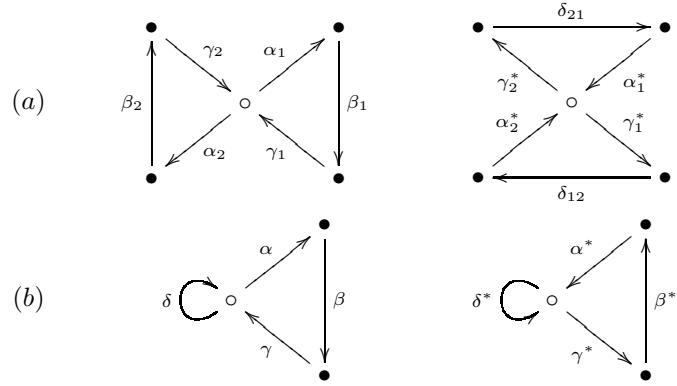
\begin{figure}
\[
\begin{array}{ccccc}
(a) & &
\begin{array}{c}
\xymatrix@R=1.5pc{
{\bullet} \ar[dr]^{\gamma_2} & & {\bullet} \ar[dd]^{\beta_1}  \\
& {\circ} \ar[ur]^{\alpha_1} \ar[dl]^{\alpha_2} \\
{\bullet} \ar[uu]^{\beta_2}  & & {\bullet} \ar[ul]^{\gamma_1}
}
\end{array}
& &
\begin{array}{c}
\xymatrix@R=1.5pc{
{\bullet} \ar[rr]^{\delta_{21}} && {\bullet} \ar[dl]^{\alpha^*_1} \\
& {\circ} \ar[dr]^{\gamma^*_1} \ar[ul]^{\gamma^*_2} \\
{\bullet} \ar[ur]^{\alpha^*_2}  && {\bullet} \ar[ll]^{\delta_{12}}
}
\end{array}
\\
(b) & &
\begin{array}{c}
\xymatrix@R=1.5pc{
&& {\bullet} \ar[dd]^{\beta} \\
& {\circ} \ar[ur]^{\alpha} \ar@(dl,ul)[]^{\delta} \\
&& {\bullet} \ar[ul]^{\gamma}
}
\end{array}
& &
\begin{array}{c}
\xymatrix@R=1.5pc{
&& {\bullet} \ar[dl]_{\alpha^*} \\
& {\circ} \ar[dr]_{\gamma^*} \ar@(ul,dl)[]_{\delta^*} \\
&& {\bullet} \ar[uu]_{\beta^*}
}
\end{array}
\end{array}
\]
\caption{Mutation of triangulation quivers at the middle vertex $\circ$;
(a) without loops; (b) with a loop fixed by the permutation $f$.
Some of the other vertices may coincide, and only the arrows that change are
shown.}
\label{fig:mut}
\end{figure}

Next, we define mutation at a vertex with loop.
\begin{definition} \label{def:mutloop}
Let $(Q,f)$ be a triangulation quiver and let $k$ be a vertex of $Q$ with
a loop. Denote by $\alpha$, $\balpha$ the two arrows that start at $k$ and
assume that $\balpha$ is a loop.
The \emph{mutation} of $(Q,f)$ at $k$, denoted $\mu_k(Q,f)$,
is the triangulation quiver $(Q',f')$
obtained from $(Q,f)$ by performing the following steps:
\begin{enumerate}
\setcounter{enumi}{-1}
\item
If $g(\balpha)=\balpha$, or if $\alpha$ is also a loop, then set
$(Q',f')=(Q,f)$.

Otherwise, there are four distinct arrows
\begin{align*}
\alpha &,& \beta=f(\alpha) &,& \gamma=f^2(\alpha) &,& \delta=\balpha=f(\balpha)
\end{align*}
which form two cycles of the permutation $f$.

\item
Replace the four arrows $\alpha$, $\beta$, $\gamma$ and $\delta$
by arrows in the opposite direction $\alpha^*$, $\beta^*$, $\gamma^*$
and $\delta^*$, see Figure~\ref{fig:mut}(b);

\item
Define the permutation $f'$ on the new set of arrows $Q'_1$ 
by $f'(\eps)=f(\eps)$ if $\eps$ is an arrow of $Q$ which has not been
changed, and by
\begin{align*}
f'(\alpha^*) = \gamma^* &,&
f'(\beta^*) = \alpha^* &,&
f'(\gamma^*) = \beta^* &,&
f'(\delta^*) = \delta^*
\end{align*}
for the other arrows.
\end{enumerate}
Note that the arrow $\delta^*$ is also a loop at $k$ so we could have
avoided the reversal of $\delta$. This reversal is done in order to stress the
analogy to the general case of Definition~\ref{def:mut}.
\end{definition}

\begin{example}
We describe all the mutations of the triangulation quivers appearing in
Table~\ref{tab:quivers}.
For each of the triangulation quivers $1$, $2$, $3'$ and $3''$, a mutation at
any vertex gives an isomorphic triangulation quiver.
For the triangulation quiver $3b$, a mutation at any vertex is isomorphic to
the triangulation quiver $3a$.
For the triangulation quiver $3a$, a mutation at the vertex $2$ is
isomorphic to the triangulation quiver $3b$, whereas a mutation at any of the
other vertices gives the triangulation quiver $3a$.
\end{example}

\begin{remark}
As can be seen from Figure~\ref{fig:mut}, mutation is an involution.
In other words, if $(Q,f)$ is a triangulation quiver and $k$ is a vertex of
$Q$, then the triangulation quiver $\mu_k(\mu_k(Q,f))$ is isomorphic to
$(Q,f)$.
\end{remark}

The permutation $f'$ on $Q'_1$ defines the permutation $g'$ by
$g'(\alpha')=\overline{f'(\alpha')}$ for any $\alpha' \in Q'_1$.
The next statement is a consequence of the definitions.

\begin{lemma} \label{l:mutcycle}
Let $(Q',f')$ be a mutation of the triangulation quiver $(Q,f)$
at some vertex. Then:
\begin{enumerate}
\renewcommand{\theenumi}{\alph{enumi}}
\item \label{it:l:fcyc}
The permutations $f$ and $f'$ have the same cycle structure.

\item \label{it:l:gnum}
The permutations $g$ and $g'$ have the same number of cycles.
\end{enumerate}
\end{lemma}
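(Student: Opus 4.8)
The plan is to treat part~\eqref{it:l:fcyc} and part~\eqref{it:l:gnum} separately, in both cases by inspecting the purely local modification prescribed in Definitions~\ref{def:mut} and~\ref{def:mutloop} and tracking its effect on the cycles of $f$ and of $g$, where throughout I write $g=\iota f$ with $\iota$ the involution $\alpha\mapsto\balpha$.

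For part~\eqref{it:l:fcyc}, recall that $f^3=\mathrm{id}$, so the cycle type of $f$ is recorded by the number of fixed arrows (the loops in $Q_1^f$) and the number of $3$-cycles. In the loopless case of Definition~\ref{def:mut} the only arrows whose $f$-behaviour changes are the six arrows forming the two $3$-cycles $(\alpha_1\beta_1\gamma_1)$ and $(\alpha_2\beta_2\gamma_2)$ through $k$; these are replaced by the two $3$-cycles $(\alpha_1^*\gamma_2^*\delta_{21})$ and $(\alpha_2^*\gamma_1^*\delta_{12})$ of $f'$, while $f'=f$ on every other arrow. In the loop case of Definition~\ref{def:mutloop} the $3$-cycle $(\alpha\beta\gamma)$ and the fixed loop $\delta$ are replaced by the $3$-cycle $(\alpha^*\gamma^*\beta^*)$ and the fixed loop $\delta^*$ (and in the degenerate step~$0$ nothing changes at all). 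In each instance $3$-cycles are traded for $3$-cycles and fixed loops for fixed loops, so $f$ and $f'$ have the same number of $1$-cycles and of $3$-cycles, proving~\eqref{it:l:fcyc}.

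For part~\eqref{it:l:gnum} I would first observe that the mutation leaves untouched all arrows outside the local region (call their set $R$), together with the restriction of the dynamics of $g$ to $R$; consequently every $g$-cycle disjoint from the special arrows is simultaneously a $g'$-cycle, and these contribute equally to both counts. It therefore suffices to compare the cycles that pass through the special arrows. Using $g=\iota f$ and $g'=\iota' f'$ --- where the new involution $\iota'$ is read off from the two outgoing arrows at each affected vertex of $Q'$ --- I would compute $g$ and $g'$ explicitly on the special arrows. This exhibits the special arrows of $Q$ (resp.\ of $Q'$) as a family of ``runs'' that reconnect four boundary ports: the out-ports $p=g(\alpha_1)$, $q=g(\beta_1)$, $r=g(\alpha_2)$, $s=g(\beta_2)$ lying in $R$ and the corresponding in-ports $f^{-1}(p),\dots,f^{-1}(s)$. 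The $R$-internal dynamics, being common to $g$ and $g'$, glues out-ports to in-ports by one and the same bijection $\Psi$; the mutation only changes how the local runs wire the in-ports back to the out-ports, which I encode as a permutation $M$ (resp.\ $M'$) of the four ports.

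The crux is then the identity $M=M'$. A direct computation in $S_4$, conjugating away the ``entry'' and ``exit'' relabellings, shows that both $M$ and $M'$ equal the same $4$-cycle $(1\,2\,3\,4)$ in the natural labelling of the ports; in the loop case the same computation in $S_2$ gives $M=M'=(1\,2)$. Since the number of special-touching cycles of $g$ (resp.\ $g'$) equals the number of cycles of $M\Psi$ (resp.\ $M'\Psi$) and $M=M'$, the two counts agree for every external gluing $\Psi$, which yields~\eqref{it:l:gnum}. The main obstacle I anticipate is bookkeeping the degenerate configurations in which the surrounding vertices or the boundary arrows coincide, so that the four ports are no longer distinct or an ``exit'' lands directly on another special arrow; these need to be checked separately, but because the identity $M=M'$ is a statement about the local wiring alone --- independent of how the ports are identified by the external part $\Psi$ --- the same computation disposes of them, leaving only finitely many merged patterns to verify by hand.
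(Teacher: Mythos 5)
The paper does not actually prove this lemma --- it is stated with the remark that it ``is a consequence of the definitions'' and the combinatorial details are deferred elsewhere --- so there is no written proof to compare against; your argument is a legitimate way of filling in what the author omits, and it is correct. Part~\eqref{it:l:fcyc} is complete as written: in Definition~\ref{def:mut} the two $f$-orbits $(\alpha_1\,\beta_1\,\gamma_1)$, $(\alpha_2\,\beta_2\,\gamma_2)$ are traded for the two $f'$-orbits $(\alpha_1^*\,\gamma_2^*\,\delta_{21})$, $(\alpha_2^*\,\gamma_1^*\,\delta_{12})$, and in Definition~\ref{def:mutloop} a $3$-cycle plus a fixed loop go to a $3$-cycle plus a fixed loop, with $f'=f$ elsewhere. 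For part~\eqref{it:l:gnum} your transfer-map idea is the right one, and the asserted identity does check out: writing $a=t(\alpha_1)$, $b=t(\beta_1)$, $c=t(\alpha_2)$, $d=t(\beta_2)$, one computes in $Q$ that $g$ sends $\gamma_1\mapsto\alpha_2\mapsto\bar\beta_2$, $\gamma_2\mapsto\alpha_1\mapsto\bar\beta_1$, $\beta_i\mapsto\bar\gamma_i$, while in $Q'$ (using $\overline{\alpha_1^*}=\bar\beta_1$, $\overline{\alpha_2^*}=\bar\beta_2$, $\overline{\delta_{12}}=\bar\gamma_1$, $\overline{\delta_{21}}=\bar\gamma_2$, $\overline{\gamma_1^*}=\gamma_2^*$) one gets $\alpha_i^*\mapsto\gamma_i^*\mapsto\bar\gamma_i$ and $\delta_{12}\mapsto\bar\beta_2$, $\delta_{21}\mapsto\bar\beta_1$; in both quivers the first-return map from the entry arrows $f^{-1}(\bar\beta_1),f^{-1}(\bar\beta_2),f^{-1}(\bar\gamma_1),f^{-1}(\bar\gamma_2)$ to the exit arrows is $\bar\beta_i\rightsquigarrow\bar\gamma_i$, $\bar\gamma_i\rightsquigarrow\bar\beta_{3-i}$, and the analogous computation in the loop case gives the transposition you claim. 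Since $g$ and $g'$ agree on all non-special arrows, equality of the local transfer maps gives equality of the cycle counts.

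The only soft spot is the treatment of the degenerate configurations, which you defer to a finite check. Two things there deserve to be said explicitly rather than waved at: first, in the merged cases \emph{purely internal} $g$-cycles can appear (e.g.\ if $t(\alpha_1)=t(\beta_1)$ then $\beta_1$ becomes a $g$-fixed loop in $Q$, matched by the internal $2$-cycle $(\alpha_1^*\,\gamma_1^*)$ of $g'$), so the count is not purely a statement about $\Psi M$ versus $\Psi M'$ --- one must also verify that internal cycles are matched in number on both sides; second, the claim that the degeneracies enter only through $\Psi$ requires fixing once and for all a labelling of the four entry and four exit ports by the \emph{same} external data on both sides, which your formal-port formulation does provide but should be stated. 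If you prefer to avoid this bookkeeping entirely, part~\eqref{it:l:gnum} also follows from the ribbon-graph picture: the $g$-cycles are the nodes of the associated ribbon graph, and the mutation acts on that graph by re-attaching a single edge (this is Proposition~\ref{p:mutKauer}(a) in the paper, an elementary move in the sense of Kauer), which does not change the node set; that route trades your $S_4$ computation for the verification of the elementary-move description, so the two are of comparable length, but the surviving statement is cleaner.
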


\begin{example} \label{ex:mutg}
Although $g$ and $g'$ have the same number of cycles, the lengths of the
cycles may change. For example, for the triangulation quiver 3b of
Table~\ref{tab:quivers} the lengths are $2$, $2$, $2$ whereas for
the mutated triangulation quiver 3a they are $1$, $4$, $1$.
\end{example}

Lemma~\ref{l:mutcycle}\eqref{it:l:gnum} implies that
any $g$-invariant function $\nu$ gives rise to a $g'$-invariant function
$\nu'$ on $Q'_1$ with the same image. Explicitly, this is done by setting
$\nu'_\eps = \nu_\eps$ for the arrows in $Q'_1$ that are also in $Q_1$ and
\begin{align*}
\nu'_{\alpha^*_1} = \nu'_{\gamma^*_1} = \nu_{\beta_1} &,&
\nu'_{\alpha^*_2} = \nu'_{\gamma^*_2} = \nu_{\beta_2} &,&
\nu'_{\delta_{12}} = \nu_{\gamma_1} (= \nu_{\alpha_2}) &,&
\nu'_{\delta_{21}} = \nu_{\gamma_2} (= \nu_{\alpha_1})
\end{align*}
for the other arrows in the case of Definition~\ref{def:mut} and
\begin{align*}
\nu'_{\alpha^*} = \nu'_{\gamma^*} = \nu'_{\delta^*} = \nu_\beta &,&
\nu'_{\beta^*} = \nu_{\gamma} (=\nu_{\delta} = \nu_{\alpha})
\end{align*}
in the case of Definition~\ref{def:mutloop}.
In particular, any two $g$-invariant functions $m \colon Q_1 \to \bZ_{>0}$ and
$c \colon Q_1 \to K^{\times}$ of multiplicities and scalars on $(Q,f)$
give rise to $g'$-invariant functions of multiplicities
$m' \colon Q'_1 \to \bZ_{>0}$ and scalars $c' \colon Q'_1 \to K^{\times}$
on $(Q',f')$.

\begin{remark}
Since the lengths of the cycles of $g$ may change under mutation, even
if a multiplicity function $m \colon Q_1 \to \bZ_{>0}$ 
on $(Q,f)$ was admissible, the multiplicity function $m'$ on
$(Q',f')$ may not be admissible anymore.
\end{remark}

\begin{example}
Continuing Example~\ref{ex:mutg}, if $m$ is the multiplicity function for
the triangulation quiver $3b$ taking the constant value $2$,
then $m'$ takes the constant value $2$ on the arrows of the triangulation
quiver $3a$. Hence $m$ is admissible while $m'$ is not.
\end{example}

Similarly, Lemma~\ref{l:mutcycle}\eqref{it:l:fcyc} implies that any function
$\theta$ on the set $Q_1^f$ of fixed points of $f$ gives rise to a function
$\theta'$ on the set $(Q'_1)^{f'}$ of fixed points of $f'$.
Explicitly, in the case of
Definition~\ref{def:mut} we have $\theta'=\theta$, whereas in the case of
Definition~\ref{def:mutloop} we have
$\theta'_{\delta^*} = \theta_\delta$
and $\theta'_\eps=\theta_\eps$ for any
arrow $\eps \neq \delta$ with $f(\eps)=\eps$.

\subsection{Mutations and flips}
\label{ssec:flip}

Fomin, Shapiro and Thurston have shown in~\cite[Proposition~4.8]{FST08} that
if two triangulations
are related by flipping an arc, then their adjacency quivers are related by a
Fomin-Zelevinksy mutation at the vertex corresponding to that arc. In this
section we discuss an analogous statement for triangulation quivers.

Let $\tau$ be a triangulation of a marked surface $(S,M)$.
If $\gamma$ is an arc of $\tau$ which is not the inner side of a self-folded
triangle, then it is possible to replace $\gamma$ by another arc $\gamma'$
to obtain a triangulation $\tau' = \tau \setminus \{\gamma\} \cup \{\gamma'\}$
which is not topologically equivalent to $\tau$, see Figure~\ref{fig:flip}.
The triangulation $\tau'$ is called the \emph{flip} of $\tau$ at $\gamma$.

\begin{figure}
\begin{align*}
\xymatrix{
& {\circ} \ar@{-}[dr] \ar@{-}[dd]_{\gamma} \\
{\circ} \ar@{-}[ur] \ar@{-}[dr] && {\circ} \\
& {\circ} \ar@{-}[ul] \ar@{-}[ur]
}
&&
\xymatrix{
& {\circ} \ar@{-}[dr] \\
{\circ} \ar@{-}[ur] \ar@{-}[rr]^{\gamma'} \ar@{-}[dr] && {\circ} \\
& {\circ} \ar@{-}[ul] \ar@{-}[ur]
}
\end{align*}
\caption{Flip of a triangulation at the arc $\gamma$.
The sides of the quadrilateral may be arcs or boundary segments.}
\label{fig:flip}
\end{figure}
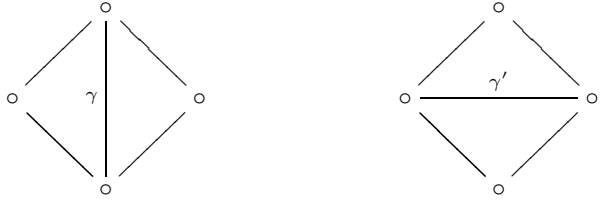

\begin{lemma}
The triangulation quivers of two triangulations related by a flip at some arc
are related by a mutation at the vertex corresponding to that arc.
\end{lemma}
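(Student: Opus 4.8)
The plan is to exploit the fact that both the flip and the mutation $\mu_k$ are \emph{local} operations and to reduce the statement to a finite check. Flipping $\gamma$ only alters the triangulation inside the region $R$ obtained as the union of the two ideal triangles $T_1,T_2$ having $\gamma$ as a side (these are well defined and $T_1\neq T_2$ precisely because $\gamma$ is not the inner side of a self-folded triangle); outside $R$ the triangles of $\tau$ and $\tau'$ coincide. Since the construction of $(Q_\tau,f_\tau)$ is triangle-by-triangle through the block decomposition of Remark~\ref{rem:triblocks}, the quivers $Q_\tau$ and $Q_{\tau'}$ agree away from the arrows produced by $T_1$ and $T_2$, and $f_\tau,f_{\tau'}$ agree away from the two corresponding $f$-cycles. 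As $\mu_k$ (Definition~\ref{def:mut}) likewise changes only the arrows lying in the two $f$-cycles through the vertex $k=\gamma$, it suffices to produce an isomorphism of ribbon quivers matching the local pieces; by Proposition~\ref{p:triauniq} this amounts to a check on the underlying quivers together with the permutation $f$.

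First I would pin down the local shape at $k$. Every loop of a triangulation quiver lies in $Q_1^f$ (the boundary segments) or in $Q_1^g$ (the inner sides of self-folded triangles); since $\gamma$ is an arc and not an inner side, the vertex $k$ carries no loop, so we are exactly in the situation of Definition~\ref{def:mut} rather than Definition~\ref{def:mutloop}. The two arrows $\alpha_1,\alpha_2$ starting at $k$ then sit in two length-$3$ $f$-cycles $(\alpha_i,\beta_i,\gamma_i)$ with $\beta_i=f(\alpha_i)$ and $\gamma_i=f^2(\alpha_i)$, which are precisely the triples of clockwise arrows read off from $T_1$ and $T_2$ as in Figure~\ref{fig:triquiver}; a short argument (using that $k$ has no loop) shows these two triples are disjoint, so the six arrows are distinct as required by Definition~\ref{def:mut}.

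The core computation is the generic case in which $T_1,T_2$ are two distinct ordinary (type C) triangles with six distinct sides. Labelling the four outer sides of the quadrilateral and writing the clockwise arrows of $T_1,T_2$, the flip $\gamma\rightsquigarrow\gamma'$ of Figure~\ref{fig:flip} recuts $R$ into two new triangles $T_1',T_2'$; reading off their clockwise arrows and the resulting permutation, one checks arrow-for-arrow that the reversals of $\alpha_1,\alpha_2,\gamma_1,\gamma_2$, the deletion of $\beta_1,\beta_2$, the two new arrows $\delta_{12},\delta_{21}$ and the prescribed values of $f'$ in Definition~\ref{def:mut} are exactly the arrows and permutation produced by $T_1',T_2'$, matching Figure~\ref{fig:mut}(a). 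Because the recipe of Definition~\ref{def:mut} is phrased purely in terms of the six arrows and is insensitive to coincidences among their endpoints, the same bookkeeping settles the remaining configurations once the labels are identified appropriately.

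The main obstacle, and where the real work lies, is precisely these degenerate configurations: some of the four outer sides of the quadrilateral may be boundary segments or may be identified with one another, and --- most delicately --- one of $T_1,T_2$ may be self-folded with $\gamma$ as its enclosing arc, in which case the corresponding $\beta_i$ is the loop at the inner-side vertex that $\mu_k$ deletes. In each such case I would re-run the local matching, verifying that the clockwise orderings around $T_1',T_2'$ and the induced $f'$ still agree with Definition~\ref{def:mut}, that $\tau'$ is again a genuine triangulation, and that no spurious loop or arrow is created or lost. The self-folded case and the side-identification cases are the ones requiring care, but each reduces to the same finite combinatorial verification already carried out in the generic case.
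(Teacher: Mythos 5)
Your proposal is correct and follows essentially the same route as the paper: first rule out loops at the vertex of a flippable arc via the dichotomy $Q_1^f$ = boundary segments, $Q_1^g$ = inner sides of self-folded triangles, and then match the local picture of the flip (Figure~\ref{fig:flip}) against Definition~\ref{def:mut} via the triangle-to-$f$-cycle construction of Figure~\ref{fig:triquiver}. The paper leaves the case-by-case verification of degenerate configurations implicit, whereas you spell it out, but the underlying argument is the same.
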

\begin{proof}
First we verify that a vertex corresponding to a flippable arc cannot
have loops. Indeed, for a loop $\alpha$ at some vertex $k$ we have that either
$f(\alpha)=\alpha$ or $g(\alpha)=\alpha$. In the former case $k$ corresponds
to a boundary segment, whereas in the latter case it corresponds to an arc
which is the inner side of a self-folded triangle.

Now the claim follows by comparing
Figure~\ref{fig:flip} and Figure~\ref{fig:mut}(a)
using the construction of triangulation quiver visualized in 
Figure~\ref{fig:triquiver}.
\end{proof}

Consider now a mutation of a triangulation quiver $(Q,f)$
at a vertex with a loop fixed by the permutation $f$. The change of the
associated ribbon graphs is illustrated in Figure~\ref{fig:ribmut}.
In particular, if $\tau$ is a triangulation of a marked surface $(S,M)$
and $k$ is a vertex corresponding to a boundary segment of $(S,M)$, then
a mutation of $(Q_\tau, f_\tau)$ at $k$ is 
a triangulation quiver $(Q_{\tau'}, f_{\tau'})$
of a triangulation $\tau'$ of a new marked
surface $(S',M')$ which is obtained from $(S,M)$ 
as follows: remove the boundary segment corresponding to $k$
from the boundary component containing it represented by the left node of
the ribbon graph in Figure~\ref{fig:ribmut}, and
add it to the component (or puncture) represented by the right node.
The arcs of $\tau'$ are identical to those of $\tau$.

\begin{figure}
\begin{align*}
\xymatrix@=1pc{
{\circ} \ar@{-}@(dr,ur) \ar@{-}@/_2pc/[rrr] \ar@{-}@/^2pc/[rrr]
&&& {\circ}
}
&&
\xymatrix@=1pc{
{\circ} \ar@{-}@/_2pc/[rrr] \ar@{-}@/^2pc/[rrr]
&&& {\circ} \ar@{-}@(dl,ul)
}
\end{align*}
\caption{The mutation of Figure~\ref{fig:mut}(b) in terms of ribbon graphs.}
\label{fig:ribmut}
\end{figure}
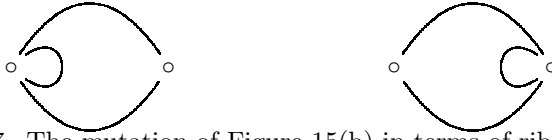

Adding or removing a boundary segment is equivalent to adding or removing
one marked point.
Here, it makes sense to consider punctures as boundary components with zero
marked points. So, when we remove a boundary segment from a component with just
one marked point we get a puncture, and conversely, when we add
a boundary segment to a puncture we get a boundary component with one marked
point.

This point of view can be made more systematic by using the notion of
orbifolds and their triangulations as introduced by Felikson, Shapiro and 
Tumarkin~\cite[\S4]{FST12}. The precise details are outside the scope of this
survey, but let us just mention that any marked surface $(S,M)$ gives rise to
a closed orbifold $\mathcal{O}$ by replacing each boundary component of
$(S,M)$ containing $n$ marked points by a puncture and $n$
orbifold points, each connected to that puncture by a so-called pending arc.
Any triangulation of $(S,M)$ yields a triangulation of the orbifold
$\mathcal{O}$.

The transitivity of flips on triangulations of orbifolds implies the next
proposition, which provides a partial converse to Lemma~\ref{l:mutcycle}.

\begin{proposition} \label{p:mutflip}
Let $\tau$ be a triangulation of a marked surface $(S,M)$ with
$p$ punctures and $b$ boundary components, and let $\tau'$ be a triangulation
of a marked surface $(S',M')$ with $p'$ punctures and $b'$ boundary
components.
Then the following conditions are equivalent:
\begin{enumerate}
\renewcommand{\theenumi}{\alph{enumi}}
\item
The triangulation quiver $(Q_{\tau'}, f_{\tau'})$ can be obtained from
$(Q_\tau, f_\tau)$ by a finite sequence of mutations;

\item
The topological parameters of the marked surfaces $(S,M)$ and $(S',M')$
satisfy
\begin{align} \label{e:topomut}
\genus(S)=\genus(S') &,& p+b=p'+b' &,&
|M|-p=|M'|-p';
\end{align}

\item
The permutations $f_\tau$ and $f_{\tau'}$ have the same cycle structure
and the permutations $g_\tau$ and $g_{\tau'}$ have the same number of cycles.
\end{enumerate}
\end{proposition}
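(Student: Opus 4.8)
The plan is to establish the cycle of implications (a)$\Rightarrow$(c), (b)$\Leftrightarrow$(c), and (b)$\Rightarrow$(a), which together yield all three equivalences. The implication (a)$\Rightarrow$(c) is immediate from Lemma~\ref{l:mutcycle}: each mutation preserves the cycle structure of the permutation $f$ and the number of cycles of $g$, so these invariants must agree for $(Q_\tau,f_\tau)$ and $(Q_{\tau'},f_{\tau'})$ whenever the latter is reachable from the former by mutations.

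For the equivalence (b)$\Leftrightarrow$(c), I would translate the combinatorial data of condition (c) into surface parameters via Remark~\ref{rem:triblocks}. The length-$1$ cycles of $f_\tau$ are in bijection with the boundary segments, so their number equals $|M|-p$; the cycles of $g_\tau$ correspond to the punctures and boundary components, so their number equals $p+b$; and the length-$3$ cycles of $f_\tau$ correspond to the triangles of $\tau$. Counting arrows through the ribbon-quiver identity $|(Q_\tau)_1|=2|(Q_\tau)_0|$ together with the vertex formula~\eqref{e:nvertex} of Remark~\ref{rem:nvertex} shows that the number of triangles equals $4(\genus(S)-1)+2(p+b)+(|M|-p)$. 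Hence condition (c) is equivalent to the three equalities $|M|-p=|M'|-p'$, $p+b=p'+b'$, and an equality of triangle counts which, granted the first two, collapses to $\genus(S)=\genus(S')$. This recovers exactly the parameters of~\eqref{e:topomut}, so (b) and (c) coincide.

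The substance lies in the implication (b)$\Rightarrow$(a), for which I would invoke the orbifold perspective. To a marked surface one associates a closed orbifold $\mathcal{O}$ by collapsing each boundary component carrying $n$ marked points to a puncture equipped with $n$ pending arcs to orbifold points; the resulting $\mathcal{O}$ has genus $\genus(S)$, exactly $p+b$ punctures, and $|M|-p$ orbifold points. Thus~\eqref{e:topomut} says precisely that $(S,M)$ and $(S',M')$ give rise to \emph{the same} orbifold $\mathcal{O}$, and both $\tau,\tau'$ descend to triangulations of $\mathcal{O}$. I would then apply the transitivity of flips on triangulations of orbifolds~\cite[\S4]{FST12} to connect $\tau$ and $\tau'$ by a finite sequence of flips, and verify that each flip of $\mathcal{O}$ induces a mutation of the associated triangulation quiver: a flip of an ordinary arc is a mutation at a loopless vertex (this is the lemma of Section~\ref{ssec:flip}), while a flip of a pending arc---which redistributes an orbifold point, equivalently moves a marked point between components as in Figure~\ref{fig:ribmut}---is exactly the loop-vertex mutation of Definition~\ref{def:mutloop}.

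The main obstacle I anticipate is this last verification: matching the Felikson--Shapiro--Tumarkin flip of a \emph{pending} arc with the loop-vertex mutation of Definition~\ref{def:mutloop} at the level of both the triangulation quiver (Figure~\ref{fig:mut}(b)) and its ribbon graph (Figure~\ref{fig:ribmut}), tracking the permutation $f$ and the vertex/edge bookkeeping carefully. One must also confirm that excluding the degenerate surfaces keeps $\mathcal{O}$ within the range where orbifold flips act transitively, so that no small exceptional orbifold obstructs the flip sequence; I expect these edge cases to be the only genuinely delicate point, the remaining steps being formal consequences of the stated results.
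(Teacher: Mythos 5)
Your proposal is correct and follows essentially the same route as the paper, which justifies this proposition in a single line by passing to the Felikson--Shapiro--Tumarkin orbifold, invoking transitivity of flips there, and realizing ordinary-arc flips as loopless-vertex mutations and the redistribution of marked points (Figure~\ref{fig:ribmut}) as the loop-vertex mutation of Definition~\ref{def:mutloop}. Your explicit counting argument for (b)$\Leftrightarrow$(c) supplies detail the survey omits; just note that the vertex formula~\eqref{e:nvertex} and the bijections of Remark~\ref{rem:triblocks} exclude the unpunctured monogon, an edge case that is harmless since~\eqref{e:topomut} forces a monogon to pair only with another monogon.
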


\begin{remark}
Two closed surfaces $(S,M)$ and $(S',M')$ satisfy \eqref{e:topomut}
if and only if they are homeomorphic (i.e.\ they have the same genus and
the same number of punctures).
\end{remark}

\subsection{Mutations and Kauer moves}
Rickard~\cite[Theorem~4.2]{Rickard89} proved that a Brauer tree algebra is
derived
equivalent to a Brauer star algebra by constructing a tilting complex over the
former whose endomorphism algebra is isomorphic to the latter.
Later, K\"{o}nig and Zimmermann~\cite{KZ97} have shown that a Brauer tree can be
transformed to a Brauer star by applying a sequence of small changes, replacing
one edge at a time. In each such replacement, the Brauer tree algebras of the
two trees are related by a tilting complex of length 2 which is of the form
given in~\eqref{e:silt}, so in particular they are derived equivalent.

In~\cite{Kauer98}, Kauer considered more generally Brauer graph algebras
and defined similar moves, which he called \emph{elementary moves}.
For each edge $e$ of a Brauer graph he defined a new graph 
obtained by replacing $e$ (i.e.\ taking it out and putting it back in a 
different place) such that
if $\Gamma$ is the Brauer graph algebra corresponding to the original graph
and $P$ is the indecomposable projective $\Gamma$-module corresponding to
the edge $e$, then $\End_{\cD^b(\Gamma)} U^+_P(\Gamma)$ is the Brauer graph
algebra corresponding to the new graph.

There are three kinds of elementary moves; the first involves edges that are
leaves in the graph (i.e.\ they are incident to nodes without any additional
incident edges); the second involves edges that are loops whose two half-edges
are successive in the cyclic ordering around their common node; and the third
involves the other edges. In terms of the ribbon quiver, the first case
corresponds to vertices with a loop $\alpha$ such that $g(\alpha)=\alpha$; the
second to vertices with a loop $\alpha$ such that $f(\alpha)=\alpha$; and
the third to vertices without loop.

\begin{proposition} \label{p:mutKauer}
Let $(Q,f)$ be a triangulation quiver, let $k$ be a vertex of $Q$ and
let $(Q',f')$ be the mutation of $(Q,f)$ at $k$. Then:
\begin{enumerate}
\renewcommand{\theenumi}{\alph{enumi}}
\item
The ribbon graphs of $(Q,f)$ and $(Q',f')$ are related by an elementary move
at the edge corresponding to the vertex $k$.

\item
Let $m \colon Q_1 \to \bZ_{>0}$ and $c \colon Q_1 \to K^{\times}$ be
$g$-invariant functions of multiplicities and scalars, respectively, and let 
$m' \colon Q'_1 \to \bZ_{>0}$ and $c' \colon Q'_1 \to K^{\times}$ be the
$g'$-invariant functions induced from $m$ and $c$.
Then the Brauer graph algebras
$\Gamma = \Gamma(Q,f,m,c)$ and $\Gamma'= \Gamma(Q',f',m',c')$
satisfy
\begin{align} \label{e:BGAmut}
\End_{\cD^b(\Gamma)} U^-_{P_k}(\Gamma) \simeq \Gamma' \simeq
\End_{\cD^b(\Gamma)} U^+_{P_k}(\Gamma)
\end{align}
and in particular they are derived equivalent.
\end{enumerate}
\end{proposition}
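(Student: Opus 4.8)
The plan is to prove (a) purely combinatorially, translating the quiver-level mutation of Definitions~\ref{def:mut} and~\ref{def:mutloop} into the ribbon-graph language through the dictionary of Proposition~\ref{p:ribbon} and Table~\ref{tab:ribbon}, and then to deduce (b) from (a) together with Kauer's derived equivalences~\cite{Kauer98}. Recall from that dictionary that a vertex $k$ of $Q$ is the edge $e_k$ of the ribbon graph whose two half-edges are the arrows $\alpha,\balpha$ starting at $k$, and that $e_k$ joins the two nodes given by the $g$-cycle of $\alpha$ and the $g$-cycle of $\balpha$. Thus $e_k$ is a leaf exactly when $g(\alpha)=\alpha$ or $g(\balpha)=\balpha$ (equivalently, $k$ carries a $g$-fixed loop); it is a loop whose half-edges are cyclically successive exactly when $f(\alpha)=\alpha$ or $f(\balpha)=\balpha$ (an $f$-fixed loop at $k$); and it is of the remaining, generic type exactly when $k$ has no loop. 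As the paper records, these three possibilities match, in order, Kauer's three kinds of elementary moves and the three cases governing our mutation: the degenerate branch of Definition~\ref{def:mutloop} (where $\mu_k$ acts trivially), the loop branch of Definition~\ref{def:mutloop}, and Definition~\ref{def:mut}.

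First I would carry out (a) case by case. In each case the only data that change are the values of $g'=\iota\sigma'$, equivalently the cyclic orderings $\sigma'$ around the affected nodes, so it suffices to check that these agree with the half-edge reattachment prescribed by the corresponding Kauer move. For the generic case I would read off $g'$ from the reversal-and-reconnection recipe of Definition~\ref{def:mut}, steps (1)--(4), and compare it, via Proposition~\ref{p:ribbon}, with Kauer's move on $e_k$, using Figure~\ref{fig:mut}(a); for the $f$-fixed loop case the required comparison is exactly the picture of Figure~\ref{fig:ribmut}, obtained from Figure~\ref{fig:mut}(b); and in the leaf/double-loop case our mutation is the identity by construction, so one only checks that Kauer's corresponding move returns an isomorphic ribbon graph. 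This bookkeeping is the combinatorial heart of the statement and the step most likely to be error-prone, since one must track the induced permutation $g'$ faithfully through all the identifications; Lemma~\ref{l:gf2fg2} and Lemma~\ref{l:mutcycle} will be convenient for keeping the $f$- and $g$-cycle structures under control.

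With (a) in hand, (b) becomes largely an assembly. The algebra $\Gamma$ is symmetric by Proposition~\ref{p:BGA}, so both complexes in~\eqref{e:silt} are tilting complexes, and the induced functions $m',c'$ are precisely the multiplicities and scalars attached to the moved graph. Kauer's theorem~\cite{Kauer98}, phrased in terms of $U^+$, then gives at once $\End_{\cD^b(\Gamma)}U^+_{P_k}(\Gamma)\simeq\Gamma(Q',f',m',c')=\Gamma'$, which by (a) is the right-hand isomorphism of~\eqref{e:BGAmut}. For the left-hand isomorphism I would exploit that mutation is an involution: applying (a) to $(Q',f')$ shows its ribbon graph is the Kauer move of that of $(Q,f)$ at $e_k$, whence Kauer's theorem for $\Gamma'$ yields $\End_{\cD^b(\Gamma')}U^+_{P'_k}(\Gamma')\simeq\Gamma$. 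The tilting equivalence $G\colon\cD^b(\Gamma')\xrightarrow{\sim}\cD^b(\Gamma)$ attached to $U^+_{P'_k}(\Gamma')$ sends this complex to $\Gamma$ and, by the basic fact that right and left silting mutations are mutually inverse~\cite{AiharaIyama12}, sends $\Gamma'$ to the left mutation $U^-_{P_k}(\Gamma)$; hence $\End_{\cD^b(\Gamma)}U^-_{P_k}(\Gamma)\simeq\End_{\cD^b(\Gamma')}\Gamma'\simeq\Gamma'$, completing~\eqref{e:BGAmut}. The one genuinely delicate point here, beyond the combinatorics of (a), is this last identification of $G(\Gamma')$ with $U^-_{P_k}(\Gamma)$; I expect it to be the main obstacle, and the clean way to handle it is through the silting-mutation formalism of Aihara and Iyama, in which it is exactly the statement that mutating right and then transporting along the resulting equivalence produces the left mutation.
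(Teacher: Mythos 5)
The paper itself gives no proof of Proposition~\ref{p:mutKauer} --- it is one of the combinatorial statements whose details are deferred elsewhere --- so I can only judge your proposal on its own terms. Your architecture is the natural one: the identification of the three cases (leaf edge $\leftrightarrow$ $g$-fixed loop at $k$, loop with successive half-edges $\leftrightarrow$ $f$-fixed loop at $k$, generic edge $\leftrightarrow$ loop-free $k$) with Kauer's three kinds of elementary moves is correct, and deducing the $U^+$ half of~\eqref{e:BGAmut} from (a) plus Kauer's theorem, then the $U^-$ half from involutivity of mutation and the Aihara--Iyama formalism, is a workable route.

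The step you dismiss too quickly is the degenerate branch of Definition~\ref{def:mutloop}: when the loop $\balpha$ at $k$ satisfies $g(\balpha)=\balpha$, so that $e_k$ is a leaf edge, the mutation is \emph{defined} to be the identity, and you say ``one only checks that Kauer's corresponding move returns an isomorphic ribbon graph.'' That check is not a formality; for a general Brauer graph the leaf move genuinely changes the graph (this is precisely how K\"{o}nig--Zimmermann deform a Brauer tree into a star), so triviality here must come from the constraint $f^3=\mathrm{id}$. Concretely, writing $\delta=f(\alpha)$ for the non-loop arrow $\alpha$ at $k$, one finds $g^{-1}(\alpha)=\delta$ and $g(\alpha)=\bar{\delta}$, so the leaf edge $e_k$ always sits \emph{between the two half-edges of a single loop edge} of the ribbon graph; you must verify against Kauer's actual definition that reattaching $e_k$ within this configuration yields an isomorphic ribbon graph. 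The precise form of the move matters: for the quiver $3a$ of Table~\ref{tab:quivers}, the naive one-step slide of the relevant half-edge past its $g$-successor produces a ribbon graph whose $f$-permutation contains a $5$-cycle, hence not even a triangulation quiver. Two further points to nail down in (b): the node bijection implicit in Lemma~\ref{l:mutcycle}(b) must be the one along which Kauer's theorem transports multiplicities (and one needs his theorem in a version accommodating the scalar function $c$, or a normalization argument); and your identification of $G(\Gamma')$ with $U^-_{P_k}(\Gamma)$ requires matching the mutated summand of $U^+_{P'_k}(\Gamma')$ with $P_k$ under the equivalence. A cleaner alternative for the latter is to show combinatorially that the two opposite Kauer moves at $e_k$ produce isomorphic ribbon graphs in the triangulation-quiver setting, which strengthens (a) and gives both isomorphisms of~\eqref{e:BGAmut} symmetrically.
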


If $(Q_\tau, f_\tau)$ is a triangulation quiver arising from a triangulation
$\tau$ of a marked surface $(S,M)$, then by Remark~\ref{rem:triblocks} we can
think
of the multiplicities and scalars as quantities attached to each puncture
and boundary component of $(S,M)$. By combining Proposition~\ref{p:mutflip}
and Proposition~\ref{p:mutKauer} we deduce the next corollary which implies
in particular that the derived equivalence class of a Brauer graph algebra
from a triangulation quiver may depend only on the surface and not on the
particular triangulation.

\begin{corollary} \label{c:BGAsurface}
Let $(S,M)$ and $(S',M')$ be two marked surfaces whose topological parameters
satisfy Eq.~\eqref{e:topomut}.
Let $\tau$ be any triangulation of $(S,M)$ and let $\tau'$ be
any triangulation of $(S',M')$. Then:
\begin{enumerate}
\renewcommand{\theenumi}{\alph{enumi}}
\item
The triangulation quiver $(Q_{\tau'}, f_{\tau'})$ can be
obtained from $(Q_\tau, f_\tau)$ by a sequence of mutations, hence
any $g$-invariant function $\nu$ on $(Q_\tau)_1$ yields a
$g$-invariant function $\nu'$ on $(Q_{\tau'})_1$.

\item
The Brauer graph algebras 
$\Gamma(Q_\tau, f_\tau, m, c)$ and $\Gamma(Q_{\tau'}, f_{\tau'}, m', c')$
are derived equivalent for any $g$-invariant function of multiplicities
$m \colon (Q_\tau)_1 \to \bZ_{>0}$ and scalars
$c \colon (Q_\tau)_1 \to K^{\times}$.
\end{enumerate}
\end{corollary}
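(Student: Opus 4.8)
The plan is to deduce both parts from the two propositions stated immediately before the corollary, using that derived equivalence is a transitive relation. The only real content is bookkeeping: first produce a finite sequence of triangulation-quiver mutations connecting $(Q_\tau,f_\tau)$ to $(Q_{\tau'},f_{\tau'})$, and then run the Brauer graph algebra derived equivalence of Proposition~\ref{p:mutKauer} along that sequence while transporting the multiplicities and scalars at each step.

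First I would prove part (a). Since $(S,M)$ and $(S',M')$ satisfy \eqref{e:topomut} by hypothesis, the equivalence of conditions (a) and (b) in Proposition~\ref{p:mutflip} furnishes a chain
\[
(Q_\tau,f_\tau)=(Q^{(0)},f^{(0)}), (Q^{(1)},f^{(1)}), \dots, (Q^{(n)},f^{(n)})=(Q_{\tau'},f_{\tau'}),
\]
in which each $(Q^{(i+1)},f^{(i+1)})$ is the mutation of $(Q^{(i)},f^{(i)})$ at some vertex $k_i$. Fixing such a chain, I would then invoke Lemma~\ref{l:mutcycle}\eqref{it:l:gnum} and the explicit formulae recorded after it: each mutation sends a $g$-invariant function on $(Q^{(i)})_1$ to a $g^{(i+1)}$-invariant function on $(Q^{(i+1)})_1$ with the same image. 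Composing these transport maps along the chain produces the desired $\nu'$ on $(Q_{\tau'})_1$; applying this to $\nu=m$ and to $\nu=c$ defines the induced data $m'$ and $c'$ appearing in part (b). The function $\nu'$ may depend on the chosen chain, but since the corollary asserts only existence this causes no difficulty.

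For part (b) I would argue by induction along the same chain. Let $m^{(i)}, c^{(i)}$ denote the $g^{(i)}$-invariant functions obtained by applying the first $i$ transport steps to $m$ and $c$, so that $m^{(0)}=m$, $c^{(0)}=c$ and $m^{(n)}=m'$, $c^{(n)}=c'$. By construction $m^{(i+1)}, c^{(i+1)}$ are precisely the functions induced from $m^{(i)}, c^{(i)}$ by the mutation at $k_i$, so part (b) of Proposition~\ref{p:mutKauer} applies verbatim at each step and, via the tilting complexes $U^{\pm}_{P_{k_i}}$ of \eqref{e:BGAmut}, yields a derived equivalence between $\Gamma(Q^{(i)},f^{(i)},m^{(i)},c^{(i)})$ and $\Gamma(Q^{(i+1)},f^{(i+1)},m^{(i+1)},c^{(i+1)})$. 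Concatenating these $n$ equivalences by transitivity gives the asserted derived equivalence of $\Gamma(Q_\tau,f_\tau,m,c)$ and $\Gamma(Q_{\tau'},f_{\tau'},m',c')$.

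The hard part is conceptual rather than computational: one must verify that transporting $g$-invariant functions is compatible with composition of mutations, so that the data fed into the $(i+1)$-st application of Proposition~\ref{p:mutKauer} is exactly the data induced from the $i$-th step. This is forced by the explicit formulae following Lemma~\ref{l:mutcycle} but should be stated carefully. I would emphasise one point that makes the argument go through smoothly: admissibility of the multiplicity function need not survive a mutation, yet this is irrelevant here, since by Definition~\ref{def:BGA} Brauer graph algebras are defined for arbitrary positive multiplicities, so the derived equivalences of Proposition~\ref{p:mutKauer} remain available at every step regardless of admissibility.
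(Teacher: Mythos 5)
Your proposal is correct and follows exactly the route the paper intends: the paper derives this corollary precisely "by combining Proposition~\ref{p:mutflip} and Proposition~\ref{p:mutKauer}", i.e.\ using the implication from Eq.~\eqref{e:topomut} to the existence of a mutation chain, transporting the $g$-invariant data along that chain, and concatenating the derived equivalences of~\eqref{e:BGAmut} by transitivity. Your observation that admissibility is irrelevant for Brauer graph algebras is a correct and worthwhile clarification, but the argument is the same as the paper's.
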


\begin{remark}
It has also been observed by Marsh and Schroll~\cite{MarshSchroll14} that
by viewing triangulations of marked surfaces as ribbon graphs, flips of
triangulations become elementary moves of Brauer graphs and hence a marked
surface gives rise to a collection of derived equivalent Brauer graph algebras.
Note that in the
case of surfaces with non-empty boundary, the Brauer graph algebras they
consider are somewhat different than the algebras considered here.
\end{remark}

\begin{remark}
Recently, a description of Kauer's elementary moves in terms of the ribbon
quivers has been given in~\cite{Aihara15}.
\end{remark}

\subsection{Mutations and quivers with potentials}
Let $(Q,f)$ be a triangulation quiver and let $(Q',f')$ be a
mutation of $(Q,f)$ at a fixed vertex $k$ of $Q$.

Let $R \colon Q_1 \to K[[x]]$ be a $g$-invariant function and let
$P \colon Q_1^f \to K[[x]]$ be a function whose values are power series
(i.e.\ $P_\alpha(x)$ is a power series for each $\alpha \in Q_1$ such that 
$f(\alpha)=\alpha$).
Consider the potential on $Q$ defined by
\begin{equation} \label{e:potR}
W = \sum_{\alpha \,:\, f(\alpha) = \alpha} P_\alpha(\alpha) +
\sum_{\alpha \,:\, f(\alpha) \neq \alpha}
\alpha \cdot f(\alpha) \cdot f^2(\alpha) -
\sum_\beta R_{\beta}(\omega_\beta) ,
\end{equation}
where the first sum runs over the fixed points of $f$, the second
runs over representatives $\alpha$ of the $f$-cycles of length $3$
the third runs over representatives $\beta$ of the $g$-cycles in $Q_1$.
This is a special case of a potential considered in
Proposition~\ref{p:hyperib}\eqref{it:potrib}, as $P$ can be extended to
an $f$-invariant function on all the arrows by setting $P_\alpha(x)=x$ for
any arrow $\alpha$ with $f(\alpha) \neq \alpha$.

By the discussion in Section~\ref{ssec:mut}, the function $R$ gives rise to a
$g'$-invariant function $R'$ and the function $P$ gives rise to a function $P'$
on the set $(Q'_1)^{f'}$ of fixed points of $f'$, hence to the potential on
$Q'$ given by
\begin{equation} \label{e:potR2}
W' = \sum_{\alpha' \,:\, f'(\alpha')=\alpha'} P'_{\alpha'}(\alpha') + 
\sum_{\alpha' \,:\, f'(\alpha') \neq \alpha'}
\alpha' \cdot f'(\alpha') \cdot f'^2(\alpha') -
\sum_{\beta'} R'_{\beta'}(\omega_{\beta'}) ,
\end{equation}
where the sums run over  fixed points $\alpha'$ of $f'$, representatives
$\alpha'$ of the $f'$-cycles of length $3$ and representatives $\beta'$ of
the $g'$-cycles in $Q'_1$.

The next proposition compares $(Q',W')$ with the mutation of the quiver
with potential $(Q,W)$ at the vertex $k$ as defined in~\cite[\S5]{DWZ08}.

\begin{proposition} \label{p:QPmut}
Assume that there are no $2$-cycles in $Q$ passing through the vertex~$k$.
Then $(Q',W')$ is right equivalent to the mutation of $(Q,W)$ at $k$.
\end{proposition}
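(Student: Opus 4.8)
The plan is to compute the Derksen--Weyman--Zelevinsky mutation $\mu_k(Q,W)$ directly from \cite[\S5]{DWZ08} and to match its reduction with $(Q',W')$; throughout I read paths from left to right, as in Section~\ref{ssec:QP}. First I would fix the local labelling of Figure~\ref{fig:mut}(a). Since $k$ carries no loop and lies on no $2$-cycle, the six arrows $\alpha_1=\alpha$, $\beta_1=f(\alpha)$, $\gamma_1=f^2(\alpha)$, $\alpha_2=\balpha$, $\beta_2=f(\balpha)$, $\gamma_2=f^2(\balpha)$ are distinct, with $\alpha_1,\alpha_2$ the two arrows out of $k$ and $\gamma_1,\gamma_2$ the two arrows into $k$. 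The combinatorial key, using $g(\eps)=\overline{f(\eps)}$ and Lemma~\ref{l:gf2fg2}, is to separate the $f$-transitions through $k$, namely $f(\gamma_1)=\alpha_1$ and $f(\gamma_2)=\alpha_2$, from the $g$-transitions through $k$, namely $g(\gamma_1)=\alpha_2$ and $g(\gamma_2)=\alpha_1$.

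Second, I would write out the premutation $(\overline{Q},\overline{W})$. The recipe reverses $\alpha_1,\alpha_2,\gamma_1,\gamma_2$ into $\alpha_1^*,\alpha_2^*,\gamma_1^*,\gamma_2^*$ and adjoins, for each path $\gamma_j\alpha_i$ through $k$, a composite arrow $[\alpha_i\gamma_j]\colon s(\gamma_j)\to t(\alpha_i)$. Comparing endpoints identifies the two composites attached to the $g$-transitions with the new arrows of Definition~\ref{def:mut}, $[\alpha_1\gamma_2]=\delta_{21}$ and $[\alpha_2\gamma_1]=\delta_{12}$, whereas the two composites $[\alpha_1\gamma_1]$ and $[\alpha_2\gamma_2]$ attached to the $f$-transitions are paired with $\beta_1,\beta_2$. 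The premutation potential is $\overline{W}=[W]+\Delta_k$ with $\Delta_k=\sum_{i,j}[\alpha_i\gamma_j]\,\alpha_i^*\gamma_j^*$. Here I would record three facts. (i) In $[W]$ each of the two $f$-triangles through $k$ collapses to the $2$-cycle $[\alpha_1\gamma_1]\beta_1$, respectively $[\alpha_2\gamma_2]\beta_2$, and these are the only $2$-cycles of $\overline{W}$. (ii) The two terms of $\Delta_k$ attached to the $g$-transitions, rewritten through $[\alpha_1\gamma_2]=\delta_{21}$ and $[\alpha_2\gamma_1]=\delta_{12}$, are exactly the two new length-$3$ $f'$-triangles $\alpha_1^*\gamma_2^*\delta_{21}$ and $\alpha_2^*\gamma_1^*\delta_{12}$ occurring in $W'$. (iii) Each $g$-cycle term $-R_\beta(\omega_\beta)$ of $W$ has its through-$k$ subpaths $\gamma_1\alpha_2$ and $\gamma_2\alpha_1$ rewritten as $\delta_{12}$ and $\delta_{21}$, while all loop terms $P_\alpha(\alpha)$ and all triangles and $g$-cycles avoiding $k$ remain untouched.

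Third comes the reduction via the splitting theorem \cite[Theorem~4.6]{DWZ08}. The vertex-fixing automorphism $\beta_i\mapsto\beta_i-\alpha_i^*\gamma_i^*$ absorbs the remaining $\Delta_k$-terms $[\alpha_i\gamma_i]\alpha_i^*\gamma_i^*$ into the $2$-cycles, after which $[\alpha_i\gamma_i]$ occurs only in the pure $2$-cycle $[\alpha_i\gamma_i]\beta_i$; deleting these trivial factors amounts to substituting $\beta_i\mapsto-\alpha_i^*\gamma_i^*$ into the rest of $\overline{W}$. Because each arrow lies on a unique $g$-cycle, $\beta_i$ occurs once inside $\omega_{\beta_i}$, so this substitution replaces $\beta_i$ by the length-$2$ path $\alpha_i^*\gamma_i^*$ through $k$; combined with the rewriting of (iii) this is precisely the passage from $\omega_\beta$ to the mutated $g'$-cycle word prescribed by the induced $g'$-invariant data of Section~\ref{ssec:mut}, so that $-\sum R_\beta(\omega_\beta)$ turns into $-\sum R'_{\beta'}(\omega_{\beta'})$. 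Together with (ii) and the untouched terms of (iii), the reduced potential is therefore $W'$ up to right equivalence, and Proposition~\ref{p:hyperib} confirms that $W'$ is indeed the potential associated with $(Q',f')$ and the induced functions.

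The main obstacle is the bookkeeping rather than the idea. Substituting $\beta_i\mapsto-\alpha_i^*\gamma_i^*$ into $R_\beta(\omega_\beta)=\sum_n r_n\omega_{\beta}^n$ produces a sign $(-1)^n$ in the $n$-th term, which must be reabsorbed by rescaling one of the new arrows; such rescalings cascade through the $f'$-triangles of (ii), and I would have to check that they can be organised into a single $g'$-invariant change of scalars, consistent with the freedom recorded in Remark~\ref{rem:trianga} and with the definition of the induced function $c'$ in Section~\ref{ssec:mut}. The second source of difficulty is the degenerate configurations still allowed by the hypotheses: some of the four outer vertices $s(\gamma_1),t(\alpha_1),s(\gamma_2),t(\alpha_2)$ may coincide, a $g$-cycle may pass through $k$ twice or have length so small that the rewriting in (iii) shortens it to a loop or a $2$-cycle in $Q'$, and $Q$ or $Q'$ may carry $2$-cycles away from $k$. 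In each such case the collapses in (i)--(iii) and the convergence of the formal substitution in $\wh{KQ'}$ must be re-examined separately, and this case analysis --- parallel in spirit to the finiteness arguments of Section~\ref{ssec:findim} --- is where the real work lies.
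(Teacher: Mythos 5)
The paper states Proposition~\ref{p:QPmut} without proof (it is a survey; the only text following the statement translates the hypothesis into the conditions $n_{\alpha_1}>2$, $n_{\gamma_1}>2$, $n_{\beta_1}>1$, $n_{\beta_2}>1$), so there is no in-paper argument to compare yours against. Your outline is the expected one — form the Derksen--Weyman--Zelevinsky premutation, identify the composite arrows for the two $g$-transitions through $k$ with $\delta_{12},\delta_{21}$ and the two composites for the $f$-transitions with partners of $\beta_1,\beta_2$ in $2$-cycles, then reduce — and the combinatorial identifications you make are correct: the $\Delta_k$-terms attached to the $g$-transitions are exactly the two new $f'$-triangles, the substitution $\beta_i\mapsto-\alpha_i^*\gamma_i^*$ reproduces the passage from $g$-cycle words to $g'$-cycle words, and the reduction is exact because the composites $[\alpha_i\gamma_i]$ occur only linearly in $\overline{W}$.

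Two caveats, both of which you flag but which deserve emphasis because they affect the literal truth of the statement rather than just the bookkeeping. First, your claim in (i) that the two collapsed triangles are the \emph{only} $2$-cycles of $\overline{W}$ presupposes that $W$, and more importantly the rewritten $g$-cycle terms, have no components of degree $\le 2$: if a $g$-cycle through $k$ has length $3$ and $R$ has a linear term, or a $g$-cycle of length $3$ not through $k$ is hit by a quadratic term of $R$, the splitting theorem deletes further arrows and the reduced part no longer lives on $Q'$; so the proposition really needs $(Q',W')$ to be reduced, which in the intended application is what admissibility of $m$ (before mutation) does and does not always survive mutation. Second, the sign $(-1)^n$ produced by $\beta_i\mapsto-\alpha_i^*\gamma_i^*$ in the $n$-th term of $R_{\beta_i}(\omega_{\beta_i})$ cannot in general be removed without either twisting $R'$ on some cycles (replacing $R'_{\beta'}(x)$ by $R'_{\beta'}(-x)$) or rescaling arrows in a way that propagates through the triangles into other $g'$-cycles; whether the cascade closes up into a genuine right equivalence onto the potential built from the \emph{transported} functions $R',P'$ depends on parities of cycle lengths and multiplicities of passage through $k$. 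This is precisely the freedom recorded in Remark~\ref{rem:trianga}, and any complete proof must either carry out this parity check or weaken the conclusion to ``right equivalent to $(Q',W'')$ for functions $R'',P''$ agreeing with $R',P'$ up to signs.'' You have located the right pressure points; what is missing is the resolution of the sign cascade and the verification that the degree-$2$ part of the reduced potential vanishes under the stated hypotheses.
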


The assumption in the proposition implies that $k$ has no loops and
therefore the mutation is governed by Definition~\ref{def:mut}.
In the notations of that definition, the condition in the proposition
is equivalent to the conditions that
$n_{\alpha_1} > 2$, $n_{\gamma_1} > 2$, $n_{\beta_1} > 1$ and
$n_{\beta_2} > 1$.

\subsection{Mutations and triangulation algebras}
\sloppy 
Let $(Q,f)$ be a triangulation quiver and let $k$ be a vertex of $Q$.
Let $m \colon Q_1 \to \bZ_{>0}$
and $c \colon Q_1 \to K^{\times}$ be $g$-invariant functions of multiplicities
and scalars, respectively and let $\lambda \colon Q_1^f \to K$. Assume that:
\begin{itemize}
\item
$m$ is admissible;

\item
if $((Q,f),m)$ is exceptional, the scalars $c \colon Q_1 \to K^{\times}$
satisfy the conditions stated before Theorem~\ref{t:quasi};

\item
$\ch K$ does not divide $\mu_f \mu_g$ (see Proposition~\ref{p:potential} for
the definition);

\item
there are no $2$-cycles in $Q$ passing through the vertex $k$.

\end{itemize}
\fussy 

Consider the triangulation algebra $\gL=\gL(Q,f,m,c,\lambda)$. Our first
two assumptions imply that Theorem~\ref{t:quasi} holds for $\gL$ and that in
particular, $\gL$ is symmetric and there is a triangulated 2-Calabi-Yau category
$\cC$ with a cluster-tilting object $T$ such that $\gL \simeq \End_{\cC}(T)$.

Let $(Q',f')$ be the mutation of $(Q,f)$ at $k$, let
$m' \colon Q'_1 \to \bZ_{>0}$ and  $c' \colon Q'_1 \to K^{\times}$ be the
$g'$-invariant functions induced from $m$ and $c$, and
let $\lambda' \colon (Q'_1)^{f'} \to K$ be the function on the arrows fixed by
$f'$ induced from $\lambda$. Our last assumption implies that the triangulation
algebra $\gL'=\gL(Q',f',m',c',\lambda')$ is well defined
(i.e.\ $m'_{\alpha'} n_{\alpha'} \geq 2$ for any $\alpha' \in Q'_1$),
but $m'$ is not necessarily admissible.

\begin{proposition} \label{p:mutriang}
Under the above assumptions, the following assertions hold true:
\begin{enumerate}
\renewcommand{\theenumi}{\alph{enumi}}
\item \label{it:p:potR}
$\gL \simeq \cP(Q,W)$, where the potential $W$ takes the form
in~\eqref{e:potR} for suitable $g$-invariant function
$R \colon Q_1 \to K[[x]]$ and function $P \colon Q_1^f \to K[[x]]$.


\item \label{it:p:potR2}
$\gL' \simeq \cP(Q',W')$ for the potential $W'$ given in~\eqref{e:potR2}
with the functions $R'$ and $P'$ corresponding to the functions $R$ and $P$
of part~\eqref{it:p:potR}.

\item \label{it:p:QP}
The quiver with potential $(Q',W')$ is right equivalent to the mutation
of the quiver with potential $(Q,W)$ at the vertex $k$.

\item \label{it:p:IY}
$\gL' \simeq \End_{\cC}(T')$, where $T'$ is the Iyama-Yoshino mutation of $T$
with respect to the indecomposable summand corresponding to the vertex $k$.

\item \label{it:p:mutriang}
$\gL'$ is derived equivalent to $\gL$ and is of quasi-quaternion type.
More precisely, we have isomorphisms
\begin{equation} \label{e:mutriang}
\End_{\dgL} U^-_{P_k}(\gL) \simeq \gL' \simeq
\End_{\dgL} U^+_{P_k}(\gL)
\end{equation}
where $P_k$ is the indecomposable projective $\gL$-module corresponding to
the vertex $k$.
\end{enumerate}
\end{proposition}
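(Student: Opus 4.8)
The plan is to prove the six parts in the stated order, reducing each to a result already in hand. For part~\eqref{it:p:potR} I would invoke the standing hypothesis $\ch K \nmid \mu_f \mu_g$: by Proposition~\ref{p:potential}(b) it already guarantees that $\gL$ is the Jacobian algebra of a \emph{potential} on $Q$, and it remains only to recognise the shape~\eqref{e:potR}. The defining data of $\gL$ amount to the $f$-invariant series $p_\alpha(x)=x^2-\lambda_\alpha x^3$ for $\alpha \in Q_1^f$ (and $p_\alpha = 1$ otherwise) together with the $g$-invariant series $q_\alpha(x) = c_\alpha x^{m_\alpha-1}$; by Proposition~\ref{p:hyperib}\eqref{it:potrib} these are the derivatives $P'_\alpha = p_\alpha$, $R'_\alpha = q_\alpha$ of the summands of $W = \sum_\alpha P_\alpha(\xi_\alpha) - \sum_\beta R_\beta(\omega_\beta)$. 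Integrating gives $P_\alpha(x) = x$ on the length-$3$ cycles of $f$, so that $P_\alpha(\xi_\alpha) = \alpha \cdot f(\alpha) \cdot f^2(\alpha)$ reproduces the middle sum of~\eqref{e:potR}, and a genuine power series on the $f$-fixed loops; here the coefficients $\tfrac13,\tfrac14$ produced by the integration are legitimate precisely because $\mu_f = 6$ makes the hypothesis $\ch K \nmid \mu_f \mu_g$ force $3$ and $4$ to be invertible.

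For part~\eqref{it:p:potR2} I would apply part~\eqref{it:p:potR} to the mutated data $(Q',f',m',c',\lambda')$, after checking that its hypotheses survive the mutation. The algebra $\gL'$ is well defined by assumption, and although $m'$ need not be admissible this is irrelevant for Proposition~\ref{p:potential}. The characteristic condition transfers: by Lemma~\ref{l:mutcycle}\eqref{it:l:fcyc} the permutation $f'$ has the same cycle structure as $f$ and $\lambda'$ the same vanishing pattern as $\lambda$, so $\mu_{f'} = \mu_f$, while the induced multiplicity function $m'$ has the same image as $m$, giving $\mu_{g'} = \mu_g$. Thus $\gL' \simeq \cP(Q',W')$ with $W'$ of the form~\eqref{e:potR2} built from the functions $R'$, $P'$ induced in Section~\ref{ssec:mut}. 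Part~\eqref{it:p:QP} is then a direct citation of Proposition~\ref{p:QPmut}, whose sole hypothesis---no $2$-cycle through $k$---is part of the standing assumptions.

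Part~\eqref{it:p:IY} I would obtain by chaining the previous three parts through the Buan--Iyama--Reiten--Smith comparison. Since $\gL$ is symmetric it is self-injective, so the vanishing condition of Proposition~\ref{p:QPmutIY} holds; combined with the absence of $2$-cycles through $k$ and the identification $\End_{\cC}(T) \simeq \cP(Q,W)$ of part~\eqref{it:p:potR}, that proposition yields $\End_{\cC}(T') \simeq \cP(\mu_k(Q,W))$ for the Iyama--Yoshino mutation $T'$. Right equivalent quivers with potentials have isomorphic Jacobian algebras, so part~\eqref{it:p:QP} gives $\cP(\mu_k(Q,W)) \simeq \cP(Q',W')$, which by part~\eqref{it:p:potR2} is $\gL'$. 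Finally, for part~\eqref{it:p:mutriang}, the algebra $\gL$ is symmetric and $T'$ is the Iyama--Yoshino mutation of $T$, so Proposition~\ref{p:dereq}\eqref{it:algmut} and its last part apply verbatim to give that $\gL'$ is symmetric and that the complexes $U^-_{P_k}(\gL)$ and $U^+_{P_k}(\gL)$ realise the isomorphisms~\eqref{e:mutriang}, whence $\gL$ and $\gL'$ are derived equivalent; the quasi-quaternion type of $\gL'$ then follows from that of $\gL$ (Theorem~\ref{t:quasi}\eqref{it:t:quasi}) together with Proposition~\ref{p:derquasi}.

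The main obstacle is the bookkeeping underlying parts~\eqref{it:p:potR2} and~\eqref{it:p:QP}: one must verify that the two a priori unrelated descriptions of $\gL'$---on the one hand as the triangulation algebra attached to the mutated combinatorial data, on the other as $\cP(\mu_k(Q,W))$ coming from the algebraic mutation of the potential---really coincide. Because $m'$ can fail to be admissible, Theorem~\ref{t:quasi} is unavailable for $\gL'$, so this identification cannot be outsourced and must be carried out directly through the induced functions of Section~\ref{ssec:mut}, making the compatibility of the combinatorial mutation (Definitions~\ref{def:mut} and~\ref{def:mutloop}) with the Derksen--Weyman--Zelevinsky mutation of $(Q,W)$ the technical heart of the argument.
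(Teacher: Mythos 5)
Your proposal is correct and follows essentially the same route as the paper's own proof: part~(a) from the characteristic assumption via Proposition~\ref{p:potential} and Proposition~\ref{p:hyperib}, part~(b) from the equalities $\mu_{f'}=\mu_f$ and $\mu_{g'}=\mu_g$, part~(c) from Proposition~\ref{p:QPmut}, part~(d) from Proposition~\ref{p:QPmutIY}, and part~(e) from Proposition~\ref{p:dereq} together with Proposition~\ref{p:derquasi}. You merely fill in details the paper leaves implicit (the integration producing the coefficients $\tfrac13$, $\tfrac14$ and why they are invertible, and the transfer of the hypotheses under mutation), all of which check out.
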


Claim~\eqref{it:p:potR} follows by our assumption on $\mu_f \mu_g$. Since
$\mu_{f'}=\mu_f$ and $\mu_{g'}=\mu_g$, claim~\eqref{it:p:potR2} follows in a
similar way. Claim~\eqref{it:p:QP} is a consequence of the previous claims
together with our assumption on the vertex $k$ and Proposition~\ref{p:QPmut}.
Claim~\eqref{it:p:IY} follows from~\eqref{it:p:QP}
and Proposition~\ref{p:QPmutIY}. Finally, claim~\eqref{it:p:mutriang} is
a consequence of~\eqref{it:p:IY} and Proposition~\ref{p:dereq}.

\begin{remark}
Our assumptions on the characteristic of $K$ and the vertex $k$ are needed
in order to use the theory of mutations of quivers with potentials.
It seems very likely that the statements in parts~\eqref{it:p:IY}
and~\eqref{it:p:mutriang}
of Proposition~\ref{p:mutriang} are still true even if we drop the assumption
on the characteristic of $K$ and weaken the assumption on the vertex $k$,
requiring only that the triangulation algebra $\gL'$ is defined.
%
%
\end{remark}

\subsection{Construction of infinitely many non-degenerate potentials}

We conclude by presenting an application of the preceding results to
the theory of quivers with potentials.

For a mutation $(Q',W')$ of a quiver with potential $(Q,W)$, the underlying
quiver $Q'$ may have $2$-cycles even if the quiver $Q$ did not have such.
Thus, $(Q',W')$ could not be further mutated at the vertices lying on these
2-cycles.

A quiver with potential $(Q,W)$ is \emph{non-degenerate} if, for any sequence
of mutations of quivers with potentials, the underlying quiver does not contain
any $2$-cycles~\cite[Definition~7.2]{DWZ08}.
The existence of non-degenerate potentials is crucial to
several approaches to solve various conjectures on cluster algebras,
either via the representations of Jacobian algebras and their mutations
as in~\cite{DWZ10}, or via the generalized cluster categories~\cite{Plamondon11}.

Derksen, Weyman and Zelevinsky proved~\cite[Corollary~7.4]{DWZ08} that if
the ground field is
uncountable, then over any quiver without loops and $2$-cycles there is at
least one non-degenerate potential. It is interesting to know when such
non-degenerate potential is unique (up to right equivalence). For instance,
on quivers without oriented cycles there is only one potential, namely the
zero potential. In~\cite[Theorem~1.4]{GLS16},
Geiss, Labardini-Fragoso and Schr\"{o}er
proved that apart from one exception, the adjacency quiver of
a triangulation of a marked surface with non-empty boundary has only one
non-degenerate potential.
More generally, we proved in~\cite[\S4]{Ladkani13} that a non-degenerate
potential
is unique on any quiver belonging to the class $\cP$ of Kontsevich and
Soibelman~\cite[\S8.4]{KS08}, and that this class of quivers actually contains
the previous two instances.

In this section we consider the other extremity, namely, we apply the previous
results to construct quivers which have infinitely many non-degenerate
potentials whose Jacobian algebras are pairwise non-isomorphic.

Consider a triangulation quiver $(Q,f)$ such that:
\begin{equation} \label{e:g1f3}
\tag{$\star$}
\text{The permutation $g$ has one cycle and
all the cycles of $f$ are of length $3$.}
\end{equation}
These assumptions imply that the quiver $Q$ does not have loops or 2-cycles
(see Proposition~\ref{p:tri2cyc}).
Moreover, a potential as in Eq.~\eqref{e:potR} is controlled by one power series
$R(x) \in K[[x]]$, and all the cycles $\oa$ (where $\alpha$ runs over the arrows
of~$Q$) are rotationally equivalent. Denote by $\omega$ one of these cycles.

If $(Q',f')$ is a mutation of $(Q,f)$, then by Lemma~\ref{l:mutcycle} it
also satisfies~\eqref{e:g1f3} and hence Proposition~\ref{p:QPmut} can be
applied indefinitely to yield the following.

\begin{proposition}
Let $(Q,f)$ be a triangulation quiver satisfying condition~\eqref{e:g1f3}.
Then for any power series $R(x) \in xK[[x]]$, the potential $W_R$ on $Q$ given
by
\begin{equation} \label{e:potRg1f3}
W_R = -R(\omega) + \sum_\alpha \alpha \cdot f(\alpha) \cdot f^2(\alpha)
\end{equation}
(where the sum runs over representatives $\alpha$ of the $f$-cycles)
is non-degenerate.
\end{proposition}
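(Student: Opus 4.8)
The plan is to show that the entire mutation class of $(Q,W_R)$ consists of quivers with potentials of the same special shape \eqref{e:potRg1f3}, so that $2$-cycles can never appear. First I would record that $W_R$ is a special instance of the potential $W$ of \eqref{e:potR}: condition \eqref{e:g1f3} forces $Q_1^f=\emptyset$ (since all $f$-cycles have length $3$), so the first sum in \eqref{e:potR} disappears, the second runs over all $f$-cycles, and because $g$ has a single cycle the third reduces to the one term $R(\omega)$. In particular $Q$ has no loops or $2$-cycles by Proposition~\ref{p:tri2cyc}, so every vertex is a legitimate site for mutation of quivers with potentials.

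The core is an induction on the length of the mutation sequence. Fix a sequence of vertices $k_1,\dots,k_n$ and set $(Q^{(0)},f^{(0)})=(Q,f)$, $R^{(0)}=R$. I would prove that $\mu_{k_n}\cdots\mu_{k_1}(Q,W_R)$ is right equivalent to $(Q^{(n)},W^{(n)})$, where $(Q^{(n)},f^{(n)})=\mu_{k_n}\cdots\mu_{k_1}(Q,f)$ is the iterated combinatorial mutation of the triangulation quiver and $W^{(n)}=W_{R^{(n)}}$ is again of the form \eqref{e:potRg1f3} for the induced power series $R^{(n)}$. The inductive step rests on three ingredients: (i) mutation of quivers with potentials is compatible with right equivalence \cite{DWZ08}, so I may replace the $(n-1)$-fold mutation by its right-equivalent model $(Q^{(n-1)},W_{R^{(n-1)}})$ before mutating at $k_n$; (ii) by the inductive hypothesis together with Lemma~\ref{l:mutcycle} the pair $(Q^{(n-1)},f^{(n-1)})$ still satisfies \eqref{e:g1f3}, hence by Proposition~\ref{p:tri2cyc} the quiver $Q^{(n-1)}$ has no $2$-cycles and the hypothesis of Proposition~\ref{p:QPmut} is met at $k_n$; (iii) Proposition~\ref{p:QPmut} then identifies $\mu_{k_n}(Q^{(n-1)},W_{R^{(n-1)}})$, up to right equivalence, with $(Q^{(n)},W^{(n)})$ built from the induced functions as in \eqref{e:potR2}. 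Finally, Lemma~\ref{l:mutcycle} guarantees that $f^{(n)}$ has only $3$-cycles and $g^{(n)}$ a single cycle, so \eqref{e:potR2} collapses exactly to the shape \eqref{e:potRg1f3}, closing the induction.

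With the induction in hand, non-degeneracy is immediate: for every mutation sequence the resulting quiver with potential is right equivalent to $(Q^{(n)},W_{R^{(n)}})$ with $(Q^{(n)},f^{(n)})$ a triangulation quiver satisfying \eqref{e:g1f3}, so $Q^{(n)}$ contains no $2$-cycles by Proposition~\ref{p:tri2cyc}; since right equivalence preserves the underlying quiver, no $2$-cycle ever appears, which is precisely the defining property \cite[Definition~7.2]{DWZ08}. Note also that because every quiver in the class is $2$-acyclic, mutation is defined at every vertex, so the process can indeed be iterated indefinitely.

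The only place where genuine verification is needed is step (iii) together with the collapse of \eqref{e:potR2} to \eqref{e:potRg1f3}: I must confirm that the combinatorial recipe producing $W'$ from a mutated triangulation quiver really agrees with the Derksen--Weyman--Zelevinsky mutation up to right equivalence (this is exactly the content of Proposition~\ref{p:QPmut}, whose hypothesis is satisfied because $Q^{(n-1)}$ is $2$-acyclic) and that the induced data keep the first summand of \eqref{e:potR2} empty. The latter is handled cleanly by Lemma~\ref{l:mutcycle}\eqref{it:l:fcyc}, which preserves the absence of $f$-fixed points, and by Lemma~\ref{l:mutcycle}\eqref{it:l:gnum}, which keeps $g$ a single cycle so that $R^{(n)}$ remains a single power series lying in $xK[[x]]$.
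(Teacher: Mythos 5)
Your argument is correct and is essentially the paper's own proof: the paper likewise observes that Lemma~\ref{l:mutcycle} preserves condition~\eqref{e:g1f3} under mutation, so Proposition~\ref{p:tri2cyc} rules out $2$-cycles at every stage and Proposition~\ref{p:QPmut} can be applied indefinitely to keep the potential in the form~\eqref{e:potRg1f3}. You have merely made explicit the induction and the compatibility of DWZ mutation with right equivalence, which the paper leaves implicit.
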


Consider now a triangulation $\tau$ of a closed surface with exactly one
puncture. Then its triangulation quiver $(Q_\tau, f_\tau)$ satisfies
condition~\eqref{e:g1f3} by Remark~\ref{rem:triblocks}.
Moreover, the adjacency quiver of $\tau$ is $Q_\tau$ by Corollary~\ref{c:onep}.

\begin{corollary} \label{c:infpot}
Let $Q$ be the adjacency quiver of a triangulation of a closed surface with
exactly one puncture, and view it as triangulation quiver $(Q,f)$. Then:
\begin{enumerate}
\renewcommand{\theenumi}{\alph{enumi}}
\item
For any power series $R(x) \in xK[[x]]$, the potential $W_R$ on $Q$ defined by
Eq.~\eqref{e:potRg1f3} is non-degenerate.

\item
Let $R_0(x)=0$ and $R_m(x)=x^m$ for $m \geq 1$. Then
\[ 
\{W_{R_0} \} \cup \{W_{R_m} : \text{$m \geq 1$ is not divisible by $\ch K$}\}
\]
is an infinite set of non-degenerate potentials on $Q$ whose Jacobian algebras
are pairwise non-isomorphic.
\end{enumerate}
\end{corollary}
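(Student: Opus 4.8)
The plan is to settle part~(a) by directly assembling the results proved just above the corollary, and then to prove part~(b) by computing a single complete isomorphism invariant, namely the $K$-dimension of the Jacobian algebra $\cP(Q,W_R)$, for each potential in the list.

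For part~(a), first I would note that by Corollary~\ref{c:onep} the adjacency quiver of any triangulation $\tau$ of our once-punctured closed surface coincides with the underlying quiver of the triangulation quiver $(Q_\tau,f_\tau)$, so regarding $Q$ as a triangulation quiver is legitimate. Next, by the bijections of Remark~\ref{rem:triblocks}, the cycles of $g$ correspond to punctures and boundary components --- here one puncture and no boundary, so $g$ has a single cycle $\omega$ --- while the $f$-cycles of length $1$ correspond to boundary segments, of which there are none, so every $f$-cycle has length $3$. This is exactly condition~\eqref{e:g1f3}, and part~(a) is then immediate from the preceding Proposition.

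For part~(b), the non-degeneracy of every $W_{R_m}$ and of $W_{R_0}$ is part~(a), so the content is that the Jacobian algebras are pairwise non-isomorphic. Fix $m\geq 1$ with $\ch K \nmid m$. The potential $W_{R_m} = -\omega^m + \sum_\alpha \alpha\cdot f(\alpha)\cdot f^2(\alpha)$ is the potential of Proposition~\ref{p:hyperib}\eqref{it:potrib} with $P_\alpha(x)=x$ and $R(x)=x^m$; its cyclic derivatives are the relations $\rho_\alpha$ for the data $p_\alpha(x)=1$ and $q_\alpha(x)=R'(x)=m\,x^{m-1}$. These are precisely the defining relations of the triangulation algebra on $(Q,f)$ with constant multiplicity $m$ and constant scalar $m$, the latter lying in $K^{\times}$ exactly because $\ch K \nmid m$. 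Equivalently, since here $\mu_f=1$ and $\mu_g=m$, the hypothesis $\ch K \nmid \mu_f\mu_g$ of Proposition~\ref{p:potential} holds, identifying $\cP(Q,W_{R_m})$ with that triangulation algebra. As our surface is closed with one puncture and genus at least $1$, it is not a punctured monogon or a tetrahedron, and $m_\alpha n_\alpha = m\,|\omega| \geq |\omega| \geq 3$, so the multiplicities are admissible and the data is not exceptional; hence Theorem~\ref{t:quasi}\eqref{it:t:finite} gives finite-dimensionality and Proposition~\ref{p:Cartan}\eqref{it:p:dim} yields $\dim_K \cP(Q,W_{R_m}) = m\,|\omega|^2 = m\,|Q_1|^2$. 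Since $|Q_1|$ depends only on $Q$, distinct such $m$ give distinct finite dimensions.

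It remains to separate $W_{R_0}$. As $R_0=0$, a direct computation gives $\partial_\alpha W_{R_0} = f(\alpha)\cdot f^2(\alpha)$ for every $\alpha$, whence $\cP(Q,W_{R_0}) = \wh{KQ}/\overline{(\beta\cdot f(\beta) : \beta\in Q_1)}$. I would then show this algebra is infinite-dimensional by checking that the single $g$-cycle $\omega = \alpha\cdot g(\alpha)\cdot g^2(\alpha)\cdots$ survives in all powers: every pair of consecutive arrows occurring in any $\omega^k$ has the form $\gamma,\,g(\gamma)$, and $g(\gamma)=\overline{f(\gamma)}\neq f(\gamma)$ because the involution $\gamma\mapsto\bar\gamma$ has no fixed points (each vertex has two outgoing arrows), so no subpath of $\omega^k$ equals a forbidden pair $\beta\cdot f(\beta)$; thus $\omega^k\neq 0$ for all $k$ and $\dim_K\cP(Q,W_{R_0})=\infty$. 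Consequently $\cP(Q,W_{R_0})$ is isomorphic to none of the finite-dimensional algebras $\cP(Q,W_{R_m})$, and the dimension count of the previous paragraph finishes pairwise non-isomorphism; the index set is infinite since there are infinitely many $m\geq 1$ not divisible by $\ch K$. The main obstacle, beyond correctly invoking Proposition~\ref{p:potential} to match $W_{R_m}$ with a triangulation algebra, is precisely this infinite-dimensionality of the $R_0$ case, for which the decisive observation is that the $g$-successor and $f$-successor of an arrow never coincide, so the power series in $\omega$ is not killed in the quotient.
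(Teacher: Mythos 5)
Your proof is correct and follows the same overall route as the paper: part~(a) is exactly the paper's argument (Corollary~\ref{c:onep} plus Remark~\ref{rem:triblocks} to verify condition~\eqref{e:g1f3}, then the preceding Proposition), and for part~(b) the paper likewise identifies $\cP(Q,W_{R_m})$ with the triangulation algebra of constant multiplicity $m$ and reads off the dimension $m|Q_1|^2 = m(12g-6)^2$ from Proposition~\ref{p:Cartan}, so that distinct admissible $m$ give distinct dimensions. The one place where you genuinely diverge is the $R_0$ case: the paper simply asserts that $\cP(Q,W_{R_0})$ is infinite-dimensional, deferring to earlier references, whereas you prove it directly by observing that $\partial_\alpha W_{R_0}=f(\alpha)\cdot f^2(\alpha)$, so the defining ideal is the monomial ideal generated by the paths $\beta\cdot f(\beta)$, and that no power of the $g$-cycle $\omega$ contains such a subpath because $g(\gamma)=\overline{f(\gamma)}\neq f(\gamma)$ (the involution $\gamma\mapsto\bar\gamma$ being fixed-point-free). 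This self-contained argument is a worthwhile addition -- it replaces an external citation by a two-line combinatorial observation -- and it is sound: for a closed monomial ideal in $\wh{KQ}$ the paths avoiding every generator as a subpath form a topological basis of the quotient, so $\omega^k\neq 0$ for all $k$. Your bookkeeping of the scalar ($c_\alpha=m\in K^{\times}$ exactly when $\ch K\nmid m$), of admissibility ($m\,|Q_1|\geq 12g-6\geq 6$), and of non-exceptionality (a once-punctured closed surface is neither a punctured monogon nor a sphere with four punctures) is also all correct.
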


\begin{remark}
For a quiver as in Corollary~\ref{c:infpot}, it was known that there are
at least two inequivalent non-degenerate potentials, denoted in our notation
by $W_0$ and $W_x$, see~\cite[\S4.3]{Ladkani11b}, \cite[\S3]{Ladkani13}
and~\cite[Proposition~9.13]{GLS16}. Note that
the Jacobian algebra of $W_0$ is infinite dimensional whereas that
of any $W_{x^m}$ with $m \geq 1$ not divisible by $\ch K$ is a triangulation
algebra and hence finite-dimensional of quasi-quaternion type.
The latter Jacobian algebras are pairwise non-isomorphic because their
dimensions are all different; indeed, if the surface has genus $g \geq 1$ then
$Q$ has $6g-3$ vertices (Remark~\ref{rem:nvertex}) and hence the Jacobian
algebra of $W_{x^m}$ has dimension $m(12g-6)^2=36m(2g-1)^2$
(Proposition~\ref{p:Cartan}).
\end{remark}

\bibliographystyle{amsplain}
\bibliography{grpcluster}

\end{document}